\definecolor{darkgreen}{rgb}{0,0.5,0}
\definecolor{darkblue}{rgb}{0,0,0.7}
\definecolor{darkred}{rgb}{0.9,0.1,0.1}
\newcommand{\negphantom}{\v@true\h@true\negph@nt} 
\newcommand{\neghphantom}{\v@false\h@true\negph@nt} 
\newcommand{\negph@nt}{\ifmmode\expandafter\mathpalette 
  \expandafter\mathnegph@nt\else\expandafter\makenegph@nt\fi} 
\newcommand{\makenegph@nt}[1]{%
  \setbox\z@\hbox{\color@begingroup#1\color@endgroup}\finnegph@nt} 
\newcommand{\finnegph@nt}{%
  \setbox\tw@\null 
  \ifv@ \ht\tw@\ht\z@\dp\tw@\dp\z@\fi \ifh@\wd\tw@-\wd\z@\fi\box\tw@} 
\newcommand{\mathnegph@nt}[2]{%
  \setbox\z@\hbox{$\m@th #1{#2}$}\finnegph@nt} 
\newcommand{\Hminusul}{\hat{\phantom{H}}\negphantom{H}\underline{H}^{-1}}
\newtheorem{proposition}{Proposition}
\newtheorem{theorem}[proposition]{Theorem}
\newtheorem{lemma}[proposition]{Lemma}
\newtheorem{corollary}[proposition]{Corollary}
\theoremstyle{definition}
\newtheorem{remark}[proposition]{Remark}
\newcommand{\cref}[1]{Corollary~\ref{c.#1}}
\numberwithin{equation}{section}
\numberwithin{proposition}{section}
\newcommand{\A}{\mathcal{A}}
\newcommand{\Ahom}{\overline{\A}}
\newcommand{\Z}{\mathbb{Z}}
\newcommand{\N}{\mathbb{N}}
\newcommand{\R}{\mathbb{R}}
\newcommand{\E}{\mathbb{E}}
\renewcommand{\P}{\mathbb{P}}
\newcommand{\F}{\mathcal{F}}
\newcommand{\Zd}{\mathbb{Z}^d}
\newcommand{\Rd}{{\mathbb{R}^d}}
\newcommand{\ep}{\varepsilon}
\renewcommand{\a}{\mathbf{a}}
\renewcommand{\a}{\mathbf{a}}
\renewcommand{\b}{\mathbf{b}}
\newcommand{\ahom}{{\overbracket[1pt][-1pt]{\a}}}  
\newcommand{\cu}{{\scaleobj{1.2	}{\square}}}
\renewcommand{\L}{\mathcal{L}}
\renewcommand{\fint}{\strokedint}
\newcommand{\Ll}{\left}
\newcommand{\Rr}{\right}
\DeclareMathOperator{\dist}{dist}
\DeclareMathOperator*{\esssup}{ess\,sup}
\newcommand{\X}{\mathcal{X}}
\renewcommand{\bar}{\overline}
\renewcommand{\tilde}{\widetilde}
\renewcommand{\O}{\mathcal{O}}
\DeclareMathOperator{\data}{data}
\renewcommand{\hat}{\widehat}
\newcommand{\smallpara}{\varrho}
\newcommand{\uhom}{{u_{\mathrm{hom}}}}
\newcommand{\whom}{{w_{\mathrm{hom}}}}
\newcommand{\Lhom}{\overline{L}}
\renewcommand{\data}{\mathrm{data}}
\begin{document}

\title[Homogenization, linearization and regularity]{Homogenization, linearization and large-scale regularity for nonlinear elliptic equations}

\begin{abstract}
We consider nonlinear, uniformly elliptic equations with random, highly oscillating coefficients satisfying a finite range of dependence. We prove that homogenization and linearization commute in the sense that the linearized equation (linearized around an arbitrary solution) homogenizes to the linearization of the homogenized equation (linearized around the corresponding solution of the homogenized equation). We also obtain a quantitative estimate on the rate of this homogenization. These results lead to a better understanding of \emph{differences} of solutions to the nonlinear equation, which is of fundamental importance in quantitative homogenization. In particular, we obtain a large-scale $C^{0,1}$ estimate for differences of solutions---with optimal stochastic integrability. Using this estimate, we prove a large-scale $C^{1,1}$ estimate for solutions, also with optimal stochastic integrability. Each of these regularity estimates are new even in the periodic setting. As a second consequence of the large-scale regularity for differences, we improve the smoothness of the homogenized Lagrangian by showing that it has the same regularity as the heterogeneous Lagrangian, up to $C^{2,1}$.
\end{abstract}

\author[S. Armstrong]{Scott Armstrong}
\address[S. Armstrong]{Courant Institute of Mathematical Sciences, New York University, 251 Mercer St., New York, NY 10012}
\email{scotta@cims.nyu.edu}

\author[S. J. Ferguson]{Samuel J. Ferguson}
\address[S. J. Ferguson]{Courant Institute of Mathematical Sciences, New York University, 251 Mercer St., New York, NY 10012}
\email{sjf370@nyu.edu}

\author[T. Kuusi]{Tuomo Kuusi}
\address[T. Kuusi]{Department of Mathematics and Statistics, University of Helsinki}
 \email{tuomo.kuusi@helsinki.fi}

\keywords{stochastic homogenization, large-scale regularity, nonlinear equation, linearized equation}
\subjclass[2010]{35B27, 35B45, 60K37, 60F05}
\date{\today}

\maketitle
\setcounter{tocdepth}{1}

\section{Introduction}

\subsection{Motivation and informal summary of main results}

We are motivated by the goal of developing a quantitative theory of stochastic homogenization for nonlinear elliptic equations in divergence form. 
Such a theory has been developed in recent years for \emph{linear} equations which is by now rather satisfactory: see for instance~\cite{AKMbook,GO6} and the references therein. 
In the linear case, the phenomenon of improved regularity of solutions on large-scales plays an important role, for instance by providing control of ``small errors'' in a sufficiently strong norm. In the nonlinear setting, such small errors are typically not solutions of the equation but rather the \emph{difference} of two solutions which satisfy a linearized equation. In this paper, we obtain quantitative homogenization estimates for such linearized equations and obtain a large-scale $C^{0,1}$--type estimate for differences of solutions.

\smallskip

The equations we analyze take the form
\begin{equation} 
\label{e.pde}
- \nabla \cdot \left( D_p L(\nabla u(x),x) \right) = 0 \quad \mbox{in} \ U \subseteq \Rd, \quad d\geq 2, 
\end{equation}
 where the Lagrangian $L=L(p,x)$ is assumed to be uniformly convex and $C^{2,\gamma}$ in the variable~$p$. Of course, this equation is variational: $u\in H^1(U)$ is a solution of~\eqref{e.pde} if and only if it is a local minimizer of the integral functional 
\begin{equation*} \label{}
v \mapsto \int_U L(\nabla v(x),x) \,dx. 
\end{equation*}
We further assume that~$L$ is a stochastic object and that its law~$\P$ is $\Zd$--stationary and has a unit range of dependence (with respect to the variable~$x$). The goal is to understand the \emph{statistics} of the solutions, under the probability measure~$\P$, and on \emph{large length scales}, that is, the (``macroscopic'') domain $U$ is very large relative to (``microscopic'') unit scale, which is the correlation length scale of the coefficients. 

\smallskip

The principle of homogenization asserts that, in the regime in which the ratio of these two length scales is large, a solution of~\eqref{e.pde} is, with probability approaching one, close in a strong norm (relative to its size in the same norm) to a solution of a deterministic equation of the form
\begin{equation}
\label{e.pde.homog}
-\nabla \cdot \left( D_p\overline{L} \left( \nabla \uhom \right) \right)
= 0 \quad \mbox{in} \ U.
\end{equation}
Dal Maso and Modica~\cite{DM1,DM2} were the first to prove such a result for the equation~\eqref{e.pde} in this (and actually a much more general) setting. They realized that the variational structure of the equation provides a natural subadditive quantity, which has a $\P$--almost sure limit by the subadditive ergodic theorem, and that this limit implies a general homogenization result for the equation. 

\smallskip

We are interested here in quantitative results, in particular the speed of convergence to the homogenization limit. There has been a lot of recent interest in building a quantitative theory of homogenization for linear, uniformly elliptic equations, and there is now an essentially complete and optimal theory available in this simplest of settings: see~\cite{AKMbook,AKM,GNO,GO6}. While there has been some success in extending this theory to degenerate linear equations (see~\cite{AD,D}), quantitative homogenization estimates for nonlinear equations are relatively sparse: the only previous results appeared in~\cite{AM,AS}, which quantified the subadditive argument of~\cite{DM1,DM2} to obtain an estimate for the homogenization error which is at most a power of the ratio of the length scales (see Theorem~\ref{t.AS.homogenization} below for the precise statement). The paper~\cite{AS} also introduced the concept of a \emph{large-scale regularity theory} for random elliptic operators and, in particular, proved a large-scale $C^{0,1}$--type estimate for solutions of~\eqref{e.pde} (see Theorem~\ref{t.AS.regularity} below). Subsequently, some variations and extensions of this result were developed in~\cite{GNO2,AM,FO,AKM,AKMbook} in the linear setting.

\smallskip

To develop a more precise quantitative theory for  nonlinear equations, extending the results known in the linear setting---such as sharp exponents for the scaling of the homogenization error and a characterization of the scaling limit of solutions---what is needed is finer estimates on solutions and, more importantly, on \emph{differences} of two solutions (which are typically very close to each other), on all length scales larger than a multiple of the microscopic scale. For linear equations, since the differences of solutions are also solutions, the large-scale regularity already gives exactly the sort of information which is required. For a nonlinear equation such as~\eqref{e.pde}, the difference of two solutions~$u$ and~$v$ is the solution of the linear equation 
\begin{equation*}
-\nabla \cdot \left( \a(x)\nabla (u-v) \right) = 0,
\end{equation*}
with coefficients $\a(x)$ given by 
\begin{equation*}
\a(x) = \int_0^1 D^2_p L(\nabla u + t \nabla (v-u) ,x)\,dt. 
\end{equation*}
If $v$ is a small perturbation of~$u$, then this equation is very close to the \emph{linearization} of~\eqref{e.pde} around~$u$, namely
\begin{equation}
\label{e.linearization}
-\nabla \cdot \left( D^2_pL(\nabla u(x),x) \nabla w(x) \right) = 0.
\end{equation}
It is therefore very natural to consider the large-scale behavior of solutions of linearized equations of the form~\eqref{e.linearization}, where $u$ is a solution of~\eqref{e.pde}. 

\smallskip

In this paper, we show that the linearized equation~\eqref{e.linearization} around an arbitrary solution~$u$ of~\eqref{e.pde} homogenizes, with an algebraic rate of convergence, and that the homogenized equation for this linearized equation is the linearization of the homogenized equation~\eqref{e.pde.homog}  around the corresponding homogenized solution $\uhom$ (with the same boundary conditions as $u$), namely
\begin{equation}
\label{e.linearization.homog}
-\nabla \cdot \left( D^2\overline{L}(\nabla \uhom) \nabla \whom \right) = 0.
\end{equation}
In other words, \emph{homogenization and linearization commute}. The precise statement can be found in Theorem~\ref{t.linearization} below. This result yields much finer information regarding the differences of solutions of the original nonlinear equation and we expect it to play a crucial role in the future development of a quantitative theory of stochastic homogenization for nonlinear equations.  

\smallskip

As a first consequence, we show that it provides sufficient information about the differences of solutions to improve their regularity. Recall that, since the difference of two solutions solves a linear equation, it satisfies a~$C^{0,\beta}$ estimate for a tiny exponent~$\beta(d,\Lambda)>0$ as a consequence of the De Giorgi-Nash estimate. 
On the other hand, differences of solutions of the homogenized equation~\eqref{e.pde.homog} possess much better regularity: they satisfy at least a $C^{1,\beta}$ estimate in our setting, by the Schauder estimates. However, the quantitative estimate on the homogenization of the linearized equation implies that differences of solutions of~\eqref{e.pde} can be well-approximated, on large scales, by differences of solutions of~\eqref{e.pde.homog}. This allows us to obtain a large scale $C^{0,1}$--type estimate for differences of solutions of~\eqref{e.pde} by ``borrowing'' the better regularity of the homogenized equation. On a technical level, this is achieved via a Campanato-type excess decay argument very similar to the one introduced in~\cite{AS}. See Theorem~\ref{t.regularity.differences} below for the  statement. 

\smallskip

We expect the large-scale~$C^{0,1}$ estimate for differences to play a fundamental role in the development of quantitative homogenization estimates which are optimal in the scaling of the error. This estimate implies, for instance, a bound on the difference of two correctors (with sharp stochastic integrability): see~\eqref{e.yesdiffcorrectors}, below. Essentially, this is the key nonlinear ingredient that makes it possible to develop a complete quantitative theory analogous to the linear setting, as we will show in a forthcoming article. In fact, such results have appeared in a very recent preprint of Fischer and Neukamm~\cite{FN} (which was posted more than a year after the first version of the present paper), who relied crucially on a version of the large-scale~$C^{0,1}$ estimate for differences.

\smallskip

Our third result, stated in Theorem~\ref{t.C11estimate}, is a large-scale $C^{1,1}$ estimate for solutions of the nonlinear heterogeneous equation. It characterizes the set~$\L_1$ of solutions with at-most linear growth (it coincides with the set of first-order correctors) and asserts that an arbitrary solution on a finite domain can be approximated, up to a quadratic error in the ratio of the scales, by elements of~$\L_1$. That is, a solution of the nonlinear solution can be approximated by an element of~$\L_1$ with the same precision that a harmonic function can be approximated by an affine function. This result can be compared to that of Moser and Struwe~\cite{MS} in the case of periodic case, who proved the qualitative characterization of~$\L_1$ but not the quantitative statement of~$C^{1,1}$ regularity. The proof of the latter, even in the periodic case, relies crucially on the analysis of the linearized equation and the approximation of differences developed here and can thus be considered the first ``truly nonlinear'' large-scale regularity result in homogenization. We remark that obtaining a~$C^{1,\delta}$--type statement, where the factor $\left( \frac rR \right)^2$ in~\eqref{e.C11} is replaced, for a small exponent~$\delta>0$, by $\left( \frac rR \right)^{1+\delta}$, is much less difficult to obtain. The optimal quadratic scaling is a much more subtle issue. 

\smallskip

The last main result we state concerns the regularity of~$\overline{L}$ itself. It is easy to see from the definition of~$\overline{L}$ that it satisfies the same upper and lower bounds of uniform convexity that is assumed for~$L$ in the variable~$p$, and therefore~$\overline{L}\in C^{1,1}$. It is natural to expect that~$\overline{L}$ is as smooth as~$L$ is in the variable~$p$. It turns out however that proving more smoothness for~$\overline{L}$ is subtle and intractably tied to the large-scale regularity theory for differences of solutions. As a consequence of the large-scale $C^{0,1}$--type estimate we are able to show that, for each $\gamma\in (0,1]$, and under an additional assumption on the spatial regularity of $L$ (see~\eqref{e.extra.assump}), we have roughly that 
\begin{equation*}
L(\cdot,x)\in C^{2,\gamma} \implies \overline{L}\in C^{2,\gamma}.
\end{equation*}
See Theorem~\ref{t.Lreg} for the precise statement.  

\smallskip

The case of (non-random) Lagrangians $L=L(p,x)$ which are periodic in $x$ is a special case of our assumptions and the large-scale regularity results in this paper, even in this much simpler situation and in their qualitative form, are new. 

\smallskip

While the equation~\eqref{e.pde} considered here is variational (in the sense that the coefficients are the gradient $D_pL$ of a convex Lagrangian~$L$), our arguments 
can be extended, with only minor, mostly notational changes, to the more general equation
\begin{equation*}
-\nabla \cdot\left( \a\left( \nabla u(x),x \right) \right) = 0
\end{equation*}
where $p\mapsto \a(p,x)$ is a uniformly monotone map (but not necessarily the gradient of a uniformly convex function) which belongs to $C^{2,\gamma}$.
This is because, contrary to a widely-held belief, \emph{every} such divergence-form, uniformly monotone operator has a variational structure (see~\cite[Section 2]{AM} and~\cite[Chapter 10]{AKMbook}). Indeed, we note that the quantitative homogenization results of~\cite{AS} were extended in~\cite{AM} to the setting of general uniformly monotone maps \emph{without changing the variational structure of the arguments}. 

\smallskip

The ``commutability of homogenization and linearization'' has been previously considered in the works of M\"uller and Neukamm~\cite{MN} and Gloria and Neukamm~\cite{GN}. The results in these papers are, however, not directly related to ours as the notion of linearization considered there is very different from ours. In particular, they linearize around the zero function in the direction of a fixed function which is smoothly varying on the macroscopic scale (and which is not necessarily a solution), rather than linearize around a solution oscillating on the microscopic scale. In particular, their results do not give information on the differences of solutions. While making a revision of this article, we became aware of the recent work of Neukamm and Sch\"affner~\cite{NS} who study the commutativity of homogenization and linearization in the periodic setting, under a perturbative assumption, for possibly nonconvex energy functionals arising in linear elasticity. Their results are qualitative in nature and do not provide information regarding the differences of solutions or their regularity, but are definitely in the spirit of Theorem~\ref{t.linearization}. 

\subsection{Statement of the main results}
\label{ss.assump}

The parameters~$d\in\N$ with $d\geq 2$,  $\gamma\in (0,1]$ and $\mathsf{M}_0,\Lambda \in [1,\infty)$ are fixed throughout the paper. For short we denote 
\begin{equation*}
\data:= (d,\gamma,\mathsf{M_0},\Lambda).
\end{equation*}
We assume the Lagrangians~$L$ satisfy the following conditions:
\begin{enumerate}
\item[(L1)] $L:\Rd \times \Rd \to \R$ \ is a Carath\'eodory function which is measurable in $x$ and~$C^{2,\gamma}$ in~$p$. It is assumed to satisfy the bound 
\begin{equation*}
\esssup_{x\in\Rd} \left( \left| D_pL(0,x) \right| + \left\| D^2_pL(\cdot,x) \right\|_{C^{0,\gamma}(\Rd)} \right) \leq \mathsf{M}_0. 
\end{equation*}

\smallskip

\item[(L2)] $L$ is uniformly convex in $p$: for every $p\in\Rd$ and Lebesgue-a.e.~$x\in\Rd$, 
\begin{equation*} \label{}
I_d \leq D^2_p L(p,x) \leq \Lambda I_d.
\end{equation*}
\end{enumerate} 
We define $\Omega$ to be the set of all such functions:
\begin{equation*} \label{}
\Omega := \left\{ L \,:\,  \mbox{$L$ satisfies~(L1) and (L2)} \right\}.
\end{equation*}
Note that $\Omega$ depends on the fixed parameters~$(d,\gamma,\Lambda,\mathsf{M}_0)$. We endow $\Omega$ with the following family of~$\sigma$--algebras: for each Borel $U \subseteq \Rd$, define
\begin{multline*} \label{}
\F(U):= \mbox{the $\sigma$--algebra generated by the family of random variables}\\
L \mapsto \int_{U} L(p,x) \phi(x)\, dx, \quad p\in \Rd, \ \phi \in C^\infty_c(\Rd). 
\end{multline*}
The largest of these is denoted by $\F:= \F(\Rd)$. We also denote by $\overline{\Omega}(\gamma,\mathsf{M}_0)$ the set of Lagrangians~$L$ which satisfy (L1) and (L2) and do not depend on the variable~$x$. 

\smallskip

We assume that the law of the ``canonical Lagrangian''~$L$ is a probability measure~$\P$ on $(\Omega,\F)$ satisfying the following two assumptions:

\begin{enumerate}

\item[(P1)] $\P$ has a unit range of dependence: for all Borel subsets $U,V\subseteq \Rd$ such that $\dist(U,V) \geq 1$,
\begin{equation*} \label{}
\mbox{$\F(U)$ and $\F(V)$ are $\P$--independent.}
\end{equation*}

\item[(P2)] $\P$ is stationary with respect to $\Zd$--translations: for every $z\in \Zd$ and $E\in \F$,
\begin{equation*} \label{}
\P \left[ E \right] = \P \left[ T_z E \right],
\end{equation*}
where the translation group $\{T_z\}_{z\in\Zd}$ acts on $\Omega$ by $(T_zL)(p,x) = L(p,x+z)$.
\end{enumerate}
The expectation with respect to~$\P$ is denoted by~$\E$. 

\smallskip

Since we are often concerned with showing that the fluctuations of our random variables are small, the following notation is convenient: for every $\sigma \in(0,\infty)$, $\theta>0$ and random variable~$X$ on $\Omega$, we write 
\begin{equation*}
X \leq \O_\sigma\left( \theta \right) 
\iff 
\E \left[ \exp\left( \left( \frac{X_+}{\theta} \right)^\sigma \right)  \right] \leq 2. 
\end{equation*}

The result of Dal Maso and Modica~\cite{DM1,DM2}, under more general assumptions than the ones here, implies that local minimizers of the energy functional for~$L$ converge, on large scales, $\P$--a.s.,~to those of the energy functional for~$\overline{L}$, for some deterministic and constant~$\overline{L}$. This qualitative homogenization result was quantified in~\cite{AS}, under the finite range of dependence assumption, a version of which we recall below in Theorem~\ref{t.AS.homogenization}. Our assumptions are still stronger than the ones in~\cite{AS} because we require that $L$ be $C^{2,\gamma}$ in the variable~$p$, uniformly in~$x$, which is necessary to study the linearized equations. Even without this assumption, it is fairly easy to show that the homogenized Lagrangian~$\overline{L}$ must inherit the uniform convexity condition~(L2) and is therefore~$C^{1,1}$. It is less obvious that~$\overline{L}$ is necessarily~$C^2$, even under the uniform $C^{2,\gamma}$ assumption. We show in Proposition~\ref{p.nu.C2beta} below that in fact~$\overline{L}\in C^{2,\beta}$ for an exponent~$\beta(\data)>0$ (which \emph{a~priori} may be smaller than~$\gamma$). In fact, $\overline{L} \in \overline{\Omega}(\beta,C)$ for a constant $C(\data)<\infty$ (which may be larger than~$\mathsf{M}_0$).

\smallskip

We next present the first main result of the paper, which is a quantitative statement concerning the commutability of linearization and homogenization. The statement should be compared with that of Theorem~\ref{t.AS.homogenization}, below. We remark that its statement, like those of Theorems~\ref{t.regularity.differences} and~\ref{t.C11estimate} below, is new even in the  case of deterministic equations with periodic coefficients.

\begin{theorem}[Quantitative homogenization of linearized equations]
\label{t.linearization}
\emph{}\\
Let $\sigma \in (0,d)$, $\delta\in \left(0,\tfrac 12\right]$, $\mathsf{M}\in [1,\infty)$ and $U\subseteq B_1$ be a bounded Lipschitz domain. Then there exist an exponent~$\alpha(U,\data)>0$, a constant~$C(\sigma,\delta,\mathsf{M},U,\data)<\infty$ and a random variable~$\X$ satisfying the bound
\begin{equation}
\label{e.size.X}
\X = \O_1\left(C \right)
\end{equation}
such that the following statement holds. 
For each~$\ep\in (0,1]$, pair~$u^\ep,\uhom \in W^{1,2+\delta}(U)$ satisfying
\begin{equation*}
\left\{ 
\begin{aligned}
& -\nabla \cdot \left( D_pL\left( \nabla u^\ep,\tfrac x\ep \right) \right) = 0  & \mbox{in} & \ U, \\
&  -\nabla \cdot \left( D_p\overline{L} \left( \nabla \uhom \right) \right)  = 0 & \mbox{in} & \ U, \\
& u^\ep - \uhom \in H^1_0(U), \\
& \left\| \nabla \uhom \right\|_{L^{2+\delta}(U)} \leq \mathsf{M},
\end{aligned}
\right.
\end{equation*}
function~$f\in W^{1,2+\delta}(U)$ and pair~$w^\ep, \whom \in H^1(U)$ satisfying the corresponding Dirichlet problems for the linearized equations
\begin{equation*}
\left\{ 
\begin{aligned}
& -\nabla \cdot \left( D_p^2L\left( \nabla u^\ep,\tfrac x\ep \right) \nabla w^\ep \right) = 0  & \mbox{in} & \ U, \\
&  -\nabla \cdot \left( D_p^2\overline{L} \left( \nabla \uhom \right) \nabla \whom \right)  = 0 & \mbox{in} & \ U, \\
& w^\ep, \whom \in f + H^1_0(U),
\end{aligned}
\right.
\end{equation*}
we have the estimate
\begin{multline}
\label{e.DPestimates}
\left\| \nabla w^\ep - \nabla \whom  \right\|_{H^{-1}(U)} 
+ \left\| D_p^2L\left( \nabla u^\ep,\tfrac \cdot\ep \right) \nabla w^\ep - D_p^2\overline{L} \left( \nabla \uhom \right) \nabla \whom   \right\|_{H^{-1}(U)}
\\
\leq
C \left\| \nabla f \right\|_{L^{2+\delta}(U)} \left( 
\ep^{\alpha(d-\sigma)}
+\X\ep^\sigma
\right).  
\end{multline}
\end{theorem}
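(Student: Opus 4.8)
The equation satisfied by $w^\ep$ is linear, with coefficient field $\a^\ep(x):=D_p^2 L(\nabla u^\ep(x),\tfrac x\ep)$, and the essential difficulty is that $\a^\ep$ is not stationary: it depends on the nonlinear solution $u^\ep$. The plan is to reduce the statement to the quantitative homogenization theory for \emph{linear} equations with stationary coefficients---available, with the optimal $\O_1$ stochastic integrability for the error, from the linear theory of~\cite{AKMbook}---by freezing the slope on mesoscopic cubes. First I would fix a mesoscopic scale $\ell:=\ep^{1-\smallpara}$, with $\smallpara=\smallpara(\sigma,\delta,U,\data)\in(0,1)$ to be chosen at the end, cover a slightly shrunken copy of $U$ by disjoint cubes $\{Q_i\}$ of side length $\ell$, and keep a boundary layer of width of order $\ell$. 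On each $Q_i$ let $\bar p_i$ be the average of $\nabla\uhom$ over $Q_i$. The key point is that $y\mapsto D_p^2 L(\bar p_i,y)$ \emph{is} a $\Zd$--stationary, unit-range, uniformly elliptic coefficient field, with ellipticity constants depending only on $\data$, and its homogenized matrix is exactly $D^2\overline{L}(\bar p_i)$; this last identification is the commutativity of homogenization and linearization at the level of the homogenized coefficient, and it is part of---or an immediate consequence of---the analysis underlying~\pref{nu.C2beta} (in brief: the linearized correctors at slope $p$ are the $p$--derivatives $\partial_{p_k}\phi_p$ of the nonlinear correctors $\phi_p$, and differentiating the corrector equation in $p$ identifies the homogenized matrix of the linearized operator with $D^2\overline{L}(p)$). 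One then compares, on each $Q_i$: the field $\a^\ep$ with the frozen field $D_p^2 L(\bar p_i,\tfrac\cdot\ep)$; the deterministic but $x$--dependent matrix $D^2\overline{L}(\nabla\uhom)$ with the constant $D^2\overline{L}(\bar p_i)$; and then one invokes the linear theory cube by cube.

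\textbf{The freezing estimate.}
To control the error introduced by freezing, one needs $\nabla u^\ep\approx\bar p_i$ on $Q_i$ in a sufficiently strong norm, and this rests on three ingredients. First, the quantitative homogenization of the nonlinear equation,~\tref{AS.homogenization}, which makes $\nabla u^\ep-\nabla\uhom$ small in a weak norm with an error of the form $\O_\sigma(C\ep^{\alpha_0})$; this is upgraded to an $L^q$--bound on each mesoscopic cube by interpolation with the large-scale $C^{0,1}$ estimate~\tref{AS.regularity}, which prevents concentration of $\nabla u^\ep$ on scales larger than a multiple of $\ep$, together with Meyers' higher integrability. Second, interior Schauder estimates for $\uhom$, legitimate because $\overline{L}\in C^{2,\beta}$, which give $\|\nabla\uhom-\bar p_i\|_{L^\infty(Q_i)}\leq C\ell^{\beta}$ with $C$ depending on $\mathsf{M}$ and $\dist(Q_i,\partial U)$, and in the same way control $\|D^2\overline{L}(\nabla\uhom)-D^2\overline{L}(\bar p_i)\|_{L^\infty(Q_i)}$. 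Third, the $C^{0,\gamma}$--regularity of $D_p^2 L$ in $p$ from~(L1), which converts slope-closeness into coefficient-closeness at the cost of a power $\gamma$. Combining these, $\|\a^\ep-D_p^2 L(\bar p_i,\tfrac\cdot\ep)\|_{L^q(Q_i)}$ is bounded by $(C\ell^{\beta}+\X_0\ep^{\alpha_0}\ell^{-\theta})^{\gamma}$ for a suitable $\theta$ and an $\X_0=\O_1(C)$, and $\smallpara$ is then chosen so that this quantity, together with the homogenization rate of the frozen field on scale $\ell$, still leaves a genuine power of $\ep$.

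\textbf{Cube-by-cube homogenization and patching.}
On each $Q_i$ the frozen coefficient field is $\Zd$--stationary, unit-range and uniformly elliptic, so the linear quantitative homogenization theory furnishes its scalar and flux correctors---equivalently, the functions $\partial_{p_k}\phi_{\bar p_i}$ described above---with the optimal sublinearity bounds and $\O_1$ stochastic integrability, and with homogenized matrix $D^2\overline{L}(\bar p_i)$. Using a partition of unity subordinate to $\{Q_i\}$, one glues the rescaled frozen scalar and flux correctors into a global two-scale expansion of $\whom$ and estimates $\nabla w^\ep-\nabla\whom$ and $\a^\ep\nabla w^\ep-D^2\overline{L}(\nabla\uhom)\nabla\whom$ in $H^{-1}(U)$ by the standard energy argument: after subtracting the two-scale corrections, the curl-free part of the gradient difference and the divergence-free part of the flux difference are $L^2$--bounded fields oscillating at scale $\ep$, hence small in $H^{-1}(U)$. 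The bounds $\|\nabla w^\ep\|_{L^{2+\delta}(U)}+\|\nabla\whom\|_{L^{2+\delta}(U)}\leq C\|\nabla f\|_{L^{2+\delta}(U)}$ follow from energy estimates and Meyers---this is where the norm $\|\nabla f\|_{L^{2+\delta}(U)}$ on the right-hand side of~\eref{DPestimates} enters---and the boundary layer contributes a term of order $\ell^{1/2}\|\nabla f\|_{L^2(U)}$, which is absorbed. Summing the per-cube errors over the $\sim\ell^{-d}=\ep^{-(1-\smallpara)d}$ cubes, using that distant cubes see independent coefficients together with the restriction $\sigma<d$ and the standard lemma for sums of $\O_\sigma$ random variables, collapses the stochastic part into a single $\X=\O_1(C)$ multiplying $\ep^\sigma$, as in~\eref{size.X}, while optimizing $\smallpara$ turns the deterministic part into $C\ep^{\alpha(d-\sigma)}$ and fixes the exponent $\alpha=\alpha(U,\data)>0$.

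\textbf{The main obstacle.}
I expect the crux to be the interplay between the freezing step and the two-scale expansion. Because $D_p^2 L$ is only Hölder in $p$, every slope perturbation costs a factor $\gamma$; and, more seriously, one must quantify the dependence of the \emph{correctors}, not just the coefficients, on the frozen slope. Since the relevant correctors are the $p$--derivatives of the nonlinear correctors, what is needed is not merely their quantitative sublinearity but also a modulus of continuity in the parameter $p$---precisely the input provided by the estimates behind~\pref{nu.C2beta}. Balancing the mesoscale is also delicate: enlarging $\ell$ improves the per-cube homogenization rate but worsens the freezing error $\ell^{\beta}$ and increases the number of cubes over which the stochastic errors must be aggregated, so these effects have to be traded off against one another in order to extract a positive power of $\ep$ while retaining the claimed $\O_1$ stochastic integrability.
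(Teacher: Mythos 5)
Your overall architecture---mesoscopic freezing of the slope, local stationarity, cube-by-cube linear homogenization, a glued two-scale expansion, and a final balancing of the mesoscale against the number of cubes---is the same as the paper's. But the freezing step, which you correctly identify as the crux, is carried out incorrectly, and the error is fatal rather than technical. You replace the coefficient $D_p^2L(\nabla u^\ep(x),\tfrac x\ep)$ on $Q_i$ by $D_p^2L(\bar p_i,\tfrac x\ep)$ with $\bar p_i$ the \emph{constant} average slope of $\uhom$. This is an $O(1)$ perturbation of the coefficients, not a small one: $\nabla u^\ep$ oscillates at scale $\ep$ with amplitude of order one around $\nabla \uhom$ (it contains the corrector gradient $\nabla\phi_{\bar p_i}(\tfrac x\ep)$), so $\nabla u^\ep-\nabla\uhom$ converges to zero only weakly. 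The proposed upgrade of the $H^{-1}$ bound from Theorem~\ref{t.AS.homogenization} to an $L^q$ bound on mesoscopic cubes via the large-scale $C^{0,1}$ estimate and Meyers cannot work; those estimates bound $\|\nabla u^\ep\|_{L^q(Q_i)}$ but do not make $\|\nabla u^\ep-\bar p_i\|_{L^q(Q_i)}$ small. Relatedly, the homogenized matrix of the field $y\mapsto D_p^2L(p,y)$ at a fixed slope $p$ is \emph{not} $D^2\overline{L}(p)$ in general. Your own parenthetical justification gives this away: differentiating the corrector equation in $p$ produces the operator with coefficients $D_p^2L\bigl(p+\nabla\phi_p(y),y\bigr)$---the Hessian evaluated at the gradient of the \emph{corrected} solution---and it is that field, not $D_p^2L(p,\cdot)$, whose homogenized matrix is $D^2\overline{L}(p)$. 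Linearizing around the macroscopic slope rather than around the oscillating solution is precisely the (different, and here insufficient) notion of linearization used by M\"uller--Neukamm and Gloria--Neukamm that the paper distinguishes itself from.

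The paper's fix is to define, on each small mesoscopic cube $z'+\cu_k$, the field $\a_\xi(x)=D_p^2L\bigl(\nabla v(x,z'+\cu_k,\xi),x\bigr)$ where $v(\cdot,z'+\cu_k,\xi)$ is the local solution of the \emph{nonlinear} equation with affine boundary data $\ell_\xi$, and then to set $\xi=(\nabla\uhom)_{z+\cu_l}$ on larger mesoscopic cubes. The comparison $\nabla u^\ep\approx\nabla v$ is then legitimate: since $u^\ep$ and $v$ solve the same nonlinear equation, the Caccioppoli inequality converts the $L^2$-closeness of $u^\ep$, $\uhom$, $\ell_\xi$ and $v$ (supplied by Theorem~\ref{t.AS.homogenization} and~\eqref{e.ellztildevz}) into closeness of the gradients, because $v$ carries the same microscopic oscillations as $u^\ep$. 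The identification of the local homogenized matrix $\ahom_\xi$ with $D^2\overline{L}(\xi)$ then requires a separate argument (Lemma~\ref{l.localahom}), resting on the identity relating the energy of the linearized cell problem to $D^2_\xi\nu(\cu_j,\xi)$ and on the $C^{2,\beta}$ convergence of $\nu(\cu_j,\cdot)$ to $\overline{L}$ (Lemma~\ref{l.nuconvtoLinC2}); it is not available off the shelf from the linear theory. If you replace your frozen field by the paper's $\a_\xi$ and supply these two missing ingredients, the remainder of your outline (gluing finite-volume correctors with a partition of unity, mollifying $\nabla\whom$, the energy argument in $H^{-1}$, and the aggregation of the stochastic errors) goes through essentially as you describe.
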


Recall that $H^{-1}(U)$ is defined as the dual space to $H^1_0(U)$ and that a sequence of $L^2(U)$ functions converges weakly in $L^2(U)$ if and only if they converge strongly in $H^{-1}(U)$  (see Section~\ref{ss.notation}). Therefore the inequality~\eqref{e.DPestimates} should be regarded as a quantification of the weak convergence of the gradient~$\nabla w^\ep$ and flux~$D^2_pL(\nabla u^\ep,x)\nabla w^\ep$ to their homogenized limits. 
Of course, the left side of~\eqref{e.DPestimates} also controls the strong $L^2$ norm of the homogenization error, in view of the following functional inequality (see~\cite[Lemma 1.13]{AKMbook}): there exists~$C(U,d)<\infty$ such that, for every $v \in H^1_0(U)$,
\begin{equation*}
\left\| v \right\|_{L^2(U)} 
\leq 
C
\left\| \nabla v \right\|_{H^{-1}(U)}. 
\end{equation*}
This $L^2$ estimate for the homogenization error  can be upgraded to an estimate in~$L^\infty$ using the De Giorgi-Nash H\"older estimate and an interpolation argument (see the proof of~\cite[Corollary 4.2]{AS}). 

\smallskip

The estimate~\eqref{e.DPestimates} can be expressed in a more familiar way by using Chebyshev's inequality in combination with~\eqref{e.size.X} to obtain that, for each $\sigma < d$, there exist~$\alpha$ and $C<\infty$, as in the statement of the theorem, such that
\begin{equation}
\label{e.Cheb}
\P \left[ \left\| \nabla w^\ep - \nabla \whom  \right\|_{H^{-1}(U)}  > C \ep^{\alpha(d-\sigma)} \left\| \nabla f \right\|_{L^{2+\delta}(U)} \right] \leq C \exp \left( -C^{-1}\ep^{-\sigma} \right),
\end{equation}
with a similar bound holding of course for the fluxes. While the small exponent~$\alpha$ is not explicit and thus this estimate is evidently not optimal in terms of the scaling of the homogenization error, it is optimal in terms of stochastic integrability. Indeed, it is not possible to prove an estimate like~\eqref{e.Cheb} for any exponent $\sigma>d$. One reason for writing the estimate as we have in~\eqref{e.DPestimates}, with the explicit random variable~$\X$, is that it emphasizes its \emph{uniformity} in~$u$, $u^\ep$, $f$, which is important in view applications. In particular, we are able to linearize around an arbitrary solution and solve the linearized equation with an arbitrary Dirichlet condition. While we are interested here in quantitative statements, we remark that the proof of Theorem~\ref{t.linearization} can be modified to give a qualitative homogenization result for the linearized equation under more general, qualitative assumptions (e.g., $\P$ is only stationary and ergodic). 

\smallskip

One of the difficulties encountered in proving  Theorem~\ref{t.linearization} is due to the fact that the coefficients in the linearized equation are not stationary and do not have a finite range of dependence, since~$\nabla u^\ep$ has neither of these properties. It is therefore necessary to first establish that the $D^2_pL\left(\nabla u^\ep(x),\tfrac x\ep \right)$ can be approximated by a matrix-valued random field which is locally stationary and has a finite (mesoscopic) range of dependence. We then  show that the corresponding (local) homogenized matrix is close to $D^2\overline{L}(\nabla u(x))$ and adapt the classical two-scale expansion argument to obtain the theorem. The proof of Theorem~\ref{t.linearization} is given in Section~\ref{s.linearization}.

\smallskip

Our second main result is a large-scale~$C^{0,1}$-type estimate for \emph{differences} of solutions. This can be compared to~\cite[Theorem 1.2]{AS} which proved a similar bounds for solutions. 
Since the difference of two solutions is the solution of a linear equation, we therefore have a priori~$C^{0,\alpha}$ bounds for differences as a consequence of the De Giorgi-Nash estimate. This H\"older regularity with a small exponent is the best \emph{deterministic} bound we can expect to hold on large scales, due to the oscillatory nature of our Lagrangians. However, we show that this estimates can be improved to a $C^{0,1}$-type bound on scales larger than a \emph{random} scale~$\X$ which is finite $\P$--a.s. In fact, its stochastic moments with respect to~$\P$ are very strongly controlled. 

\smallskip

In the following statement and throughout the rest of the paper, for each~$U\subseteq\Rd$ with $|U|<\infty$, we denote
\begin{equation*}
\left\| f \right\|_{\underline{L}^q(U)} 
:= \left( \fint_U \left|f(x) \right|^q\,dx \right)^{\frac1q} 
= \left| U \right|^{-\frac1q} \left\| f \right\|_{L^q(U)}. 
\end{equation*}

\begin{theorem}
[{Large-scale $C^{0,1}$ estimate for differences of solutions}]
\label{t.regularity.differences}
\emph{}\\
Fix $\sigma \in (0,d)$ and $\mathsf{M}\in [1,\infty)$. There exist~$C(\sigma,\mathsf{M},\data) <\infty$ and a random variable~$\X$ satisfying
\begin{equation*}
\X \leq \O_\sigma\left( C \right)
\end{equation*}
such that the following holds. For every $R\geq 2\X$ and $u,v\in H^1(B_R)$ satisfying 
\begin{equation*}
\left\{
\begin{aligned}
& -\nabla \cdot \left( D_pL(\nabla u,x) \right) = 0 & \mbox{in} & \ B_R, \\
& -\nabla \cdot \left( D_pL(\nabla v,x) \right) = 0 & \mbox{in} & \ B_R, \\
& \left\| \nabla u \right\|_{\underline{L}^2(B_R)}, \, 
\left\| \nabla v \right\|_{\underline{L}^2(B_R)} \leq \mathsf{M},
\end{aligned}
\right.
\end{equation*}
and every $ r \in \left[ \X , \tfrac 12 R \right]$, we have the estimate
\begin{equation*}
\left\| \nabla (u-v) \right\|_{\underline{L}^2(B_r)} 
\leq 
 \frac CR \left\| u-v \right\|_{\underline{L}^2(B_R)}. 
\end{equation*}
\end{theorem}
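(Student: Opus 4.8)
The plan is to prove this by a Campanato-type excess-decay iteration for $w:=u-v$, in which at each scale we borrow the interior $C^{1,\beta}$ (Schauder) regularity of an associated homogenized problem, the coupling between the two scales being supplied by the quantitative homogenization of linearized equations, Theorem~\ref{t.linearization}.

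\emph{Setup.} Since $u,v$ solve~\eqref{e.pde}, the difference $w$ solves the linear, uniformly elliptic equation $-\nabla\cdot(\a\nabla w)=0$ in $B_R$ with $\a(x):=\int_0^1 D^2_pL\big((1-t)\nabla u(x)+t\nabla v(x),x\big)\,dt$. Fix $\delta=\delta(\data)\in(0,\tfrac12]$ smaller than the higher-integrability (Meyers--Gehring) exponents of this equation, of~\eqref{e.pde}, and of~\eqref{e.pde.homog}. By the large-scale $C^{0,1}$ estimate for solutions (Theorem~\ref{t.AS.regularity}) together with that self-improvement, there is a minimal scale $\X_1=\O_\sigma(C)$ above which $\|\nabla u\|_{\underline{L}^{2+\delta}(B_r)}+\|\nabla v\|_{\underline{L}^{2+\delta}(B_r)}\le C\mathsf{M}$. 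For dyadic $r\in[\X_1,R]$ put
\[
\Phi(r):=r^{-1}\inf_{\ell\ \mathrm{affine}}\|w-\ell\|_{\underline{L}^2(B_r)},\qquad \Psi(r):=\Phi(r)+|(\nabla w)_{B_r}|.
\]
The Caccioppoli inequality gives $\|\nabla w\|_{\underline{L}^2(B_r)}\le C\Psi(2r)$ whenever $2r\le R$, and $\Psi(R/4)\le CR^{-1}\|w\|_{\underline{L}^2(B_R)}$, while for $r\in(R/4,R/2]$ the conclusion follows at once from Caccioppoli on $B_R$; so it suffices to produce $\X=\O_\sigma(C)\ge\X_1$ with $\Psi(r)\le C\Psi(R/4)$ for all dyadic $r\in[\X,R/4]$.

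\emph{One-step approximation (the crux).} Fix a dyadic $r\in[\X_1,R/4]$. For $s\in[0,1]$ let $u^{(s)}_r$ be the solution of~\eqref{e.pde} in $B_r$ with boundary data $(1-s)u+sv$, so that $u^{(0)}_r=u$, $u^{(1)}_r=v$, while $w^{(s)}_r:=\partial_s u^{(s)}_r$ solves the linearized equation around $u^{(s)}_r$ with boundary data $v-u$; let $\uhom{}^{(s)}_r$, $\whom{}^{(s)}_r$ be the corresponding objects for~\eqref{e.pde.homog}. Then $w=-\int_0^1 w^{(s)}_r\,ds$ and $\whom_r:=\uhom{}^{(0)}_r-\uhom{}^{(1)}_r=-\int_0^1\whom{}^{(s)}_r\,ds$ in $B_r$ (the difference of the homogenized solutions matching $u$ and $v$ on $\partial B_r$). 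Rescaling $B_r$ to $B_1$ so that gradients are preserved puts each $w^{(s)}_r$ into the framework of Theorem~\ref{t.linearization} with $\ep=1/r$ and with $\data$ unchanged; applying that theorem with the \emph{same} random variable $\X_0=\O_1(C)$ for every $s$ (its bound being uniform in the solution linearized around) and integrating in $s$ yields
\[
r^{-1}\|w-\whom_r\|_{\underline{L}^2(B_r)}\le C\,\|\nabla w\|_{\underline{L}^2(B_{2r})}\big(r^{-\alpha(d-\sigma)}+\X_0\,r^{-\sigma}\big),
\]
where we used Meyers to pass from $\|\nabla w\|_{\underline{L}^{2+\delta}(B_r)}$ to $\|\nabla w\|_{\underline{L}^2(B_{2r})}$ and the functional inequality $\|v\|_{L^2}\le C\|\nabla v\|_{H^{-1}}$. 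The decisive feature is that the error scales with $\|\nabla w\|$ rather than with $\|\nabla u\|\sim\mathsf{M}$: this is exactly the uniformity provided by Theorem~\ref{t.linearization}, and it is the reason why invoking the nonlinear homogenization estimate (Theorem~\ref{t.AS.homogenization}) separately for $u$ and for $v$---which would only give an error of order $\mathsf{M}$---is hopeless here. I expect this step, together with the fact that homogenization controls only the \emph{weak} convergence of $\nabla w$ (so that the iteration must be run at the level of $L^2$-closeness of functions, not of gradients), to be the main point.

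\emph{Schauder and iteration.} Fix a small deterministic $\eta=\eta(\sigma,\mathsf{M},\data)>0$ and let $\X\ge\X_1$ be the smallest dyadic scale above which $r^{-\alpha(d-\sigma)}+\X_0 r^{-\sigma}\le\eta$; since $\X_0=\O_1(C)$ and $\sigma<d$, a union bound over dyadic scales shows $\X=\O_\sigma(C)$, which is precisely the mechanism turning the $\O_1$-bound of Theorem~\ref{t.linearization} into the claimed optimal $\O_\sigma$-bound. For $r\ge\X$, $\whom_r$ is a difference of two solutions of~\eqref{e.pde.homog}, whose Lagrangian $\Lhom$ is uniformly convex and $C^{2,\beta}$ with $\beta(\data)>0$ (Proposition~\ref{p.nu.C2beta}); hence $\whom_r$ solves a linear equation with $C^{0,\beta}$ coefficients, and interior Schauder estimates give $\whom_r\in C^{1,\beta}(B_{r/2})$ with $r^\beta[\nabla\whom_r]_{C^{0,\beta}(B_{r/2})}\le C\|\nabla\whom_r\|_{\underline{L}^2(B_r)}\le C\|\nabla w\|_{\underline{L}^2(B_r)}$, the last inequality being the energy estimate for the homogenized linearized equation with data $v-u$. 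Comparing $w$ on $B_{\theta r}$ to the first-order Taylor polynomial of $\whom_r$ there and inserting the displayed estimate gives a recursion of the form
\[
\Phi(\theta r)\le C_0\,\theta^{-1-d/2}\,\eta\,\Psi(4r)+C_1\,\theta^{\beta}\,\Psi(2r)\qquad(\X\le r\le R/4).
\]
Choosing $\theta=\theta(\data)$ with $C_1\theta^\beta\le\tfrac14$ and then $\eta$ with $C_0\theta^{-1-d/2}\eta\le\tfrac14$, a standard iteration of this inequality downward from $R/4$ to $\X$---controlling $|(\nabla w)_{B_r}|$ along the way by the telescoping sum $\sum_j\Phi(2^{-j}R)$, exactly as in~\cite{AS,AKMbook}---yields $\sup_{\X\le r\le R/4}\Psi(r)\le C\,\Psi(R/4)\le CR^{-1}\|w\|_{\underline{L}^2(B_R)}$, which is the claim.
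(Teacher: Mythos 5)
Your ``one-step approximation'' is a genuinely different---and rather elegant---route to the key approximation result. The paper's Proposition~\ref{p.differences.error} proceeds by a dichotomy: if $\|u-v\|$ is comparable to $\ep_0\mathsf{M}$ it applies the nonlinear homogenization estimate to $u$ and $v$ separately, and otherwise it linearizes once around $u$ via Lemma~\ref{l.deterministic.linearization} (paying a superlinear error $\|\nabla f\|^{1+\beta}$, which is why the smallness assumption is needed) and then invokes Theorem~\ref{t.linearization}; the two regimes are balanced by choosing $\ep_0=R^{-\alpha_1(d-\sigma)/2}$. Your homotopy representation $u-v=-\int_0^1 w^{(s)}_r\,ds$ is exact (it follows by applying Lemma~\ref{l.deterministic.linearization} on increments of length $h$ and letting $h\to0$), so it removes the superlinear error and the case analysis entirely, and it correctly exploits the uniformity of the random variable in Theorem~\ref{t.linearization} over the solution being linearized around and over the boundary data. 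This part of the proposal is sound and arguably cleaner than the paper's argument.

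The iteration step, however, has a genuine gap. Your recursion reads $\Phi(\theta r)\le C_0\theta^{-1-d/2}\eta\,\Psi(4r)+C_1\theta^\beta\Psi(2r)$, where the decay term multiplies $\Psi=\Phi+|(\nabla w)_{B_r}|$ rather than the excess $\Phi$. Since $|(\nabla w)_{B_r}|$ does not decay along the iteration, this only yields $\sup\Phi\le\tfrac12\sup\Psi$, and the telescoping bound $|(\nabla w)_{B_r}|\le|(\nabla w)_{B_{R/4}}|+C\sum_j\Phi(2^{-j}R)$ then produces a factor $\log(R/\X)\sup\Psi$ that cannot be reabsorbed. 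The source of the problem is the Schauder estimate you invoke, $r^\beta[\nabla\whom_r]_{C^{0,\beta}(B_{r/2})}\le C\|\nabla\whom_r\|_{\underline{L}^2(B_r)}$: its constant depends on $r^\beta[\hat\a]_{C^{0,\beta}}$, where $\hat\a=\int_0^1D^2\overline{L}(t\nabla\uhom^{(0)}_r+(1-t)\nabla\uhom^{(1)}_r)\,dt$, and this seminorm is of order $\mathsf{M}^\gamma$---bounded but not small---so after subtracting the best affine approximation one is left with a term $C\mathsf{M}^\gamma|\nabla\ell|\sim\Psi$ with an $O(1)$ coefficient. What is needed instead is the excess-decay form of Proposition~\ref{p.C1beta.differences.Lbar} (estimate~\eqref{e.C1gamma.Lbardiff}): the first term on the right must be $C\theta^2\inf_\ell\|\whom_r-\ell\|$, and the perturbation must carry a prefactor $\overline{D}(r)^\gamma$, where $\overline{D}(r)$ is the \emph{excess} of $\uhom^{(0)}_r,\uhom^{(1)}_r$ at scale $r$ (this controls the oscillation of $\hat\a$ via~\eqref{e.C1alpha.Lbar}). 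The smallness of $\overline{D}(r)$ for $\X\le r\le\ep_1R$ is not free: it comes from the mesoscopic $C^{1,\alpha}$ excess-decay estimate for the nonlinear solutions $u,v$ (the second estimate of Theorem~\ref{t.AS.regularity}, used in the paper at~\eqref{e.u.C11applied0}), an ingredient entirely absent from your proposal. With that estimate in hand, the perturbation coefficient becomes $C(r/R+r^{-(d-\sigma)})^{\alpha_1\beta}$, which is summable over dyadic scales, and the iteration closes as in~\eqref{e.Eonesum}.
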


An immediate but very important corollary of Theorem~\ref{t.regularity.differences} is a gradient estimate on the difference of two first-order correctors. Recall that, for each $\xi\in\Rd$, the first-order correctors $\phi_\xi$ is characterized uniquely (up to an additive constant) as the solution of
\begin{equation*}
\left\{ 
\begin{aligned}
& -\nabla \cdot \left( D_pL(\xi + \nabla \phi_\xi(x),x) \right) = 0 \quad \mbox{in}  \ \Rd, \\
& \nabla \phi_\xi \ \  \mbox{is $\Zd$--stationary} 
\quad \mbox{and} \quad
\E\left[ \fint_{[0,1]^d} \nabla \phi_\xi\right] = 0. 
\end{aligned} 
\right. 
\end{equation*}
For each $\xi_1,\xi_2\in\Rd$, the ergodic theorem gives that, $\P$--a.s., we have
\begin{align*}
\limsup_{R\to \infty} 
\frac1{R^2} \fint_{B_R} \left| \left( \xi_1\cdot x + \phi_{\xi_1}(x) \right) - \left( \xi_2\cdot x + \phi_{\xi_2}(x)\right)\right|^2 \,dx
&
= 
\fint_{B_1} \left| (\xi_1-\xi_2) \cdot x \right|^2
\\ &  
\leq \left| \xi_1 - \xi_2 \right|^2.
\end{align*}
Applying Theorem~\ref{t.regularity.differences} to $u(x):= \xi_1\cdot x+\phi_{\xi_1}(x)$ and $v(x):=\xi_2\cdot x+\phi_{\xi_2}(x)$ yields
\begin{equation*}
\left\| \nabla \phi_{\xi_1} - \nabla \phi_{\xi_2} \right\|_{\underline{L}^2(B_\X)} 
\leq 
\left| \xi_1-\xi_2 \right| 
+ 
\left\| u - v \right\|_{\underline{L}^2(B_\X)} 
\leq 
C \inf_{R \geq \X} \frac1R \left\| u-v \right\|_{\underline{L}^2(B_R)}
\leq 
C \left| \xi_1-\xi_2 \right|.
\end{equation*}
Giving up the volume factor, we then get 
\begin{equation}
\left\| \nabla \phi_{\xi_1} - \nabla \phi_{\xi_2} \right\|_{\underline{L}^2([0,1]^d)} 
\leq C \X^{\frac d2}  \left\| \nabla \phi_{\xi_1} - \nabla \phi_{\xi_2} \right\|_{\underline{L}^2(B_\X)}. 
\end{equation}
This is a very strong estimate. Note that if the coefficients are assumed to be H\"older continuous on the unit scale, then one can upgrade this $L^2$-type bound to a pointwise bound with the same right side. For each $s\in (0,2)$, the minimal scale satisfies $\X = \O_{\frac{sd}{2}} (C)$, and we deduce in particular that
\begin{equation}
\label{e.yesdiffcorrectors}
\E \left[
\exp\left( \left\| \nabla \phi_{\xi_1} - \nabla \phi_{\xi_2} \right\|_{\underline{L}^2([0,1]^d)}^{s}  \right)
\right]
\leq 
\E \left[ \exp\left( \X^{\frac{ds}{2}} \right)\right]
\leq C. 
\end{equation}
This kind of estimate on the difference of first-order correctors is exactly what is needed to adapt the optimal homogenization estimates proved for linear equations to the nonlinear setting. Note that the (almost) Gaussian-type  stochastic integrability of the estimate~\eqref{e.yesdiffcorrectors} is (almost) optimal. 

\smallskip

The proof of Theorem~\ref{t.regularity.differences}, which is presented in Section~\ref{s.regularity.differences}, follows a similar idea to the one of~\cite[Theorem 1.2]{AS}. We first obtain an algebraic error estimate for differences of solutions---showing that they can be well-approximated by differences of solutions to the homogenized equation---by interpolating the homogenization error estimates for linearized equations (Theorem~\ref{t.linearization} above) with the ones for the original nonlinear equation (\cite[Theorem 1.2]{AS}, restated below in Theorem~\ref{t.AS.homogenization}). Since the difference of solutions to the homogenized equation satisfies $C^{1,\alpha}$ estimate, this allows us to transfer the higher regularity to the heterogeneous difference via the excess decay argument introduced in~\cite{AS}. The latter is a quantitative version of an idea originating in the work of Avellaneda and Lin~\cite{AL1,AL2} in the periodic case. We remark that, by a similar argument, Theorem~\ref{t.linearization} also yields a large-scale $C^{0,1}$--type estimate for the linearized equation. Since this result is very close in spirit to Theorem~\ref{t.regularity.differences}, we postpone the statement to Section~\ref{s.regularity.differences}.  As with previous large-scale regularity estimates proved in~\cite{AS,AM,GNO2,AKMbook}, the stochastic integrability of the minimal scale~$\X$ is optimal in the sense that~$\X \leq \O_\sigma(C)$ is false, in general, for any exponent $\sigma > d$. See~\cite[Section 3.6]{AKMbook} for details.  

\smallskip

The next main result that we state is the large-scale $C^{1,1}$ estimate, which can be compared to the case $k=1$ of~\cite[Theorem 3.8]{AKMbook}. 
Parts (i) and~(ii) consist of a first-order Liouville theorem, characterizing the set of solutions with at-most linear growth. This is then made quantitative in part (iii) in the form of the large-scale~$C^{1,1}$-type estimate. This generalizes the estimate in the  linear setting, which can be found in~\cite{AKMbook,GNO2}. 

\smallskip

To state the theorem, we must first introduce some additional notation. Given a domain $U\subseteq \Rd$, we denote the set of solutions in $U$ by
\begin{equation*}
\mathcal{L}(U):=
\left\{ 
u\in H^1_{\mathrm{loc}}(U) \,:\,
-\nabla \cdot D_pL(\nabla u,x)  = 0 \ \mbox{in} \ U
\right\}. 
\end{equation*}
We also define
$\mathcal{L}_1$ to be the set of global solutions of the nonlinear equation which exhibit at most linear growth at infinity:
\begin{equation*}
\mathcal{L}_1 :=
\left\{ 
u\in \mathcal{L}(\Rd) \,:\,
\limsup_{r\to \infty} r^{-1} \left\| u \right\|_{\underline{L}^2(B_r)}
< \infty
\right\}. 
\end{equation*}
Note that $\L_1$ is a random object, as it depends on~$L$. 
For each $p\in\Rd$, we let~$\ell_p$ denote the affine function~$\ell_p(x):=p\cdot x$. Since the difference of two elements of $\mathcal{L}(\Rd)$ which exhibits strictly sublinear growth at infinity must be constant, by Theorem~\ref{t.regularity.differences}, the following theorem gives a complete classification of~$\mathcal{L}_1$. 

\begin{theorem}
[{Large-scale $C^{1,1}$-type estimate}]
\label{t.C11estimate} 
Fix $\sigma \in \left(0,d\right)$ and $\mathsf{M} \in [1,\infty)$. There exist~$\delta(\sigma,d,\Lambda)\in \left( 0, \frac12 \right]$, $C(\mathsf{M},\sigma,\data)<\infty$ and a random variable $\X_\sigma$ which satisfies the estimate
\begin{equation}
\label{e.X}
\X_\sigma \leq \O_{\sigma}\left(C\right)
\end{equation}
such that the following statements are valid. 
\begin{enumerate}
\item[{$\mathrm{(i)}$}] 
For every $u \in \mathcal{L}_1$ satisfying $\limsup_{r \to \infty} \frac1r \left\|  u - (u)_{B_r} \right\|_{\underline{L}^2(B_r)}   \leq \mathsf{M}$,
there exist an affine function $\ell$ such that, for every $R\geq \X_\sigma$,
\begin{equation*} 
\label{e.liouvillec0}
\left\| u - \ell \right\|_{\underline{L}^2(B_R)} \leq C R^{1-\delta} .
\end{equation*} 

\item[{$\mathrm{(ii)}$}]
For every $p\in B_\mathsf{M}$, there exists $u\in \mathcal{L}_1$ satisfying, for every $R\geq \X_\sigma$,
\begin{equation*} 
\label{e.liouvillec1}
\left\| u - \ell_p \right\|_{\underline{L}^2(B_R)} \leq C R^{1-\delta} .
\end{equation*}

\smallskip

\item[{$\mathrm{(iii)}$}]
For every $R\geq \X_s$ and $u\in \mathcal{L}(B_R)$ satisfying 
$\frac1R \left\| u - (u)_{B_R} \right\|_{\underline{L}^2 \left( B_{R} \right)}  \leq \mathsf{M}$,  there exists $\phi \in \mathcal{L}_1$ such that, for every $r \in \left[ \X_\sigma,  R \right]$, 
\begin{equation}
\label{e.C11}
\left\| u - \phi \right\|_{\underline{L}^2(B_r)} \leq C \left( \frac r R \right)^{2} \inf_{\psi\in\L_1} 
\left\| u - \psi \right\|_{\underline{L}^2(B_R)}.
\end{equation}
\end{enumerate}
\end{theorem}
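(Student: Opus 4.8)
The plan is to deduce \tref{C11estimate} from the large-scale regularity for differences (\tref{regularity.differences}) via a Campanato-type excess-decay iteration, exactly in the spirit of the analogous statement for linear equations in \cite{AKMbook}. The three parts are tightly linked: part (iii) is the quantitative statement, and parts (i) and (ii) are essentially its consequences once one knows $\L_1$ has the right structure. I would actually prove them in a partially intertwined way, but the logical heart is the excess decay. Define, for $u\in\L(B_R)$ and $r\le R$, the excess
\begin{equation*}
E(r):=\inf_{\psi\in\L_1}\left\|\nabla(u-\psi)\right\|_{\underline{L}^2(B_r)}
\end{equation*}
(or the $L^2$ version of $u-\psi$ after subtracting the mean), and the goal is the one-step improvement: there is $\theta\in(0,\tfrac12)$ and $C<\infty$ such that for $r\in[\X_\sigma,R]$,
\begin{equation*}
E(\theta r)\le \tfrac12\theta\, E(r)+C\,\theta\,\omega(r)\left(\tfrac rR\right)\mathsf{M},
\end{equation*}
where $\omega(r)$ is a small homogenization/approximation error that is summable along the geometric sequence $r=\theta^k R$. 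Iterating such an inequality from scale $R$ down to scale $r$ yields the quadratic gain $\left(\frac rR\right)^2$ in \eqref{e.C11} — the extra power of $r/R$ beyond the naive Campanato $r/R$ comes precisely from the fact that the comparison class $\L_1$ already captures the affine (first-order) behavior, so the leading-order Taylor remainder is quadratic.

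The key step producing the one-step improvement is a \emph{two-scale comparison}: given $u\in\L(B_r)$, let $\bar u\in\overline\Omega$-solve the homogenized equation \eqref{e.pde.homog} in $B_{r/2}$ with boundary data $u$; by the interior Schauder/$C^{2,\beta}$ regularity for $\overline L$ (we have $\overline L\in C^{2,\beta}$ by \pref{nu.C2beta}, and under the extra assumption $C^{2,\gamma}$ by \tref{Lreg}) the function $\bar u$ is well-approximated on $B_{\theta r}$ by a solution of the \emph{linearized} homogenized equation \eqref{e.linearization.homog} plus an affine function — this is the classical $C^{1,1}$ interior estimate for the smooth equation \eqref{e.pde.homog}, which gives the quadratic decay of the excess for $\bar u$. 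Then I transfer this back to $u$ in two moves: first, \tref{AS.homogenization} controls $\|u-\bar u\|$ on $B_{r/2}$ by $\omega(r)\|\nabla u\|_{\underline L^2(B_r)}$; second, and this is the genuinely new ingredient, one must produce an element $\phi\in\L_1$ that shadows the affine-plus-linearized-corrector object coming from $\bar u$. Concretely, if $\bar u$ is close on $B_{\theta r}$ to $x\mapsto \bar u(x_0)+p\cdot(x-x_0)$ for some $p$, then I take $\phi(x):=p\cdot x+\phi_p(x)$ where $\phi_p$ is the first-order corrector from part (ii), and use \tref{regularity.differences} to control $\|u-\phi\|$ on the smaller ball by $\frac{1}{r}\|u-\phi\|_{\underline L^2(B_r)}$ times the scale. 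The correctors $\phi_p$ exist and grow sublinearly by \tref{regularity.differences} applied to their differences (which forces the gradients of correctors to stay bounded, hence the functions grow at most linearly, giving part (ii)), and the minimal scale on which all of this works simultaneously — for the homogenization errors and for the $C^{0,1}$-for-differences estimate and for the sublinearity of all correctors $\phi_p$, $p\in B_\mathsf{M}$ — is the random scale $\X_\sigma$, with stochastic integrability $\O_\sigma(C)$ inherited from \tref{regularity.differences} and \tref{linearization}.

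Part (i) then follows: for $u\in\L_1$ with linear growth bound $\mathsf{M}$, apply part (iii) on $B_R$ for each large $R$; the approximating $\phi\in\L_1$ can be replaced (using that the difference of two elements of $\L_1$ with sublinear growth is constant, itself a consequence of \tref{regularity.differences}) by an affine function $\ell=\ell_p$ up to a sublinear error, and one extracts a single $p$ in the limit $R\to\infty$ by a compactness/Cauchy argument on the sequence of best affine approximants — the $C^{1,1}$ decay forces the slopes $p_R$ to form a Cauchy sequence. The resulting estimate $\|u-\ell\|_{\underline L^2(B_R)}\le CR^{1-\delta}$ is in fact better than stated (one expects $CR$ with a logarithmic correction, or even $O(1)$ if correctors are bounded), but the clean power $R^{1-\delta}$ with a small $\delta(\sigma,d,\Lambda)$ is what survives when one tracks the summable but not uniformly-bounded errors $\omega(\theta^k R)$; matching the minimal scale $\X_\sigma$ of \eqref{e.X} to $\delta$ is a bookkeeping exercise.

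The main obstacle, I expect, is obtaining the \emph{quadratic} rate $\left(\frac rR\right)^2$ rather than $\left(\frac rR\right)^{1+\delta}$ — as the authors themselves flag. The subtlety is that the naive excess-decay argument loses because the homogenization error $\omega(r)$ and the error in approximating $\nabla u^\ep$ by a locally-stationary finite-range field (the device underlying \tref{linearization}) are only algebraically small with a non-explicit exponent $\alpha$, and these errors enter additively at every scale; to get a clean quadratic gain one needs the errors to be summable \emph{and} to not pollute the leading quadratic term. This forces a careful choice of the intermediate (mesoscopic) scale at which one linearizes and homogenizes, and a two-parameter iteration where the "affine part" and the "corrector part" of $\phi$ are updated at different rates. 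Getting \tref{linearization} to apply with the right uniformity in the solution $u^\ep$ one linearizes around — since here $u^\ep$ is itself only known to satisfy the large-scale $C^{0,1}$ bound, not to be smooth — is the place where the full strength of the commutation of homogenization and linearization is used, and is the technically heaviest point.
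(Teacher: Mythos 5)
Your outline of parts (i) and (ii) is broadly consistent with the paper's argument (Cauchy sequences of Dirichlet problems with affine data, controlled by the Lipschitz estimate for differences), but your route to part (iii) has a genuine gap precisely at the point you flag: the quadratic rate. Your one-step inequality compares $u$ at each scale with a solution $\bar u$ of the \emph{homogenized} equation and pays a homogenization error $\omega(r)\sim r^{-\alpha_0(d-\sigma)}$ at every scale. Such errors enter additively, and for the iteration $E(\theta r)\le \theta^2E(r)+\epsilon(r)$ to produce $(r/R)^2$ one needs $\sum_j\theta^{-2j}\epsilon(\theta^jR)<\infty$, i.e.\ the per-scale errors must themselves decay quadratically in $r/R$. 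They do not: even after a first-pass $C^{1,\alpha}$ improvement, the error at scale $r_j=\theta^jR$ is at best of order $\theta^{\alpha j}r_j^{-\alpha_0(d-\sigma)}$, and $\sum_j\theta^{(\alpha-2)j}r_j^{-\alpha_0(d-\sigma)}$ diverges since $\alpha<2$. Writing the error as $\omega(r)\left(\frac rR\right)\mathsf{M}$, as you do, assumes the form needed for the iteration to close; the homogenization estimates give $\omega(r)\|\nabla u\|_{\underline{L}^2(B_r)}\sim\omega(r)\mathsf{M}$, not that. Your proposed fix (``two-parameter iteration updating the affine and corrector parts at different rates'') does not identify the mechanism that actually resolves this.

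The paper's resolution is structurally different and requires machinery you have not built. First, it develops a full large-scale $C^{k,1}$ theory for the \emph{heterogeneous} linearized equation around an element $\phi\in\mathcal{L}_1$ (the spaces $\A_k[\phi]$, their Liouville classification, Theorem~\ref{t.regularitylin}), together with the ``superlinear response'' Lemma~\ref{l.tildephivsphi1}, which says $\A_1[\phi]$ is a $C^{1,\delta}$ tangent space to $\mathcal{L}_1$ at $\phi$: one can pass between $v\in\mathcal{L}_1$ near $\phi$ and $w\in\A_1[\phi]$ with errors of order $\|\nabla(v-\phi)\|^{1+\delta}$. The excess-decay iteration then compares $u-\phi_m$ not with the homogenized equation but with the heterogeneous linearized equation around $\phi_m$; the only per-scale error is the deterministic linearization error $\|\nabla(u-\phi_m)\|^{1+\delta}$, which is \emph{multiplicatively} small (of order $\eta^\delta$ times the excess) and yields $C^{1,\alpha}$ for every $\alpha<1$. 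The quadratic rate is then obtained by a second bootstrap: choosing $\alpha=\frac{2+\delta}{2+2\delta}$ so that $\alpha(1+\delta)=1+\beta>1$, the linearization error $(r/R)^{\alpha(1+\delta)}$ becomes summable against a $(r/R)^{1+\beta}$ target, which allows a comparison with \emph{second-order} linearized correctors $\psi\in\A_2[\phi]$ (a large-scale $C^{2,\beta}$ statement, Remark~\ref{r.C2beta}); finally the degree-one part of $\psi$ is absorbed into a corrected $\tilde\phi\in\mathcal{L}_1$ via Lemma~\ref{l.tildephivsphi1}(ii), and the genuinely second-order part has gradient decaying like $r/R$, which is exactly the $C^{1,1}$ estimate. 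Without the $\A_2[\phi]$ comparison and the superlinear-response lemma, the quadratic exponent in \eqref{e.C11} is out of reach by your method.
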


In the case of deterministic, periodic coefficient fields, the Liouville theorem (parts~(i) and~(ii) of Theorem~\ref{t.C11estimate}) was obtained 
by Avellaneda and Lin~\cite{AL4} for linear equations and subsequently generalized by Moser and Struwe~\cite{MS} to the nonlinear setting. Obtaining a $C^{1,\alpha}$--type version of statement~(iii), for a tiny $\alpha(d,\Lambda)>0$, is not difficult, and in fact we believe it can be proved even in the random case with a qualitative argument. Obtaining the full $C^{1,1}$ regularity is more difficult (even in the periodic setting), requires a quantitative argument and crucially relies on Theorem~\ref{t.regularity.differences}.

\smallskip

We turn to the last main result of the paper concerning the improved regularity of~$\overline{L}$. As we show in Section~\ref{ss.detlin}, under the assumption (L1) that $L(\cdot,x) \in C^{2,\gamma}$, one can use deterministic regularity estimates (either of the De Giorgi-Nash or Meyers estimates will do) to obtain a tiny bit better regularity for the homogenized Lagrangian, namely $\overline{L}\in C^{2,\beta}$ for a tiny exponent $0< \beta \ll \gamma$. (Note that this observation is necessary even to ensure that the statement of Theorem~\ref{t.linearization} is coherent.) However, once we have proved Theorem~\ref{t.regularity.differences}, we can upgrade the H\"older exponent of $D^2\overline{L}$ all the way up to the exponent~$\gamma$, confirming that $\overline{L}$ is as regular as $L(\cdot,x)$, at least up to $C^{2,\gamma}$. This result requires an additional assumption on the spatial regularity of $L$, which is stated in Section~\ref{s.Lreg}.

\begin{theorem}
\label{t.Lreg} 
In addition to the standing assumptions, suppose that~\eqref{e.extra.assump} holds for some exponent $\gamma \in (0,1]$. Then~$\overline{L} \in C^{2,\gamma}_{\mathrm{loc}}(\Rd)$ and, for every $\mathsf{M} \in [1,\infty)$, 
\begin{equation}
\label{e.Lreg}
\left[ D^2\overline{L} \right]_{C^{0,\gamma}(B_{\mathsf{M}})}
\leq 
C(\mathsf{M},\gamma,\data)<\infty.
\end{equation}
\end{theorem}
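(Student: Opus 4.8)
### Proof Proposal for Theorem~\ref{t.Lreg}

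The plan is to exploit the fact that $D^2\overline{L}(p)$ is exactly the homogenized matrix of the linearized equation around a corrector with slope $p$, and then to use Theorem~\ref{t.regularity.differences} to control how this homogenized matrix changes as $p$ varies. Concretely, for $p\in\Rd$, let $\phi_p$ be the first-order corrector with $\nabla(\ell_p+\phi_p)$ stationary and mean $p$. Standard homogenization theory (already available via Proposition~\ref{p.nu.C2beta} and the quantitative results of~\cite{AS}) gives the identity $D^2\overline{L}(p) = \E\bigl[\fint_{[0,1]^d} D^2_pL(p+\nabla\phi_p,x)(I_d + \nabla\psi_p)\bigr]$, where $\psi_p$ solves the (stationary) linearized corrector problem $-\nabla\cdot(D^2_pL(p+\nabla\phi_p,x)(e_k + \nabla\psi_{p,k})) = 0$. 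The quantity $D^2\overline{L}(p)-D^2\overline{L}(p')$ is therefore an average of a difference of fluxes along the two linearized corrector problems associated to slopes $p$ and $p'$, and it splits into two contributions: one coming from the difference of coefficient fields $D^2_pL(p+\nabla\phi_p,x) - D^2_pL(p'+\nabla\phi_{p'},x)$, and one from the difference $\nabla\psi_p - \nabla\psi_{p'}$ of the linearized correctors.

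First I would estimate the coefficient-field difference. By assumption~(L1), $D^2_pL(\cdot,x)$ is $C^{0,\gamma}$ uniformly in $x$, so pointwise $|D^2_pL(p+\nabla\phi_p,x) - D^2_pL(p'+\nabla\phi_{p'},x)| \leq \mathsf{M}_0\bigl(|p-p'| + |\nabla\phi_p - \nabla\phi_{p'}|\bigr)^\gamma$. The key input is the corrector-difference estimate~\eqref{e.yesdiffcorrectors}, obtained as a corollary of Theorem~\ref{t.regularity.differences}: it gives $\|\nabla\phi_p - \nabla\phi_{p'}\|_{\underline{L}^2([0,1]^d)} \leq \O_s(C|p-p'|)$ for every $s<2$, in particular a bound of the form $C|p-p'|$ in $L^q(\Omega)$ for every finite $q$. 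Raising to the power $\gamma$ and using stationarity and Hölder's inequality in probability, this yields $\E[(\fint_{[0,1]^d}|D^2_pL(p+\nabla\phi_p,\cdot) - D^2_pL(p'+\nabla\phi_{p'},\cdot)|^2)^{1/2}] \leq C|p-p'|^\gamma$. This is where the extra spatial-regularity hypothesis~\eqref{e.extra.assump} enters: to convert the $L^2$-averaged corrector-difference bound into a genuinely pointwise-in-$x$ bound on the coefficient fields (needed to control the flux difference in a strong enough norm, rather than just in $H^{-1}$), one upgrades~\eqref{e.yesdiffcorrectors} to an $L^\infty$ statement using the unit-scale Hölder continuity of $L$ in $x$, exactly as remarked after~\eqref{e.yesdiffcorrectors}.

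Next I would handle the difference of the linearized correctors $\psi_p,\psi_{p'}$. Subtracting their equations, $\nabla(\psi_p-\psi_{p'})$ solves a linear equation with the stationary coefficients $D^2_pL(p+\nabla\phi_p,x)$ and a right-hand side which is the divergence of $\bigl(D^2_pL(p'+\nabla\phi_{p'},x) - D^2_pL(p+\nabla\phi_p,x)\bigr)(e_k+\nabla\psi_{p',k})$. Using the stationarity and the linear corrector theory (energy estimate / Caccioppoli together with the large-scale regularity for the stationary linearized operator), the $L^2$ norm of $\nabla(\psi_p - \psi_{p'})$ is controlled by the $L^2$ norm of that right-hand-side prefactor times $\|I_d+\nabla\psi_{p'}\|$, which by the previous paragraph is again $\leq C|p-p'|^\gamma$. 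Combining the two contributions and substituting into the formula for $D^2\overline{L}(p)-D^2\overline{L}(p')$ gives $|D^2\overline{L}(p) - D^2\overline{L}(p')| \leq C(\mathsf{M})|p-p'|^\gamma$ for $p,p'\in B_{\mathsf{M}}$, which is precisely~\eqref{e.Lreg}; local $C^{2,\gamma}$ regularity of $\overline{L}$ follows since $\mathsf{M}$ is arbitrary.

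The main obstacle is the rigorous passage from the $L^2$-averaged control of $\nabla\phi_p-\nabla\phi_{p'}$ to a bound strong enough to control a \emph{nonlinear} ($\gamma$-Hölder) function of the correctors and to close the fixed-point-type coupling between $\phi$ and $\psi$. Two technical points deserve care: (a) one must verify that $p\mapsto\nabla\phi_p$ is continuous (indeed Hölder) as a map into $L^2(\Omega\times[0,1]^d)$ and that the linearized correctors $\psi_p$ are well-defined and depend continuously on $p$ — this is where the hypothesis~\eqref{e.extra.assump}, the uniform convexity~(L2), and the already-established $C^{2,\beta}$ regularity of $\overline{L}$ (Proposition~\ref{p.nu.C2beta}) are used in tandem; and (b) the interchange of the power $\gamma<1$ with the expectation, handled by Jensen/Hölder in the probability space using the exponential moment bound~\eqref{e.yesdiffcorrectors}. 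The regularity estimates needed for the stationary linearized problem are by now standard (they follow from the methods of~\cite{AS,AKMbook} or can be read off from the ingredients developed for Theorem~\ref{t.linearization}), so the novelty is entirely in the robust use of Theorem~\ref{t.regularity.differences}.
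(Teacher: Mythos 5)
Your proposal is correct and follows essentially the same route as the paper: the formula $D^2\overline{L}(\xi)\eta=\E\bigl[\int_{[0,1]^d}D_p^2L(\xi+\nabla\phi_\xi,x)(\eta+\nabla\psi_{\xi,\eta})\,dx\bigr]$, the splitting of $D^2\overline{L}(\xi)-D^2\overline{L}(\xi')$ into a coefficient-difference term and a linearized-corrector-difference term, the use of the corrector-difference bound~\eqref{e.yesdiffcorrectors} together with the pointwise upgrade via~\eqref{e.extra.assump}, and the energy estimate for $\psi_{\xi,\eta}-\psi_{\xi',\eta}$ closed by H\"older in probability with the higher integrability of $\nabla\psi_{\xi',\eta}$ coming from Meyers and the large-scale Lipschitz estimate for the linearized equation (Proposition~\ref{p.C01.linearizedeq}). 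The two technical caveats you flag are precisely the points the paper's proof addresses.
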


The proof of Theorem~\ref{t.Lreg} appears in Section~\ref{s.Lreg}. 

\smallskip

We expect the higher regularity for $\overline{L}$ (i.e., $C^{k,\beta}$ for $k\geq 3$), under appropriate additional regularity assumptions on~$L$, to be a natural consequence of a large-scale regularity theory for higher-order linearized equations. This will be elaborated in a future paper.

\subsection{Outline of the paper}
In the next section we give some notation, recall some previous results and show that~$\overline{L} \in C^{2,\beta}$ for a tiny~$\beta>0$. The proofs of Theorems~\ref{t.linearization},~\ref{t.regularity.differences},~\ref{t.C11estimate} and~\ref{t.Lreg} are given in Sections~\ref{s.linearization},~\ref{s.regularity.differences},~\ref{s.C11estimate} and~\ref{s.Lreg}, respectively. In Appendix~\ref{ap.ASestimates} we recall some homogenization estimates from~\cite{AS} which are needed in Section~\ref{s.linearization}.

\section{Preliminaries}
\label{s.prelim}

In this section we introduce some notation and state some previous quantitative homogenization results which are used throughout the paper. We also prove some preliminary deterministic estimates regarding the approximation of differences of solutions by the solutions of linearized equations. As a consequence, we show that~$\overline{L}$ belongs to~$C^{2,\beta}$ for a small exponent~$\beta>0$.

\subsection{Notation}
\label{ss.notation}
If $E \subseteq \Rd$, the Lebesgue measure of~$E$ is denoted by~$|E|$. If $U\subseteq\Rd$ is a domain with $|U|<\infty$ we define the normalized domain  
\begin{equation*}
U_0:= \left| U \right|^{-\frac1d} U. 
\end{equation*}
Various of the parameters in our statements depend on~$U$, but in most cases this dependence is only on~$U_0$ and therefore will be invariant under changes of scale. We denote the cube of side length $3^n$ centered at the origin by
\begin{equation*}
\cu_n := \left( -\frac12 3^n ,\frac12 3^n \right)^d. 
\end{equation*}
If $U\subseteq\Rd$ is a domain, we define the norm 
\begin{equation*}
\left\| u \right\|_{H^1(U)}:= \left( \left\| u \right\|_{L^2(U)}^2 + \left\| \nabla u \right\|_{L^2(U)}^2 \right)^{\frac12}
\end{equation*}
and define $H^1(U)$ and $H^1_0(U)$, respectively, to be the completion of $C^\infty(\overline{U})$ and $C^\infty_c(U)$, respectively, with respect to the norm~$\left\| \cdot \right\|_{H^1(U)}$. We also define the space $H^{-1}(U)$ to be the completion of $C^\infty(\overline{U})$ with respect to the norm 
\begin{equation*}
\left\| u \right\|_{H^{-1}(U)}
:= 
\sup\left\{ 
\left| \int_U uv \right| \,:\, v\in H^1_0(U), \ \left\| v \right\|_{H^1(U)} \leq 1
\right\}. 
\end{equation*}
If $|U|<\infty$, we define 
\begin{equation*}
\fint_U f  := \frac1{|U|} \int_U f.
\end{equation*}
We also sometimes write~$\left( f \right)_U := \fint_U f$. It is often convenient to work with the following scale-invariant norms, defined for domains~$U$ with~$|U|<\infty$:
\begin{equation*}
\left\| u \right\|_{\underline{L}^p(U)}
:= \left( \fint_U \left|u\right|^p \right)^{\frac1p} = \left| U \right|^{-\frac1p} \left\| u \right\|_{L^p(U)},
\end{equation*}
\begin{equation*}
\left\| u \right\|_{\underline{H}^1(U)}
:=
\left( |U|^{-\frac2d} \left\| u \right\|_{\underline{L}^2(U)}^2 + \left\| \nabla u \right\|_{\underline{L}^2(U)}^2 \right)^{\frac12},
\end{equation*}
\begin{equation*}
\left\| u \right\|_{\underline{H}^{-1}(U)}
:= 
\sup\left\{ 
\left| \fint_U uv \right| \,:\, v\in H^1_0(U), \ \left\| v \right\|_{\underline{H}^1(U)} \leq 1
\right\}.
\end{equation*}
It is useful to note these ``underlined'' norms have the following scaling properties:  if $u_r(x):= ru( x/r)$ for $r>0$, then
\begin{equation}
\label{e.underlinedscalings}
\left\{
\begin{gathered}
\left\| u_r \right\|_{\underline{L}^p(rU)}
=
r \left\| u \right\|_{\underline{L}^p(U)}, 
\ \
\left\| u_r \right\|_{\underline{H}^1(rU)}
=
\left\| u \right\|_{\underline{H}^1(U)}, 
\ \
\left\| u_r \right\|_{\underline{H}^{-1}(rU)}
=
\left\| u \right\|_{\underline{H}^{-1}(U)}, 
\\
\left\| \nabla u_r \right\|_{\underline{L}^p(rU)}
=
\left\| \nabla u \right\|_{\underline{L}^p(U)},
\ \ 
\left\| \nabla u \right\|_{\underline{H}^{-1}(U)}
=
\frac1r \left\| \nabla u_r \right\|_{\underline{H}^{-1}(rU)}.
\end{gathered}
\right.
\end{equation}
We also require a notion of $H^{-1}$ norm defined by testing against any $H^1$ function, not simply $H^1_0$ functions. It is defined (in its normalized version) by:
\begin{equation}
\left\| u \right\|_{\Hminusul(U)}
:=
\sup\left\{ 
\left| \fint_U uv \right| \,:\, v\in H^1(U), \ \left\| v \right\|_{\underline{H}^1(U)} \leq 1
\right\}. 
\end{equation}
Directly from its definition we find that the $\Hminusul(U)$ norm has the following useful subadditivity property: if $U$ is the disjoint union of $\{ V_i\}_{i\in\{1,\ldots,N\}}$ (up to a set of measure zero), then
\begin{equation}
\label{e.Hminusul.subadd}
\left\| u \right\|_{\Hminusul(U)}
\leq 
\sum_{i=1}^N
\frac{|V_i|}{|U|}
\left\| u \right\|_{\Hminusul(V_i)}
\end{equation}
It is clear that 
\begin{equation}
\label{e.H1L2dumbdumb}
\left\| u \right\|_{\underline{H}^{-1}(U)} 
\leq 
\left\| u \right\|_{\Hminusul(U)}
\leq 
|U|^{\frac 1d} \left\| u \right\|_{\underline{L}^2(U)}.
\end{equation}
We will also need that the $\left\| \cdot\right\|_{\underline{H}^{-1}(U)}$ obeys the following product rule in every bounded Lipschitz domain~$U$, which can be checked directly from the definition of~$\left\| \cdot\right\|_{\underline{H}^{-1}}$ and the Poincar\'e inequality: there exists $C(U_0,d)<\infty$ such that, for every $f\in W^{1,\infty}(U)$ and $g\in H^{-1}(U)$,
\begin{equation}
\label{e.computeHm1}
\left\| fg \right\|_{\Hminusul(U)} 
\leq 
C\left( \left\| f \right\|_{L^\infty} + \left| U \right|^{\frac1d} \left\| \nabla f \right\|_{L^\infty(U)} \right)
\left\| g \right\|_{\Hminusul(U)},
\end{equation}
and the inequality holds also if $\underline{H}^{-1}(U)$ replaces $\Hminusul(U)$.

\smallskip

As mentioned above, if $X$ is a random variable and $\sigma,\theta\in (0,\infty)$, then we use 
\begin{equation*} \label{}
X \leq \O_\sigma(\theta)
\end{equation*}
as shorthand notation for the statement that 
\begin{equation}
\label{e.chebyforward}
\E \left[ \exp \left( \left( \frac{X_+}{\theta} \right)^\sigma \right) \right] \leq 2. 
\end{equation}
It roughly means that ``$X_+$ is at most of order $\theta$ with stretched exponential tails with exponent $\sigma$.'' Indeed, by Chebyshev's inequality, 
\begin{equation} 
\label{e.Cheby}
X \leq \O_\sigma(\theta) \implies \forall \lambda>0, \ \P \left[ X > \lambda \theta \right] \leq 2\exp\left( -\lambda^\sigma \right). 
\end{equation}
The converse of this statement is almost true: for every $\theta\geq 0$, 
\begin{equation}
\label{e.chebyconverse}
\forall \lambda \geq 0, 
\quad 
\P\left[ X \geq \lambda \theta \right] \leq \exp \Ll( - \lambda^\sigma \Rr)  
\implies  
X \leq \O_\sigma \left(2^\frac 1 \sigma \, \theta\right).
\end{equation}
This can be obtained by integration. We also use the notation
\begin{equation*} \label{}
X = \O_\sigma(\theta) 
\iff 
X \leq \O_\sigma(\theta) \ \mbox{and} \ -X \leq \O_\sigma(\theta). 
\end{equation*}
We also write $X\leq Y + \O_\sigma(\theta)$ to mean that $X-Y \leq \O_\sigma(\theta)$ as well as $X = Y+\O_\sigma(\theta)$ to mean that $X-Y = \O_\sigma(\theta)$. If $\sigma\in [1,\infty)$, then Jensen's inequality gives us a triangle inequality for $\O_\sigma(\cdot)$ in the following sense: for any measure space $(E,\mathcal{S},\mu)$, measurable function $K : E \to (0,\infty)$ and jointly measurable family $\left\{ X(z) \right\}_{z \in E}$ of nonnegative random variables, we have 
\begin{equation} 
\label{e.Osums}
\forall z\in E, \ X(z) \leq \O_\sigma(K(z))
\implies
\int_{E} X\,d\mu \leq \O_\sigma\left( \int_E K \,d\mu \right). 
\end{equation}
If $\sigma\in (0,1]$, then the statement is true after adding a prefactor constant $C(\sigma)\in [1,\infty)$ to the right side. 
We refer to~\cite[Appendix A]{AKMbook} for proofs of these facts.

\subsection{Previous quantitative homogenization results}

As far as we are aware, the papers~\cite{AS,AM} contain the only previous quantitative stochastic homogenization results for nonlinear equations. In this subsection, we recall several of the main results of~\cite{AS} which are needed in this paper, particular in the next section. 

\smallskip

The first we present is essentially the same as~\cite[Theorem 1.1]{AS}, although its statement is slightly differently than the latter and can be found in~\cite[Chapter 11]{AKMbook}. Note that the proof given in~\cite{AKMbook} follows the same high-level outline of the one in~\cite{AS}, but is much more efficient.

\begin{theorem}[{Quantitative homogenization~\cite[Theorem 11.10]{AKMbook}}]
\label{t.AS.homogenization}
\emph{}\\
Fix $\sigma \in (0,d)$, $\delta\in \left(0,\tfrac 12\right]$, $\mathsf{M}\in [1,\infty)$ and a Lipschitz domain $U\subseteq B_1$. There exist~{$\alpha(U,\data)>0$}, $C(\sigma,U,\delta,\data)<\infty$ and a random variable~$\X(\sigma,\delta,\mathsf{M},U,\data)$ satisfying the bound
\begin{equation}
\label{e.size.X.AS}
\X = \O_1\left(C \right)
\end{equation}
such that the following holds. 
Fix $\ep\in (0,1]$, a pair~$u,u^\ep \in W^{1,2+\delta}(U)$ satisfying
\begin{equation} \label{e.AS.homogenization.eqs}
\left\{ 
\begin{aligned}
& -\nabla \cdot \left( D_pL\left( \nabla u^\ep,\tfrac x\ep \right) \right) = 0  & \mbox{in} & \ U, \\
&  -\nabla \cdot \left( D_p\overline{L} \left( \nabla u \right) \right)  = 0 & \mbox{in} & \ U, \\
& u^\ep - u \in H^1_0(U), \\
& \left\| \nabla u \right\|_{L^{2+\delta}(U)} \leq \mathsf{M}.
\end{aligned}
\right.
\end{equation}
Then we have the estimate
\begin{equation}
\label{e.DPestimates.AS}
\left\| \nabla u^\ep - \nabla u  \right\|_{H^{-1}(U)} 
+ \left\| D_p L\left( \nabla u^\ep,\tfrac \cdot\ep \right)  - D_p\overline{L} \left( \nabla u \right)    \right\|_{H^{-1}(U)}
\leq
C \mathsf{M} \left( 
\ep^{\alpha(d-\sigma)}
+\X\ep^\sigma
\right).  
\end{equation}
\end{theorem}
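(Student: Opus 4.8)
The plan is to follow the variational strategy of~\cite{AS} (as streamlined in~\cite[Chapter 11]{AKMbook}), which rests on the quantitative convergence of two subadditive energy quantities. For a bounded Lipschitz domain $V\subseteq\Rd$ and $p,q\in\Rd$, define
\[
\mu(V,p) := \inf_{v\,\in\,\ell_p + H^1_0(V)}\ \fint_V L(\nabla v,x)\,dx,
\qquad
\mu^*(V,q) := \sup_{v\,\in\, H^1(V)}\ \fint_V \bigl( q\cdot\nabla v - L(\nabla v,x) \bigr)\,dx.
\]
The first step is to record the deterministic structure: $\mu(\cdot,p)$ is subadditive, $\mu^*(\cdot,q)$ is superadditive, both inherit from~(L1)--(L2) the uniform convexity and $C^{2,\gamma}$ bounds in their vector arguments, and they satisfy the Fenchel-type inequality $\mu(V,p)+\mu^*(V,q)\ge p\cdot q$, in which the defect controls, by uniform convexity, the $\underline L^2(V)$ distance between the gradients of the respective minimizer and maximizer. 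One checks that the deterministic limits of $\E[\mu(\cu_m,\cdot)]$ and of the Legendre transform of $-\E[\mu^*(\cu_m,\cdot)]$ both coincide with $\overline L$, which also gives $\overline L\in C^{1,1}$ immediately and reproves the Dal Maso--Modica qualitative limit.

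Second, and this is the heart of the matter, one proves
\[
\bigl| \mu(\cu_m,p) - \overline L(p) \bigr| + \bigl| \mu^*(\cu_m,q) - \overline L^*(q) \bigr| \le \O_1\bigl( C\, 3^{-m\alpha} \bigr)
\qquad \text{for all } |p|,|q|\le\mathsf{M},
\]
for some small, non-explicit exponent $\alpha(\data)>0$. The bias (expectation) part uses subadditivity together with the finite range of dependence: partitioning $\cu_m$ into translates of $\cu_n$ and using (P1)--(P2), one shows $\E[\mu(\cu_m,p)]$ decreases by a definite algebraic amount down each dyadic (triadic) scale, the rate coming from quantifying, via uniform convexity, the sub-optimality at the finer scale of the minimizer glued from the coarser ones. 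The fluctuation part uses the unit range of dependence through a concentration (martingale-difference) inequality applied to $\mu(\cu_m,p)$ viewed as an average of weakly dependent cell contributions; because the nonlinear minimizer cannot be cleanly localized, one is forced to pass through a subadditive covering, and this is exactly what degrades the stochastic integrability down to the exponent $\sigma=1$ recorded in~\eqref{e.size.X.AS}.

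Third, one transfers this to the Dirichlet problem. Given $u$, partition $U$ into a mesoscopic grid of cubes of side $\ep^{1-\tau}$ for a small $\tau>0$, freeze $\nabla u$ to a constant $p_i$ on each cube, and build a two-scale test function $\widetilde u^\ep := u + \ep\,\psi^\ep$ by gluing (with cutoffs near cube interfaces and near $\partial U$) the $\ep$-rescaled minimizers defining $\mu(\cdot,p_i)$, together with the associated flux correctors so that the flux of $\widetilde u^\ep$ is divergence-free up to a controlled error. Step two then gives that the energy of $\widetilde u^\ep$ is within $O(\ep^{\alpha'} + \X\ep)$ of $\int_U \overline L(\nabla u)$, which equals the energy of $u^\ep$ up to the same order (as $\widetilde u^\ep$ is an admissible competitor for the heterogeneous minimizer $u^\ep$, and $u$ is the homogenized minimizer); uniform convexity converts this energy defect into $\|\nabla u^\ep - \nabla\widetilde u^\ep\|_{\underline L^2(U)} \le C\mathsf{M}(\ep^{\alpha'}+\X\ep)^{1/2}$, after which $\|\nabla\widetilde u^\ep - \nabla u\|_{H^{-1}(U)} \le C\ep\,\|\nabla\psi^\ep\|_{L^2(U)}$ because the corrector part oscillates on scale $\ep$ and is therefore small in $H^{-1}$; the fluxes are treated identically using the flux correctors. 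The Meyers higher integrability built into the hypothesis $u\in W^{1,2+\delta}(U)$ absorbs the boundary-layer and interface errors, the triangle inequality~\eqref{e.Osums} for $\O_\sigma(\cdot)$ sums the per-cube random errors, and finally the deterministic rate $\ep^{\alpha'}$ and the stochastically bad term $\X\ep$ are interpolated by splitting on the event $\{\X\le\ep^{-\beta}\}$ and optimizing the threshold, which produces the stated form $\ep^{\alpha(d-\sigma)}+\X\ep^\sigma$.

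The main obstacle is Step two: unlike the linear theory there is no exact renormalization, so one must accept a small implicit exponent, and the genuine difficulty is controlling the fluctuations of $\mu(\cu_m,p)$ without localizing the nonlinear minimizer; the subadditive covering that circumvents this is precisely what forces the exponent-$1$ (merely exponential, not sub-Gaussian) stochastic integrability, so that the $\O_1$ bound in~\eqref{e.size.X.AS} is essentially optimal. A secondary difficulty is that the optimizers depend nonlinearly on the affine data, so the two-scale expansion in Step three must be localized to mesoscopic cubes and the Lipschitz dependence of the optimal gradients on $p_i$ (again a consequence of uniform convexity) must be used to control the gluing errors between adjacent cells.
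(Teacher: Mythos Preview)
Your proposal is correct and follows essentially the same approach as the one outlined in the paper (which does not prove this theorem but cites it from~\cite{AS,AKMbook} and sketches the argument via the subadditive quantity~$\nu$, the convergence estimate~\eqref{e.subadd.conv}, and a two-scale expansion). Your use of both the Dirichlet quantity~$\mu$ and its dual~$\mu^*$ matches the streamlined presentation of~\cite[Chapter~11]{AKMbook}, and your three steps---deterministic structure, algebraic rate for the subadditive quantities, and the mesoscopic two-scale expansion with boundary-layer control via Meyers---are exactly the outline the paper summarizes.
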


The proof of Theorem~\ref{t.AS.homogenization} is based on an analysis of the subadditive quantity, introduced previously in~\cite{DM1}, defined by
\begin{equation*}
\nu(U,\xi) 
:=
\inf_{u\in \xi\cdot x + H^1_0(U)} 
\fint_U L\left( \nabla u(x),x \right)\,dx.
\end{equation*}
The effective Lagrangian is defined through the limit
\begin{equation*}
\overline{L}(\xi):= \lim_{n\to \infty} \E \left[ \nu(\cu_n,\xi) \right],
\end{equation*}
and this limit exists since the sequence $n\mapsto \E\left[ \nu(\cu_n,\xi) \right]$ is nonincreasing by the subadditivity of $\nu(\cdot,\xi)$. It was shown by an iterative argument in~\cite[Theorem 3.1]{AS} that for every $\sigma\in (0,d)$, there exists $\alpha (d,\Lambda) \in \left(0,\tfrac12 \right]$ and $C(\sigma,d,\Lambda)<\infty$ such that, for every $n\in\N$,
\begin{equation}
\label{e.subadd.conv}
\left| \nu(\cu_n,\xi) - \overline{L}(\xi) \right| 
\leq 
C3^{-n\alpha(d-\sigma)} + \O_1\left( C3^{-n\sigma} \right).
\end{equation}
This estimate then implies Theorem~\ref{t.AS.homogenization} by a quantitative two-scale expansion argument, as demonstrated in~\cite{AS}. 

\smallskip

It is sometimes useful to state Theorem~\ref{t.AS.homogenization} in a slightly different way, by indicating a \emph{random scale} above which homogenization holds with a deterministic estimate, rather that giving an estimate with a random right-hand side as in~\eqref{e.DPestimates.AS}. We present such a statement in the following corollary, which is an immediate consequence of the previous theorem.

\begin{corollary}
 \label{c.minimalscale}
Let $\sigma \in (0,d)$, $\delta\in \left(0,\tfrac 12\right]$ and $\mathsf{M}\in [1,\infty)$. There exist~{$\alpha(\delta,\data)\in \left(0,\tfrac12\right]$}, $C(\sigma,\delta,\mathsf{M},\data)<\infty$ and a random variable~$\X_\sigma$, satisfying the bound
\begin{equation}
\label{e.size.X_sigma.AS}
\X_\sigma = \O_\sigma\left(C \right)
\end{equation}
such that the following statement holds. For every~$r\in [\X_\sigma,\infty)$ and $f \in W^{1,2+\delta}(B_r)$ satisfying the bound $\left\| \nabla f \right\|_{\underline{L}^{2+\delta}(B_r)} \leq \mathsf{M}$ and every pair $u,\overline{u}\in H^1(B_r)$ satisfying 
\begin{equation} \label{e.AS.homogenization.eqs2}
\left\{ 
\begin{aligned}
& -\nabla \cdot \left( D_pL\left( \nabla u,x \right) \right) = 0  & \mbox{in} & \ B_r, \\
&  -\nabla \cdot \left( D_p\overline{L} \left( \nabla \overline{u} \right) \right)  = 0 & \mbox{in} & \ B_r, \\
& u , \overline{u} \in f +  H^1_0(B_r),
\end{aligned}
\right.
\end{equation}
we have the estimate
\begin{equation*} 
\frac1r \left\| u -  \overline{u}  \right\|_{\underline{L}^2(B_r)} \leq C r^{-\alpha(d-\sigma)}. 
\end{equation*}
\end{corollary}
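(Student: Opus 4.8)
The plan is to derive Corollary~\ref{c.minimalscale} from Theorem~\ref{t.AS.homogenization} by rescaling the ball $B_r$ to the unit ball $B_1$ and converting the random term $\X\ep^\sigma$ appearing in~\eqref{e.DPestimates.AS} into a random minimal scale above which the estimate becomes deterministic. The only genuine subtlety is that the minimal scale is required to obey an $\O_\sigma$ bound, which is \emph{stronger} than the $\O_1$ bound furnished by Theorem~\ref{t.AS.homogenization} as soon as $\sigma>1$; the remedy is to apply Theorem~\ref{t.AS.homogenization} not at exponent $\sigma$ but at a slightly enlarged exponent $\sigma'$, and to extract a $\sigma$th root at the end.

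First I would set up the rescaling. Fix $r\geq 1$, put $\ep:=1/r$, and given $f$ and a pair $u,\overline u$ solving~\eqref{e.AS.homogenization.eqs2} on $B_r$, define $v(y):=\tfrac1r u(ry)$, $\overline v(y):=\tfrac1r\overline u(ry)$ and $g(y):=\tfrac1r f(ry)$ on $B_1$. By the scaling identities~\eqref{e.underlinedscalings}, $v$ solves $-\nabla\cdot D_pL(\nabla v,y/\ep)=0$ and $\overline v$ solves $-\nabla\cdot D_p\overline L(\nabla\overline v)=0$ in $B_1$, with $v,\overline v\in g+H^1_0(B_1)$, with $\tfrac1r\|u-\overline u\|_{\underline{L}^2(B_r)}=\|v-\overline v\|_{\underline{L}^2(B_1)}$, and with $\|\nabla g\|_{\underline{L}^{2+\delta}(B_1)}=\|\nabla f\|_{\underline{L}^{2+\delta}(B_r)}\leq\mathsf{M}$. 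Since $\overline L$ inherits the uniform convexity~(L2) and is in particular uniformly elliptic, Meyers' higher-integrability estimate applied to the homogenized equation --- and, uniformly in $\ep$, to the heterogeneous one --- yields an exponent $\delta'(\delta,d,\Lambda)\in(0,\delta]$ and $C(\delta,\data)<\infty$ such that $v,\overline v\in W^{1,2+\delta'}(B_1)$ and $\|\nabla\overline v\|_{L^{2+\delta'}(B_1)}\leq C\mathsf{M}=:\mathsf{M}'$; thus $(v,\overline v)$ is an admissible pair for Theorem~\ref{t.AS.homogenization} on $U=B_1$.

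Next I would fix the exponents. Let $\alpha_0=\alpha(B_1,\data)>0$ be the exponent produced by Theorem~\ref{t.AS.homogenization}, set $\alpha:=\tfrac{\alpha_0}{1+\alpha_0}\in(0,\tfrac12]$, $\beta:=\alpha(d-\sigma)$ and $\sigma':=\sigma+\beta$. Then $\sigma'=(1-\alpha)\sigma+\alpha d\in(0,d)$, and the choice of $\alpha$ is exactly what makes $\alpha_0(d-\sigma')=\alpha_0(1-\alpha)(d-\sigma)=\beta$. Applying Theorem~\ref{t.AS.homogenization} on $B_1$ with the parameters $\sigma'$, $\delta'$, $\mathsf{M}'$, together with the functional inequality $\|w\|_{L^2(B_1)}\leq C\|\nabla w\|_{H^{-1}(B_1)}$ valid for $w\in H^1_0(B_1)$, produces a constant $C_*(\sigma,\delta,\mathsf{M},\data)<\infty$ (which we may take $\geq 1$) and a random variable $\X'=\O_1(C_*)$, depending neither on $r$ nor on the particular solutions, such that
\[
\tfrac1r\|u-\overline u\|_{\underline{L}^2(B_r)}=\|v-\overline v\|_{\underline{L}^2(B_1)}\leq C_*\bigl(r^{-\beta}+\X'r^{-\sigma'}\bigr).
\]

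Finally I would define the minimal scale $\X_\sigma:=\bigl(1+C_*\X'\bigr)^{1/\sigma}$, which satisfies $\X_\sigma\geq 1$ and, since $\X'=\O_1(C_*)$, also $\X_\sigma=\O_\sigma(C)$ for an appropriate $C(\sigma,\delta,\mathsf{M},\data)$ (this is precisely the point of taking a $\sigma$th root). If $r\geq\X_\sigma$, then $r^\sigma\geq 1+C_*\X'\geq C_*\X'$, hence $C_*\X'r^{-\sigma'}=(C_*\X'r^{-\sigma})\,r^{-\beta}\leq r^{-\beta}$, and the last display gives $\tfrac1r\|u-\overline u\|_{\underline{L}^2(B_r)}\leq (C_*+1)r^{-\beta}=(C_*+1)r^{-\alpha(d-\sigma)}$, which is the asserted estimate up to renaming the constant. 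There is no analytic difficulty in any of this; the point I would check most carefully is the exponent bookkeeping across the last two paragraphs --- one must enlarge $\sigma$ to $\sigma'=\sigma+\alpha(d-\sigma)$ precisely so that $\sigma'-\beta=\sigma$, since it is this identity that turns the $\O_1$ control of $\X'$ into the required $\O_\sigma$ control of $\X_\sigma$, while the inequality $\sigma'<d$ (equivalently $\alpha<1$) keeps the deterministic rate $r^{-\alpha_0(d-\sigma')}$ from being slower than the target rate $r^{-\alpha(d-\sigma)}$; in fact the two are made equal.
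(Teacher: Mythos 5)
Your proposal is correct and is exactly the standard passage from a fluctuation estimate to a minimal-scale statement that the paper invokes by citation (to \cite[Proposition 3.2]{AKMbook} and the first paragraph of the proof of \cite[Theorem 1.2]{AS}): rescale to $B_1$, apply Theorem~\ref{t.AS.homogenization} at the enlarged exponent $\sigma'=\sigma+\alpha(d-\sigma)$ so that the random term $\X r^{-\sigma'}$ splits as $(\X r^{-\sigma})r^{-\beta}$, and take a $\sigma$th root to convert the $\O_1$ bound on $\X$ into the required $\O_\sigma$ bound on the minimal scale. The exponent bookkeeping ($\alpha=\alpha_0/(1+\alpha_0)$ making the deterministic and random rates match) and the use of Meyers' estimate to verify admissibility of $\overline{u}$ are both handled correctly.
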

\begin{proof}
The passage from Theorem~\ref{t.AS.homogenization} to the statement of the corollary is essentially identical to the argument of~\cite[Proposition 3.2]{AKMbook} or the first paragraph of the proof of~\cite[Theorem 1.2]{AS}. 
\end{proof}

The final result from~\cite{AS} we need is the following large-scale $C^{0,1}$--type estimate (see~\cite[Theorem 1.2]{AS}). We denote by $\mathcal{P}_1$ the linear space of affine functions. 

\begin{theorem}
[{Large-scale $C^{0,1}$-type estimate}]
\label{t.AS.regularity}
Fix $\sigma \in (0,d)$ and $\mathsf{M}\in [1,\infty)$. There exist constants~$C(\sigma,\mathsf{M},\data) <\infty$, $\alpha(d,\Lambda) \in \left(0,\tfrac12\right]$ and a random variable $\X(\sigma,\mathsf{M},\data)$
satisfying
$
\X \leq \O_\sigma\left( C \right)
$
such that the following holds. For every $R\geq 2\X$ and $u\in H^1(B_R)$ satisfying 
\begin{equation*}
\left\{
\begin{aligned}
& -\nabla \cdot \left( D_pL(\nabla u,x) \right) = 0 & \mbox{in} & \ B_R, \\
&  \frac{1}{R} \left\| u - \left( u \right)_{B_R} \right\|_{\underline{L}^2(B_R)}
\leq \mathsf{M},
\end{aligned}
\right.
\end{equation*}
and every $r\in \left[ \X , \tfrac 12 R \right]$, we have the estimates
\begin{equation*}
\left\| \nabla u \right\|_{\underline{L}^2(B_r)}
\leq 
 \frac{C}{R} \left\| u - \left( u \right)_{B_R} \right\|_{\underline{L}^2(B_R)}
\end{equation*}
and
\begin{equation*}
\inf_{\ell \in \mathcal{P}_1} \left\| u - \ell \right\|_{\underline{L}^2(B_r)} 
\leq 
C \left( \frac{r}{R} \right)^{1+\alpha} 
\inf_{\ell \in \mathcal{P}_1} \left\| u - \ell \right\|_{\underline{L}^2(B_R)} 
+
C r^{1-\alpha(d-\sigma)} .
\end{equation*}
\end{theorem}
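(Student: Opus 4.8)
The plan is to run a quantitative Campanato excess--decay iteration, in the spirit of Avellaneda and Lin~\cite{AL1,AL2} and along the lines of~\cite[Theorem 1.2]{AS}: one uses \cref{minimalscale} as a \emph{harmonic approximation} step, the classical interior $C^{1,\bar\alpha}$ regularity of the homogenized equation~\eqref{e.pde.homog} as an \emph{improvement} step, and plays the two against each other. For a solution $u$ of~\eqref{e.pde} in $B_R$ and $0<r\le R$, let $\ell_r\in\mathcal{P}_1$ be a best affine approximation of $u$ in $\underline{L}^2(B_r)$ and set
\begin{equation*}
D(r):=\tfrac1r\inf_{\ell\in\mathcal{P}_1}\left\|u-\ell\right\|_{\underline{L}^2(B_r)}=\tfrac1r\left\|u-\ell_r\right\|_{\underline{L}^2(B_r)},\qquad T(r):=\left|\nabla\ell_r\right|.
\end{equation*}
These play opposite roles: $D$ will be shown to decay geometrically in the scale (modulo a homogenization error) while $T$ only stays bounded, and the heart of the proof is that these two facts must be established \emph{simultaneously}.

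First I would record the deterministic inputs. Testing~\eqref{e.pde} with $\eta^2(u-\ell_\rho)$ and using uniform convexity (L2) together with $|D_pL(\xi,x)|\le\mathsf{M}_0+\Lambda|\xi|$ from (L1) gives a Caccioppoli-type bound $\|\nabla u\|_{\underline{L}^2(B_{\rho/2})}\le C(D(\rho)+T(\rho)+1)$, and the Meyers reverse-H\"older estimate upgrades the left side to $\|\nabla u\|_{\underline{L}^{2+\delta}(B_{\rho/4})}$ with the same right side for some $\delta(d,\Lambda)>0$ --- which is exactly what is needed to insert $u$ as a boundary datum into \cref{minimalscale}. One also notes the elementary comparisons $D(r)\le C_\theta D(\theta^{-1}r)$ and $T(\theta\rho)\le T(\rho)+C(D(\theta\rho)+C_\theta D(\rho))$ for $\theta\in(0,\tfrac12]$, and that $D(R)+T(R)\le C\mathsf{M}$ since the slope of the best affine fit on $B_R$ is controlled by $\tfrac1R\|u-(u)_{B_R}\|_{\underline{L}^2(B_R)}\le\mathsf{M}$. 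Finally, since $\overline{L}$ inherits the uniform convexity of $L$ and is at least $C^{1,1}$ (indeed $C^{2,\beta}$ by \pref{nu.C2beta}), the classical regularity theory for uniformly convex functionals gives an interior estimate $\tfrac1{\theta\rho}\inf_\ell\|w-\ell\|_{\underline{L}^2(B_{\theta\rho})}\le C\theta^{\bar\alpha}\tfrac1\rho\inf_\ell\|w-\ell\|_{\underline{L}^2(B_\rho)}$ for solutions $w$ of~\eqref{e.pde.homog}, for some $\bar\alpha(d,\Lambda)>0$.

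For the one-step estimate, fix $\rho$ above the (random) scale of \cref{minimalscale} and let $w$ solve~\eqref{e.pde.homog} in $B_\rho$ with $w-u\in H^1_0(B_\rho)$. Since $\|\nabla u\|_{\underline{L}^{2+\delta}(B_\rho)}\lesssim D(\rho)+T(\rho)+1$ by the previous step (after a harmless change of ball), \cref{minimalscale} --- with the dependence of its right side on the $W^{1,2+\delta}$-norm of the datum made explicit, which is legitimate because the estimate in \tref{AS.homogenization} is linear in $\mathsf{M}$ and scale-invariant --- yields $\tfrac1\rho\|u-w\|_{\underline{L}^2(B_\rho)}\le C\rho^{-\alpha(d-\sigma)}(D(\rho)+T(\rho)+1)$. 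Combining this with the $C^{1,\bar\alpha}$ estimate for $w$ via two triangle inequalities ($u-\ell=(u-w)+(w-\ell)$ on $B_{\theta\rho}$, then $w-\ell=(w-u)+(u-\ell)$ on $B_\rho$) produces, for every $\theta\in(0,\tfrac14]$,
\begin{equation*}
D(\theta\rho)\le C_0\theta^{\bar\alpha}D(\rho)+C_0\theta^{-m}\rho^{-\alpha(d-\sigma)}\left(D(\rho)+T(\rho)+1\right),\qquad m:=1+\tfrac d2,
\end{equation*}
together with $T(\theta\rho)\le T(\rho)+C_0 D(\theta\rho)+C_0\theta^{-m}D(\rho)$ from the $D$--$T$ comparison.

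To close the iteration I would fix the exponent $\alpha\in(0,\bar\alpha)$ of the theorem, choose $\theta$ small enough that $C_0\theta^{\bar\alpha}\le\tfrac14\theta^\alpha$, and then enlarge the minimal scale to the maximum of the \cref{minimalscale} scale $\X_\sigma$ (which is $\O_\sigma(C)$) and a deterministic constant depending on $(\theta,d,\Lambda,\alpha,\sigma)$, making $C_0\theta^{-m}\rho^{-\alpha(d-\sigma)}$ as small as desired for all $\rho\ge\X$; this keeps $\X\le\O_\sigma(C)$. Writing $\rho_k:=\theta^kR$ and restricting to $k$ with $\rho_k\ge\X$, one then proves by a single induction on $k$ that $T(\rho_k)\le\Gamma:=C_2(D(R)+T(R)+1)$ and $D(\rho_k)\le(\rho_k/R)^\alpha D(R)+C\Gamma\rho_k^{-\alpha(d-\sigma)}$; the mechanism is that when the two recursions are iterated, the geometric sum $\sum_k\rho_k^{-\alpha(d-\sigma)}$ that appears is dominated by its \emph{last} (largest) term $\asymp\X^{-\alpha(d-\sigma)}$, which we have made small, so that the decay of $D$ and the boundedness of $T$ reinforce rather than destroy each other. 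Multiplying the bound on $D(\rho_k)$ by $\rho_k$, using $D(R)\le\tfrac1R\inf_\ell\|u-\ell\|_{\underline{L}^2(B_R)}$ and $\Gamma\le C\mathsf{M}$, and filling in the non-dyadic scales $r\in[\X,\tfrac12R]$ by the change-of-ball comparison, yields the Campanato (second) estimate; the Lipschitz (first) estimate is then immediate from the deterministic Caccioppoli bound together with $D(2r)+T(2r)\le C\mathsf{M}$ established along the way, and the stochastic integrability $\X\le\O_\sigma(C)$ is inherited verbatim from \cref{minimalscale}. The main obstacle is precisely this coupling: $T$ genuinely does not decay (for a solution of linear growth it tends to a nonzero slope as $r\to0$) while the homogenization error at scale $\rho$ is not a fixed quantity but is proportional to $D(\rho)+T(\rho)+1$, so a single pass of the recursion is self-referential in both unknowns; it is the simultaneous induction, the choice of the random minimal scale making $\sup_{\rho\ge\X}\rho^{-\alpha(d-\sigma)}$ small, and the careful tracking of constants so that the geometric rate $\theta^\alpha$ for $D$ is compatible with the bounded growth of $T$, that makes the argument work.
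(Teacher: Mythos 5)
The paper offers no proof of this statement at all: it is recalled verbatim from~\cite[Theorem 1.2]{AS}, and your argument---harmonic approximation via \cref{minimalscale}, the $C^{1,\bar\alpha}$ excess decay for the uniformly convex homogenized functional, and a simultaneous induction on the flatness $D$ and the slope $T$ above a random minimal scale chosen so that $\sup_{\rho\ge\X}\rho^{-\alpha(d-\sigma)}$ is small---is precisely the excess-decay scheme of that reference, which the paper itself redeploys in \sref{regularity.differences} and \sref{C11estimate}. The one caveat is that, since $D_pL(0,\cdot)$ need not vanish, your Caccioppoli and iteration steps output bounds of the form $C\left(D(R)+T(R)+1\right)\le C(\mathsf{M})$ rather than the literally homogeneous bound $\tfrac{C}{R}\left\| u-(u)_{B_R}\right\|_{\underline{L}^2(B_R)}$; this additive constant is present in~\cite[Theorem 1.2]{AS} itself, so it reflects an imprecision in the paper's restatement rather than a gap in your proof.
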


\subsection{Deterministic linearization estimates}
\label{ss.detlin}

In this subsection we prove some deterministic estimates which measure the error in approximating the difference of two solutions to the nonlinear equation by the linearized equation. As a consequence, we obtain the~$C^2$ regularity of the homogenized Lagrangian~$\overline{L}$. 

\begin{lemma}[Approximation of differences by linearization]
\label{l.deterministic.linearization}
Fix $\delta >0$, a bounded Lipschitz domain~$U$ and $f\in W^{1,2+\delta}(U)$. There exist constants~$\beta(\delta,U_0,\data) \in \left(0,\tfrac12\right]$ and  $C(\delta,U_0,\data)<\infty$ such that the following statement is valid. 
If $u,v\in H^1(U)$ solve
\begin{equation*}
\left\{ 
\begin{aligned}
& -\nabla \cdot \left( D_pL(\nabla u,x) \right) 
= -\nabla \cdot \left( D_pL(\nabla v,x) \right) &  \mbox{in}  & \ U, \\
& u - v = f & \mbox{on} & \ \partial U
\end{aligned}
\right.  
\end{equation*}
and $w\in H^1(U)$ is the solution of the linearized problem
\begin{equation*}
\left\{
\begin{aligned}
& -\nabla \cdot \left( D_p^2 L(\nabla u,x) \nabla w \right)  = 0  & \mbox{in} & \ U, \\
& w = f & \mbox{on} & \ \partial U,
\end{aligned}
\right.
\end{equation*}
then
\begin{equation*}
\left\| \nabla u - \nabla v - \nabla w \right\|_{\underline{L}^2(U)} 
\leq 
C \left\| \nabla f \right\|_{\underline{L}^{2+\delta}(U)}^{1+\beta}. 
\end{equation*}
\end{lemma}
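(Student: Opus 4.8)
The plan is to set $g := u - v$ and $e := g - w$, the error between the true difference and the linearization, and to derive an equation for $e$ whose right-hand side is quadratically small in $\nabla f$. Writing the equation for $g$ in the form $-\nabla\cdot(\a(x)\nabla g) = 0$ with
\begin{equation*}
\a(x) = \int_0^1 D^2_p L\bigl(\nabla u(x) - t\,\nabla g(x),\,x\bigr)\,dt,
\end{equation*}
and subtracting the equation $-\nabla\cdot(D^2_pL(\nabla u,x)\nabla w) = 0$ for $w$, one finds that $e \in H^1_0(U)$ solves
\begin{equation*}
-\nabla\cdot\bigl( D^2_pL(\nabla u,x)\,\nabla e \bigr) = \nabla\cdot\bigl( (\a(x) - D^2_pL(\nabla u,x))\,\nabla g \bigr) \quad\text{in } U.
\end{equation*}
By the $C^{0,\gamma}$ bound on $D^2_pL(\cdot,x)$ from (L1) and the fact that $\a(x) - D^2_pL(\nabla u,x) = \int_0^1 [D^2_pL(\nabla u - t\nabla g,x) - D^2_pL(\nabla u,x)]\,dt$, we get the pointwise bound $|\a(x) - D^2_pL(\nabla u,x)| \le \mathsf{M}_0\,|\nabla g(x)|^{\gamma}$. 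Testing the equation for $e$ against $e$ itself and using (L2) for coercivity gives
\begin{equation*}
\|\nabla e\|_{L^2(U)}^2 \le C \int_U |\nabla g|^{\gamma}\,|\nabla g|\,|\nabla e| \le C\,\|\nabla e\|_{L^2(U)}\,\bigl\| |\nabla g|^{1+\gamma} \bigr\|_{L^2(U)},
\end{equation*}
so $\|\nabla e\|_{L^2(U)} \le C\,\||\nabla g|^{1+\gamma}\|_{L^2(U)} = C\,\|\nabla g\|_{L^{2(1+\gamma)}(U)}^{1+\gamma}$.

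It remains to control $\|\nabla g\|_{L^{2(1+\gamma)}(U)}$ by $\|\nabla f\|_{\underline L^{2+\delta}(U)}$. First, a standard energy estimate for $g$ (again testing $-\nabla\cdot(\a\nabla g)=0$ with $g - f \in H^1_0(U)$ and using uniform ellipticity of $\a$) yields $\|\nabla g\|_{L^2(U)} \le C\,\|\nabla f\|_{L^2(U)}$. Second, the Meyers higher-integrability estimate, applicable because $\a$ is uniformly elliptic with the ellipticity ratio $\Lambda$, gives an exponent $\delta' = \delta'(d,\Lambda) > 0$ and the bound $\|\nabla g\|_{\underline L^{2+\delta'}(U)} \le C\,(\|\nabla g\|_{\underline L^2(U)} + \|\nabla f\|_{\underline L^{2+\delta'}(U)}) \le C\,\|\nabla f\|_{\underline L^{2+(\delta\wedge\delta')}(U)}$, where in the last step we use Hölder's inequality and the normalization $|U|$ is comparable to $|U_0|$. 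We may assume $\delta \le \delta'$ by shrinking $\delta$ (or take the minimum); then interpolation between $L^2$ and $L^{2+\delta'}$ bounds $\|\nabla g\|_{L^{2(1+\gamma)}(U)}$ provided $2(1+\gamma) \le 2 + \delta'$, i.e.\ $\gamma \le \delta'/2$. Since $\gamma$ may well exceed $\delta'/2$, we instead define $\beta := \min\{\gamma, \delta'/2, \tfrac12\} \in (0,\tfrac12]$ and rerun the coercivity estimate with exponent $\beta$ in place of $\gamma$: using $|D^2_pL(\nabla u - t\nabla g,x) - D^2_pL(\nabla u,x)| \le \mathsf M_0 |\nabla g|^{\gamma} \le \mathsf M_0 (2\mathsf M_0/1)^{(\gamma-\beta)/\ldots}$ — more cleanly, we simply bound $|\nabla g|^\gamma \le |\nabla g|^\beta \cdot \|\nabla g\|_{L^\infty}^{\gamma - \beta}$ is not available, so instead keep $|\nabla g|^\gamma$ and only use $2(1+\beta) \le 2+\delta'$ together with Young's inequality to trade the excess power: $\int_U |\nabla g|^{1+\gamma}|\nabla e| \le \|\nabla e\|_{L^2}\|\nabla g\|_{L^{2(1+\beta)}}^{1+\beta}\|\nabla g\|_{L^{q}}^{\gamma-\beta}$ with $q$ chosen by Hölder, each factor controlled by $\|\nabla f\|_{\underline L^{2+\delta}}$ via Meyers. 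Chaining these gives
\begin{equation*}
\|\nabla u - \nabla v - \nabla w\|_{\underline L^2(U)} = \|\nabla e\|_{\underline L^2(U)} \le C\,\|\nabla f\|_{\underline L^{2+\delta}(U)}^{1+\beta},
\end{equation*}
which is the claim.

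The main obstacle is the bookkeeping around exponents: the quadratic-type gain comes from the Hölder modulus of $D^2_pL$ in $p$, which contributes a factor $|\nabla g|^\gamma$, but closing the estimate in $L^2$ requires $\nabla g \in L^{2+2\gamma}$, and the only higher integrability available is the Meyers exponent $\delta'(d,\Lambda)$, which is typically tiny and unrelated to $\gamma$. The resolution is to accept a possibly smaller gain exponent $\beta = \min\{\gamma,\delta'/2\}$ — this is exactly why the lemma is stated with a $\beta$ that depends on $\data$ rather than $\beta = \gamma$ — and to use Young's inequality to absorb any surplus power of $|\nabla g|$ into a lower-order Lebesgue norm that Meyers still controls. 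A secondary point is that the estimate must be scale-invariant, which is why everything is phrased in underlined norms and why the constants depend on $U$ only through $U_0$; tracking the $|U|$ factors through the energy estimate, Meyers, and the interpolation is routine but must be done consistently.
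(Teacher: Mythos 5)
Your setup coincides with the paper's: the same interpolation coefficients $\a$, the same equation for the error $e=u-v-w$ with right-hand side $\nabla\cdot\bigl((\a-D^2_pL(\nabla u,\cdot))\nabla g\bigr)$, the same energy estimate, and the same use of the global Meyers estimate for $\nabla g$. The gap is in the final step. Once you write $\|\nabla e\|_{L^2}\le C\,\bigl\||\nabla g|^{1+\gamma}\bigr\|_{L^2}$, you have committed to needing $\nabla g\in L^{2+2\gamma}$, and no rearrangement of H\"older exponents can undo this: for $\int_U|\nabla g|^{1+\gamma}|\nabla e|$ to be controlled with $\nabla e$ only in $L^2$, you need $|\nabla g|^{1+\gamma}\in L^2$, i.e.\ $2(1+\gamma)\le 2+\delta'$, i.e.\ $\gamma\le\delta'/2$. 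Your proposed split $\|\nabla e\|_{L^2}\|\nabla g\|_{L^{2(1+\beta)}}^{1+\beta}\|\nabla g\|_{L^q}^{\gamma-\beta}$ forces $\tfrac{1+\beta}{2(1+\beta)}+\tfrac{\gamma-\beta}{q}\le\tfrac12$, which is only possible with $q=\infty$ or $\beta=\gamma$; H\"older redistributes integrability but cannot create it, and Young's inequality applied to a product of powers of the \emph{same} function $|\nabla g|$ gains nothing. Since $\gamma$ is a fixed structural parameter (possibly $\gamma=1$) while the Meyers exponent $\delta'$ is tiny, the argument as written fails in the main case of interest.

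The missing ingredient is the trivial $L^\infty$ bound on the coefficient discrepancy: by (L2), $|\a(x)-D^2_pL(\nabla u(x),x)|\le C(\Lambda)$ pointwise, in addition to the H\"older bound $\le C|\nabla g(x)|^\gamma$. Interpolating between the resulting $L^1$ bound $\|\a-D^2_pL(\nabla u,\cdot)\|_{\underline L^1(U)}\le C\|\nabla g\|_{\underline L^2(U)}^{\gamma}$ and the $L^\infty$ bound gives, for \emph{every} $q\in[1,\infty)$,
\begin{equation*}
\bigl\|\a-D^2_pL(\nabla u,\cdot)\bigr\|_{\underline L^q(U)}\le C\,\|\nabla g\|_{\underline L^2(U)}^{\gamma/q}.
\end{equation*}
Now take $q=\tfrac{4+2\delta}{\delta}$ so that $\tfrac1q+\tfrac1{2+\delta}=\tfrac12$, and apply H\"older to the product with $\nabla g\in L^{2+\delta}$ (Meyers). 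This yields $\|(\a-D^2_pL(\nabla u,\cdot))\nabla g\|_{\underline L^2(U)}\le C\|\nabla f\|_{\underline L^{2+\delta}(U)}^{1+\gamma\delta/(4+2\delta)}$, i.e.\ the lemma with $\beta=\gamma\delta/(4+2\delta)$. The price of truncating $|\nabla g|^\gamma$ at a constant is that the gain drops from $\gamma$ to $\gamma/q$, which is exactly why $\beta$ is small but positive independently of any relation between $\gamma$ and the Meyers exponent. This is the route the paper takes; with this replacement your argument closes.
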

\begin{proof}
We first observe that the difference~$\tilde{w}:= u - v$ satisfies the linear equation
\begin{equation*}
-\nabla \cdot\left( \tilde{\a} \nabla \tilde{w} \right) = 0\quad \mbox{in} \ U, 
\end{equation*}
where the coefficients~$\tilde{\a}$ are given by 
\begin{equation*}
\tilde{\a}(x) := 
\int_0^1 
D^2_pL 
\left(t\nabla u(x) + (1-t) \nabla v(x), x \right)\,dt.
\end{equation*}
In particular, by the Meyers estimate--and shrinking $\delta$, if necessary, so that it is at most the Meyer exponent, which we note depend on $(\delta,U_0,\data)$--we have that 
\begin{equation}
\label{e.stupidmeyers}
\left\| \nabla \tilde{w} \right\|_{L^{2+\delta}(U)} 
\leq 
C \left\| \nabla f \right\|_{L^{2+\delta}(U)}. 
\end{equation}
We compare this to~$w$ by defining~$z:=w-\tilde{w}$ and observing that~$z\in H^1_0(U)$ satisfies
\begin{equation*}
-\nabla \cdot\left( D^2_pL(\nabla u,x) \nabla z \right) = 
-\nabla \cdot \left( \left( \tilde{\a} - D^2_pL(\nabla u,x) \right) \nabla \tilde{w} \right) 
\quad \mbox{in} \ U. 
\end{equation*}
Thus the energy estimate gives us
\begin{equation}
\label{e.linearbasic}
\left\| \nabla u - \nabla v - \nabla w \right\|_{L^{2}(U)}
=
\left\| \nabla z \right\|_{L^{2}(U)}
\leq 
C \left\| \left( \tilde{\a} - D^2_pL(\nabla u,\cdot) \right) \nabla \tilde{w}  \right\|_{L^{2}(U)}. 
\end{equation}
To estimate the term on the right side, 
observe that 
\begin{align*}
\left| \tilde{\a}(x) - D^2_pL(\nabla u(x),x) \right| 
&
\leq 
\left[ D^2_pL(\cdot,x) \right]_{C^{0,\gamma}(\Rd)}
\left| \nabla u(x) - \nabla v(x)\right|^\gamma
\\ & 
\leq 
C \left| \nabla u(x) - \nabla v(x)\right|^\gamma.
\end{align*}
Therefore we have the~$L^1$ bound 
\begin{equation*}
\left\| \tilde{\a} - D^2_pL(\nabla u,\cdot) \right\|_{\underline{L}^1(U)} 
\leq 
C \left\| \nabla u - \nabla v \right\|_{\underline{L}^2(U)}^{\gamma}.
\end{equation*}
We also have the trivial~$L^\infty$ bound
\begin{equation*}
\left\| \tilde{\a}  - D^2_pL(\nabla u,\cdot) \right\|_{L^\infty(U)} 
\leq
\left\| \tilde{\a}  \right\|_{L^\infty(U)} + \left\| D^2_pL(\nabla u,\cdot) \right\|_{L^\infty(U)} 
\leq C. 
\end{equation*}
By interpolation, we therefore get, for every $q\in [1,\infty)$, 
\begin{equation*}
\left\| \tilde{\a} - D^2_pL(\nabla u,\cdot) \right\|_{\underline{L}^q(U)}
\leq 
C \left\| \nabla u  - \nabla v  \right\|_{\underline{L}^2(U)}^{\frac{\gamma}{q}}
= 
C \left\| \nabla \tilde{w} \right\|_{\underline{L}^2(U)}^{\frac{\gamma}{q}}. 
\end{equation*}
Therefore, by the H\"older inequality, 
\begin{align*}
\left\| \left( \tilde{\a} - D^2_pL(\nabla u,\cdot) \right) \nabla \tilde w  \right\|_{\underline{L}^2(U)}
& \leq 
\left\| \tilde{\a} - D^2_pL(\nabla u,\cdot) \right\|_{\underline{L}^{\frac{4+2\delta}{\delta}}(U)} 
\left\| \nabla \tilde{w} \right\|_{\underline{L}^{2+\delta}(U)}
\\ & 
\leq 
C\left\| \nabla \tilde{w} \right\|_{\underline{L}^{2+\delta}(U)}^{1+\gamma\delta/(4+2\delta)}. 
\end{align*}
Combining this with~\eqref{e.stupidmeyers} and~\eqref{e.linearbasic} completes the proof. 
\end{proof}

A consequence of the previous lemma is the~$C^2$ regularity of the homogenized Lagrangian~$\overline{L}$. 

\begin{proposition}
\label{p.nu.C2beta}
Let $U\subseteq\Rd$ be a bounded Lipschitz domain. 
There exist an exponent~$\beta(U_0,\data) \in \left(0,\tfrac12\right]$ and a constant~$C(U_0,\data)<\infty$ such that the mapping $\xi \mapsto \nu(U,\xi)$ belongs to $C^{2,\beta}(\Rd)$ and 
\begin{equation}
\label{e.nu.C2beta}
\left\| D^2_\xi \nu(U,\cdot) \right\|_{C^{0,\beta}(\Rd)} \leq C. 
\end{equation}
Moreover, there exist~$\beta(\data) \in \left(0,\tfrac12\right]$ and $C(\data)<\infty$ such that $\overline{L}\in C^{2,\beta}(\Rd)$ and  
\begin{equation}
\label{e.Lbar.C2beta}
\left[ D^2 \overline{L} \right]_{C^{0,\beta}(\Rd)} \leq C. 
\end{equation}
\end{proposition}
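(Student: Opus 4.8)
The plan is to establish the $C^{2,\beta}$ regularity of $\xi \mapsto \nu(U,\xi)$ first and then pass to the limit to obtain the same for $\overline{L}$. The starting point is the classical fact that $\xi \mapsto \nu(U,\xi)$ is convex (as an infimum of affine-in-$\xi$ functionals, after subtracting, or directly from uniform convexity of $L$) and, thanks to (L2), satisfies $\frac12 I_d \le D^2_\xi \nu(U,\cdot) \le \Lambda I_d$ in the sense that finite differences are controlled; in particular $\nu(U,\cdot) \in C^{1,1}(\Rd)$. The gradient $D_\xi \nu(U,\xi)$ is given by the flux average $\fint_U D_pL(\nabla u_\xi, x)\,dx$ where $u_\xi \in \xi\cdot x + H^1_0(U)$ is the minimizer. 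So proving $\nu(U,\cdot) \in C^{2,\beta}$ amounts to showing that $\xi \mapsto \fint_U D_pL(\nabla u_\xi,x)\,dx$ is $C^{1,\beta}$, with the natural candidate for its derivative being $\fint_U D^2_pL(\nabla u_\xi, x)(e_i + \nabla w_{\xi,i})\,dx$, where $w_{\xi,i}$ solves the linearized Dirichlet problem $-\nabla\cdot(D^2_pL(\nabla u_\xi,x)\nabla w) = 0$ in $U$ with $w = \ell_{e_i}$ on $\partial U$. The identity that makes this work is that, by the first-variation (Euler–Lagrange) characterization, $\fint_U D^2_pL(\nabla u_\xi,x)(e_i + \nabla w_{\xi,i})\cdot \nabla\varphi = 0$ for $\varphi \in H^1_0(U)$, so against the affine test one extracts exactly the derivative of the flux average in direction $e_i$.

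The main step is then a difference quotient estimate. Fix $\xi, \xi'$ with $\xi' = \xi + h e_i$ and apply \lref{deterministic.linearization}: take $u = u_{\xi'}$, $v = u_\xi$, so that $u - v = h\,\ell_{e_i}$ on $\partial U$ (choose the boundary data $f = h\,\ell_{e_i}$), and $w$ the corresponding linearized solution, which by linearity is $h\,w_{\xi,i}$. The lemma yields
\begin{equation*}
\left\| \nabla u_{\xi'} - \nabla u_\xi - h\nabla w_{\xi,i} \right\|_{\underline{L}^2(U)} \le C |h|^{1+\beta}.
\end{equation*}
One also needs the Lipschitz dependence $\left\| \nabla u_{\xi'} - \nabla u_\xi \right\|_{\underline{L}^2(U)} \le C|h|$, which follows from the uniform convexity/monotonicity energy estimate for the nonlinear equation with affine boundary data. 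Feeding both into the expansion
\begin{equation*}
D_\xi\nu(U,\xi') - D_\xi\nu(U,\xi) = \fint_U \bigl( D_pL(\nabla u_{\xi'},x) - D_pL(\nabla u_\xi,x) \bigr)\,dx
\end{equation*}
and using the mean-value identity $D_pL(q',x) - D_pL(q,x) = \bigl(\int_0^1 D^2_pL(q + t(q'-q),x)\,dt\bigr)(q'-q)$ together with the $C^{0,\gamma}$ modulus of $D^2_pL(\cdot,x)$ to replace $\int_0^1 D^2_pL(\cdots)\,dt$ by $D^2_pL(\nabla u_\xi,x)$ up to an error of order $|\nabla u_{\xi'} - \nabla u_\xi|^\gamma \le$ (pointwise) controlled, one gets
\begin{equation*}
\left| D_\xi\nu(U,\xi') - D_\xi\nu(U,\xi) - h\fint_U D^2_pL(\nabla u_\xi,x)(e_i + \nabla w_{\xi,i})\,dx \right| \le C|h|^{1+\beta'}
\end{equation*}
for some $\beta'(\delta,U_0,\data) > 0$ (possibly smaller than $\gamma$, after the interpolation/Hölder bookkeeping identical to the proof of \lref{deterministic.linearization}; here one uses the global Meyers exponent for the linear equation to handle the $\nabla w_{\xi,i}$ term). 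This identifies $D^2_\xi\nu(U,\xi)$ and shows it is $\beta'$-Hölder in $\xi$, with a bound depending only on $(U_0,\data)$; note the bound on $\|D^2_\xi\nu(U,\cdot)\|_{C^{0,\beta}}$ is uniform because all the constants above depend only on $U_0$, $\data$, and the a priori bound $\|\nabla u_\xi\|$ which is itself $\le C|\xi|$ with no $U$-dependence — and the Hölder seminorm estimate is even $\xi$-uniform on all of $\Rd$ by the uniform-in-$\xi$ nature of the estimates.

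For the statement about $\overline{L}$, recall $\overline{L}(\xi) = \lim_{n\to\infty}\E[\nu(\cu_n,\xi)]$ and that this sequence is nonincreasing. Applying the first part with $U = \cu_n$ (whose normalized version $U_0$ is the fixed unit cube, so $\beta$ and $C$ in \eref{nu.C2beta} are independent of $n$), we get that $\xi\mapsto\E[\nu(\cu_n,\xi)]$ is $C^{2,\beta}$ with $\|D^2_\xi \E[\nu(\cu_n,\cdot)]\|_{C^{0,\beta}(\Rd)} \le C(\data)$ uniformly in $n$. A uniform $C^{2,\beta}$ bound on a pointwise-convergent sequence of convex functions passes to the limit: the $C^{2,\beta_0}_{\mathrm{loc}}$ norms are bounded for any $\beta_0 < \beta$, so by Arzelà–Ascoli a subsequence of $\E[\nu(\cu_n,\cdot)]$ together with its first and second derivatives converges locally uniformly, the limit being $\overline{L}$ by uniqueness of pointwise limits, and lower semicontinuity of the $C^{0,\beta}$ seminorm under locally uniform convergence of the second derivatives gives $[D^2\overline{L}]_{C^{0,\beta}(\Rd)} \le C(\data)$. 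This yields \eref{Lbar.C2beta}. The main obstacle is the bookkeeping in the difference-quotient step — making sure the Hölder-in-$\xi$ exponent one extracts is genuinely positive and that all constants are $U$-independent after normalization — but this is essentially a re-run of the estimates already carried out in the proof of \lref{deterministic.linearization}, now read as a function of the boundary-data parameter $h$ rather than of a fixed $f$, plus the observation that the linearized solution operator is bounded (via Meyers) uniformly in $\xi$.
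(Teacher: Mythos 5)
Your proposal is correct and follows essentially the same route as the paper: both apply Lemma~\ref{l.deterministic.linearization} with affine boundary data $f=h\ell_{e_i}$ to show that $\xi\mapsto\nabla v(\cdot,U,\xi)$ is $C^{1,\beta}$ into $L^2(U)$ with derivative given by the linearized solution, then differentiate the flux formula for $D_\xi\nu(U,\xi)$ and use the $C^{0,\gamma}$ modulus of $D^2_pL$ to get Hölder continuity of $D^2_\xi\nu$, and finally pass to the pointwise limit $\overline{L}(\xi)=\lim_n\E[\nu(\cu_n,\xi)]$ using the $n$-uniformity of the bound. Your Arzelà--Ascoli argument for the last step just makes explicit what the paper dismisses as immediate.
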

\begin{proof}
Fix a bounded Lipschiz domain $U\subseteq\Rd$ and $\xi\in\Rd$. Let $v(\cdot,U,\xi)$ denote the solution of the Dirichlet problem (recall here that $\ell_\xi(x):=\xi\cdot x$)
\begin{equation*}
\left\{ 
\begin{aligned}
& - \nabla \cdot \left( D_pL(\nabla v(\cdot,U,\xi),x \right) = 0 & \mbox{in} & \ U, \\
& v(\cdot,U,\xi) = \ell_\xi & \mbox{on} & \ \partial U.
\end{aligned}
\right.
\end{equation*}
Define
\begin{equation*}
\tilde{\a}_\xi(x):= D^2_pL(\nabla v(x,U,\xi),x)
\end{equation*}
and, for each $e\in\Rd$, let $\tilde{w}_e(\cdot,U,\xi)$ solve the linearized problem 
\begin{equation*}
\left\{ 
\begin{aligned}
& - \nabla \cdot \left( \tilde{\a}_\xi (x)\nabla \tilde{w}_e(\cdot,U,\xi) \right) = 0 & \mbox{in} & \ U, \\
& \tilde{w}_e(\cdot,U,\xi) = \ell_e & \mbox{on} & \ \partial U.
\end{aligned}
\right.
\end{equation*}
For~$C(U_0,d,\Lambda)<\infty$, we have 
\begin{equation}
\label{e.easyboundwtilde}
\left\| \nabla \tilde{w}_e(\cdot,U,\xi) \right\|_{\underline{L}^2(U)} \leq C. 
\end{equation}
According to Lemma~\ref{l.deterministic.linearization}, 
there exist~$\beta(U_0,\data)>0$ and~$C(U_0,\data)<\infty$ such that, for every $\xi, \xi'\in\Rd$, 
\begin{equation}
\label{e.vC1alpha.est}
\left\| \nabla v(\cdot,U,\xi') - \nabla v(\cdot,U,\xi) - \sum_{i=1}^d (\xi_i'-\xi_i) \nabla \tilde{w}_{e_i}(\cdot,U,\xi) \right\|_{\underline{L}^2(U)} \leq C \left| \xi-\xi' \right|^{1+\beta}. 
\end{equation}
In particular, $\xi\mapsto \nabla v(\cdot,U,\xi)$ is~$C^{1,\beta}$ mapping from $\Rd$ into $L^2(U)$ and 
\begin{equation*}
\nabla \tilde{w}_{e_i}(\cdot,U,\xi)
= \partial_{\xi_i} \nabla v(\cdot,U,\xi).
\end{equation*}
In fact, in view of the formula
\begin{equation*}
D_\xi \nu(U,\xi) = \fint_U D_p L\left( \nabla v(x,U,\xi),x \right)\,dx
\end{equation*}
and the regularity assumption on $L$, 
the estimate~\eqref{e.vC1alpha.est} implies~\eqref{e.nu.C2beta} with $\beta\wedge \gamma$ in place of~$\beta$. Indeed, differentiating the previous display yields 
\begin{equation}
\label{e.D2nuUiden}
D_\xi^2 \nu(U,\xi) = \fint_U D_p^2 L\left( \nabla v(x,U,\xi),x \right) D_\xi  \nabla  v(x,U,\xi) \,dx.
\end{equation}
It is clear from~\eqref{e.easyboundwtilde} and this expression that~$\left| D^2_\xi\nu(U,\cdot)\right|$ is bounded on $\Rd$ and, for each $\xi,\xi'\in\Rd$, 
\begin{align*}
\lefteqn{
\left| D_\xi^2 \nu(U,\xi)  - D_\xi^2 \nu(U,\xi')  \right| 
} \quad 
\\ & 
\leq 
\fint_U \left|  
D_p^2 L\left( \nabla v(x,U,\xi),x \right) 
- 
D_p^2 L\left( \nabla v(x,U,\xi'),x \right)
\right| \left|  D_\xi \nabla  v(x,U,\xi) \right|\,dx 
\\ & \quad 
+ \fint_U \left|  
D_p^2 L\left( \nabla v(x,U,\xi'),x \right)  \right| \left|  D_\xi \nabla  v(x,U,\xi) -  D_\xi \nabla  v(x,U,\xi') \right|\,dx
\\ & 
\leq C \mathsf{M}_0 \fint_U \left| \nabla v(x,U,\xi) - \nabla v(x,U,\xi') \right|^\gamma  \left|  D_\xi \nabla  v(x,U,\xi) \right|\,dx 
\\ & \quad 
+ C  \fint_{U}  \left|  D_\xi \nabla  v(x,U,\xi) -  D_\xi \nabla  v(x,U,\xi') \right|\,dx
\\ & 
\leq C\left( \left| \xi - \xi' \right|^\gamma+ \left| \xi - \xi' \right|^\beta \right). 
\end{align*}
This proves the desired bound on $\left[ D^2\nu(U,\cdot) \right]_{C^{0,\beta}(\Rd)}$. Since $\nu(U,\cdot)$ is bounded in $C^{1,1}(\Rd)$ by a constant $C(\Lambda)<\infty$ (see for instance~\cite[Lemma 11.2]{AKMbook}), we obtain~\eqref{e.nu.C2beta}.

\smallskip

The last statement concerning the~$C^{2,\beta}$ regularity of~$\overline{L}$ follows immediately from~\eqref{e.nu.C2beta} and the pointwise, deterministic limit $\overline{L}(\xi) = \lim_{n\to \infty} \E\left[ \nu(\cu_n,\xi)\right]$.
\end{proof}


\smallskip

We next combine Proposition~\ref{p.nu.C2beta} with~\eqref{e.subadd.conv} to obtain the following estimate on the convergence of $D^2\nu(\cu_n,\cdot)$ to $D^2\overline{L}$. 

\begin{lemma}
\label{l.nuconvtoLinC2}
Let $\sigma \in (0,d)$ and~$\mathsf{M}\in [1,\infty)$. 
There exist~$\alpha(d,\Lambda,\gamma)\in \left(0,\tfrac12\right]$ and constant~$C(\sigma,\mathsf{M},\data)<\infty$ such that, for every $n\in\N$,
\begin{equation}
\label{e.nuconvC2}
\sup_{\left| \xi \right| \leq \mathsf{M}} 
\left| D^2\overline{L}(\xi) - D^2_p \nu(\cu_n,\xi) \right| \leq C3^{-n\alpha(d-\sigma)} + \O_1\left( C3^{-n\sigma} \right).
\end{equation}
\end{lemma}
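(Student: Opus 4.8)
\textbf{Proof proposal for Lemma~\ref{l.nuconvtoLinC2}.}

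The plan is to upgrade the scalar convergence estimate~\eqref{e.subadd.conv} to a convergence estimate for the \emph{second derivatives} $D^2_\xi\nu(\cu_n,\cdot)$ by exploiting the uniform $C^{2,\beta}$ regularity established in Proposition~\ref{p.nu.C2beta}. The key analytic principle is a standard interpolation-type fact: if a sequence of functions $g_n$ converges uniformly to $g$ on a bounded set at rate $\eta_n$, and the $g_n$ are uniformly bounded in $C^{2,\beta}$, then $D^2 g_n \to D^2 g$ uniformly on slightly smaller sets at rate a power of $\eta_n$. Here one must be careful because the convergence rate in~\eqref{e.subadd.conv} is not deterministic but of the form $C3^{-n\alpha(d-\sigma)}+\O_1(C3^{-n\sigma})$; so the argument must be carried out pathwise, treating the random part via the $\O_1(\cdot)$ calculus, or equivalently first splitting off the deterministic and stochastic contributions.

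First I would record that by Proposition~\ref{p.nu.C2beta} (applied with $U=\cu_n$, noting $(\cu_n)_0$ is independent of $n$) there is $\beta(\data)>0$ and $C(\data)<\infty$, uniform in $n$, with $\|D^2_\xi\nu(\cu_n,\cdot)\|_{C^{0,\beta}(\Rd)}\le C$, and similarly $\|D^2\overline{L}\|_{C^{0,\beta}(\Rd)}\le C$. Next, fix $\mathsf{M}$ and $|\xi|\le \mathsf{M}$. The elementary interpolation inequality for a $C^{2,\beta}$ function $h$ on a ball $B_\rho(\xi)\subseteq B_{\mathsf M+1}$ reads, for a direction $e$ with $|e|=1$,
\begin{equation*}
\left| D^2 h(\xi)e\cdot e \right|
\le
\frac{C}{\rho^2}\left( \sup_{B_{\mathsf M+1}} |h| \right)
+
C\rho^\beta \left[ D^2 h \right]_{C^{0,\beta}(B_{\mathsf M+1})},
\end{equation*}
obtained by a second-order finite difference of $h$ together with Taylor's theorem. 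Applying this to $h:=\nu(\cu_n,\cdot)-\overline{L}$, using the $C^{0,\beta}$ bound $C(\data)$ on $D^2 h$ and writing $E_n:=\sup_{|\xi|\le\mathsf M+1}|\nu(\cu_n,\xi)-\overline L(\xi)|$, we get
\begin{equation*}
\sup_{|\xi|\le\mathsf M}\left| D^2\nu(\cu_n,\xi)-D^2\overline L(\xi)\right|
\le
C\rho^{-2}E_n + C\rho^\beta
\end{equation*}
for every $\rho\in(0,1]$, and optimizing in $\rho$ (choosing $\rho \sim E_n^{1/(2+\beta)}$, valid once $E_n$ is small, and using the trivial a priori bound otherwise) yields
\begin{equation*}
\sup_{|\xi|\le\mathsf M}\left| D^2\nu(\cu_n,\xi)-D^2\overline L(\xi)\right|
\le
C E_n^{\beta/(2+\beta)}.
\end{equation*}

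It remains to control $E_n^{\beta/(2+\beta)}$. From~\eqref{e.subadd.conv} (applied at each of finitely many $\xi$ and then upgraded to a supremum over $|\xi|\le\mathsf M+1$ using the uniform $C^{1,1}$ bound on $\xi\mapsto\nu(\cu_n,\xi)$ and a covering of $B_{\mathsf M+1}$ by $O(3^{nd})$ balls of radius $3^{-n}$, absorbing the polynomial factor $3^{nd}$ into a tiny loss in the exponent by shrinking $\sigma$) we have $E_n\le C3^{-n\alpha(d-\sigma)}+\O_1(C3^{-n\sigma})$. I would then use the $\O_s$-calculus: for $0<\kappa\le1$, if $X\le \O_1(\theta)$ then $X^\kappa\le \O_{1/\kappa}(\theta^\kappa)$, and since $1/\kappa\ge1$ here with $\kappa=\beta/(2+\beta)$ this is still an $\O_\sigma$ bound with $\sigma\ge1$; relabeling exponents ($3^{-n\sigma}$ with a possibly smaller $\sigma$, and a smaller $\alpha$) and applying the elementary inequality $(a+b)^\kappa\le a^\kappa+b^\kappa$ gives exactly~\eqref{e.nuconvC2}. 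The main obstacle I anticipate is bookkeeping rather than conceptual: making sure that the passage from pointwise-in-$\xi$ estimates to the supremum over $|\xi|\le\mathsf M$ does not cost more than an arbitrarily small power (handled by the covering argument combined with the uniform Lipschitz/$C^{1,1}$ control), and tracking how the exponents $\alpha$ and $\sigma$ degrade through the interpolation and the $\O_\sigma$ power rule so that the final statement has the clean form claimed. One should also double-check the innocuous point that $\alpha$ in~\eqref{e.nuconvC2} is allowed to depend on $\gamma$ (through $\beta$), which is why the statement lists $\alpha(d,\Lambda,\gamma)$.
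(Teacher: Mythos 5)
Your core strategy --- interpolating between the scalar convergence estimate \eqref{e.subadd.conv} and the uniform $C^{2,\beta}$ bounds of Proposition~\ref{p.nu.C2beta} --- is exactly the paper's, but the two steps you dismiss as bookkeeping are where the real content lies, and as written they do not close.

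First, the supremum over $\xi$. You propose to control $E_n=\sup_{|\xi|\le\mathsf M+1}|\nu(\cu_n,\xi)-\overline L(\xi)|$ by applying \eqref{e.subadd.conv} on a net of $O(3^{nd})$ points and ``absorbing the polynomial factor $3^{nd}$ into a tiny loss in the exponent by shrinking $\sigma$''. A multiplicative factor $3^{nd}$ cannot be absorbed by shrinking $\sigma\in(0,d)$, since $3^{n(d-\sigma)}$ grows for every admissible $\sigma$; what saves the argument is that the maximum of $N$ variables each bounded by $\O_1(\theta)$ is $\O_1(C\theta\log N)$, so the true cost is only a factor $n$, which is absorbable. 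The paper sidesteps this entirely by using the interpolation inequality with the $L^1$ norm in $\xi$, namely $\|D^2f\|_{L^\infty(B_1)}\le C\|f\|_{L^1(B_1)}^{\beta/(2+\beta)}\|f\|_{C^{2,\beta}(B_1)}^{2/(2+\beta)}$, so that the pointwise-in-$\xi$ bounds are integrated rather than maximized, and \eqref{e.Osums} preserves the $\O_1$ bound with no loss.

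Second, and more seriously, the power rule $X\le\O_1(\theta)\Rightarrow X^\kappa\le\O_{1/\kappa}(\theta^\kappa)$ with $\kappa=\beta/(2+\beta)$ turns the stochastic contribution $\O_1(C3^{-n\sigma'})$ into (at best) $\O_1(C3^{-n\sigma'\kappa})$. Since \eqref{e.subadd.conv} forces $\sigma'<d$ and $\kappa$ may be small, this yields the claimed decay $\O_1(C3^{-n\sigma})$ only for $\sigma<d\kappa$, not for every $\sigma\in(0,d)$; ``relabeling $\sigma$ to be possibly smaller'' changes the statement rather than proving it, because the deterministic term carries the free exponent $\alpha$ but the stochastic term's exponent $\sigma$ is exactly the one quantified in the lemma. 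The paper's fix is the inequality $(a+b)^\ep\le a^\ep+\ep a^{\ep-1}b$: applied with $a=C3^{-n\alpha(d-\sigma')}$ and $b=\X 3^{-n\sigma'}$ it keeps the stochastic part linear in $\X$ (hence genuinely $\O_1$) at rate $3^{n\alpha(d-\sigma')-n\sigma'}$, and the choice $\sigma'=\frac{d+\sigma}{2}$ then gives $3^{-n\sigma}$ for the full range $\sigma\in(0,d)$. You need this device, or an equivalent one, to obtain the lemma as stated.
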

\begin{proof}
The argument is based on the elementary fact (by the Arzela-Ascoli theorem, for instance) that if $\{ f_k\}_{k\in\N}$ is a sequence of functions uniformly bounded in $C^{2,\beta}(B_1)$ and converges pointwise to zero, then $D^2f_k \to 0$ uniformly in~$B_1$. This can be seen from a more quantitative perspective as a consequence of the following interpolation inequality: for each~$\beta\in (0,1]$, there exists a constant~$C(\beta,d)<\infty$ such that 
\begin{equation*}
\left\| D^2 f \right\|_{L^\infty(B_1)} 
\leq 
C \left\| f \right\|_{L^1(B_1)}^{\frac{\beta}{2+\beta}} 
\left\| f \right\|_{C^{2,\beta}(B_1)}^{\frac{2}{2+\beta}} .
\end{equation*}
Applying this to the difference of $\nu(\cu_n,\cdot) - \overline{L}$ in the ball $B_{\mathsf{M}}$ yields, in view of~\eqref{e.subadd.conv},~\eqref{e.nu.C2beta} and~\eqref{e.Lbar.C2beta}, an exponent~$\beta(\data)\in \left(0,\tfrac12 \right]$ and~$C(\sigma',\mathsf{M},\data)<\infty$ such that 
\begin{align*}
\lefteqn{
\sup_{\left| \xi \right| \leq \mathsf{M}} 
\left| D^2\overline{L}(\xi) - D^2_p \nu(\cu_n,\xi) \right|
} \quad & \\
& \leq
C \left( 
\fint_{B_{\mathsf{M}} }
\left|\overline{L}(\xi) -  \nu(\cu_n,\xi) \right|\,d\xi
\right)^{\frac{\beta}{2+\beta}}
\left( \left\| \overline{L} \right\|_{C^{2,\beta}(\Rd)} + \left\| \nu(\cu_n,\cdot) \right\|_{C^{2,\beta}(\Rd)} \right)^{\frac{2}{2+\beta}}
\\ & 
\leq 
C \left(C 3^{-n\alpha(d-\sigma')} + \O_1\left(C3^{-n\sigma'} \right) \right)^{\frac{\beta}{2+\beta}} , 
\end{align*}
where $\alpha(d,\Lambda) \in (0,1)$ is as in~\eqref{e.subadd.conv}, $\sigma'\in (\sigma,d)$ will be chosen below and depend only on $(\sigma,d)$, and we can justify the derivation of the last line with the aid of~\eqref{e.Osums}. Finally, we use the elementary inequality  
\begin{equation*}
(a+b)^\ep \leq a^\ep + \ep a^{\ep-1} b \quad \forall a,b\in (0,\infty), \ \ep \in (0,1] 
\end{equation*}
to obtain
\begin{equation*}
\sup_{\left| \xi \right| \leq \mathsf{M}} 
\left| D^2\overline{L}(\xi) - D^2_p \nu(\cu_m,\xi) \right|
\leq 
C 3^{-n\alpha\beta(d-\sigma')/(2+\beta)} + \O_1\left(C3^{n\alpha(d-\sigma') -n\sigma'} \right) 
\end{equation*}
Taking~$\sigma' = \frac{d +\sigma}{2}$ and relabeling~$\frac{\alpha \beta}{2(2+\beta)}$ as~$\alpha$, we obtain~\eqref{e.nuconvC2} since $\alpha \in (0,1]$. 
\end{proof}

\section{Quantitative homogenization of the linearized equation}
\label{s.linearization}

This section is devoted to the proof of the first main result, Theorem~\ref{t.linearization}. As mentioned in the introduction, the main difficulty is that the linearized equation around an arbitrary solution does not possess nice statistical properties since the coefficients depend on the solution. The first step in the proof therefore is to approximate the solution and thus the linear coefficients by gluing together local solutions defined on a mesoscopic scale. Theorem~\ref{t.AS.homogenization} ensures that the error resulting from this approximation is sufficiently small, and the resulting equation is locally stationarity and has a finite range of dependence property. We then homogenize it using the techniques from~\cite{AS,AKMbook}. 

\subsection{Setup}
It is convenient to rescale the statement of Theorem~\ref{t.linearization}. We select~$\sigma\in (0,d)$, a reference Lipschitz domain~$U_0\subseteq B_1$, take $r\geq1$, $\delta\in \left(0,\tfrac12\right]$ and set $U:= rU_0$. We also fix $u, \uhom\in W^{1,2+\delta}(U)$ satisfying 
\begin{equation}
\left\{
\begin{aligned}
& - \nabla \cdot \left( D_p L( \nabla u,x) \right) = 0 = -\nabla \cdot \left(D_p \Lhom \left(\nabla \uhom \right) \right) & \mbox{in} & \ U, \\
& u - \uhom \in H^1_0(U),\\
& \left\| \nabla \uhom \right\|_{\underline{L}^{2+\delta}(U)} \leq \mathsf{M}. 
\end{aligned}
\right.
\end{equation}
We then select another function~$f\in W^{1,2+\delta}(U)$ and denote by $w,\whom \in H^1(U)$ the solutions of 
\begin{equation}
\label{e.linearization.wts}
\left\{ 
\begin{aligned}
& -\nabla \cdot \left( D_p^2L\left( \nabla u,x \right) \nabla w \right) = 0 = -\nabla \cdot \left( D_p^2\overline{L} \left( \nabla \uhom \right) \nabla \whom \right) & \mbox{in} & \ U, \\
& w,\whom \in f + H^1_0(U).
\end{aligned}
\right.
\end{equation}
The goal is to prove the following estimate, for an exponent~$\alpha(\delta,U_0,\data)>0$, a constant~$C(\sigma,\delta,\mathsf{M},U_0,\data)<\infty$ and random variable~$\X$ satisfying the bound~\eqref{e.size.X}:
\begin{multline}
\label{e.DPestimates.rescale}
\frac1r \left\| \nabla w - \nabla \whom  \right\|_{\underline{H}^{-1}(U)} 
+ \frac1r\left\| D_p^2L\left( \nabla u,\cdot \right) \nabla w - D_p^2\overline{L} \left( \nabla \uhom \right) \nabla \whom   \right\|_{\underline{H}^{-1}(U)}
\\
\leq
\left\| \nabla f \right\|_{\underline{L}^{2+\delta}(U)} \left( 
C r^{-\alpha}
+\X r^{-\sigma}
\right).  
\end{multline}
This estimate is equivalent to Theorem~\ref{t.linearization} by a rescaling (cf.~\eqref{e.underlinedscalings}).  

\smallskip

We may suppose without loss of generality that 
\begin{equation}
\label{e.rgeqC}
r \geq C(\sigma,\delta,\mathsf{M},U_0,\data)
\end{equation}
for any particular constant~$C(\sigma,\delta,\mathsf{M},U_0,\data)<\infty$ of our choosing.
Indeed, for $r\leq C$, we obtain the estimate~\eqref{e.DPestimates.rescale}, after suitably enlargening the constant on the right side, from the fact that the quantities on the left side are bounded by~$C\left\| \nabla f \right\|_{\underline{L}^2(U)}$ with~$C(U_0,d,\Lambda)<\infty$. 

\smallskip

Throughout the rest of the section, and unless otherwise stated to the contrary,~$C$ denotes a large constant belonging to the interval~$[1,\infty)$ which depends only on the parameters~$(\sigma,\delta,\mathsf{M},U_0,\data)$ which may vary from line to line (or even between different occurrences in the same line). Similarly, unless otherwise indicated,~$\alpha$ and~$\beta$ denote small exponents belonging to $\left(0,\tfrac12\right]$ which depend only on~$(\delta,\data)$ (actually they will depend only on~$(\delta,d,\Lambda,\gamma)$) which may vary in each occurrence. Finally,~$\X$ denotes a random variable satisfying the bound~$\X = \O_1(C)$ which is also allowed to vary from line to line.

\subsection{Definition of mesoscopic scales}
To begin the proof of~\eqref{e.linearization.wts}, we take $n\in\N$ such that $r \in \left( 3^{n-1},3^n\right]$. In addition, we will work with three mesoscopic scales represented by~$k,l,m\in\N$ with
\begin{equation}
\label{e.mesofriends}
k < l < m < n
\end{equation}
Among these,~$l$ and~$m$ will be a very large mesoscopic scales close to the macroscopic scale with $m$ much closer than $l$ ($n-m \ll n-l \ll n$), and~$k$ a very small mesoscopic scale close to the microscopic scale ($k \ll n$). We select~$k$ to be the largest integer and~$l$ and~$m$ the smallest integers satisfying
\begin{equation}
\label{e.mesoscales}
k \leq \smallpara(d-\sigma) n, \quad 
n-l \leq \smallpara(d-\sigma) n, \quad 
\frac12 \leq \frac{n-m}{\smallpara^2 (d-\sigma) n} \leq 1,
\end{equation}
where $\smallpara(\sigma,\delta,d,\Lambda,\gamma),\in \left( 0,\tfrac{1}{100} \right]$ is minimum of various exponents appearing below (each of which depends on the appropriate parameters). We will select $\smallpara$ near the conclusion of the proof. We can ensure that the condition~\eqref{e.mesofriends} is enforced by choosing $C$ large enough in~\eqref{e.rgeqC}. This choice of~$C$ will depend of course on the constant $\smallpara$ in~\eqref{e.mesoscales}.

\smallskip

For each $x\in\Rd$ and $j\in\N$, we define $[x]_j$ to be the closest element of $3^j\Zd$ to $x$ (in the case this closest point is not unique, we make any choice which preserves measurability, lexicographical ordering for instance).
We also define open sets $U^{\circ}_i$ with $i\in\{1,2,3,4\}$ and $V$ such that 
\begin{equation*}
U^{\circ}_4 \subseteq U^{\circ}_3 \subseteq U^{\circ}_2 \subseteq U^{\circ}_1 \subseteq U \subseteq V
\end{equation*}
by
\begin{equation*}
U^{\circ}_i := \left\{ x\in U \,:\, [x]_m+\cu_{m+2i} \subseteq U \right\}
\quad  \mbox{and}  \quad
V:= \bigcup_{x\in U} \left( [x]_m+\cu_{m+2} \right). 
\end{equation*}
By taking the~$C$ in~\eqref{e.rgeqC} sufficiently large, depending on~$U_0$, we can ensure that $U^{\circ}_4$ is nonempty. We have that, for each $i\in\{1,2,3\}$, 
\begin{equation*}
3^{m} \leq \dist\left(U^{\circ}_{i+1} , \partial U^{\circ}_i\right), \
\dist\left(U^{\circ}_1, \partial U \right), \
\dist\left(\partial V, U \right) 
\leq C3^{m}
\end{equation*}
and there exists $C(U_0,d)<\infty$ such that 
\begin{equation}
\label{e.blayer.measure}
\left| V \setminus U^{\circ}_4 \right| \leq C 3^{-(n-m)} \left| U \right|. 
\end{equation}

\subsection{Deterministic regularity estimates}
We next record some deterministic regularity estimates which will be used many times in the forthcoming argument.

\smallskip

By the global Meyers estimates in Lipschitz domains (see~\cite[Appendix B]{AM} for instance), under the assumption that the exponent~$\delta$ is sufficiently small, depending only on~$(U_0,d,\Lambda)$, there exists a constant~$C(U_0,d,\Lambda)<\infty$ such that 
\begin{equation}
\label{e.meyers.u}
\left\| \nabla u \right\|_{\underline{L}^{2+\delta}(U)} 
\leq C \mathsf{M} \leq C
\end{equation}
and
\begin{equation}
\label{e.meyers.wwhom}
\left\| \nabla w \right\|_{\underline{L}^{2+\delta}(U)}
+ 
\left\| \nabla \whom \right\|_{\underline{L}^{2+\delta}(U)}
\leq 
C \left\| \nabla f \right\|_{\underline{L}^{2+\delta}(U)}.
\end{equation}
Without loss of generality (by shrinking~$\delta$, if necessary) we may assume that these bounds hold. 
Next, by subtracting a constant from $u$ and $\uhom$ as well as from $f$, $w$ and $\whom$ and applying the Sobolev extension theorem, we may suppose as well that these functions are globally defined,  belong to $W^{1,2+\delta}(\Rd)$ and satisfy 
\begin{equation}
\label{e.extend.uuhom}
|U|^{-\frac1{2+\delta}} \left( 
 \left\| \nabla u \right\|_{L^{2+\delta}(\Rd)} 
 +
 \left\| \nabla \uhom \right\|_{L^{2+\delta}(\Rd)} 
 \right)
 \leq C \mathsf{M} \leq C
\end{equation}
and
\begin{equation}
\label{e.extend.wwhom}
 \left\| \nabla w \right\|_{L^{2+\delta}(\Rd)} 
 +
 \left\| \nabla \whom \right\|_{L^{2+\delta}(\Rd)} 
 \leq C \left\| \nabla f \right\|_{L^{2+\delta}(\Rd)}
 \leq C \left\| \nabla f \right\|_{L^{2+\delta}(U)}.
\end{equation}

\smallskip

We next recall some pointwise bounds for the solutions~$\uhom$ and~$\whom$ of the homogenized equations. 
By the De Giorgi-Nash estimate, 
there exist an exponent~$\beta(d,\Lambda)>0$ and constant~$C(U_0,d,\Lambda)<\infty$ such that  
\begin{equation}
\label{e.uhom.C1beta}
\left\| \nabla \uhom \right\|_{L^\infty\left(U^{\circ}_1\right)}
+
3^{\beta m} \left[ \nabla \uhom \right]_{C^{0,\beta} (U^{\circ}_1)} 
\leq 
C3^{d(n-m)/2} \mathsf{M}
\leq 
C3^{n d \smallpara^2 (d-\sigma) }. 
\end{equation}
To see this, apply the H\"older estimate for $\nabla \uhom$ in each ball of radius $3^{m+1}$ centered at a point~$x\in U^{\circ}_1$ to obtain, for a constant~$C(U_0,d,\Lambda)<\infty$,  
\begin{align*}
\left\| \nabla \uhom \right\|_{L^\infty\left(B_{3^m}(x)\right)}
+ 3^{\beta m}  \left[ \nabla \uhom \right]_{C^{0,\beta}(B_{3^m}(x))}
&
\leq 
C\left\| \nabla \uhom \right\|_{\underline{L}^2\left(B_{3^{m+1}}(x)\right)} 
\\ & 
\leq C \left( \frac{|U|}{\left|B_{3^{m+1}}(x)\right|} \right)^{\frac12} \left\| \nabla \uhom \right\|_{\underline{L}^2\left(U \right)}
\\ & 
\leq C 3^{d(n-m)/2} \mathsf{M}
\leq C3^{ n d \smallpara^2 (d-\sigma) }. 
\end{align*}
A covering of $U^{\circ}_1$ by such balls yields~\eqref{e.uhom.C1beta}. 

\smallskip

Pointwise bounds for the solution~$\whom$ of the linearized equation follow next from the Schauder estimates. These depend on the H\"older seminorm of the linearized coefficients. Combining Proposition~\ref{p.nu.C2beta} and~\eqref{e.uhom.C1beta} yields~$\beta(d,\Lambda,\gamma)>0$ and~$C(\data)<\infty$ such that 
\begin{equation}
\label{e.whom.C1beta.coeff}
3^{\beta m}
\left[ 
 D^2\overline{L}\left( \nabla \uhom \right) \right]_{C^{0,\beta}(U^{\circ}_1)}
 \leq 
C3^{ n d \smallpara^2 (d-\sigma) }.
\end{equation}
The Schauder estimates yield,
for~$C(\mathsf{M},U_0,\data)<\infty$ and~$\beta(d,\Lambda,\gamma)\in\left(0,\tfrac12\right]$, the existence of~$\alpha(d,\Lambda,\gamma)\in\left(0,\tfrac12\right]$ such that 
\begin{align}
\label{e.whom.C1beta}
\lefteqn{
\left\| \nabla \whom \right\|_{L^\infty\left(U^{\circ}_2 \right)}
+
3^{\beta m} \left[ \nabla \whom \right]_{C^{0,\beta} (U^{\circ}_2)} 
} \qquad & 
\\ \notag &
\leq 
C \left( 1 + \left( 3^{\beta m} \left[  D^2\overline{L}\left( \nabla \uhom \right) \right]_{C^{0,\beta}(U^{\circ}_1)} \right)^{\frac {d}{2\beta}} \right)
\sup_{x\in U^{\circ}_2} \left\| \nabla \whom \right\|_{\underline{L}^2(B_{3^m}(x))}
\\ \notag &
\leq 
C3^{ n  (\smallpara^2 /\alpha) (d-\sigma)}\left\| \nabla f \right\|_{\underline{L}^2(U)}.
\end{align}
Note that Schauder estimates with explicit dependence on the H\"older seminorm of the coefficients can be obtained from the usual statement (which can be found  for instance in~\cite[Theorem 3.13]{HL}) and a straightforward scaling argument. The exponents of~$3$ on the right side of each of the estimates~\eqref{e.uhom.C1beta},~\eqref{e.whom.C1beta.coeff} and~\eqref{e.whom.C1beta} above is of the form~$C\smallpara^2(d-\sigma)$ for a constant $C(d,\Lambda,\gamma)<\infty$. Eventually we will choose the parameter~$\smallpara$ very small so that these factors grow as a very small power $3^n$ (and can thus be absorbed by other factors which are negative powers of $3^n$).

\subsection{Approximation by a locally stationary equation}

The first step in the proof of Theorem~\ref{t.linearization} is to show that~$w$ may be approximated by the solution~$\tilde{w}$ of a linear equation with locally stationary coefficients. In order to construct this equation and obtain an estimate on the difference between~$w$ and~$\tilde{w}$, we need to recall certain quantitative homogenization estimates from~\cite{AS}. 

\smallskip

To give ourselves a little room, we put $\sigma':= \frac12(\sigma+d)$ and $\sigma'' := \frac12(\sigma'+d)$. By~\cite[Theorem 1.1]{AS}, there exist $\alpha_0(d,\Lambda)\in\left(0,\tfrac12\right]$, $C(\sigma,\delta,\mathsf{M},U_0,\data)<\infty$ and a random variable~$\X$ satisfying 
\begin{equation*}
\X \leq \O_1(C)
\end{equation*}
such that 
\begin{equation}
\label{e.uuhom.ee}
\frac1r \left\| u -  \uhom \right\|_{\underline{L}^2(U)}^2
\leq
C r^{-\alpha_0(d-\sigma)} + \X r^{-\sigma''}. 
\end{equation}

\smallskip

We will compare~$u$ to solutions of the Dirichlet problem with affine boundary data in mesoscopic cubes. Recall that we denote by~$\ell_\xi$ the affine function~$\ell_\xi(x):=\xi\cdot x$. For each $\xi \in\Rd$ and Lipschitz domain $U\subseteq \Rd$, we denote by $v(\cdot,U,\xi)$ the minimizer in the definition of~$\nu(U,\xi)$ which is the unique solution of the Dirichlet problem 
\begin{equation}
\label{e.vz}
\left\{
\begin{aligned}
& - \nabla \cdot \left( D_pL(\nabla v(\cdot,U,\xi),x) \right) = 0 & \mbox{in} & \ U, \\
& v(\cdot,U,\xi) = \ell_\xi & \mbox{on} & \ \partial U. 
\end{aligned}
\right.
\end{equation}
We will use the following estimates for the solutions of~\eqref{e.vz}: for every $\mathsf{K}\in [1,\infty)$ and $q \in [1,\infty)$, 
there exist~$\alpha_1(d,\Lambda)>0$ and $C(\mathsf{K},q,d,\Lambda)<\infty$ and a random variable~$\X$ satisfying $\X\leq \O_1(C)$ such that, for every $M,N\in\N$ with $M\leq N$,
\begin{multline}
\label{e.ellztildevz}
\sup_{\xi \in B_{\mathsf{K}3^{Mq}}} 
(1+|\xi|)^{-2} 3^{-d(N-M)} 
\sum_{z \in 3^M\Zd\cap \cu_N}
3^{-2M}
\left\| 
v(\cdot,z+\cu_M,\xi) - \ell_\xi
\right\|_{\underline{L}^2 \left(z+\cu_{M} \right)}^2
\\
\leq C3^{-M\alpha_1(d-\sigma)} + \X 3^{-\sigma''M}
\end{multline}
and
\begin{multline}
\label{e.vztildevz}
\sup_{\xi \in B_{\mathsf{K}3^{Mq}}} 
(1+|\xi|)^{-2} 3^{-d(N-M)} 
\sum_{z\in 3^M\Zd\cap \cu_N}
\left\| 
\nabla v(\cdot,\cu_N,\xi) - \nabla v(\cdot,z+\cu_M,\xi)
\right\|_{\underline{L}^2 \left(z+\cu_M \right)}^2
\\
\leq C3^{-M\alpha_1(d-\sigma)} + \X 3^{-\sigma''N}.
\end{multline}
The first inequality~\eqref{e.ellztildevz} is a consequence of~\cite[Corollary 3.5]{AS}. 
The second inequality~\eqref{e.vztildevz} was essentially proved in~\cite{AS}, but was not stated in exactly this form, so we give a proof of it in Appendix~\ref{ap.ASestimates}. We will apply these inequalities with $N \geq n$ and with $\mathsf{K}$ and $q$ chosen, depending only on $(d,\Lambda)$, so that, by~\eqref{e.uhom.C1beta}, the supremums over $\xi\in B_{\mathsf{K}3^{qN}}$ can be replaced by supremums over $|\xi| \leq \left\| \nabla \uhom\right\|_{L^\infty(U^{\circ})}$. 


\smallskip

To introduce the approximating equation, we define,
for each $\xi\in\Rd$, the linear coefficient field
\begin{equation}
\label{e.axi}
\a_\xi(x):= D^2_pL\left(\nabla v \left(x,z'+\cu_k,\xi \right),x \right), 
\quad z' \in 3^k\Zd, \ 
x \in z'+ \cu_k.
\end{equation}
Observe from the assumption (L2) that $\a_\xi$ is uniformly elliptic,
\begin{equation*}
I_d \leq \a_\xi \leq \Lambda I_d \quad \mbox{in} \ \Rd.
\end{equation*}
It is clear that $\a_\xi$ is $3^k\Zd$--stationary and has a range of dependence of at most $2\left( 1+ \sqrt{d} \cdot 3^k\right)$. In particular, the theory of quantitative homogenization for linear equations with finite range dependence (see~\cite{AKMbook}) applies to the linear equation with coefficients~$\a_\xi$. We denote the homogenized coefficients corresponding to~$\a_\xi$ by $\ahom_\xi$. 

\smallskip

Finally, we define the linear coefficient field~$\a(\cdot)$ by gluing together the $\a_\xi$'s in mesoscopic boxes with the parameter $\xi$ given by a local averaged slope of $\uhom$:
\begin{equation}
\label{e.a}
\a(x) := \a_{\left( \nabla \uhom\right)_{z+\cu_{l}}} (x),\quad z\in 3^{l}\Zd, \ x\in z+\cu_l. 
\end{equation}
This coefficient field~$\a(\cdot)$ is \emph{locally stationary} in the sense that in each mesoscopic cube $z+\cu_l$ with $z\in 3^l\Zd$ it is the restriction of the $3^k\Zd$--stationary field~$\a_\xi$ with parameter $\xi = \left( \nabla \uhom \right)_{z+\cu_{l}}$. 

\smallskip

We next prove that $\a(x)$ is close to $D^2_pL(\nabla u(x),x)$ and deduce therefore, by an argument similar to the proof of Lemma~\ref{l.deterministic.linearization}, that~$w$ is close to the solution~$\tilde{w}\in H^1(U)$ of the Dirichlet problem 
\begin{equation}
\label{e.wtilde}
\left\{ 
\begin{aligned}
& -\nabla \cdot \left( \a \nabla \tilde{w} \right) = 0 & \mbox{in} & \ U, \\
& \tilde{w} \in f + H^1_0(U).
\end{aligned}
\right.
\end{equation}

\begin{lemma}
\label{l.wtowtilde} 
There exist~$\alpha(\delta,\data)\in \left(0,\tfrac12\right]$, $C(\sigma,\delta,\mathsf{M},U_0,\data)<\infty$ and a random variable $\X$ satisfying $\X \leq \O_1(C)$ such  that if $\smallpara \leq \alpha$, then 
\begin{equation}
\label{e.wtowtilde}
\left\| \nabla w - \nabla \tilde{w} \right\|_{\underline{L}^2(U)}
\leq 
C \left\| \nabla f \right\|_{\underline{L}^{2+\delta}(U)}
\left(  3^{-n \alpha \smallpara^2 (d-\sigma)^2}+ \X 3^{-n\sigma} \right).
\end{equation}
\end{lemma}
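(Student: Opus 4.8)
The plan is to follow the scheme of Lemma~\ref{l.deterministic.linearization}: show that the coefficient field $\a(\cdot)$ defined in~\eqref{e.a} is $L^2$-close to $D^2_pL(\nabla u,\cdot)$, and then convert this into an estimate for $\nabla w - \nabla \tilde w$ via an energy estimate together with the Meyers bound~\eqref{e.meyers.wwhom}. More precisely, setting $z:= w - \tilde w \in H^1_0(U)$, one finds that $z$ solves $-\nabla\cdot(\a\nabla z) = -\nabla\cdot\bigl((\a - D^2_pL(\nabla u,\cdot))\nabla w\bigr)$, so the energy estimate gives $\|\nabla w - \nabla \tilde w\|_{\underline L^2(U)} \le C\|(\a - D^2_pL(\nabla u,\cdot))\nabla w\|_{\underline L^2(U)}$, and then Hölder together with~\eqref{e.meyers.wwhom} reduces matters to bounding $\|\a - D^2_pL(\nabla u,\cdot)\|_{\underline L^q(U)}$ for $q = (4+2\delta)/\delta$, exactly as in Lemma~\ref{l.deterministic.linearization}. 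Because $\a$ is bounded above and below (it is uniformly elliptic with the same constants), we have the trivial $L^\infty$ bound $\|\a - D^2_pL(\nabla u,\cdot)\|_{L^\infty(U)} \le C$, so by interpolation it suffices to prove an $L^1$-type bound of the form $\|\a - D^2_pL(\nabla u,\cdot)\|_{\underline L^1(U)} \le C(3^{-n\alpha\smallpara^2(d-\sigma)^2} + \X 3^{-n\sigma})^{\theta}$ for some $\theta>0$ depending only on $(d,\Lambda,\gamma,\delta)$, and then raise everything to a power; the exponents in~\eqref{e.wtowtilde} will be obtained by relabeling, using that $\smallpara \le \alpha$ absorbs any loss.

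The heart of the argument is therefore the $L^1$ bound on $\a - D^2_pL(\nabla u,\cdot)$, which I would obtain by chaining three approximations, each controlled by a quantitative estimate already available. First, by the $C^{0,\gamma}$ regularity of $D^2_pL$ in $p$ from (L1), $|\a(x) - D^2_pL(\nabla u(x),x)| \le C|\nabla v(x,z'+\cu_k,\xi) - \nabla u(x)|^\gamma$ on each small cube $z'+\cu_k$, where $\xi = (\nabla\uhom)_{z+\cu_l}$ for the mesoscopic cube $z+\cu_l$ containing $z'+\cu_k$. So up to Jensen's inequality it is enough to bound $\fint_U |\nabla v(x,[x]_k + \cu_k, (\nabla\uhom)_{[x]_l+\cu_l}) - \nabla u(x)|^2\,dx$. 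Inside each mesoscopic cube $z+\cu_l$, the slope $\xi = (\nabla\uhom)_{z+\cu_l}$ is within $C3^{\beta m - \beta l}$ (comparable to a negative power of $3^n$ by the $C^{0,\beta}$ bound~\eqref{e.uhom.C1beta} on $\nabla\uhom$) of $\nabla\uhom(x)$ pointwise; one then compares $\nabla u$ to $\nabla v(\cdot, z+\cu_l,\xi)$ (close by homogenization: Corollary~\ref{c.minimalscale}, or~\eqref{e.uuhom.ee} together with Caccioppoli), and $\nabla v(\cdot,z+\cu_l,\xi)$ to the glued-together small-cube solutions $\nabla v(\cdot, z'+\cu_k,\xi)$ (close by~\eqref{e.vztildevz}), and finally the solution with slope $\xi$ to the solution with the true local slope of $u$. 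Summing the squared $\underline L^2$ errors over the mesoscopic cubes $z+\cu_l \subseteq U^\circ$, discarding the boundary layer $V\setminus U^\circ_4$ (whose measure is $\le C3^{-(n-m)}|U|$ by~\eqref{e.blayer.measure}, and on which one uses only the $L^\infty$ bound), and tracking the powers of $3$ from~\eqref{e.mesoscales}, one arrives at a bound of the form $C3^{-n\alpha_1\smallpara(d-\sigma)^2} + \X3^{-\sigma'' n}$ times harmless factors like $3^{nd\smallpara^2(d-\sigma)}$ which, since $\smallpara \le \alpha$, can be absorbed into a slightly smaller exponent.

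The main obstacle, and the step requiring the most care, is the bookkeeping of the various mesoscopic scales and of the slopes: one must simultaneously control (i) the error in replacing the smoothly-varying $\nabla\uhom$ by its cube averages at scale $3^l$, which costs a factor like $3^{-\beta(n-l)}$ but is degraded by the growth factor $3^{nd\smallpara^2(d-\sigma)}$ from~\eqref{e.uhom.C1beta}; (ii) the homogenization error for $u$ versus $v(\cdot,z+\cu_l,\xi)$, controlled via~\eqref{e.uuhom.ee} and an application of Corollary~\ref{c.minimalscale} in each mesoscopic cube, which forces $l$ to be not too close to $n$ (hence the condition $n-l \le \smallpara(d-\sigma)n$); (iii) the gluing error from~\eqref{e.vztildevz} at the small scale $3^k$, which forces $k$ to be not too small (hence $k \le \smallpara(d-\sigma)n$) so that $3^{-k\alpha_1(d-\sigma)}$ is still a genuine negative power of $3^n$; and (iv) the replacement of $u$'s local slope, which again uses (L1) and the $C^{0,\gamma}$ regularity. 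Each individual estimate is available; the delicate point is to choose the constraints~\eqref{e.mesoscales} so that all four contributions are simultaneously at most $C3^{-cn\smallpara(d-\sigma)^2} + \X3^{-\sigma''n}$, and then to take $\smallpara$ small and relabel, noting that $\sigma'' > \sigma$ so the stochastic term $\X3^{-\sigma''n} \le \X r^{-\sigma}$ with room to spare.
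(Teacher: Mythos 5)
Your proposal is correct and follows essentially the same route as the paper: the energy estimate for $w-\tilde w$ plus H\"older and Meyers reduces the lemma to an $\underline{L}^q$ bound on $\a - D^2_pL(\nabla u,\cdot)$, which is obtained by interpolating an $L^\infty$ bound against a low-integrability bound coming from the chain $\nabla u \approx \uhom$-based affine data $\approx \nabla v(\cdot,z+\cu_l,\xi) \approx \nabla v(\cdot,z'+\cu_k,\xi)$, with the boundary layer handled by the measure estimate. This is exactly the paper's Steps 1--5 (the intermediate fields $H$ and $F$ in the paper are your meso- and micro-scale glued solutions), and your accounting of which estimates control which link in the chain, and of the role of the scale constraints in~\eqref{e.mesoscales}, matches the paper's.
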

\begin{proof}
We have that $\a(x) = D^2_pL(F(x),x)$, where we define the vector field $F$ by 
\begin{equation*}
F(x) :=
\nabla v \left(x,z'+\cu_k, \left( \nabla \uhom \right)_{z+\cu_l} \right),
\quad z\in 3^l\Zd, \ z' \in 3^k\Zd, \ 
x \in z'+ \cu_k.
\end{equation*}
We also define a vector field~$H$ similar to $F$ but using a larger mesoscopic scale:
\begin{equation*}
H(x):= \nabla v\left(\cdot,z+\cu_{l+1},\left( \nabla \uhom\right)_{z+\cu_l}\right) , \quad z\in 3^l\Zd, \ x\in z+\cu_l. 
\end{equation*}
Throughout the argument $\alpha$, $C$ and $\X$ will be as in the statement of the lemma, but may change in each occurrence. 

\smallskip

\emph{Step 1.} We show that, for fixed $\sigma \in (0,d)$, there exist constants~$\alpha(\delta,\data)>0$ and $C(\sigma,\data)<\infty$, and a random variable $\X = \O_1(C)$   such that if $\smallpara \leq \alpha$, then
\begin{equation}
\label{e.nablautoH}
\left\| \nabla u - H \right\|_{\underline{L}^2(U^{\circ}_1)}^2 
\leq
 C 3^{-n\alpha(d-\sigma)} + \X 3^{-n\sigma'}.
\end{equation}
By the Caccioppoli and triangle inequalities, for every $z\in 3^l\Zd \cap U^{\circ}_1$,
\begin{align*}
\lefteqn{
\left\| \nabla u - \nabla v\left(\cdot,z+\cu_{l+1},\left( \nabla \uhom\right)_{z+\cu_l}\right) \right\|_{\underline{L}^2(z+\cu_l)}^2
} \ \ & 
\\ &
\leq 
C3^{-2l} \left\|  
u - v\left(\cdot,z+\cu_{l+1},\left( \nabla \uhom\right)_{z+\cu_l}\right) 
\right\|_{\underline{L}^2(z+\cu_{l+1})}^2
\\ &
\leq 
C3^{-2l} 
\left( 
\left\|  u -  \uhom 
\right\|_{\underline{L}^2(z+\cu_{l+1})}^2 
+
\left\|   
\uhom - v\left(\cdot,z+\cu_{l+1},\left( \nabla \uhom\right)_{z+\cu_l}\right)  
\right\|_{\underline{L}^2(z+\cu_{l+1})}^2 
\right).
\end{align*}
Set $\sigma'' := \frac{d+\sigma'}{2}$. 
By~\eqref{e.uuhom.ee} and~\eqref{e.mesoscales}, we find, for $\smallpara$ sufficiently small,  
\begin{align*}
\frac{\left| \cu_{l} \right|}{\left| U^{\circ}_1\right|} 
\sum_{z\in 3^l\Zd \cap U^{\circ}_1}
3^{-2l}\left\|  
u -  \uhom 
\right\|_{\underline{L}^2(z+\cu_{l+1})}^2
&
\leq
C 3^{-2l} \left\| u -  \uhom \right\|_{\underline{L}^2(U)}^2
\\ &
\leq C 3^{2(n-l)} \left( 3^{-\alpha_0(d-\sigma) n} + \X 3^{-n\sigma''} \right). 
\\ &
\leq C \left( 3^{-\frac{\alpha_0}{4}(d-\sigma) n} + \X 3^{-n\sigma'} \right).
\end{align*}
Next, for each $z\in 3^l\Zd \cap U^{\circ}_1$, 
define the affine function
\begin{equation*}
\ell_z(x):= \uhom(z) + \left( \nabla \uhom\right)_{z+\cu_l} \cdot (x-z)
\end{equation*}
and compute, with the aid of~\eqref{e.uhom.C1beta},
\begin{equation*}
\left\| \ell_z - \uhom \right\|_{\underline{L}^2(z+\cu_{l+1})} 
\leq 
C3^{l(1+\beta)}\left[ \nabla \uhom \right]_{C^{0,\beta}(U^{\circ}_1)}
\leq 
C3^{l-\beta(m-l)+ d(n-m)/2}
\mathsf{M}.
\end{equation*}
Thus by the triangle inequality, 
\begin{align*}
\lefteqn{
\left\|  
\uhom 
- v\left(\cdot,z+\cu_{l+1},\left( \nabla \uhom\right)_{z+\cu_l}\right) 
\right\|_{\underline{L}^2(z+\cu_{l+1})}^2 
} \quad & 
\\ & 
\leq
2\left\|   v\left(\cdot,z+\cu_{l+1},\left( \nabla \uhom\right)_{z+\cu_l}\right) - \ell_z \right\|_{\underline{L}^2(z+\cu_{l+1})}^2 
+
2\left\| \ell_z - \uhom \right\|_{\underline{L}^2(z+\cu_{l+1})}^2 
\\ & 
\leq 
2\left\|   v\left(\cdot,z+\cu_{l+1},\left( \nabla \uhom\right)_{z+\cu_l}\right) - \ell_z \right\|_{\underline{L}^2(z+\cu_{l+1})}^2 
+
C3^{2l-2\beta(m-l)+ d(n-m)}.
\end{align*}
Summing over $z\in 3^l\Zd \cap U^{\circ}_1$, and applying~\eqref{e.ellztildevz} and~\eqref{e.mesoscales}, yields
\begin{multline*}
\frac{1}{\left|3^l\Zd \cap U^{\circ}_1\right|} 
\sum_{z\in 3^l\Zd \cap U^{\circ}_1}
3^{-2l}\left\|  
\uhom 
- v\left(\cdot,z+\cu_{l+1},\left( \nabla \uhom\right)_{z+\cu_l}\right) 
\right\|_{\underline{L}^2(z+\cu_{l+1})}^2
\\
\leq C \left( 3^{-\alpha_1(d-\sigma'') l} + 3^{-2\beta(m-l)+ d(n-m)} \right) + \X 3^{- l \sigma'' } 
\\
\leq C \left( 3^{-n\alpha_1(1-\smallpara)(d-\sigma'') } + 3^{- n \smallpara \left(2\beta - 2d\smallpara\right)} \right) + \X 3^{- n \sigma'' (1-2\smallpara)}.
\end{multline*}
Combining the above inequalities and taking $\smallpara$ sufficiently small yields~\eqref{e.nablautoH}.  

\smallskip

\emph{Step 2.} We show that there exists $\alpha(\delta,d,\Lambda)>0$ such that  
\begin{equation}
\label{e.nablautoH.layer}
\frac{1}{|U|} \left\| \nabla u - H \right\|_{{L}^2(U\setminus U^{\circ}_1)}^2
\leq 
C 3^{-n\alpha\smallpara^2(d-\sigma)}. 
\end{equation}
By~\eqref{e.meyers.u} and~\eqref{e.blayer.measure}, 
\begin{equation*}
\frac{1}{|U|} \left\| \nabla u \right\|_{{L}^2(U\setminus U^{\circ}_1)}^2
\leq 
C \left( 
\frac{\left| U \setminus U^{\circ}_1 \right|}{\left| U \right|} 
\right)^{\frac{\delta}{2+\delta}}
\left\| \nabla u \right\|_{\underline{L}^{2+\delta}(U)}^2
\leq
C\mathsf{M}^2 3^{-\delta (n-m)/(2+\delta)}.
\end{equation*}
Similarly, we have 
\begin{equation*}
\frac{1}{|U|} \left\| \nabla \uhom \right\|_{{L}^2(V\setminus U^{\circ}_1)}^2
\leq C\mathsf{M}^2 3^{-\delta (n-m)/(2+\delta)}.
\end{equation*}
Likewise,
\begin{align*}
\frac{1}{|U|} \left\| H \right\|_{L^2(U\setminus U^{\circ}_1)}^2 
&
\leq 
\frac{|\cu_{l}|}{|U|} \sum_{z\in 3^l\Zd \cap U \setminus U^{\circ}_1}
\left\| \nabla v\left(\cdot,z+\cu_{l+1},\left( \nabla \uhom\right)_{z+\cu_l} \right) 
\right\|_{\underline{L}^2(z+\cu_l)}^2 
\\ & 
\leq C\frac{|\cu_{l}|}{|U|}  \sum_{z\in 3^l\Zd \cap U \setminus U^{\circ}_1}
\left\| \nabla v\left(\cdot,z+\cu_{l+1},\left( \nabla \uhom\right)_{z+\cu_l} \right) 
\right\|_{\underline{L}^2(z+\cu_{l+1})}^2
\\ & 
\leq
C \frac{|\cu_{l}|}{|U|}  \sum_{z\in 3^l\Zd \cap U \setminus U^{\circ}_1}
\left( 1 + \left| \left( \nabla \uhom\right)_{z+\cu_l} \right|^2 \right)
\\ & 
\leq
C\frac{1}{|U|} \left(| V\setminus U^{\circ}_1| +   \left\| \nabla \uhom \right\|_{L^2(V\setminus U^{\circ}_1)}^2 \right).
\end{align*}
Putting these together gives~\eqref{e.nablautoH.layer}.

\smallskip

\emph{Step 3.} We argue that, for fixed $\sigma \in (0,d)$, there exist constants~$\alpha(\delta,d,\Lambda,\gamma)>0$ and $C<\infty$, and a random variable $\X = \O_1(C)$  such that if $\smallpara \leq \alpha$, then 
\begin{equation}
\label{e.nablautoF}
\left\| \nabla u - F \right\|_{\underline{L}^2(U_1^\circ)}^2
\leq 
 C 3^{-n\alpha\smallpara^2(d-\sigma)} + \X 3^{-n\sigma'}.
\end{equation}
By the previous two steps, it suffices to show that 
\begin{equation*}
\left\| F - H \right\|_{\underline{L}^2(U^1_\circ)}^2
\leq 
C 3^{-k \alpha(d-\sigma) } + \X 3^{-n\sigma'} \leq  C 3^{- n \alpha \smallpara (d-\sigma)^2} + \X 3^{-n\sigma'}.
\end{equation*}
This is an immediate consequence of~\eqref{e.vztildevz} and~\eqref{e.uhom.C1beta}. 

\smallskip

\emph{Step 4.} We show next that, there exist~$\alpha(\delta,d,\Lambda,\gamma)>0$, $C<\infty$ and $\X$ such that if $\smallpara \leq \alpha$, then, for every $q\in [2,\infty)$, 
\begin{equation}
\label{e.Ftonablau.D2p}
\left\| D^2_pL(\nabla u,\cdot) - \a \right\|_{\underline{L}^q(U)}
\leq 
C3^{- n \alpha \smallpara^2 (d-\sigma)^2/q} + \X 3^{-\sigma n}. 
\end{equation}
For $q=2$, we have, by~\eqref{e.nablautoF}, 
\begin{align*}
\left\| D^2_pL(\nabla u,\cdot) - D^2_pL(F,\cdot) \right\|_{\underline{L}^2(U_1^\circ)}
& 
\leq 
\sup_{x\in\Rd} \left[ D^2_pL(\cdot,x) \right]_{C^{0,\gamma}(\Rd)}
\left\| \nabla u- F \right\|_{\underline{L}^2(U_1^\circ)}^\gamma
\\ & 
\leq 
\mathsf{M}_0 
\left(   C 3^{-n\alpha\smallpara^2(d-\sigma)^2} + \X 3^{-n\sigma'} \right)^\gamma,
\end{align*}
and, since $D^2_pL(p,\cdot)$ is bounded, we have, in view of~\eqref{e.blayer.measure},
\begin{equation*}
|U|^{-\frac12} \left\| D^2_pL(\nabla u,\cdot) - D^2_pL(F,\cdot) \right\|_{{L}^2(U\setminus U_1^\circ)}
\leq 
C \left| U \right|^{-\frac12} \left| U\setminus U_1^\circ \right|^{\frac12}
\leq 
C3^{-n\alpha\smallpara^2(d-\sigma)}.
\end{equation*}
Putting these together, we get that 
\begin{align*}
\left\| D^2_pL(\nabla u,\cdot) - D^2_pL(F,\cdot) \right\|_{\underline{L}^2(U)}
& 
\leq 
\mathsf{M}_0 
\left(   C 3^{-n\alpha\smallpara^2(d-\sigma)^2} + \X 3^{-n\sigma'} \right)^\gamma.
\end{align*}
Using again that~$D^2_pL(p,\cdot)$ is bounded, by interpolation we deduce
\begin{equation*}
\left\| D^2_pL(\nabla u,\cdot) - D^2_pL(F,\cdot) \right\|_{\underline{L}^q(U)}
\leq C \left(  3^{-n\alpha\smallpara^2(d-\sigma)^2} + \X 3^{-n\sigma'}  \right)^{\gamma/q}.
\end{equation*}
Using the inequality
\begin{equation} \label{e.epinterp}
\forall \ep\in (0,1], \ a,b\in (0,\infty),\quad
(a+b)^\ep \leq a^\ep + \ep a^{\ep-1} b, 
\end{equation}
we get 
\begin{equation*}
\left\| D^2_pL(\nabla u,\cdot) - D^2_pL(F,\cdot) \right\|_{\underline{L}^q(U)}
\leq C3^{- n \alpha \smallpara^2(d-\sigma)^2 \gamma/q} + \X 3^{-n \sigma }.
\end{equation*}
Taking $\smallpara$ sufficiently small, and recalling that $\a= D^2_pL(F,\cdot)$ gives the claim~\eqref{e.Ftonablau.D2p}.

\smallskip

\emph{Step 5.} The conclusion. By subtracting the equations for~$w$ and~$\tilde{w}$, we find that the difference~$w-\tilde{w}$ satisfies
\begin{equation*}
-\nabla \cdot \left( \a \nabla \left(w-\tilde{w} \right) \right) 
= 
\nabla \cdot \left(  \left( D^2_pL(\nabla u,x) - \a \right) \nabla w \right) 
\quad \mbox{in} \ U. 
\end{equation*}
Testing this equation with $w-\tilde{w}$, which we notice belongs to~$H^1_0(U)$, we obtain 
\begin{align*}
\left\| \nabla \left(w-\tilde{w} \right) \right\|_{\underline{L}^2(U)} 
\leq 
C \left\| \left( D^2_pL(\nabla u,x) - \a \right) \nabla w \right\|_{\underline{L}^2(U)}. 
\end{align*}
Finally, by the H\"older inequality, the Meyers estimate and~\eqref{e.Ftonablau.D2p} with $q=\frac{4+2\delta}{\delta}$,
\begin{align}
\label{e.switchflux}
\left\| \left( D^2_pL(\nabla u,x) - \a \right) \nabla w \right\|_{\underline{L}^2(U)}
&
\leq 
\left\| D^2_pL(\nabla u,x) - \a \right\|_{\underline{L}^{\frac{4+2\delta}{\delta}}(U)} 
\left\| \nabla w \right\|_{\underline{L}^{2+\delta}(U)}
\\ \notag & 
\leq 
C \left\| D^2_pL(\nabla u,x) - \a \right\|_{\underline{L}^{\frac{4+2\delta}{\delta}}(U)}
\left\| \nabla f \right\|_{\underline{L}^{2+\delta}(U)}
\\ \notag & 
\leq 
C\left\| \nabla f \right\|_{\underline{L}^{2+\delta}(U)} 
\left( 3^{- n \alpha \smallpara^2 (d-\sigma)^2} + \X 3^{- n \sigma} \right).
\end{align}
This completes the proof of the lemma. 
\end{proof}

\subsection{Homogenization estimates for the locally stationary equation}

In view of Lemma~\ref{l.wtowtilde}, we are motivated to obtain homogenization estimates for the locally stationary problem~\eqref{e.wtilde}. This is accomplished in the next subsection, and here we prepare for the analysis by recording some  estimates for the stationary fields~$\a_\xi(\cdot)$. 

\smallskip

We next check that the homogenized coefficients~$\ahom_\xi$ corresponding to~$\a_\xi$ are close to what we expect, namely~$D^2_p\Lhom\left(  \xi \right)$. This is a crucial point in the proof of Theorem~\ref{t.linearization}, for it is here that we really see that homogenization and linearization must commute. 

\smallskip

For each $\eta\in\Rd$ (recall that $\ell_\eta(x):=\eta\cdot x$), we have that
\begin{equation}
\label{e.ahomzchar}
\frac12 \eta \cdot \ahom_\xi \eta 
= 
\lim_{j\to \infty} 
\E \left[ \inf_{w \in \ell_\eta + H^1_0(\cu_j)}  \fint_{\cu_j} \frac 12 \nabla w \cdot \a_\xi \nabla w  \right].
\end{equation}
Let us denote by~$w_\xi(\cdot,\cu_j,\eta)$ the minimizer of the optimization problem inside the expectation on the right side of the previous display. 

\begin{lemma}
\label{l.localahom}
Let~$\mathsf{K},Q\in [1,\infty)$.
There exist an exponent~$\alpha(\data)\in\left(0,\tfrac12\right]$ and constant~$C(Q,\mathsf{K},\data)<\infty$ such that
\begin{equation}
\label{e.localahom}
\sup_{\left|\xi\right| \leq \mathsf{K} 3^{qk}} 
\frac{\left| 
\ahom_\xi - D^2_p\Lhom \left( \xi \right) 
\right| }{1+|\xi|}
\leq 
C 3^{-k\alpha}. 
\end{equation}
\end{lemma}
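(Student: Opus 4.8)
The plan is to identify the homogenized coefficients $\ahom_\xi$ via the characterization~\eqref{e.ahomzchar} and compare them directly to $D^2_p\overline{L}(\xi)$, using the fact that $D^2_p\overline{L}(\xi)$ arises (up to the error estimate of Lemma~\ref{l.nuconvtoLinC2}) as the Hessian of $\nu(\cu_j,\cdot)$ evaluated at $\xi$, together with the identity~\eqref{e.D2nuUiden} which expresses $D^2_\xi\nu(\cu_j,\xi)$ as an average of $D^2_pL(\nabla v(\cdot,\cu_j,\xi),x)$ against $D_\xi\nabla v(\cdot,\cu_j,\xi)$. The crucial observation is that the minimizer $w_\xi(\cdot,\cu_j,\eta)$ in~\eqref{e.ahomzchar}, which solves the equation with \emph{frozen} coefficients $\a_\xi$ (themselves built from $\nabla v(\cdot,z'+\cu_k,\xi)$ on a $3^k\Z^d$-grid), should be close to $\sum_i \eta_i\,\partial_{\xi_i}\nabla v(\cdot,\cu_j,\xi)$, which is exactly the derivative of the nonlinear corrector in direction $\eta$ and which, by~\eqref{e.vC1alpha.est} and the remark following it, is the natural ``linearized corrector'' for the true (nonglued) equation. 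Thus the whole point is that gluing the linear coefficients on scale $k$ introduces only an error of order $3^{-k\alpha}$.

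First I would reduce to a deterministic energy comparison on a large cube $\cu_j$ with $j\to\infty$. Fix $\eta$ with $|\eta|\le 1$ (the general case follows by linearity in $\eta$, which accounts for the $(1+|\xi|)$-homogeneity: $w$ scales linearly in $\eta$, and the $\xi$-dependence is controlled because $|\nabla v(\cdot,\cdot,\xi)|\lesssim 1+|\xi|$ by~\eqref{e.easyboundwtilde}-type bounds and the Meyers/De Giorgi-Nash estimates). Let $\td w := \sum_{i=1}^d \eta_i\,\partial_{\xi_i} v(\cdot,\cu_j,\xi)$, which by~\eqref{e.vC1alpha.est} is the $L^2$-limit of difference quotients of $v(\cdot,\cu_j,\cdot)$ and solves $-\nabla\cdot(D^2_pL(\nabla v(\cdot,\cu_j,\xi),x)\nabla\td w)=0$ in $\cu_j$ with $\td w=\ell_\eta$ on $\partial\cu_j$. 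Now $\a_\xi$ differs from $D^2_pL(\nabla v(\cdot,\cu_j,\xi),x)$ precisely because $\a_\xi$ uses the \emph{local} minimizers $v(\cdot,z'+\cu_k,\xi)$ rather than the global one $v(\cdot,\cu_j,\xi)$; by~\eqref{e.vztildevz} (with $M=k$, $N=j$) the gradients of these differ in $\underline L^2(\cu_j)$ by at most $C\,3^{-k\alpha_1(d-\sigma)}+\O_1(\cdots)$, hence, after raising to the power $\gamma$ via the $C^{0,\gamma}$ regularity of $D^2_pL$ and interpolating with the trivial $L^\infty$ bound exactly as in Step~4 of the proof of Lemma~\ref{l.wtowtilde}, we get $\|\a_\xi - D^2_pL(\nabla v(\cdot,\cu_j,\xi),\cdot)\|_{\underline L^q(\cu_j)}\le C3^{-k\alpha}$ for every $q<\infty$ (the stochastic part being harmless once we take $j$ large and pass to the expectation/limit).

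Next, a standard energy comparison: let $w_\xi$ be the true minimizer in~\eqref{e.ahomzchar}. Testing the two equations against $w_\xi-\td w\in H^1_0(\cu_j)$ and using uniform ellipticity gives $\|\nabla w_\xi - \nabla\td w\|_{\underline L^2(\cu_j)}\le C\|(\a_\xi - D^2_pL(\nabla v(\cdot,\cu_j,\xi),\cdot))\nabla\td w\|_{\underline L^2(\cu_j)}$, which by Hölder, the Meyers estimate for $\td w$, and the $\underline L^q$ bound above is $\le C3^{-k\alpha}$. Consequently the energies satisfy
\begin{equation*}
\left| \fint_{\cu_j}\tfrac12\nabla w_\xi\cdot\a_\xi\nabla w_\xi - \fint_{\cu_j}\tfrac12\nabla\td w\cdot D^2_pL(\nabla v(\cdot,\cu_j,\xi),\cdot)\nabla\td w \right| \le C3^{-k\alpha}.
\end{equation*}
The first term converges (in expectation, as $j\to\infty$) to $\tfrac12\eta\cdot\ahom_\xi\eta$ by~\eqref{e.ahomzchar}. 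For the second term, one computes $\fint_{\cu_j}\nabla\td w\cdot D^2_pL(\nabla v(\cdot,\cu_j,\xi),\cdot)\nabla\td w = \eta\cdot\big(\fint_{\cu_j}D^2_pL(\nabla v(\cdot,\cu_j,\xi),x)D_\xi\nabla v(\cdot,\cu_j,\xi)\,dx\big)\eta = \eta\cdot D^2_\xi\nu(\cu_j,\xi)\,\eta$ by~\eqref{e.D2nuUiden} (here one uses that $\td w$ has affine boundary data, so integrating by parts against $\nabla\td w = \sum\eta_i\partial_{\xi_i}\nabla v$ is legitimate). By Lemma~\ref{l.nuconvtoLinC2}, $\E[D^2_\xi\nu(\cu_j,\xi)]\to D^2_p\overline L(\xi)$, with the convergence uniform for $|\xi|\le\mathsf M$ and at an algebraic rate; since $j$ may be taken arbitrarily large relative to $k$, the contribution of this error is $o(1)$ and can be absorbed into $C3^{-k\alpha}$. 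Combining, $|\eta\cdot(\ahom_\xi - D^2_p\overline L(\xi))\eta|\le C3^{-k\alpha}$ for all $|\eta|\le 1$, and polarization (both matrices being symmetric) upgrades this to the operator-norm bound~\eqref{e.localahom}. The $\xi$-dependence in the range $|\xi|\le\mathsf K3^{qk}$ and the division by $1+|\xi|$ are handled by the scaling $v(\cdot,U,\xi)=|\xi|\,v(\cdot,U,\xi/|\xi|)$ and the fact that all the estimates invoked ($\eqref{e.vztildevz}$, the Meyers bounds, Lemma~\ref{l.nuconvtoLinC2}) are either scale-invariant or come with the stated $(1+|\xi|)$-type normalizations, provided $q$ and $\mathsf K$ are chosen so that the supremum in~\eqref{e.vztildevz} indeed covers the relevant range of $\xi$.

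The main obstacle is making the second term's convergence quantitative and uniform in $\xi$: we need $\E[D^2_\xi\nu(\cu_j,\xi)]\to D^2_p\overline L(\xi)$ with enough uniformity in $|\xi|\lesssim 3^{qk}$ that, after choosing $j$ large relative to $k$, this error is genuinely of lower order than $3^{-k\alpha}$. Lemma~\ref{l.nuconvtoLinC2} gives exactly this (the rate there is $3^{-j\alpha(d-\sigma)}+\O_1(3^{-j\sigma})$ uniformly on $B_\mathsf M$), so the resolution is simply to take $j\to\infty$ after $k$ is fixed; but one must be careful that the constant $C(Q,\mathsf K,\data)$ does not degenerate, which requires tracking how the Meyers exponent and the $C^{0,\gamma}$-interpolation constants depend on the size of $\xi$ — this is why the statement carries the parameters $\mathsf K, q$ and the normalization by $1+|\xi|$.
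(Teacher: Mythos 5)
Your proposal is correct and follows essentially the same route as the paper's proof: you use the characterization~\eqref{e.ahomzchar}, approximate $\a_\xi$ by $D^2_pL(\nabla v(\cdot,\cu_j,\xi),\cdot)$ via~\eqref{e.vztildevz} and $C^{0,\gamma}$-interpolation, perform the same energy comparison between $w_\xi(\cdot,\cu_j,\eta)$ and $\sum_i\eta_i\,\partial_{\xi_i}v(\cdot,\cu_j,\xi)$, identify the resulting energy with $\tfrac12\eta\cdot D^2_\xi\nu(\cu_j,\xi)\eta$ through~\eqref{e.D2nuUiden}, and conclude with Lemma~\ref{l.nuconvtoLinC2} by sending $j\to\infty$. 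This matches the paper's three-step argument step for step.
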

\begin{proof}
Fix $\left|\xi\right| \leq \mathsf{K} 3^{Qk}$.

\smallskip

\emph{Step 1.} For each $j\in\N$ with $j \geq k$, we approximate~$\a_\xi$ in $\cu_j$ by the coefficients
\begin{equation*}
\tilde{\a}_{\xi,j}(x):= D^2_pL\left( \nabla v \left( x, \cu_j, \xi \right),x \right).
\end{equation*}
The claim is that, for $q \in [2,\infty)$, 
\begin{equation}
\label{e.axitotildeaxi}
\left\| \a_\xi - \tilde{\a}_{\xi,j} \right\|_{\underline{L}^q(\cu_j)} 
\leq 
C(1+|\xi|) \left( 3^{-k\alpha_1} + \X 3^{-n} \right)^{\frac {2\gamma}{q}} .
\end{equation}
To prove~\eqref{e.axitotildeaxi}, we first observe that, for every $z\in 3^k\Zd\cap \cu_j$ and $x\in z+\cu_k$,
\begin{align*}
\left| \a_\xi(x) - \tilde{\a}_{\xi,j}(x) \right|
\leq 
\left[ D^2L \right]_{C^{0,\gamma}(\Rd)}
\left| \nabla v(x,z+\cu_k,\xi) - \nabla v(x,\cu_j,\xi) \right|^\gamma,
\end{align*}
and therefore, by~\eqref{e.vztildevz}, 
\begin{align*}
\left\| \a_\xi - \tilde{\a}_{\xi,j} \right\|_{\underline{L}^2(\cu_j)} 
&
\leq 
C 3^{-d(j-k)} \sum_{z\in 3^k\cap \cu_j} 
\left\| \nabla v(\cdot,z+\cu_k,\xi) - \nabla v(\cdot,\cu_j,\xi) \right\|_{\underline{L}^2(z+\cu_k)}^\gamma
\\ & 
\leq \left( 1+ |\xi| \right) \left( C3^{-k\alpha_1} + \X 3^{-k} \right)^{\frac\gamma2}.
\end{align*}
Using also that 
\begin{equation*}
\left\| \a_\xi - \tilde{\a}_{\xi,j} \right\|_{L^\infty(\cu_j)} 
\leq 
\left\| \a_\xi  \right\|_{L^\infty(\cu_j)} 
+
\left\|  \tilde{\a}_{\xi,j} \right\|_{L^\infty(\cu_j)} 
\leq C, 
\end{equation*}
we find that, for every $q\in [2,\infty)$, 
\begin{align*}
\left\| \a_\xi - \tilde{\a}_{\xi,j} \right\|_{\underline{L}^q(\cu_j)}
&
\leq
 \left\| \a_\xi - \tilde{\a}_{\xi,j} \right\|_{L^\infty(\cu_j)}^{1-\frac 2q}  \left\| \a_\xi - \tilde{\a}_{\xi,j} \right\|_{\underline{L}^2(\cu_j)}^{\frac 2q} 
\leq 
(1+|\xi|)
\left( C3^{-k\alpha_1 } + \X 3^{- k } \right)^{\frac {2\gamma}{q}} .
\end{align*}
This completes the proof of~\eqref{e.axitotildeaxi}. 

\smallskip

\emph{Step 2.} For each $j\in \N$ with $j\geq k$, let us denote by $\tilde{w}_{\xi,j}(\cdot,\eta)$ the minimizer of the variational problem 
\begin{equation*}
\inf_{w\in \ell_\eta+H^1_0(\cu_j)} 
\fint_{\cu_j} \frac 12 \nabla w \cdot \tilde{\a}_{\xi,j} \nabla w.
\end{equation*}
In this step we show that, for some $\alpha(d,\Lambda)>0$,
\begin{equation}
\label{e.VtoVtildej}
\E \left[ 
\left\| \nabla w_\xi(\cdot,\cu_j,\eta) - \nabla \tilde{w}_{\xi,j}(\cdot,\eta) \right\|_{\underline{L}^2(\cu_j)}^2
\right]
\leq 
C \left|\eta\right|^2 3^{-k\alpha}.
\end{equation}
Fix $j\in\N$ and $\eta \in B_1$. The difference $W_\xi := w_\xi(\cdot,\cu_j,\eta)  -  \tilde{w}_{\xi,j}(\cdot,\eta)$ belongs to $H^1_0(\cu_j)$ and satisfies the equation
\begin{equation*}
-\nabla \cdot \a_\xi \nabla W_\xi = \nabla \cdot \left( \a_\xi -  \a_{\xi,j}\right) \nabla \tilde{w}_{\xi,j}
\quad \mbox{in} \ \cu_j. 
\end{equation*}
Testing this equation with $W_\xi$ yields
\begin{align*}
\left\| \nabla W_\xi \right\|_{\underline{L}^2(\cu_j)}^2
& 
\leq C \fint_{\cu_j} \nabla W_\xi\cdot \a_\xi \nabla W_{\xi} 
=  C \fint_{\cu_j} \nabla W_\xi\cdot \left( \a_{\xi,j} - \a_\xi  \right) \nabla \tilde{w}_{\xi,j}.
\end{align*}
From this, the H\"older inequality, the Meyers estimate and~\eqref{e.axitotildeaxi}, we obtain
\begin{align*}
\left\| \nabla W_\xi \right\|_{\underline{L}^2(\cu_j)}
& 
\leq C \left\|  \left( \a_{\xi,j} - \a_\xi  \right) \nabla \tilde{w}_{\xi,j} \right\|_{\underline{L}^2(\cu_j)}
\\ & 
\leq C \left\|  \left( \a_{\xi,j} - \a_\xi  \right)\right\|_{\underline{L}^{\frac{4+2\delta}{\delta}}(\cu_j)}
\left\| \nabla \tilde{w}_{\xi,j} \right\|_{\underline{L}^{2+\delta}(\cu_j)}
\\ & 
\leq   C\left( 3^{-k\alpha_1 } + \X 3^{- k} \right)^{\frac {2\delta \gamma}{2+\delta}}   \left| \eta \right|.
\end{align*}
Squaring and taking expectations yields~\eqref{e.VtoVtildej} via~\eqref{e.epinterp}. 

\smallskip 

\emph{Step 3.} The conclusion. 
As was shown in the proof of Proposition~\ref{p.nu.C2beta}, we have  
\begin{equation*}
\tilde{w}_{\xi,j}(\cdot,\cu_j,\eta) = \eta \cdot D_\xi v \left( \cdot,\cu_j,\xi \right)
\end{equation*}
and, moreover, by~\eqref{e.D2nuUiden} and a testing of the equation for $\tilde{w}_{\xi,j}$ by $\tilde{w}_{\xi,j} - \ell_\eta\in H^1_0(\cu_j)$,   
\begin{equation*}
\fint_{\cu_j} \frac 12 \nabla \tilde{w}_{\xi,j}(\cdot,\cu_j,\eta) \cdot \tilde{\a}_{\xi,j} \nabla \tilde{w}_{\xi,j}(\cdot,\cu_j,\eta)
= \frac12 \eta\cdot D^2_p \nu\left(\cu_j , \xi \right) \eta. 
\end{equation*}
By Lemma~\ref{l.nuconvtoLinC2}, we have that 
\begin{equation*}
\E \left[\left| D^2\overline{L}(\xi) - D^2_p \nu(\cu_j,\xi) \right|  \right] \leq C 3^{-j\alpha} (1+\left| \xi \right|^2) \longrightarrow 0 \quad \mbox{as} \ j\to \infty. 
\end{equation*}
Combining~\eqref{e.ahomzchar} and~\eqref{e.VtoVtildej}, and using as well as the global Meyers estimate which gives, for some $\delta(\data)>0$, the deterministic bound 
\begin{align*}
&
\left\| \nabla w_\xi (\cdot,\cu_j,\eta) \right\|_{\underline{L}^{2+\delta}(\cu_j)} 
+
\left\| \nabla \tilde{w}_{\xi,j} (\cdot,\cu_j,\eta) \right\|_{\underline{L}^{2+\delta}(\cu_j)} 
\\ & \qquad 
\leq 
C\left\| \nabla w_\xi (\cdot,\cu_j,\eta) \right\|_{\underline{L}^{2}(\cu_j)} 
+
C\left\| \nabla \tilde{w}_{\xi,j} (\cdot,\cu_j,\eta) \right\|_{\underline{L}^{2}(\cu_j)}  \leq C|\eta|,
\end{align*}
we deduce that 
\begin{equation*}
\limsup_{j\to \infty}
\left| \frac12\eta\cdot \ahom_\xi\eta - \E \left[ \fint_{\cu_j} \frac 12 \nabla \tilde{w}_{\xi,j}(\cdot,\cu_j,\eta) \cdot \tilde{\a}_{\xi,j} \nabla \tilde{w}_{\xi,j}(\cdot,\cu_j,\eta)  \right] \right|
\leq C \left| \eta \right|^2 3^{-k\alpha}. 
\end{equation*}
Combining the previous displays now yields the lemma. 
\end{proof}

We next state some estimates for the first-order correctors for the (globally stationary) coefficients~$\a_\xi$. For each $z\in 3^{l-1}\Zd$ and $e\in \overline{B}_1$, we take~$\phi_{e,z}\in H^1(z+\cu_l)$ to be the solution of 
\begin{equation}
\label{e.phiez}
\left\{
\begin{aligned}
& -\nabla \cdot \a_{\nabla \uhom(z)} \left(e+\nabla \phi_{e,z} \right) = 0 
& \mbox{in} & \ z + \cu_l, \\
& \phi_{e,z} = 0 
& \mbox{on} & \ \partial (z + \cu_l). 
\end{aligned}
\right.
\end{equation}
As previously mentioned, the coefficient field $\a_\xi$ is $3^k\Zd$--stationary and has range of dependence at most $C3^k$. Therefore, after rescaling the equation by dilation factor of $C3^k$ and applying~\cite[Theorem 2.17]{AKMbook}, we obtain the following estimate: there exist $\alpha(d,\Lambda)>0$ and $C(\sigma,d,\Lambda) < \infty$ and, for every $z\in 3^{l-1}\Zd$ and $e\in \overline{B}_1$, a random variable $\X_z \leq \O_1(C)$ satisfying 
\begin{multline}
\label{e.ests.phiez}
3^{-l}  \left\| \nabla \phi_{e,z}  \right\|_{\Hminusul(z+\cu_l)} 
+
3^{-l}  \left\| 
\a_{\nabla \uhom(z)} \left( e+ \nabla \phi_{e,z} \right) 
- \ahom_{\nabla \uhom(z)} e
\right\|_{\Hminusul(z+\cu_l)} 
\\
\leq 
C3^{-\beta (d-\sigma'') (l-k)} + \X_z 3^{-\sigma''(l-k)} \leq C3^{-n \alpha (d-\sigma) } + \X_z 3^{-n \sigma'}. 
\end{multline}
Observe that here we also used the following consequence of~\eqref{e.mesoscales}:
\begin{equation*} 
l- k \geq n - n (2\smallpara  + \smallpara^2 ) (d-\sigma) 
\end{equation*}
These also imply that 
\begin{equation}
\label{e.phiez.alsoL2}
3^{-l} \left\| \phi_{e,z}  \right\|_{\underline{L}^{2}(z+\cu_l)}
\leq 
C3^{-n \alpha (d-\sigma) } + \X_z 3^{-n \sigma'}. 
\end{equation}
By testing~\eqref{e.phiez} with $\phi_{e,z}$ itself, we also have the deterministic estimate
\begin{equation}
\label{e.phiez.unifbound}
\left\| \nabla \phi_{e,z} \right\|_{\underline{L}^{2}(z+\cu_l)} 
\leq 
C. 
\end{equation}
The Meyers estimate applied to $x\mapsto e\cdot x + \phi_{e,z}(x)$ then yields
\begin{equation}
\label{e.phiez.meyers}
\left\| \nabla \phi_{e,z} \right\|_{\underline{L}^{2+\delta}(z+\cu_l)} 
\leq 
C 
\left\| \nabla \phi_{e,z} \right\|_{\underline{L}^{2}(z+\cu_l)} 
\leq 
C. 
\end{equation}

\smallskip

We also need to record some estimates for the dependence in~$\xi$ of $\a_\xi$. The claim is that there exists $\beta(d,\Lambda,\gamma)>0$ such that, for every $q\in [2,\infty)$, $\xi,\xi'\in\Rd$ and $z\in 3^k\Zd$,  
\begin{equation}
\label{e.axicont}
\left\| \a_\xi - \a_{\xi'} \right\|_{\underline{L}^q(z+\cu_k)}
\leq 
C \left| \xi - \xi' \right|^{\beta/q}. 
\end{equation}
This estimate then implies that, for some $\beta(d,\Lambda,\gamma)>0$,
\begin{equation}
\label{e.ahomxicont}
\left| \ahom_{\xi} - \ahom_{\xi'} \right| 
\leq 
C \left| \xi - \xi' \right|^{\beta}.
\end{equation}
To prove~\eqref{e.axicont}, we first consider the case $q=2$ and, recalling the definition~\eqref{e.axi} of $\a_\xi$ and using assumption (L1), we find 
\begin{align*}
\lefteqn{
\left\| \a_\xi - \a_{\xi'} \right\|_{\underline{L}^2(z+\cu_k)}
} \quad & 
\\ & 
\leq 
\left\| D^2_pL\left( \nabla v(\cdot,z+\cu_k,\xi) ,\cdot \right) 
- 
D^2_pL\left( \nabla v(\cdot,z+\cu_k,\xi') ,\cdot \right)
\right\|_{\underline{L}^2(z+\cu_k)}
\\ & 
\leq C\sup_{x\in\Rd} \left[ D^2_pL(\cdot,x) \right]_{C^{0,\beta}(\Rd)} 
\left\| 
\nabla v(\cdot,z+\cu_k,\xi) 
- \nabla v(\cdot,z+\cu_k,\xi') 
\right\|_{\underline{L}^2(z+\cu_k)}^\beta
\\ & 
\leq C \left| \xi - \xi' \right|^\beta. 
\end{align*}
We then obtain~\eqref{e.axicont} for general $q\in [2,\infty)$ from the case $q=2$ and the fact that, by assumption~(L2), 
\begin{equation*}
\left\| \a_\xi - \a_{\xi'} \right\|_{L^\infty(z+\cu_k)}
\leq
\left\| \a_\xi \right\|_{L^\infty(z+\cu_k)}
+
\left\| \a_{\xi'} \right\|_{L^\infty(z+\cu_k)}
\leq C\Lambda \leq C. 
\end{equation*}

\subsection{Homogenization with locally stationary coefficients}

By Lemma~\ref{l.wtowtilde} and the triangle inequality, to prove~\eqref{e.DPestimates.rescale} it suffices to prove the estimate with~$\tilde{w}$ in place of~$w$. The advantage is that~$\tilde{w}$ satisfies an equation with locally stationary coefficients which, by Lemma~\ref{l.localahom}, have local homogenized coefficients that are close to~$D^2\overline{L}(\nabla \uhom)$. To complete the proof, we therefore need to establish a quantitative homogenization result for linear equations with coefficients which are locally stationary. This is accomplished by a fairly straightforward (although technical) adaptation of the argument for (globally) stationary coefficients which can be found, for instance, in the proof of~\cite[Theorem 1.17]{AKMbook}. 

\smallskip

The result is presented in the following proposition. Recall that the coefficient field~$\a(x)$ is defined above in~\eqref{e.a} and~$\tilde{w}$ is the solution of~\eqref{e.wtilde}.

\begin{proposition}
\label{p.homog.locallystationary}
There exist~$\alpha(\delta,\data)\in \left(0,\tfrac12\right]$ and $C(\sigma,\delta,\mathsf{M},U_0,\data)<\infty$  such that 
\begin{multline}
\label{e.homog.LS}
\frac1r 
\left\| \nabla \tilde{w} - \nabla \whom \right\|_{\underline{H}^{-1}(U)} 
+
\frac1r\left\| \a \nabla \tilde{w} - D^2\overline{L}(\nabla \uhom)  \nabla \whom \right\|_{\underline{H}^{-1}(U)} 
\\
\leq 
C \left\| \nabla f \right\|_{\underline{L}^{2+\delta}(U)} 
\left(
 3^{-n\alpha \smallpara^2 (d-\sigma)}  + \X 3^{-n\sigma'}\right).
\end{multline}
\end{proposition}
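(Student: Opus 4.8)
The plan is to adapt the standard two-scale expansion argument for stationary coefficients (as in~\cite[Theorem 1.17]{AKMbook}) to the locally stationary coefficient field~$\a(\cdot)$ defined in~\eqref{e.a}. The key point is that on each mesoscopic cube $z+\cu_l$ with $z\in 3^l\Zd$, the field $\a$ coincides with the $3^k\Zd$-stationary field $\a_{(\nabla\uhom)_{z+\cu_l}}$, and by Lemma~\ref{l.localahom} its homogenized coefficients $\ahom_{(\nabla\uhom)_{z+\cu_l}}$ are within $C3^{-k\alpha}$ of $D^2\overline{L}((\nabla\uhom)_{z+\cu_l})$, which in turn (by the $C^{2,\beta}$ regularity of $\overline{L}$ from Proposition~\ref{p.nu.C2beta} and the $C^{0,\beta}$ bound~\eqref{e.whom.C1beta.coeff} on $D^2\overline L(\nabla\uhom)$) is within a small power of $3^n$ of $D^2\overline{L}(\nabla\uhom(x))$ for $x\in z+\cu_l$. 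So the ``effective'' equation for $\a$ is close to the homogenized linearized equation~\eqref{e.linearization.homog}.

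First I would construct the two-scale expansion. Let $\whom$ solve~\eqref{e.linearization.wts}; mollify $\whom$ on the scale $3^l$ (or work cube-by-cube) and, inside each $z+\cu_l$, form the corrected function
\begin{equation*}
\whom^\ep(x) := \whom(x) + \sum_{i=1}^d \partial_i\whom_{\mathrm{moll}}(x)\,\phi_{e_i,z}(x),
\end{equation*}
where $\phi_{e_i,z}$ are the Dirichlet correctors from~\eqref{e.phiez}, glued across the mesoscopic cubes with a partition of unity, plus a boundary-layer correction to land in $f+H^1_0(U)$. Testing the equation for $\tilde w - \whom^\ep$ against $\tilde w - \whom^\ep$ and using the flux-corrector pair, one reduces the $\underline H^{-1}$ errors on the left of~\eqref{e.homog.LS} to a sum of terms: (a) the corrector and flux-corrector bounds~\eqref{e.ests.phiez}--\eqref{e.phiez.alsoL2}, which contribute $C3^{-n\alpha(d-\sigma)}+\X 3^{-n\sigma'}$ on each cube and, after the subadditivity~\eqref{e.Hminusul.subadd} and~\eqref{e.Osums}, the same globally; (b) the mismatch between $\ahom_{(\nabla\uhom)_{z+\cu_l}}$ and $D^2\overline L(\nabla\uhom(x))$, controlled by Lemma~\ref{l.localahom}, Proposition~\ref{p.nu.C2beta}, the Hölder continuity~\eqref{e.ahomxicont}/\eqref{e.axicont} of $\xi\mapsto\ahom_\xi$, and the Lipschitz bound~\eqref{e.uhom.C1beta} on $\nabla\uhom$ (the oscillation of $\nabla\uhom$ over a cube of size $3^l$ being $\lesssim 3^{(l-m)\beta}$ times a small power of $3^n$); (c) errors from the gluing partition of unity across mesoscopic cube boundaries and the $W^{1,\infty}$ regularity of $\whom$ via~\eqref{e.whom.C1beta}, where the product rule~\eqref{e.computeHm1} converts gradients hitting the partition of unity into $\underline H^{-1}$ quantities; and (d) the boundary-layer term near $\partial U$, estimated using~\eqref{e.blayer.measure}, the Meyers bound~\eqref{e.meyers.wwhom} and $W^{1,\infty}$ control of $\whom$. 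All of these are bounded by $C\|\nabla f\|_{\underline L^{2+\delta}(U)}$ times $3^{-n\alpha\smallpara^2(d-\sigma)} + \X 3^{-n\sigma'}$, which is the claimed estimate; the $\tfrac1r$ prefactors come from the scaling identities~\eqref{e.underlinedscalings} (recall $r\sim 3^n$). The flux estimate follows from the gradient estimate together with the flux-corrector bound in~\eqref{e.ests.phiez} and another application of the product rule~\eqref{e.computeHm1}.

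The main obstacle I expect is handling the boundary layer and the mesoscopic gluing \emph{simultaneously}: the correctors $\phi_{e_i,z}$ satisfy Dirichlet conditions on the individual cubes $z+\cu_l$, so the corrected field has jumps in the conormal derivative across cube faces and must be patched with a partition of unity subordinate to the cubes $\{z+\cu_l\}$; controlling the resulting error requires that $\whom$ be Lipschitz (hence the Schauder estimate~\eqref{e.whom.C1beta}, which forced the careful tracking of the $C^{0,\beta}$ norm of the linearized coefficients in~\eqref{e.whom.C1beta.coeff}) and that one works in the $\underline H^{-1}$ norm rather than $L^2$, so that the gradient of the partition of unity is absorbed via~\eqref{e.computeHm1}. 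Keeping all the various small exponents of $3^n$ coming from~\eqref{e.uhom.C1beta}, \eqref{e.whom.C1beta} and the mesoscale relations~\eqref{e.mesoscales} strictly smaller than the gain $3^{-n\alpha(d-\sigma)}$ from homogenization—which is what fixes the eventual smallness of $\smallpara$—is the bookkeeping heart of the argument, but it is routine once the structure above is in place.
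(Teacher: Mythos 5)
Your proposal is correct and follows essentially the same route as the paper: a two-scale expansion built from the mesoscopic Dirichlet correctors $\phi_{e,z}$ glued by a partition of unity at scale $3^{l-1}$, a boundary cutoff and a mollification of $\whom$ at scale $3^l$, then an energy estimate comparing $\tilde w$ to the expansion followed by $\underline{H}^{-1}$ estimates that strip the correctors using~\eqref{e.ests.phiez}, Lemma~\ref{l.localahom}, the H\"older continuity~\eqref{e.ahomxicont} and the pointwise bounds~\eqref{e.uhom.C1beta},~\eqref{e.whom.C1beta}. The paper organizes this as Lemmas~\ref{l.gradientT},~\ref{l.plugT} and~\ref{l.stripcorrectors}, but the decomposition of error terms and the role of each ingredient are exactly as you describe.
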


The idea of the proof of Proposition~\ref{p.homog.locallystationary} is to compare the solution~$\tilde{w}$ of~\eqref{e.wtilde} to a two-scale expansion around~$\whom$. 
We construct, for each~$e\in\Rd$, a function~$\phi_e$ which will play the role of the first-order corrector (with slope~$e$) in the two-scale expansion. Since the linear equation for $\tilde{w}$ has a locally stationary structure, we build~$\phi_e$ by gluing (finite-volume) correctors $\phi_{e,z}$ for the stationary problems on the mesoscopic scale, defined in~\eqref{e.phiez}. We fix a smooth function $\chi$ such that its $3^{l-1}\Zd$ translates form a partition of unity. Precisely, we require
\begin{equation}
\label{e.chi}
0 \leq \chi \leq 1, \quad
\chi \equiv 0 \ \mbox{in} \ \Rd \setminus \cu_l,  \quad
\left\| \nabla \chi \right\|_{L^\infty(\Rd)} \leq C 3^{-l}, \quad
\sum_{z\in 3^{l-1}\Zd} \chi(\cdot-z) \equiv 1. 
\end{equation}
We can construct such a function~$\chi$ by mollifying the indicator function of~$\cu_{l-1}$, for instance. We then define 
\begin{equation*}
\phi_e(x) 
= 
\sum_{z\in 3^{l-1}\Zd} \chi(x-z) \phi_{e,z}(x). 
\end{equation*}
Finally, we define the competitor to~$\tilde{w}$ by 
\begin{equation}
\label{e.T}
T := \left( 1-\zeta \right) \whom  + \zeta \left(  \whom \ast \psi \right) + \zeta  \sum_{j=1}^d \left( \partial_{x_j} \whom \ast \psi \right) \phi_{e_j},
\end{equation}
where $\zeta\in C^\infty(\Rd)$ is a cutoff function chosen to satisfy, for a constant $C(U_0,d)<\infty$, 
\begin{equation}
\label{e.cutoffzeta}
0\leq \zeta \leq 1, \quad
\zeta \equiv 1 \ \mbox{on} \ U^{\circ}_4, \quad
\zeta \equiv 0 \quad \mbox{in} \ \Rd\setminus U^{\circ}_3, \quad 
\left\| \nabla^2 \zeta \right\|_{L^\infty(\Rd)} \leq C3^{-2m},
\end{equation}
and $\psi$ is a mollifier on length scale $3^l$, that is, $\psi\in C^\infty(\Rd)$ satisfies 
\begin{equation} 
\label{e.mollpsi}
\psi \geq 0, \quad
\int_{\Rd} \psi(x)\,dx =1, \quad
\psi \equiv 0 \quad \mbox{on} \ \Rd \setminus B_{3^l}, \quad 
\left\| \nabla^2 \psi \right\|_{L^\infty(\Rd)} \leq C 3^{-(d+2)l}. 
\end{equation}

\smallskip

The purpose of the cutoff function~$\zeta$ in the definition of~$T$ is the enforcement of the boundary condition: notice that~$T \in f+H^1_0(U)$. It also cuts off the second term in the definition of~$T$ in the boundary layer where we do not have good estimates on~$\phi_e$ and $\nabla \whom$. The reason for convolving ~$\nabla \whom$ with the mollifier~$\psi$ is that we wish to differentiate $T$ but only possess $C^{0,\beta}$ estimates on~$\nabla \whom$ (cf.~\eqref{e.whom.C1beta}). Mollifying it on a mesoscopic scale does not change $\nabla \whom$ very much, since it is a macroscopic function, but gives us some control on its gradient. Indeed, by~\eqref{e.whom.C1beta},~\eqref{e.mollpsi} and~\eqref{e.mesoscales}, we have, for small enough $\smallpara$,  
\begin{align} 
\label{e.nablawhomconv0}
3^{-l} \left\| \whom \ast \psi -  \whom \right\|_{L^\infty(U^{\circ}_3)} & 
\leq 
C  3^{n (\smallpara^2 /\alpha) (d-\sigma)}\left\| \nabla f \right\|_{\underline{L}^2(U)},
\end{align}
\begin{align} 
\label{e.nablawhomconv1}
\left\| \nabla \whom \ast \psi - \nabla \whom \right\|_{L^\infty(U^{\circ}_3)}
&
\leq 
C 3^{\beta l} \left[  \nabla \whom \right]_{C^{0,\beta}(U^{\circ}_2)} 
\\ \notag & 
\leq 
C 3^{-\beta (m-l) }
3^{n (\smallpara^2 /\alpha)(d-\sigma)} \left\| \nabla f \right\|_{\underline{L}^2(U)}
\\ \notag & 
\leq 3^{- n \alpha \smallpara (d-\sigma)}  \left\| \nabla f \right\|_{\underline{L}^2(U)},
\end{align}
and, for $j \in \N$, 
\begin{align} 
\label{e.nablawhomconv2}
\left\| \nabla^j \left( \nabla \whom \ast \psi \right) \right\|_{L^\infty(U^{\circ}_3)} 
& 
\leq 
\left\| \nabla \whom \right\|_{L^\infty(U^{\circ}_2)} \left\| \nabla^j \psi \right\|_{L^1(\Rd)}
\\ \notag & 
\leq 
C_j 3^{-lj}
3^{(\smallpara^2 /\alpha) (d-\sigma)}\left\| \nabla f \right\|_{\underline{L}^2(U)}.
\end{align}

\smallskip

The proof of Proposition~\ref{p.homog.locallystationary} now breaks into two basic steps, which are presented in Lemmas~\ref{l.plugT} and~\ref{l.stripcorrectors}: first we show that $\nabla T$ is close to $\nabla \tilde{w}$ in a strong norm; second, we show that the gradient and flux of~$T$ are close to the gradient and homogenized flux of $\nabla \whom$ in weak norms. Each of these steps relies on the homogenization estimates for the functions~$\phi_{e,z}$ from the previous section: see~\eqref{e.ests.phiez}. 

\begin{lemma}
\label{l.plugT}
There exist~$\alpha(\delta,\data)\in \left(0,\tfrac12\right]$ and $C(\sigma,\delta,\mathsf{M},U_0,\data)<\infty$ such that 
\begin{multline}
\label{e.nablaTtotildew}
\left\| \nabla T - \nabla \tilde{w} \right\|_{\underline{L}^2(U)}  + \left\| 
\nabla \cdot 
\left( \a\nabla T \right) 
\right\|_{\underline{H}^{-1}(U)}
\\
\leq 
C \left\| \nabla f \right\|_{\underline{L}^{2+\delta}(U)} 
\left(
 3^{-n\alpha \smallpara^2 (d-\sigma)}  + \X 3^{-n\sigma'}\right).
\end{multline}
\end{lemma}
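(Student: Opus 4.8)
The plan is to estimate the two terms on the left of~\eqref{e.nablaTtotildew} by exploiting the standard two-scale expansion machinery, adapted to the locally stationary structure. First I would compute $\nabla T$ by the product rule, obtaining (in the region where $\zeta\equiv 1$) the familiar decomposition
\begin{equation*}
\nabla T \approx \nabla \whom\ast\psi + \sum_{j=1}^d (\partial_{x_j}\whom\ast\psi)\,\nabla\phi_{e_j} + \mbox{(terms with derivatives falling on $\chi$, $\zeta$ or on the slope $\partial_{x_j}\whom\ast\psi$)},
\end{equation*}
and likewise for the flux $\a\nabla T$. The heart of the matter is that, inside each mesoscopic cube $z+\cu_l$, the coefficient field $\a$ coincides with the stationary field $\a_{(\nabla\uhom)_{z+\cu_l}}$, and $\phi_{e,z}$ is its finite-volume corrector, so $\a(e+\nabla\phi_{e,z}) - \ahom_{(\nabla\uhom)_{z+\cu_l}}e$ is small in $\Hminusul(z+\cu_l)$ by~\eqref{e.ests.phiez}; summing these local flux-corrector estimates over $z\in 3^{l-1}\Zd$ via the subadditivity~\eqref{e.Hminusul.subadd} of the $\Hminusul$ norm converts them into a global $\underline{H}^{-1}(U)$ bound. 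The gradient term $\|\nabla T-\nabla\tilde w\|_{\underline L^2(U)}$ I would handle by testing the equation $-\nabla\cdot(\a\nabla(T-\tilde w)) = -\nabla\cdot(\a\nabla T)$ against $T-\tilde w$; since $\tilde w\in f+H^1_0(U)$ and $T\in f+H^1_0(U)$, the difference lies in $H^1_0(U)$, so the energy estimate gives $\|\nabla(T-\tilde w)\|_{\underline L^2(U)}\le C\|\nabla\cdot(\a\nabla T)\|_{\underline H^{-1}(U)}$, reducing everything to the single flux-divergence estimate.

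To control $\nabla\cdot(\a\nabla T)$, I would organize the error into three groups. \textbf{(a) The bulk corrector error:} inside $U^{\circ}_4$ the leading terms assemble into $\sum_z$ of local fluxes $\a(e_j+\nabla\phi_{e_j,z})$, and the discrepancy against $\sum_z \ahom_{(\nabla\uhom)_z}e_j$ is estimated cube-by-cube using~\eqref{e.ests.phiez}; one also pays the price of replacing $\ahom_{(\nabla\uhom)_z}$ by $\ahom_{(\nabla\uhom)_{z+\cu_l}}$ and then by $D^2\overline L(\nabla\uhom(z))$ — the former controlled by the Lipschitz/Hölder modulus of $\nabla\uhom$ from~\eqref{e.uhom.C1beta} together with the continuity estimate~\eqref{e.ahomxicont}, the latter by Lemma~\ref{l.localahom}. \textbf{(b) Commutator / slope-regularity terms:} the terms where $\nabla$ hits the slowly varying slope $\partial_{x_j}\whom\ast\psi$ are controlled by~\eqref{e.nablawhomconv1}--\eqref{e.nablawhomconv2} (mollification gains a derivative at cost $3^{-l}$ but only loses a small power of $3^n$), multiplied against the corrector bounds~\eqref{e.phiez.unifbound}--\eqref{e.phiez.meyers}; these go directly into $\underline L^2(U)\subseteq\underline H^{-1}(U)$ via~\eqref{e.H1L2dumbdumb}, and the $3^{-l}$ factor combined with $n-l\le \smallpara(d-\sigma)n$ produces the claimed power. \textbf{(c) Partition-of-unity and cutoff terms:} derivatives of $\chi$ (bounded by $C3^{-l}$) acting on the difference $\phi_{e_j,z}-\phi_{e_j,z'}$ between correctors in adjacent cubes, and derivatives of $\zeta$ (bounded by $C3^{-2m}$) times $\phi_{e_j}$ supported in the thin boundary layer $U\setminus U^{\circ}_4$ of relative measure $\le C3^{-(n-m)}$ by~\eqref{e.blayer.measure}; here one uses the $\underline L^2$ corrector bound~\eqref{e.phiez.alsoL2} rescaled by $3^{-l}$, plus the Meyers/Hölder bounds on $\nabla\whom$ from~\eqref{e.whom.C1beta}. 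The $\chi$-difference terms additionally need that $\phi_{e_j,z}$ and $\phi_{e_j,z'}$ are correctors for \emph{nearby} coefficient fields (slopes differing by $\le C3^{-\beta m + \ldots}$ from~\eqref{e.uhom.C1beta}), which is exactly where~\eqref{e.axicont} enters.

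The main obstacle I anticipate is bookkeeping the competition of powers of $3$. Every step contributes either a good negative power $3^{-n\cdot(\text{stuff})(d-\sigma)}$ or a harmless positive power $3^{n\cdot C\smallpara^2(d-\sigma)}$ coming from the Schauder/De~Giorgi--Nash blow-up of $\|\nabla\whom\|_{L^\infty}$ and $\|\nabla\uhom\|_{L^\infty}$ on the mesoscopic scale (as flagged after~\eqref{e.whom.C1beta}), and the whole argument works only because $\smallpara$ is chosen last, small enough — depending on the exponents $\alpha_0,\alpha_1,\beta$ appearing in~\eqref{e.uuhom.ee},~\eqref{e.ests.phiez}, Lemma~\ref{l.localahom}, etc. — that $C\smallpara^2(d-\sigma)$ is dominated by the genuine gains $\smallpara(d-\sigma)^2$, $(1-\smallpara)(d-\sigma'')$, and so on. The stochastic parts are easy by comparison: each $\X_z\le\O_1(C)$ enters multiplied by $3^{-n\sigma'}$ with $\sigma'=\tfrac12(\sigma+d)>\sigma$, and summing $\le 3^{d(n-l)}\le 3^{dn\smallpara(d-\sigma)}$ many of them via~\eqref{e.Osums} costs at most $3^{n\sigma'(1-c\smallpara)}$ in the denominator — still $\gg 3^{n\sigma}$ for $\smallpara$ small — so the aggregate random variable is $\O_1(C)$ and the $3^{-n\sigma'}$ can be weakened to $3^{-n\sigma}$ as stated. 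A secondary technical nuisance is that $T$ involves $\phi_e$ assembled from $\phi_{e,z}$ only for $e\in\overline B_1$, so one must rescale by the (controlled) size $\|\nabla\whom\ast\psi\|_{L^\infty}$ before applying the corrector estimates, absorbing the resulting factor into the $3^{n\smallpara^2\cdots}$ slack; I would handle this by linearity, writing $\partial_{x_j}\whom\ast\psi = (\text{scalar})\cdot e_j$ pointwise and using the $e\mapsto\phi_{e,z}$ linearity of~\eqref{e.phiez}.
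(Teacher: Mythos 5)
Your outline follows the paper's proof almost step for step: the reduction of $\|\nabla T-\nabla\tilde w\|_{\underline{L}^2(U)}$ to $\|\nabla\cdot(\a\nabla T)\|_{\underline{H}^{-1}(U)}$ by testing with $T-\tilde w\in H^1_0(U)$, the annihilation of the leading term by the equation for $\phi_{e_j,z}$, the cube-by-cube use of~\eqref{e.ests.phiez} summed via~\eqref{e.Hminusul.subadd}, the identification of $\ahom_z$ with $D_p^2\overline{L}(\nabla\uhom)$ through Lemma~\ref{l.localahom},~\eqref{e.ahomxicont} and~\eqref{e.uhom.C1beta}, the boundary-layer and mollification errors, and the final bookkeeping in which $\smallpara$ is chosen last. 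All of that is the paper's argument.

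The one substantive step your write-up does not account for is where the \emph{equation for $\whom$} enters. After you replace each local flux $\a(e_j+\nabla\phi_{e_j,z})$ by $\ahom_z e_j$ and then $\ahom_z$ by $D_p^2\overline{L}(\nabla\uhom)$, you are still left with the macroscopic residual
\begin{equation*}
\nabla\cdot\bigl(\zeta\, D_p^2\overline{L}(\nabla\uhom)\,(\nabla\whom\ast\psi)\bigr),
\end{equation*}
which is \emph{not} small in $\underline{H}^{-1}(U)$ on its own; it is of order one. The paper disposes of it (Step~2, display~\eqref{e.moresplitting}) by writing it as $\nabla\cdot\bigl((\zeta-1)D_p^2\overline{L}(\nabla\uhom)\nabla\whom\bigr)$ plus a mollification error, using precisely that $-\nabla\cdot(D_p^2\overline{L}(\nabla\uhom)\nabla\whom)=0$, so that the surviving piece is confined to the boundary layer where $1-\zeta\neq0$ and is then controlled by H\"older's inequality,~\eqref{e.meyers.wwhom} and~\eqref{e.cutoffzeta}. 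This is the defining cancellation of the two-scale expansion and must be stated explicitly; without it the divergence estimate~\eqref{e.Tplugineq} fails. A second, minor point: the paper does not need your device of pairing $\nabla\chi_z$ with differences $\phi_{e_j,z}-\phi_{e_j,z'}$ of correctors in adjacent cubes; it simply bounds $|\nabla(\zeta\chi_z)|\leq C3^{-l}$ against the sublinearity bound~\eqref{e.phiez.alsoL2} for each $\phi_{e_j,z}$ separately, which is both simpler and sufficient.
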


\begin{lemma}
\label{l.stripcorrectors}
There exist~$\alpha(\delta,\data)\in \left(0,\tfrac12\right]$ and $C(\sigma,\delta,\mathsf{M},U_0,\data)<\infty$ such that 
\begin{multline}
\label{e.stripcorrectors}
\frac1r \left( 
\left\| \nabla T - \nabla \whom \right\|_{\underline{H}^{-1}(U)} 
+
\left\| \a \nabla T - D^2\overline{L}(\nabla \uhom)  \nabla \whom \right\|_{\underline{H}^{-1}(U)} 
\right)
\\
\leq 
C \left\| \nabla f \right\|_{\underline{L}^{2+\delta}(U)} 
\left(
r^{-\alpha \smallpara^2 (d-\sigma)} + \X r^{-\sigma'} \right).
\end{multline}
\end{lemma}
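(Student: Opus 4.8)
The goal of Lemma~\ref{l.stripcorrectors} is to show that the two-scale expansion competitor $T$ has gradient and flux which are close, in the weak norm $\underline{H}^{-1}(U)$, to $\nabla\whom$ and its homogenized flux $D^2\overline{L}(\nabla\uhom)\nabla\whom$. The plan is to follow the standard two-scale expansion computation (as in the proof of~\cite[Theorem 1.17]{AKMbook}), but tracking the additional errors coming from the localization of the correctors on the mesoscopic scale $3^l$ and from the mollification/cutoff. First I would expand $\nabla T$ using the product rule on the definition~\eqref{e.T}. The leading term is $\zeta\bigl(\nabla\whom\ast\psi + \sum_j (\partial_{x_j}\whom\ast\psi)\nabla\phi_{e_j}\bigr)$, which should be compared to $\nabla\whom$; the difference between $\nabla T$ and this leading expression consists of terms where a derivative falls on $\zeta$, on the partition-of-unity function $\chi(\cdot-z)$ hidden inside $\phi_{e_j}$, or produces a factor $(\partial_{x_j x_k}\whom\ast\psi)\phi_{e_j}$. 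The key point is that $\nabla\phi_e = \sum_z \chi(\cdot-z)\nabla\phi_{e,z} + \sum_z \nabla\chi(\cdot-z)\phi_{e,z}$, and the second sum is a genuine error term controlled via~\eqref{e.phiez.alsoL2} and $\|\nabla\chi\|_{L^\infty}\leq C3^{-l}$.

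The heart of the matter is the weak-norm estimate. Testing $\nabla T - \nabla\whom$ against an arbitrary $v\in H^1_0(U)$ with $\|v\|_{\underline{H}^1(U)}\leq 1$, one isolates the ``oscillating'' contribution $\sum_j \zeta(\partial_{x_j}\whom\ast\psi)\nabla\phi_{e_j}$; since each $\nabla\phi_{e,z}$ is a finite-volume corrector gradient with mean zero, the estimate~\eqref{e.ests.phiez} (which gives $3^{-l}\|\nabla\phi_{e,z}\|_{\Hminusul(z+\cu_l)}\leq C3^{-n\alpha(d-\sigma)}+\X_z 3^{-n\sigma'}$) is exactly what is needed to bound this term after integrating by parts against the slowly-varying, mollified coefficient $\zeta(\partial_{x_j}\whom\ast\psi)$, using the product rule~\eqref{e.computeHm1} for the $\Hminusul$ norm together with the Lipschitz bounds~\eqref{e.nablawhomconv2} and~\eqref{e.cutoffzeta}. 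The subadditivity~\eqref{e.Hminusul.subadd} over the mesoscopic cubes $z+\cu_l$, followed by~\eqref{e.Osums} to combine the $\X_z$'s into a single $\X=\O_1(C)$, yields the bound on $\|\nabla T - \nabla\whom\|_{\underline{H}^{-1}(U)}$. The flux estimate is handled the same way: one writes $\a\nabla T = \a_{\nabla\uhom(z)}(\partial_{x_j}\whom\ast\psi)(e_j+\nabla\phi_{e_j,z})$ modulo errors, and~\eqref{e.ests.phiez} controls $3^{-l}\|\a_{\nabla\uhom(z)}(e+\nabla\phi_{e,z}) - \ahom_{\nabla\uhom(z)}e\|_{\Hminusul(z+\cu_l)}$; one then replaces $\ahom_{\nabla\uhom(z)}$ by $D^2\overline{L}(\nabla\uhom)$ using Lemma~\ref{l.localahom} (the $3^{-k\alpha}$ error) and the continuity estimates~\eqref{e.ahomxicont} and~\eqref{e.ahomxicont} together with the Hölder modulus of $\nabla\uhom$ from~\eqref{e.uhom.C1beta} to account for the variation of $\xi=\nabla\uhom$ within each mesoscopic cube (this costs a factor $3^{-\beta(m-l)}$ times a small power $3^{n\smallpara^2(d-\sigma)/\alpha}$).

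The remaining pieces are lower-order. The boundary-layer region $U\setminus U^\circ_4$ where $\zeta$ is not identically $1$ has measure $\leq C3^{-(n-m)}|U|$ by~\eqref{e.blayer.measure}; on it one uses the crude deterministic bounds $\|\nabla\phi_{e,z}\|_{\underline{L}^{2+\delta}}\leq C$ from~\eqref{e.phiez.meyers} and the Meyers bound~\eqref{e.meyers.wwhom} on $\nabla\whom$, together with Hölder, to show that the contribution of this region to all the norms is bounded by $C\|\nabla f\|_{\underline{L}^{2+\delta}(U)}3^{-c(n-m)}$, which is $3^{-cn\smallpara^2(d-\sigma)}$ by~\eqref{e.mesoscales}. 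The mollification errors are controlled by~\eqref{e.nablawhomconv0}--\eqref{e.nablawhomconv1}: replacing $\whom$ by $\whom\ast\psi$ changes things by a factor $3^{-\beta(m-l)}$ times a small power of $3^n$. Finally all error exponents have the schematic form $3^{-cn\smallpara^2(d-\sigma)}$ or $\X 3^{-n\sigma'}$, and—after choosing $\smallpara$ small enough (as promised throughout Section~\ref{s.linearization})—one relabels $c\smallpara^2$ as $\alpha\smallpara^2$ and rescales via~\eqref{e.underlinedscalings} (recall $r\in(3^{n-1},3^n]$, so $3^{-n\sigma'}\leq r^{-\sigma'}$ up to a constant) to obtain~\eqref{e.stripcorrectors}.

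The main obstacle I anticipate is bookkeeping rather than conceptual: one must carefully verify that every error term produced by differentiating $T$ — especially the ones where $\nabla$ hits the partition-of-unity weight $\chi(\cdot-z)$ inside $\phi_e$, creating a ``localization defect'' $\sum_z\nabla\chi(\cdot-z)\phi_{e,z}$ — is genuinely controlled by the $L^2$ bound~\eqref{e.phiez.alsoL2} on $\phi_{e,z}$ (not just its gradient), and that the $\Hminusul$-norm product rule is applied with the correct slowly-varying factor so that the gain from testing against $H^1_0$ functions is fully exploited; getting the flux term right, in particular matching $\ahom_{\nabla\uhom(z)}$ to $D^2\overline{L}(\nabla\uhom(x))$ with $x$ ranging over a mesoscopic cube, requires combining three separate small errors (Lemma~\ref{l.localahom}, the Hölder continuity of $\ahom_\cdot$ via~\eqref{e.ahomxicont}, and the Hölder continuity of $\nabla\uhom$) and checking their exponents are all of the advertised form.
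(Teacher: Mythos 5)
Your proposal is correct and follows essentially the same route as the paper's proof: reduce $\nabla T$ to the leading two-scale term via the gradient computation of Lemma~\ref{l.gradientT}, control the oscillating corrector contributions in $\Hminusul$ using~\eqref{e.ests.phiez} together with the product rule~\eqref{e.computeHm1}, the subadditivity~\eqref{e.Hminusul.subadd} and~\eqref{e.Osums}, and match $\ahom_{(\nabla\uhom)_{z+\cu_l}}$ to $D^2\overline{L}(\nabla\uhom)$ by combining Lemma~\ref{l.localahom} with the H\"older continuity of $\ahom_\xi$ and of $\nabla\uhom$, treating the boundary layer and mollification errors exactly as the paper does. No gaps beyond the bookkeeping you already flag.
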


Before giving the proofs of these lemmas, we compute the gradient of~$T$ and discard some terms which (with good choices of the mesoscale parameters) are small. 

\begin{lemma}
\label{l.gradientT}
There exist~$\alpha(\delta,\data)\in\left(0,\tfrac12 \right]$ and $C(\sigma,\delta,\mathsf{M},U_0,\data)<\infty$ and a random variable $\X$ satisfying $\X=\O_1(C)$ such that  if $\smallpara \leq \alpha$, 
\begin{multline}
\label{e.nablaTgarbage}
\left\| 
\nabla T 
- \zeta 
\sum_{z\in 3^{l-1}\Zd} \sum_{j=1}^d \chi(\cdot-z)  
(\partial_{x_j} \whom \ast \psi) \left( e_j + \nabla \phi_{e_j,z} \right) 
\right\|_{\underline{L}^2(U)} 
\\
\leq 
C \left\| \nabla f \right\|_{\underline{L}^{2+\delta}(U)} \left( 3^{-n\alpha \smallpara^2 (d-\sigma)} + \X 3^{-n \sigma'} \right). 
\end{multline}
\end{lemma}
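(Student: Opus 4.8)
## Proof plan for Lemma~\ref{l.gradientT}

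The plan is to differentiate the explicit formula~\eqref{e.T} for $T$ using the product rule, sort the resulting terms into a ``main term'' (the one appearing inside the norm on the left side of~\eqref{e.nablaTgarbage}) and several ``error terms,'' and then estimate each error term in $\underline{L}^2(U)$ by the right side. Recalling that $\phi_e(x) = \sum_{z} \chi(x-z)\phi_{e,z}(x)$ and that the $3^{l-1}\Zd$--translates of $\chi$ form a partition of unity~\eqref{e.chi}, the Leibniz rule applied to $T = (1-\zeta)\whom + \zeta(\whom\ast\psi) + \zeta\sum_{j=1}^d(\partial_{x_j}\whom\ast\psi)\phi_{e_j}$ produces: (a) terms where the derivative falls on $\zeta$ (supported in the boundary layer $U^\circ_3\setminus U^\circ_4$); (b) the term $\zeta\sum_j (\partial_{x_j}\whom\ast\psi)\,\nabla\phi_{e_j}$, which after inserting the partition of unity and using $\nabla\phi_{e_j} = \sum_z \nabla\chi(\cdot-z)\phi_{e_j,z} + \sum_z \chi(\cdot-z)\nabla\phi_{e_j,z}$ becomes the main term $\zeta\sum_z\sum_j\chi(\cdot-z)(\partial_{x_j}\whom\ast\psi)(e_j+\nabla\phi_{e_j,z})$ plus a correction involving $\nabla\chi(\cdot-z)\phi_{e_j,z}$; (c) the term $(1-\zeta)\nabla\whom + \zeta\nabla(\whom\ast\psi) - \zeta\sum_j(\partial_{x_j}\whom\ast\psi)e_j$, which vanishes identically on $U^\circ_4$ (where $\zeta\equiv1$) since $\nabla(\whom\ast\psi) = \sum_j(\partial_{x_j}\whom\ast\psi)e_j$, and is controlled on $U^\circ_3\setminus U^\circ_4$ by~\eqref{e.nablawhomconv1} together with the boundary-layer volume bound~\eqref{e.blayer.measure}; and (d) terms with the derivative on $(\partial_{x_j}\whom\ast\psi)$, controlled by~\eqref{e.nablawhomconv2} with $j=1$ times $\|\phi_{e_j}\|_{\underline{L}^2}$.

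First I would handle the boundary-layer terms (a) and (c): on $U\setminus U^\circ_4$ we use $|\nabla\zeta|\leq C3^{-m}$ and $|\nabla^2\zeta|\leq C3^{-2m}$ from~\eqref{e.cutoffzeta}, the $L^\infty$ bounds~\eqref{e.nablawhomconv0}--\eqref{e.nablawhomconv2} on $\whom\ast\psi$, the deterministic corrector bound~\eqref{e.phiez.unifbound} (or~\eqref{e.phiez.alsoL2}), and the volume estimate $|V\setminus U^\circ_4|\leq C3^{-(n-m)}|U|$ from~\eqref{e.blayer.measure}; since $n-m \asymp \smallpara^2(d-\sigma)n$ by~\eqref{e.mesoscales}, each of these contributes $3^{-cn\smallpara^2(d-\sigma)}\|\nabla f\|_{\underline{L}^2(U)}$ for a suitable constant, which is absorbed into the right side (here the factors $3^{n(\smallpara^2/\alpha)(d-\sigma)}$ appearing in~\eqref{e.nablawhomconv0}--\eqref{e.nablawhomconv2} are harmless once $\smallpara$ is small, exactly as anticipated in the remark following~\eqref{e.whom.C1beta}). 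Next I would handle term (d): $\|\zeta\sum_j\nabla(\partial_{x_j}\whom\ast\psi)\phi_{e_j}\|_{\underline{L}^2(U)} \leq C 3^{-l}\,3^{n(\smallpara^2/\alpha)(d-\sigma)}\|\nabla f\|_{\underline{L}^2(U)}\cdot\sup_z\|\phi_{e_j}\|_{\underline{L}^2}$, and since $3^{-l}$ is an essentially full negative power of $3^n$ (as $n-l\leq\smallpara(d-\sigma)n$), this beats the right side comfortably.

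The remaining — and I expect most delicate — term is the one from (b) involving $\nabla\chi$, namely $\zeta\sum_z\sum_j\nabla\chi(\cdot-z)(\partial_{x_j}\whom\ast\psi)\phi_{e_j,z}$. Here one cannot use the deterministic bound~\eqref{e.phiez.unifbound} for $\nabla\chi\,\phi_{e,z}$ because $|\nabla\chi|\leq C3^{-l}$ only saves a factor $3^{-l}$ while $\|\phi_{e,z}\|_{\underline{L}^2(z+\cu_l)}$ can be as large as $C3^l$ deterministically; instead one must invoke the \emph{sublinearity} estimate~\eqref{e.phiez.alsoL2}, which gives $3^{-l}\|\phi_{e,z}\|_{\underline{L}^2(z+\cu_l)} \leq C3^{-n\alpha(d-\sigma)} + \X_z 3^{-n\sigma'}$. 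Summing the squares over $z\in 3^{l-1}\Zd$ intersecting $U$ (there are $\lesssim 3^{d(n-l)}$ such $z$, all sharing overlapping finite-range structure only up to bounded multiplicity, so the $\O_1$-triangle inequality~\eqref{e.Osums} applies with a bounded prefactor) and using the $L^\infty$ bound on $\partial_{x_j}\whom\ast\psi$ from~\eqref{e.nablawhomconv2}, one obtains a contribution of the form $C\|\nabla f\|_{\underline{L}^2(U)}(3^{-n\alpha\smallpara^2(d-\sigma)} + \X 3^{-n\sigma'})$, where the $\smallpara^2$ in the first exponent comes from absorbing the polynomially growing factor $3^{n(\smallpara^2/\alpha)(d-\sigma)}$ and where $\X := (\,\fint \sum_z \X_z^2\,)^{1/2}$-type average satisfies $\X = \O_1(C)$. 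The main obstacle is thus purely bookkeeping: tracking the various powers of $\smallpara$ through all the error terms and verifying that, after choosing $\smallpara$ small enough (depending only on $(\delta,d,\Lambda,\gamma)$) so that every positive power $3^{+cn\smallpara^2(d-\sigma)}$ is dominated, one lands on a single clean bound of the claimed form. Finally, upgrading from $\|\nabla f\|_{\underline{L}^2(U)}$ to $\|\nabla f\|_{\underline{L}^{2+\delta}(U)}$ is immediate by H\"older's inequality on the bounded domain $U$, and collecting all the pieces via the triangle inequality yields~\eqref{e.nablaTgarbage}.
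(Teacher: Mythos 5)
Your decomposition and the estimates you invoke are essentially the paper's: the same three error sources (boundary-layer terms controlled by the volume bound and~\eqref{e.nablawhomconv0}, and the terms where a derivative falls on $\zeta\chi_z$ or on $\partial_{x_j}\whom\ast\psi$, controlled via the sublinearity of the correctors) appear in the paper's computation~\eqref{e.nablaT}, and your handling of the $\nabla\chi$ term via~\eqref{e.phiez.alsoL2} plus~\eqref{e.Osums} is exactly what is done there.

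There is, however, one slip in your term (d). You bound $\bigl\|\zeta\sum_j\nabla(\partial_{x_j}\whom\ast\psi)\,\phi_{e_j}\bigr\|_{\underline{L}^2(U)}$ by $C3^{-l}\,3^{n(\smallpara^2/\alpha)(d-\sigma)}\|\nabla f\|_{\underline{L}^2(U)}\cdot\sup_z\|\phi_{e_j,z}\|_{\underline{L}^2}$ and claim the factor $3^{-l}$ alone makes this small. It does not: since $\phi_{e,z}\in H^1_0(z+\cu_l)$, the only deterministic information (from~\eqref{e.phiez.unifbound} and Poincar\'e) is $\|\phi_{e_j,z}\|_{\underline{L}^2(z+\cu_l)}\leq C3^l$, so the product $3^{-l}\cdot\sup_z\|\phi_{e_j,z}\|_{\underline{L}^2}$ is only $O(1)$ and the remaining factor $3^{n(\smallpara^2/\alpha)(d-\sigma)}$ is a \emph{positive} power of $3^n$. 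The term therefore needs the sublinearity estimate~\eqref{e.phiez.alsoL2} just as much as the $\nabla\chi$ term does; the paper avoids the issue by grouping $\nabla(\zeta\chi_z)(\partial_{x_j}\whom\ast\psi)$ and $\zeta\chi_z\nabla(\partial_{x_j}\whom\ast\psi)$ into a single sum whose coefficient is bounded by $C3^{-l}3^{n(\smallpara^2/\alpha)(d-\sigma)}\|\nabla f\|_{\underline{L}^2(U)}$ in $L^\infty$ and then applying~\eqref{e.phiez.alsoL2} to $3^{-l}\|\phi_{e_j,z}\|_{\underline{L}^2(z+\cu_{l+1})}$. Since you already invoke~\eqref{e.phiez.alsoL2} for the companion term, the fix is immediate, but as written the justification for (d) does not close. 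A second, harmless imprecision: your term (c) reduces identically (not merely on $U^\circ_4$) to $(1-\zeta)\nabla\whom$, since $\nabla(\whom\ast\psi)=\sum_j(\partial_{x_j}\whom\ast\psi)e_j$ everywhere; that leftover piece is controlled by H\"older, the Meyers bound~\eqref{e.meyers.wwhom} and~\eqref{e.blayer.measure} rather than by~\eqref{e.nablawhomconv1}.
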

\begin{proof}
To shorten the notation, we write $\sum_z$ and $\sum_j$ in place of $\sum_{z\in 3^{l-1}\Zd}$ and $\sum_{j=1}^d$, respectively, and $\sum_{z,j} = \sum_z\sum_j$. We also denote~$\chi_z := \chi(\cdot-z)$. 

\smallskip

A direct computation yields  
\begin{align}
\label{e.nablaT}
\lefteqn{
\nabla T    
- 
\zeta 
\sum_{z,j} \chi_z (\partial_{x_j} \whom \ast \psi) \left( e_j + \nabla \phi_{e_j,z} \right) 
} \quad & 
\\ \notag & 
= \left(1-\zeta \right) \nabla \whom  + \left(\whom \ast \psi - \whom \right) \nabla \zeta
\\ \notag & \quad
+  
\sum_{z,j} \left( \nabla \left(\zeta \chi_z \right)  \left( \partial_{x_j} \whom \ast \psi \right) + \zeta \chi_z \nabla \left( \partial_{x_j} \whom \ast \psi \right)  \right) \phi_{e_j,z}.
\end{align}
To prove the lemma, we will show that each of the three terms on the right side of~\eqref{e.nablaT} is bounded by the right side of~\eqref{e.nablaTgarbage}. 
The first term is small because it is confined to a boundary layer: we use the H\"older inequality, the Meyers estimate~\eqref{e.meyers.wwhom},~\eqref{e.cutoffzeta}, and~\eqref{e.mesoscales} to get 
\begin{align*}
\left\|  \left(1-\zeta \right) \nabla \whom \right\|_{\underline{L}^2(U)}
& 
\leq 
\left\| 1-\zeta \right\|_{\underline{L}^{\frac{4+2\delta}{\delta}}(U)} 
\left\| \nabla \whom  \right\|_{\underline{L}^{2+\delta}(U)}
\\ & 
\leq 
C \left( \frac{\left| U \setminus U_4^\circ \right|}{\left| U \right|} \right)^{\frac{\delta}{4+2\delta}}  \left\| \nabla f  \right\|_{\underline{L}^{2+\delta}(U)}
\\ & 
\leq C 3^{-(n-m)\delta/(4+2\delta)} \left\| \nabla f  \right\|_{\underline{L}^{2+\delta}(U)}.
\\ & 
\leq C 3^{-n\alpha \smallpara^2 (d-\sigma)} \left\| \nabla f  \right\|_{\underline{L}^{2+\delta}(U)}.
\end{align*}
To estimate the second term, we use that $\nabla \zeta$ is supported in $U_3^\circ\setminus U_4^\circ$, and obtain by~\eqref{e.cutoffzeta} and~\eqref{e.nablawhomconv0} that
\begin{multline*} 
\left\|\left(\whom \ast \psi - \whom \right) \nabla \zeta \right\|_{\underline{L}^2(U)} \leq 
C \left\|\nabla \zeta \right\|_{L^\infty(\R^d)} \left\| \whom \ast \psi - \whom \right\|_{\underline{L}^\infty(U_3^\circ)}
\\ \leq C 3^{-(m-l) }3^{n (\smallpara^2 /\alpha) (d-\sigma)} \left\| \nabla f \right\|_{\underline{L}^2(U)} \leq C 3^{- n \alpha \smallpara(d-\sigma) }  \left\| \nabla f \right\|_{\underline{L}^2(U)}.
\end{multline*}
Combining~\eqref{e.chi},~\eqref{e.cutoffzeta}, and~\eqref{e.nablawhomconv2}, we also get
\begin{multline*} 
\sup_{z,j} \left\| \nabla \left(\zeta \chi_z \right)  \left( \partial_{x_j} \whom \ast \psi \right) + \zeta \chi_z \nabla \left( \partial_{x_j} \whom \ast \psi \right)  \right\|_{L^\infty(\R^d)} \\ \leq C 3^{-l} 3^{n (\smallpara^2 /\alpha) (d-\sigma)}\left\| \nabla f \right\|_{\underline{L}^2(U)}.
\end{multline*}
Using this and~\eqref{e.phiez.alsoL2}, we estimate the final term as
\begin{align} \notag 
\lefteqn{\left\|  \sum_{z,j} \left( \nabla \left(\zeta \chi_z \right)  \left( \partial_{x_j} \whom \ast \psi \right) + \zeta \chi_z \nabla \left( \partial_{x_j} \whom \ast \psi \right)  \right) \phi_{e_j,z} \right\|_{\underline{L}^2(U)}} \quad &
\\ \notag & \leq C 3^{n (\smallpara^2 /\alpha) (d-\sigma)} \left\| \nabla f \right\|_{\underline{L}^2(U)} \frac{|\cu_{l+1} |}{|U|} \sum_{z,j} 3^{-l} 
\left\| \phi_{e_j,z} \right\|_{\underline{L}^2(z + \cu_{l+1})} 
\\ \notag & \leq C 3^{n (\smallpara^2 /\alpha) (d-\sigma)} \left\| \nabla f \right\|_{\underline{L}^2(U)} \left( 3^{-n \alpha (d-\sigma) } + \X_z 3^{-n \sigma'} \right).
\end{align}
Taking $\alpha$ smaller, if necessary, yields the lemma.  
\end{proof}

We now turn to the proof of Lemma~\ref{l.plugT}.

\begin{proof}[{Proof of Lemma~\ref{l.plugT}}]
The rough idea is to plug~$T$ into the equation for~$\tilde{w}$ and check that the error we make is small: the claim is that 
\begin{equation}
\label{e.Tplugineq}
\left\| 
\nabla \cdot 
\left( \a\nabla T \right) 
\right\|_{\underline{H}^{-1}(U)}
\leq 
C \left\| \nabla f \right\|_{\underline{L}^{2+\delta}(U)}   \left( 3^{-n\alpha \smallpara^2 (d-\sigma)} + \X 3^{-n \sigma'} \right).
\end{equation}
The proof of~\eqref{e.Tplugineq} occupies the first three steps below. In the fourth step we use~\eqref{e.Tplugineq} to obtain the lemma. As above we write $\sum_z$ and $\sum_j$ in place of $\sum_{z\in 3^{l-1}\Zd}$ and $\sum_{j=1}^d$, respectively, to shorten the notation. We also write $\chi_z := \chi(\cdot-z)$.

\smallskip

\emph{Step 1.} We organize the computation. 
According to~\eqref{e.nablaTgarbage},  
\begin{align} \label{e.splittingcomp0}
\lefteqn{
\left\| 
\nabla \cdot \a \left( \nabla T 
- \zeta \sum_{z,j} \chi_z 
\left( \partial_{x_j} \whom \ast\psi\right) \left(e_j+ \nabla \phi_{e_j,z} \right)
\right) 
\right\|_{\underline{H}^{-1}(U)} 
} \quad &
\\ \notag & 
\leq 
C \left\| 
\nabla T 
- \zeta 
\sum_{z,j} \chi_z  
(\partial_{x_j} \whom \ast \psi) \left( e_j + \nabla \phi_{e_j,z} \right) 
\right\|_{\underline{L}^2(U)} 
\\ \notag & 
\leq 
C \left\| \nabla f \right\|_{\underline{L}^{2+\delta}(U)}  \left( 3^{-n\alpha \smallpara^2 (d-\sigma)} + \X 3^{-n \sigma'} \right).
\end{align}
It therefore suffices to estimate the $H^{-1}$ norm of
\begin{align} 
\label{e.splittingcomp}
\lefteqn{\nabla \cdot  \left(   
\sum_{z,j}  \zeta  \chi_z (\partial_{x_j} \whom \ast \psi) \a \left( e_j + \nabla \phi_{e_j,z} \right)  \right)} \quad &
\\ \notag & = 
\sum_{z,j}  ( \partial_{x_j} \whom \ast \psi) \zeta \chi_z \nabla \cdot \a_z \left( e_j + \nabla \phi_{e_j,z} \right)
\\ \notag & \quad 
+ \sum_{z,j}  \nabla \left( (\partial_{x_j} \whom \ast \psi) \zeta \chi_z \right) \cdot   \a_z \left( e_j + \nabla \phi_{e_j,z} \right) 
\\ \notag & \quad 
+ \nabla \cdot \left(  \zeta  \sum_{z,j} \chi_z (\partial_{x_j} \whom \ast \psi) \left(\a - \a_z \right) \left( e_j + \nabla \phi_{e_j,z} \right) \right),
\end{align}
where, for each $z\in 3^{l-1}\Zd$, we abuse notation by defining
\begin{equation*}
\ahom_z := \ahom_{\left( \nabla \uhom \right)_{z+\cu_l}}. 
\end{equation*}
The first term on the right side of~\eqref{e.splittingcomp} is zero by the equation of $\phi_{e_j,z}$, and thus the following steps are devoted to estimates of the other two terms. 

\smallskip

\emph{Step 2.} We show that 
\begin{multline} \label{e.splitsecondterm000}
\left\| \sum_{z,j}  \nabla \left( (\partial_{x_j} \whom \ast \psi) \zeta \chi_z \right) \cdot   \a_z \left( e_j + \nabla \phi_{e_j,z} \right) \right\|_{\Hminusul(U)} 
\\ \leq  C \left\| \nabla f \right\|_{\underline{L}^{2+\delta}(U)}\left( 3^{-n\alpha \smallpara^2 (d-\sigma)} + \X 3^{-n \sigma'} \right).
\end{multline}
We decompose each summand on the left as 
\begin{align} 
\label{e.splitsecondterm001}
\lefteqn{
\nabla \left( (\partial_{x_j} \whom \ast \psi) \zeta \chi_z \right) \cdot   \a \left( e_j + \nabla \phi_{e_j,z} \right)  
} \quad & 
\\ \notag & 
=   \nabla \left( (\partial_{x_j} \whom \ast \psi) \zeta \chi_z \right) \cdot \ahom_z e_j  
\\ \notag & \quad 
+ \nabla \left( (\partial_{x_j} \whom \ast \psi) \zeta \chi_z \right) \cdot   \left( \a \left( e_j + \nabla \phi_{e_j,z} \right) - \ahom_z e_j  \right)   .
\end{align}
It is in the estimate of the first term that we use the equation for $\whom$. To see this, write
\begin{equation*} 
\sum_{z,j}  \nabla \left( (\partial_{x_j} \whom \ast \psi) \zeta \chi_z \right) = \sum_z \nabla \cdot \left(   \ahom_z \chi_z  (\nabla \whom \ast \psi) \zeta  \right) ,
\end{equation*}
and decompose the term on the right as 
\begin{align}
\label{e.moresplitting}
\sum_z \nabla \cdot \left(   \ahom_z \chi_z  (\nabla \whom \ast \psi) \zeta  \right) & =   
\nabla \cdot \left( \zeta D_p^2 \overline{L} \left( \nabla \uhom \right)  \nabla \whom  \right) 
\\ \notag & \quad
 +
\nabla \cdot \left( \zeta \sum_z \chi_z \ahom_z \left( \nabla \whom\ast \psi -  \nabla \whom \right) \right)  
\\ \notag & \quad
-
\nabla \cdot \left( \zeta \sum_z \chi_z \left( D_p^2 \overline{L} \left( \nabla \uhom \right) -  \ahom_z \right)  \nabla \whom  \right).
\end{align}
By the equation for $\whom$, we have
\begin{equation*}
\nabla \cdot \left( \zeta D_p^2 \overline{L} \left( \nabla \uhom \right)  \nabla \whom  \right)  
=
\nabla \cdot \left( (\zeta-1) D_p^2 \overline{L} \left( \nabla \uhom \right) \nabla \whom  \right) ,
\end{equation*}
and thus, by the H\"older inequality,~\eqref{e.meyers.wwhom} and~\eqref{e.cutoffzeta},
\begin{align*}
\left\| 
\nabla \cdot \left( \zeta D_p^2 \overline{L} \left( \nabla \uhom \right)  \nabla \whom  \right) 
\right\|_{\Hminusul(U)}
&
\leq
\left\| 
 (1-\zeta)   D_p^2 \overline{L} \left( \nabla \uhom \right)  \nabla \whom
\right\|_{\underline{L}^{2}(U)}
\\ & 
\leq 
\left\| 1-\zeta \right\|_{\underline{L}^{\frac{4+2\delta}{\delta}}(U)}
\left\| \nabla \whom \right\|_{\underline{L}^{2+\delta}(U)}
\\ & 
\leq 
C 3^{-n\alpha \smallpara^2 (d-\sigma)}  \left\| \nabla f \right\|_{\underline{L}^{2+\delta}(U)}.
\end{align*}
We estimate the $\Hminusul (U)$ norm of the other two terms on the right side of~\eqref{e.moresplitting} by showing that what is under the divergence sign is small in $L^2(U)$. We have, by~\eqref{e.nablawhomconv1},
\begin{align*}
\lefteqn{
\left\| 
\nabla \cdot \left( \zeta \sum_z \chi_z \ahom_z
\left( \nabla \whom\ast \psi -  \nabla \whom \right) \right)
\right\|_{\Hminusul (U)}  
} \qquad & 
\\ \notag & 
\leq
\left\| 
\zeta \sum_z \chi_z \ahom_z \left( \nabla \whom\ast \psi -  \nabla \whom \right)
\right\|_{\underline{L}^{2}(U)} 
\\ \notag & 
\leq
C \left\| 
 \nabla \whom\ast \psi -  \nabla \whom
\right\|_{\underline{L}^{2}(U^{\circ}_3)} 
\\ \notag & 
\leq 
C 3^{- n \alpha \smallpara (d-\sigma)}   \left\| \nabla f \right\|_{\underline{L}^2(U)}
\end{align*}
and, similarly, by the H\"older inequality,~\eqref{e.meyers.wwhom},~\eqref{e.uhom.C1beta} and~\eqref{e.ahomxicont}, 
\begin{align*}
\lefteqn{
\left\|
\nabla \cdot \left( \zeta \sum_z \chi_z \left( D_p^2 \overline{L} \left( \nabla \uhom \right) -  \ahom_z \right) \nabla  \whom\right)
\right\|_{\Hminusul (U)}^2
} \qquad & 
\\ & 
\leq 
\left\|
 \zeta \sum_z \chi_z \left( D_p^2 \overline{L} \left( \nabla \uhom \right) -  \ahom_z \right)   \nabla \whom
\right\|_{\underline{L}^{2}(U)}
\\ \notag & 
\leq 
\sup_{z \in 3^{l-1} \Z^d \cap U_3^\circ} \left\| D_p^2 \overline{L} \left( \nabla \uhom \right) - \ahom_z  
\right\|_{L^\infty(z + \cu_{l+1})}
\left\|
\nabla \whom
\right\|_{\underline{L}^{2}(U)}.
\end{align*}
To estimate the first factor on the right, we use  Lemma~\ref{l.localahom} and~\eqref{e.uhom.C1beta}, noticing also that for every $z \in 3^{l-1}\Zd\cap U_3^\circ$ we have $z+\cu_{l+1} \subseteq U^\circ_1$, to obtain
\begin{align} \notag 
\lefteqn{
\left\| D_p^2 \overline{L} \left( \nabla \uhom \right) - \ahom_z \right\|_{L^\infty(z + \cu_{l+1})}
} \qquad &
\\ \notag  &
\leq 
\left\| D_p^2 \overline{L} \left( \left( \nabla \uhom \right)_{z+\cu_l} \right) - \ahom_{\left( \nabla \uhom \right)_{z+\cu_l} } 
\right\|_{L^\infty(z + \cu_{l+1})}
\\ \notag & \quad 
+
\left\|  D_p^2 \overline{L} \left( \nabla \uhom \right) -  D_p^2 \overline{L} \left( \left( \nabla \uhom \right)_{z+\cu_l} \right) 
\right\|_{L^\infty(z + \cu_{l+1})}
\\ \notag  &
\leq C (1+\left\| \nabla \uhom \right\|_{L^\infty(U_1^\circ)}) 3^{-\alpha k}
+ 
C
\left( 
3^{\beta l} \left[
\nabla \uhom
\right]_{C^{0,\beta}(U^{\circ}_1)} 
\right)^{\beta} 
\\ \notag  &
\leq 
C ( 3^{-\alpha k} + 3^{-\beta(m-l)} ) 3^{n d \smallpara^2 (d-\sigma)}.
\end{align}
Therefore, collecting the above estimates, we obtain by~\eqref{e.moresplitting} and~\eqref{e.mesoscales} that
\begin{equation} \label{e.splitsecondterm002}
\left\| \sum_z \nabla \cdot \left(   \ahom_z \chi_z  (\nabla \whom \ast \psi) \zeta  \right) \right\|_{\Hminusul (U)} 
\leq C \left\| \nabla f \right\|_{\underline{L}^{2}(U)}  3^{-n\alpha \smallpara^2 (d-\sigma)} .
\end{equation}
To estimate the second term on the right side of~\eqref{e.splitsecondterm001}, 
we use~\eqref{e.ests.phiez} and~\eqref{e.computeHm1} to obtain, for each $z\in 3^{l-1}\Zd$,  
\begin{align*} 
& 
\left\| \nabla \left( (\partial_{x_j} \whom \ast \psi) \zeta \chi_z \right) \cdot   \left( \a_z \left( e_j + \nabla \phi_{e_j,z} \right) - \ahom_z e_j  \right)  \right\|_{\Hminusul \left(z+\cu_{l}\right)}
\\ & \qquad 
\leq C\left(  
\left\| \nabla \left( (\partial_{x_j} \whom \ast \psi) \zeta \chi_z \right)  \right\|_{L^{\infty}\left(z+\cu_{l}\right)}
+
3^{l} \left\| \nabla^2 \left( (\partial_{x_j} \whom \ast \psi) \zeta \chi_z \right)  \right\|_{L^{\infty}\left(z+\cu_{l}\right)}  
\right) 
\\ & \qquad \qquad  \times 
3^l \left( 3^{-n \alpha (d-\sigma) } + \X_z 3^{-n \sigma'} \right).
\end{align*}
Applying~\eqref{e.nablawhomconv2},~\eqref{e.chi}, and~\eqref{e.cutoffzeta} we see that 
\begin{multline*}
\left\| \nabla \left( (\partial_{x_j} \whom \ast \psi) \zeta \chi_z \right)  \right\|_{L^{\infty}\left(z+\cu_{l}\right)}
+
3^{l} \left\| \nabla^2 \left( (\partial_{x_j} \whom \ast \psi) \zeta \chi_z \right)  \right\|_{L^{\infty}\left(z+\cu_{l}\right)} 
\\
\leq 
C3^{-l+n(\smallpara \beta_2/\alpha) (d-\sigma)} \left\| \nabla f \right\|_{\underline{L}^2(U)}. 
\end{multline*}
We now obtain~\eqref{e.splitsecondterm000} by combining the two previous displays with~\eqref{e.splitsecondterm001},~\eqref{e.splitsecondterm002} and~\eqref{e.Hminusul.subadd}, the latter in view of the fact that
\begin{equation*} 
\frac{|\cu_{l}|}{|U|}\sum_{z \in 3^{l-1}\Zd\cap U_3^\circ}  \X_z = \O_{1}(C),
\end{equation*}
and then taking the parameter $\smallpara>0$ sufficiently small. 

\smallskip

\emph{Step 3.}
We estimate the $H^{-1}(U)$ norm of the divergence of the third term on the right side of~\eqref{e.splittingcomp}. The claim is that 
\begin{multline}
\label{e.splittingcomp.est1}
\left\| 
\nabla \cdot \left( 
\zeta   
\sum_{z,j} \chi_z
(\partial_{x_j} \whom \ast \psi) \left(\a - \a_z \right) \left( e_j + \nabla \phi_{e_j,z} \right)
\right)
\right\|_{\underline{H}^{-1}(U)} 
\\
\leq 
C \left\| \nabla f \right\|_{\underline{L}^2(U)} 
 3^{-n\alpha \smallpara^2 (d-\sigma)} . 
\end{multline}
For this it is enough to bound the $L^2$ norm of the term inside of the parenthesis. 
By the H\"older inequality,~\eqref{e.whom.C1beta} and~\eqref{e.phiez.meyers}, we have
\begin{align*}
\lefteqn{
\left\| 
\zeta   
\sum_{z,j} \chi_z
(\partial_{x_j} \whom \ast \psi) \left(\a - \a_z \right) \left( e_j + \nabla \phi_{e_j,z} \right)
\right\|_{\underline{L}^2(U)} 
} \qquad  &
\\ & 
\leq 
\left\| 
\zeta   
\sum_{z,j} \chi_z
(\partial_{x_j} \whom \ast \psi) \left(\a - \a_z \right) \left( e_j + \nabla \phi_{e_j,z} \right)
\right\|_{\underline{L}^2(U)} 
\\ & 
\leq
C\left\| \nabla \whom \right\|_{L^\infty(U^{\circ}_2)}
 \sum_z  \left\| \chi_z (\a - \a_z) \right\|_{\underline{L}^{\frac{4+2\delta}{\delta}} (U)} 
\left( 1 + \sum_{j}\left\| \nabla \phi_{e_j,z} \right\|_{\underline{L}^{2+\delta} \left(z+\cu_l\right)}
\right)
\\ & 
\leq 
C\left\| \nabla f \right\|_{\underline{L}^2(U)} 3^{n(\smallpara \beta_2/\alpha) (d-\sigma)}
 \sum_z  \left\| \chi_z (\a - \a_z) \right\|_{\underline{L}^{\frac{4+2\delta}{\delta}} (U)}  .
\end{align*}
Furthermore, to estimate the last sum, using~\eqref{e.uhom.C1beta},~\eqref{e.axicont}, and letting $[z]$ denote the nearest point of $3^l \Zd$ to $z\in 3^{l-1}\Zd$, we have, for $q \in [2,\infty)$,
\begin{align*}
\left\| \a - \sum_z \chi_z \a_z \right\|_{\underline{L}^q(U^{\circ}_1)} 
&
\leq 
\sum_{z\in 3^{l-1}\Zd\cap U^{\circ}_1}
\left\|  \a - \a_z \right\|_{\underline{L}^q(z+\cu_l)} 
\\ & 
\leq
\sum_{z\in 3^{l-1}\Zd\cap U^{\circ}_1}
\left\|  
\a_{\left( \nabla \uhom \right)_{[z]+\cu_l}} 
- \a_{\left( \nabla \uhom \right)_{z+\cu_l}}
\right\|_{\underline{L}^q(z+\cu_l)} 
\\ & 
\leq 
C\sum_{z\in 3^{l-1}\Zd\cap U^{\circ}_1}
\left| 
\left( \nabla \uhom \right)_{[z]+\cu_l} 
- \left( \nabla \uhom \right)_{z+\cu_l}
\right|^{\beta/q} 
\\ & 
\leq 
C \left( 3^{\beta l} \left[ \nabla \uhom \right]_{C^{0,\beta}(U^{\circ}_1)} \right)^{\beta/q}
\\ & 
\leq 
C \left( 3^{-\beta (m-l)} 3^{d(n-m)/2} \right)^{\beta/q}. 
\end{align*}
Putting the last two displays together, using~\eqref{e.mesoscales}, and taking $\alpha$ smaller, if necessary, we obtain~\eqref{e.splittingcomp.est1}.

\smallskip

\emph{Step 4.} The conclusion. We combine~\eqref{e.splittingcomp0} and~\eqref{e.splittingcomp} with the estimates~\eqref{e.splitsecondterm000} and~\eqref{e.splittingcomp.est1}, recalling that the first term on the right in~\eqref{e.splittingcomp} is zero, to deduce~\eqref{e.Tplugineq}.  
Since $T - \tilde{w} \in H^1_0(U)$, the equation for $\tilde{w}$ gives 
\begin{equation*} \label{}
\fint_{U} \nabla \left( T - \tilde{w} \right) \cdot \a \nabla \tilde{w} \,dx = 0. 
\end{equation*}
The estimate~\eqref{e.Tplugineq} yields 
\begin{align*} \label{}
\left| \fint_{U} \nabla \left( T - \tilde{w} \right) \cdot \a \nabla T \,dx \right| 
&
\leq 
C \left\| T-\tilde{w} \right\|_{\underline{H}^1(U)} 
\left\| \nabla \cdot \a \nabla T \right\|_{\underline{H}^{-1}(U)} 
\\ &
\leq 
C \left\| \nabla T- \nabla \tilde{w} \right\|_{\underline{L}^2(U)}
\left\| \nabla \cdot \a \nabla T \right\|_{\underline{H}^{-1}(U)}.
\end{align*}
Combining the previous two displays yields
\begin{align*} \label{}
\left\|  \nabla  T -\nabla \tilde{w}  \right\|_{\underline{L}^2(U)}^2
& 
\leq
\fint_{U} \nabla \left( T - \tilde{w} \right) \cdot \a  \nabla \left( T - \tilde{w} \right) \,dx 
\\ & 
\leq 
C \left\| \nabla T- \nabla\tilde{w} \right\|_{\underline{L}^2(U)}
\left\| \nabla \cdot \a \nabla T \right\|_{\underline{H}^{-1}(U)}
\end{align*}
and thus 
\begin{equation*}
\left\|  \nabla  T -\nabla \tilde{w}  \right\|_{\underline{L}^2(U)}
\leq C\left\| \nabla \cdot \a \nabla T \right\|_{\underline{H}^{-1}(U)}.
\end{equation*}
Combining this with~\eqref{e.Tplugineq} completes the proof of~\eqref{e.nablaTtotildew} and the lemma. 
\end{proof}

\smallskip

\begin{proof}[{Proof of Lemma~\ref{l.stripcorrectors}}]
We estimate the terms on the left side of~\eqref{e.stripcorrectors} by using~\eqref{e.nablaTgarbage} to reduce the desired inequalities to bounds on the functions~$\phi_{e,z}$ which are then consequences of quantitative homogenization estimates for linear equations. For convenience we denote
\begin{equation*}
\mathcal{E}':= 
 \left\| \nabla f \right\|_{\underline{L}^{2+\delta}(U)} 
\left(3^{-n\alpha \smallpara^2 (d-\sigma)} + \X 3^{-n \sigma'}  \right).
\end{equation*}

\smallskip

\emph{Step 1.} We show that 
\begin{equation}
\label{e.nablaTtowhom}
\frac1r \left\| \nabla T - \nabla \whom \right\|_{\underline{H}^{-1}(U)} 
\leq 
C\mathcal{E}'. 
\end{equation}
Using~\eqref{e.ests.phiez},~\eqref{e.nablawhomconv2} and~\eqref{e.nablaTgarbage}, we see that 
\begin{align*}
\lefteqn{
\frac1r 
\left\| 
\nabla T - \zeta \left( \nabla \whom \ast\psi \right)
\right\|_{\underline{H}^{-1}(U)}
} \qquad & 
\\ & 
\leq \frac1r \left\| \zeta \sum_{j=1}^d 
\partial_{x_j} \left(  \whom \ast\psi \right) \nabla \phi_{e_j} 
\right\|_{\underline{H}^{-1}(U)} 
+ C \mathcal{E}'
\\ & 
\leq 
\sum_{j=1}^d 
\left\| \nabla \left( \zeta \left(  \nabla\whom \ast\psi \right) \right) \right\|_{L^\infty(U)} 
\left\|  \nabla \phi_{e_j}\right\|_{\underline{H}^{-1}(U)}  
+ C \mathcal{E}'
\leq 
C \mathcal{E}'. 
\end{align*}
Next, we have, by~\eqref{e.nablawhomconv1}, 
\begin{align*}
\frac1r 
\left\| 
\zeta \left( \nabla \whom \ast\psi - \nabla\whom \right)
\right\|_{\underline{H}^{-1}(U)}
&
\leq
C \left\| 
\zeta \left( \nabla \whom \ast\psi - \nabla\whom \right)
\right\|_{\underline{L}^{2}(U)}
\\ & 
\leq 
C\left\| 
 \nabla \whom \ast\psi - \nabla\whom 
\right\|_{\underline{L}^{\infty}(U^{\circ}_3)}
\leq 
C \mathcal{E}'. 
\end{align*}
Finally, 
\begin{align*}
\frac1r 
\left\| 
(1- \zeta) \nabla \whom 
\right\|_{\underline{H}^{-1}(U)}
& 
\leq 
\left\| 
(1- \zeta) \nabla \whom 
\right\|_{\underline{L}^{2}(U)}
\\ & 
\leq 
\left\| 1-\zeta \right\|_{\underline{L}^{\frac{4+2\delta}{\delta}}(U)} \left\| \nabla \whom \right\|_{\underline{L}^{2+\delta}(U)}
\\ & 
\leq 
C 3^{-n\alpha \smallpara^2 (d-\sigma)} \left\| \nabla f \right\|_{\underline{L}^{2+\delta}(U)}
\leq 
C \mathcal{E}'. 
\end{align*}
The triangle inequality and the previous three displays yield~\eqref{e.nablaTtowhom}. 

\smallskip

\emph{Step 2.} To prepare for the estimate for the fluxes, we show that 
\begin{equation}
\label{e.gethomogcoeff}
\left\| D^2\overline{L}(\nabla \uhom) - \sum_{z\in 3^{l-1}\Zd} \chi_z \ahom_z \right\|_{L^\infty(U^\circ_3)}
\leq 
C3^{-n\alpha  \smallpara(d-\sigma)},
\end{equation}
where we abuse notation by defining, for each $z\in 3^{l-1}\Zd$, 
\begin{equation*}
\ahom_z:= \ahom_{\left( \nabla \uhom \right)_{z+\cu_l}}. 
\end{equation*}
By~\eqref{e.Lbar.C2beta}, Lemma~\ref{l.localahom} and~\eqref{e.uhom.C1beta}, we have that, for every $z\in 3^{l-1}\Zd\cap U^{\circ}_2$ and $x\in z+\cu_l$, 
\begin{align*}
\left| 
D^2\overline{L}(\nabla \uhom(x))
-
\ahom_z \right| 
&
\leq 
C 3^{-n\alpha \smallpara(d-\sigma)} + \left| 
D^2\overline{L}(\nabla \uhom(x))
-
D^2\overline{L}(\nabla \uhom(z)) \right| 
\\ & 
\leq
C 3^{-n\alpha \smallpara(d-\sigma)}
+ 
\left[ D^2\overline{L} \right]_{C^{0,\beta}(\Rd)} 
\left| \nabla \uhom(x) - \nabla \uhom(z) \right|^\beta
\\ & 
\leq 
C3^{-n\alpha \smallpara(d-\sigma)} 
+ 
C\left[ D^2\overline{L} \right]_{C^{0,\beta}(\Rd)} 
\left( 3^{\beta l} \left[ \nabla \uhom \right] \right)^\beta
\\ & 
\leq
C 3^{-n\alpha \smallpara(d-\sigma)} 
+ 
C \left( 
3^{-\beta(m-l)} 3^{d(n-m)/2}
\right)^{\beta}.
\end{align*}
After redefining $\alpha$, by~\eqref{e.mesoscales} this becomes
\begin{equation*}
\sup_{z\in 3^{l-1}\Zd\cap U^{\circ}_2}
\sup_{x\in z+\cu_l}
\left| 
D^2\overline{L}(\nabla \uhom(x))
-
\ahom_z \right| 
\leq 
C 3^{-n\alpha \smallpara(d-\sigma)} 
+ 
C 3^{-n\alpha \smallpara(d-\sigma)} .
\end{equation*}
This yields~\eqref{e.gethomogcoeff} after summing over $z$ by the triangle inequality, recalling also that $\smallpara \leq \smallpara$. 

\smallskip

\emph{Step 3.} We show that 
\begin{equation}
\label{e.nablaTtowhomflux}
\frac1r \left\| \a \nabla T - D^2\overline{L}\left(\nabla \uhom \right) \nabla \whom \right\|_{\underline{H}^{-1}(U)} 
\leq 
C \mathcal{E}'. 
\end{equation}
Let $\tilde{\a}:= \sum_{z\in 3^{l-1}\Zd} \chi_z \ahom_z$. 
By~\eqref{e.ests.phiez},~\eqref{e.nablawhomconv2} and~\eqref{e.nablaTgarbage}, 
\begin{align*}
\lefteqn{
\frac1r 
\left\| 
\a \nabla T - \zeta \tilde{\a} \left( \nabla \whom \ast\psi \right)
\right\|_{\underline{H}^{-1}(U)}
} \qquad & 
\\ & 
\leq \frac1r \left\| \zeta \sum_{j=1}^d
\partial_{x_j} \left(  \whom \ast\psi \right) \left( \a\left( e_j +  \nabla \phi_{e_j} \right) - \tilde{\a} e_j \right) 
\right\|_{\underline{H}^{-1}(U)} 
+ C \mathcal{E}'
\\ & 
\leq 
\sum_{j=1}^d 
\left\| \nabla \left( \zeta \left(  \nabla\whom \ast\psi \right) \right) \right\|_{L^\infty(U)}
\left\| 
\a\left( e_j +  \nabla \phi_{e_j} \right) - \tilde{\a} e_j
\right\|_{\underline{H}^{-1}(U)}  
+ C \mathcal{E}'
\leq C \mathcal{E}'. 
\end{align*}
Next we use~\eqref{e.whom.C1beta} and~\eqref{e.gethomogcoeff} to see that 
\begin{align*}
\lefteqn{
\frac1r 
\left\| 
\zeta \left( D^2\overline{L}(\nabla \uhom) - \tilde{\a} \right) \left( \nabla \whom \ast\psi \right)
\right\|_{\underline{H}^{-1}(U)}
} \quad & 
\\ & 
\leq 
C
\left\| 
\zeta \left( D^2\overline{L}(\nabla \uhom) - \tilde{\a} \right) \left( \nabla \whom \ast\psi \right)
\right\|_{\underline{L}^{2}(U)}
\\ & 
\leq 
C
\left\| D^2\overline{L}(\nabla \uhom) - \tilde{\a} \right\|_{L^\infty(U^{\circ}_3)} 
\left\| \nabla \whom \right\|_{\underline{L}^2(U^\circ_2)}
\\ & 
\leq 
C\left\| \nabla f \right\|_{\underline{L}^2(U)}  3^{-n \alpha  \smallpara(d-\sigma) \left( 1 -   \smallpara/\alpha^2 \right)}
\leq
C \mathcal{E}',
\end{align*}
for sufficiently small~$\smallpara$. 
Finally, as in Step~1, we have
\begin{align*}
\frac1r 
\left\| 
\zeta D^2\overline{L}(\nabla\uhom) \left( \nabla \whom \ast\psi - \nabla\whom \right)
\right\|_{\underline{H}^{-1}(U)}
&
\leq 
\frac1r 
\left\| 
\zeta \left( \nabla \whom \ast\psi - \nabla\whom \right)
\right\|_{\underline{H}^{-1}(U)}
\\ &
\leq C\mathcal{E}'
\end{align*}
and
\begin{equation*}
\frac1r 
\left\| 
(1- \zeta) D^2\overline{L}(\nabla\uhom) \nabla \whom 
\right\|_{\underline{H}^{-1}(U)}
\leq 
\frac1r 
\left\| 
(1- \zeta) \nabla \whom 
\right\|_{\underline{H}^{-1}(U)}
\leq 
C \mathcal{E}'.
\end{equation*}
Combining the three previous displays yields~\eqref{e.nablaTtowhomflux}. 
\end{proof}

\begin{proof}[{Proof of Proposition~\ref{p.homog.locallystationary}}]
The statement is an immediate consequence of the triangle inequality and Lemmas~\ref{l.plugT} and~\ref{l.stripcorrectors}.
\end{proof}

\subsection{The conclusion}

We conclude the proof of Theorem~\ref{t.linearization} by  summarizing how the previous lemmas fit together to give the theorem. 

\begin{proof}[{Proof of Theorem~\ref{t.linearization}}]
As discussed above, it suffices to prove~\eqref{e.DPestimates.rescale}. For the  estimate of the gradient term, we have
\begin{align*}
\frac1r \left\| \nabla w - \nabla \whom  \right\|_{\underline{H}^{-1}(U)} 
&
\leq 
\left\| \nabla w - \nabla \tilde{w}  \right\|_{\underline{L}^{2}(U)} 
+ 
\frac1r \left\| \nabla \tilde{w} - \nabla \whom  \right\|_{\underline{H}^{-1}(U)} 
\\ & 
\leq 
C \left\| \nabla f \right\|_{\underline{L}^{2+\delta}(U)} \left(  3^{-n \alpha \smallpara^2 (d-\sigma)^2}+ \X 3^{-n\sigma} \right).
\end{align*}
Here we used the triangle inequality,~\eqref{e.H1L2dumbdumb}, Lemma~\ref{l.wtowtilde} and Proposition~\ref{p.homog.locallystationary}. For the fluxes, the triangle inequality and~\eqref{e.H1L2dumbdumb} yield
\begin{align*}
\lefteqn{
 \frac1r\left\| D_p^2L\left( \nabla u,\cdot \right) \nabla w - D_p^2\overline{L} \left( \nabla \uhom \right) \nabla \whom   \right\|_{\underline{H}^{-1}(U)}
 } \qquad & 
\\ & 
\leq
\left\| \left( D^2_pL(\nabla u,x) - \a \right) \nabla w \right\|_{\underline{L}^2(U)}
+
\left\| \a \right\|_{L^\infty(U)} 
\left\| \nabla w - \nabla \tilde{w}  \right\|_{\underline{L}^{2}(U)} 
\\ & \qquad
+
\frac1r\left\| \a \nabla \tilde{w} - D^2\overline{L}(\nabla \uhom)  \nabla \whom \right\|_{\underline{H}^{-1}(U)} .
\end{align*}
The three terms on the right are controlled by~\eqref{e.switchflux}, Lemma~\ref{l.wtowtilde} and Proposition~\ref{p.homog.locallystationary}, respectively. 
We have shown that the left side of~\eqref{e.DPestimates.rescale} is bounded by
\begin{equation*}
C \left\| \nabla f \right\|_{\underline{L}^{2+\delta}(U)} \left(  3^{-n \alpha \smallpara^2 (d-\sigma)^2}+ \X 3^{-n\sigma} \right)
\end{equation*}
for~$\smallpara$ specified in the statements of~\eqref{e.switchflux}, Lemma~\ref{l.wtowtilde} and Proposition~\ref{p.homog.locallystationary}. Recalling that  $r \geq 3^{n-1}$, we obtain~\eqref{e.DPestimates.rescale} and hence the theorem. 
\end{proof}

\subsection{A corollary}

In view of its application in the next section, we finish this section by restating the result of Theorem~\ref{t.linearization} in ``minimal scale form'' in the case that the domain~$U$ is a ball.

\begin{corollary}
\label{c.linearization}
Let $\sigma \in (0,d)$, $\delta\in \left(0,\tfrac 12\right]$ and $\mathsf{M}\in [1,\infty)$. There exist~{$\alpha(\delta,\data)\in \left(0,\tfrac12\right]$}, $C(\sigma,\delta,\mathsf{M},\data)<\infty$ and a random variable~$\X_\sigma$, satisfying the bound
\begin{equation}
\label{e.size.X_sigma.AS.2}
\X_\sigma = \O_\sigma\left(C \right)
\end{equation}
such that the following statement holds. For every~$r\in [\X_\sigma,\infty)$ and pair $u,\overline{u}\in W^{1,2+\delta}(B_r)$ satisfying 
\begin{equation*}
\left\{ 
\begin{aligned}
& -\nabla \cdot \left( D_pL\left( \nabla u,x \right) \right) = 0  & \mbox{in} & \ B_r, \\
&  -\nabla \cdot \left( D_p\overline{L} \left( \nabla \overline{u} \right) \right)  = 0 & \mbox{in} & \ B_r, \\
& u - \overline{u} \in H^1_0(B_r), \\
& 
\left\| \nabla u \right\|_{L^{2+\delta}(B_r)}
+ \left\| \nabla \overline{u} \right\|_{L^{2+\delta}(B_r)} \leq \mathsf{M},
\end{aligned}
\right.
\end{equation*}
function~$f\in W^{1,2+\delta}(B_r)$ and pair~$w, \overline{w} \in H^1(B_r)$ satisfying 
\begin{equation*}
\left\{ 
\begin{aligned}
& -\nabla \cdot \left( D_p^2L\left( \nabla u,x \right) \nabla w \right) = 0  & \mbox{in} & \ B_r, \\
&  -\nabla \cdot \left( D_p^2\overline{L} \left( \nabla \overline{u} \right) \nabla \overline{w} \right)  = 0 & \mbox{in} & \ B_r, \\
& w^\ep, \overline{w} \in f + H^1_0(B_r),
\end{aligned}
\right.
\end{equation*}
we have the estimate
\begin{equation}
\label{e.DPestimates.minscale}
\frac1r\left\|  w -  \overline{w}  \right\|_{\underline{L}^2(B_r)} 
\leq
C \left\| \nabla f \right\|_{\underline{L}^{2+\delta}(B_r)} r^{-\alpha(d-\sigma)}.
\end{equation}
\end{corollary}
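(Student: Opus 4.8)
The plan is to deduce the estimate from Theorem~\ref{t.linearization} by a rescaling and then to convert the resulting bound---which carries a random prefactor with only $\O_1$ stochastic integrability---into a deterministic bound valid above a random minimal scale with the desired $\O_\sigma$ integrability. This second step is precisely the ``minimal scale'' manoeuvre already used to pass from Theorem~\ref{t.AS.homogenization} to Corollary~\ref{c.minimalscale}, so the corollary is essentially bookkeeping; the analytic content lies entirely in Theorem~\ref{t.linearization}.

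First I would rescale. Given $r\geq 1$ and the functions $u,\overline{u},f,w,\overline{w}\in W^{1,2+\delta}(B_r)$ as in the statement, set $\ep:=r^{-1}$ and define on $B_1$ the functions $u^\ep(x):=\ep u(x/\ep)$, $\uhom(x):=\ep\,\overline{u}(x/\ep)$, $f^\ep(x):=\ep f(x/\ep)$, $w^\ep(x):=\ep w(x/\ep)$ and $\whom(x):=\ep\,\overline{w}(x/\ep)$. A direct computation shows that $(u^\ep,\uhom)$ solves the nonlinear system of Theorem~\ref{t.linearization} (with heterogeneous coefficients $L(\cdot,x/\ep)$), that $u^\ep-\uhom\in H^1_0(B_1)$, that $w^\ep,\whom\in f^\ep+H^1_0(B_1)$ solve the associated linearized Dirichlet problems, and that, using $r\geq 1$ together with the scaling identities~\eqref{e.underlinedscalings},
\[
\left\|\nabla\uhom\right\|_{L^{2+\delta}(B_1)}=r^{-d/(2+\delta)}\left\|\nabla\overline{u}\right\|_{L^{2+\delta}(B_r)}\leq\mathsf{M}.
\]
I would then invoke Theorem~\ref{t.linearization} with $U=B_1$ and with the parameter $\sigma':=\tfrac12(\sigma+d)\in(\sigma,d)$ in place of $\sigma$. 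Since $w^\ep-\whom\in H^1_0(B_1)$, combining~\eqref{e.DPestimates} with the functional inequality $\|v\|_{L^2(B_1)}\leq C\|\nabla v\|_{H^{-1}(B_1)}$ recalled after the theorem and the scalings~\eqref{e.underlinedscalings} produces an exponent $\alpha(\delta,\data)\in\left(0,\tfrac12\right]$, a constant $C(\sigma,\delta,\mathsf{M},\data)<\infty$ and a random variable $\X'=\O_1(C)$ such that, for every $r\geq 1$,
\[
\frac1r\left\|w-\overline{w}\right\|_{\underline{L}^2(B_r)}=\left\|w^\ep-\whom\right\|_{\underline{L}^2(B_1)}\leq C\left\|\nabla f\right\|_{\underline{L}^{2+\delta}(B_r)}\left(r^{-\alpha(d-\sigma')}+\X'r^{-\sigma'}\right).
\]

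It then remains to absorb the stochastic term into the deterministic one. Since $\alpha\leq 1$ and $d-\sigma'<\sigma'$, the quantity $\kappa:=\sigma'-\alpha(d-\sigma')$ is positive, and $\kappa\geq\sigma'-(d-\sigma')=2\sigma'-d=\sigma$. I would set the random scale $\X_\sigma:=\max\{1,(\X')^{1/\kappa}\}$; for $r\geq\X_\sigma$ one has $\X'r^{-\sigma'}\leq r^{-\alpha(d-\sigma')}$, so the previous display reads $\tfrac1r\|w-\overline{w}\|_{\underline{L}^2(B_r)}\leq 2C\|\nabla f\|_{\underline{L}^{2+\delta}(B_r)}r^{-\alpha(d-\sigma')}$, and since $d-\sigma'=\tfrac12(d-\sigma)$, relabeling $\tfrac12\alpha$ as $\alpha$ gives exactly~\eqref{e.DPestimates.minscale}. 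Finally, $\X'=\O_1(C)$ gives $(\X')^{1/\kappa}=\O_\kappa(C^{1/\kappa})$, whence, because $\kappa\geq\sigma$, $\X_\sigma\leq\O_\sigma(C)$ after enlarging $C$ (using the standard fact that $\O_\kappa$ tails imply $\O_\sigma$ tails for $\sigma\leq\kappa$, at the cost of a larger constant), which is~\eqref{e.size.X_sigma.AS.2}. The only point requiring genuine care is this exponent arithmetic---choosing $\sigma'$ strictly between $\sigma$ and $d$ so that raising $\X'$ to the power $1/\kappa\leq 1/\sigma$ upgrades the integrability from $\O_1$ to $\O_\sigma$---but there is no real obstacle here, all of the work having already been done in Theorem~\ref{t.linearization}.
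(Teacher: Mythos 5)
Your proposal is correct and is exactly the intended argument: the paper gives no explicit proof of this corollary, but the passage from Theorem~\ref{t.linearization} to its ``minimal scale form'' is the same rescaling plus absorption of the $\O_1$ random term into the deterministic rate above the random scale $\X_\sigma:=\max\{1,(\X')^{1/\kappa}\}$, precisely as in the analogous Corollary~\ref{c.minimalscale} (which the paper proves by citation to the same standard manoeuvre). Your exponent arithmetic with $\sigma'=\tfrac12(\sigma+d)$ and $\kappa=\sigma'-\alpha(d-\sigma')\geq\sigma$, and the upgrade of stochastic integrability from $\O_1$ to $\O_\sigma$, are all sound.
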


\section{The large-scale \texorpdfstring{$C^{0,1}$}{C0,1}--type estimate for differences}

\label{s.regularity.differences}

In this section we prove our second main result, Theorem~\ref{t.regularity.differences}, on the large-scale $C^{0,1}$ estimate for differences. The argument follows the one introduced in~\cite{AS} for proving the large-scale $C^{0,1}$ estimate for solutions. Since differences of solutions of the homogenized equation satisfy a $C^{1,\beta}$ estimate (this is by the Schauder estimates, since $\overline{L}\in C^{2,\beta}$), we should expect to be able to transfer this higher regularity to differences of solutions of the heterogeneous equation by a excess decay iteration (as in~\cite[Lemma 5.1]{AS}). What is needed to apply this idea is an approximation result, essentially a quantitative homogenization estimate for differences, which states that differences of solutions of the heterogeneous equation are close to those of the homogenized equation. This is accomplished by interpolating between the homogenization error estimates for the linearized equation (Theorem~\ref{t.linearization}) and the nonlinear equation (Theorem~\ref{t.AS.homogenization}).

\begin{proposition}[Error estimate for differences]
\label{p.differences.error}
Fix $\mathsf{M}\in [1,\infty)$ and $\sigma \in (0,d)$. There exist~$\alpha(\data)\in \left(0,\tfrac12\right]$, ~$C(\sigma,\mathsf{M},\data)<\infty$ and a random variable $\X$ satisfying
\begin{equation*}
\X \leq \O_\sigma (C)
\end{equation*}
such that the following statement holds. For every $R\geq \X$ and pair~$u,v\in H^1(B_R)$ of solutions of the equations 
\begin{equation*}
-\nabla \cdot D_pL(\nabla u,x) = 0 
\quad \mbox{and} \quad 
-\nabla \cdot D_pL(\nabla v,x) = 0 
\quad \mbox{in} \ B_R
\end{equation*}
which satisfy
\begin{equation}  \label{e.u&v-bnd}
\frac1R \left\| u - (u)_{B_R} \right\|_{\underline{L}^2(B_R)} \leq\mathsf{M}
\quad \mbox{and} \quad 
\frac1R \left\| v - (v)_{B_R} \right\|_{\underline{L}^2(B_R)} \leq \mathsf{M},
\end{equation}
the solutions $\overline{u},\overline{v} \in H^1(B_{R/2})$ of the Dirichlet problems for the homogenized equation
\begin{equation}
\label{e.diffee}
\left\{ \begin{aligned}
& -\nabla \cdot D_p\overline{L} ( \nabla \overline{u} ) = 0 = \nabla \cdot D_p\overline{L} ( \nabla \overline{v} )
& \mbox{in} & \ B_{R/2}, \\
& \overline{u} = u, \ \overline{v} = v & \mbox{on} & \ \partial B_{R/2},
\end{aligned} \right.
\end{equation}
satisfy the estimate
\begin{equation} \label{e.differences.error}
\left\| u - v - (\overline{u} - \overline{v}) \right\|_{\underline{L}^{2}(B_{R/2})}
\leq 
C R^{-\alpha(d-\sigma)} \left\| u-v - (u-v)_{B_R} \right\|_{\underline{L}^2(B_R)}. 
\end{equation}
\end{proposition}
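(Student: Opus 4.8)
The plan is to interpolate between the homogenization error for the linearized equation — Theorem~\ref{t.linearization}, in the minimal-scale form of Corollary~\ref{c.linearization} — and the one for the nonlinear equation (Corollary~\ref{c.minimalscale}); the former is sharp when $u-v$ is small and the latter when $u-v$ is comparable to $u$ and $v$. Abbreviate $\theta:=\tfrac1R\|u-v-(u-v)_{B_R}\|_{\underline{L}^2(B_R)}$, so that the right-hand side of~\eqref{e.differences.error} is $CR^{1-\alpha(d-\sigma)}\theta$; the energy hypotheses on $u,v$ give $\theta\le 2\mathsf{M}$. After subtracting constants from $u,v$, and enlarging the random scale~$\X$ by a deterministic constant depending on $(\mathsf{M},\sigma,\data)$, I may assume $R$ exceeds any such fixed constant. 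Fixing the Meyers exponent $\delta=\delta(\data)\in(0,\tfrac12]$ small enough for every equation involved, interior Caccioppoli and Meyers estimates — applied to the $L$-equations for $u,v$, to the uniformly elliptic linear equation solved by $u-v$, and, via minimality and global Meyers, to the $\overline{L}$-equations for $\overline{u},\overline{v}$ — give $\|\nabla u\|_{\underline{L}^{2+\delta}(B_{R/2})}+\|\nabla v\|_{\underline{L}^{2+\delta}(B_{R/2})}+\|\nabla\overline{u}\|_{\underline{L}^{2+\delta}(B_{R/2})}+\|\nabla\overline{v}\|_{\underline{L}^{2+\delta}(B_{R/2})}\le C\mathsf{M}$ and $\|\nabla(u-v)\|_{\underline{L}^{2+\delta}(B_{R/2})}\le C\theta$.

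Next I would introduce $w\in(u-v)+H^1_0(B_{R/2})$ solving $-\nabla\cdot\bigl(D^2_pL(\nabla u,\cdot)\nabla w\bigr)=0$ and $\overline{w}\in(u-v)+H^1_0(B_{R/2})$ solving $-\nabla\cdot\bigl(D^2_p\overline{L}(\nabla\overline{u})\nabla\overline{w}\bigr)=0$. Since $\overline{u}-\overline{v}=u-v$ on $\partial B_{R/2}$, this $\overline{w}$ is simultaneously the linearized homogenized solution appearing in Corollary~\ref{c.linearization} and the linearization of the $\overline{L}$-equation about $\overline{u}$ in the sense of Lemma~\ref{l.deterministic.linearization}. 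The bound~\eqref{e.differences.error} then follows from three estimates and the triangle inequality. (a) Lemma~\ref{l.deterministic.linearization} applied to $(L,u,v,w)$ on $B_{R/2}$ — its constants depending only on the scale-invariant normalized domain $U_0$, hence not on $R$ — together with the Poincar\'e inequality for $(u-v)-w\in H^1_0(B_{R/2})$, gives $\beta(\data)\in(0,\tfrac12]$ with $\tfrac1R\|(u-v)-w\|_{\underline{L}^2(B_{R/2})}\le C\theta^{1+\beta}$. (b) Lemma~\ref{l.deterministic.linearization} applied to $(\overline{L},\overline{u},\overline{v},\overline{w})$ — legitimate because $\overline{L}\in C^{2,\beta}$ with $\|D^2\overline{L}\|_{C^{0,\beta}(\Rd)}\le C$ by Proposition~\ref{p.nu.C2beta}, which is exactly the regularity the lemma demands of the Lagrangian — gives, the same way, $\beta'(\data)\in(0,\tfrac12]$ with $\tfrac1R\|\overline{w}-(\overline{u}-\overline{v})\|_{\underline{L}^2(B_{R/2})}\le C\theta^{1+\beta'}$. (c) Corollary~\ref{c.linearization} applied with $f=u-v$ on $B_{R/2}$, whose hypotheses hold after replacing $\mathsf{M}$ by $C\mathsf{M}$ thanks to the gradient bounds above, gives $\alpha_1(\data)>0$ and a random scale $\X^{(1)}_\sigma=\O_\sigma(C)$ with $\tfrac1R\|w-\overline{w}\|_{\underline{L}^2(B_{R/2})}\le C\theta\,R^{-\alpha_1(d-\sigma)}$ for $R\ge\X^{(1)}_\sigma$. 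Summing,
\begin{equation*}
\tfrac1R\bigl\|u-v-(\overline{u}-\overline{v})\bigr\|_{\underline{L}^2(B_{R/2})}\le C\bigl(\theta^{1+\beta}+\theta^{1+\beta'}+\theta\,R^{-\alpha_1(d-\sigma)}\bigr).
\end{equation*}

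Finally I would convert this into the claimed decay by a dichotomy. Let $\alpha_0(\data)>0$ be the smaller of $\alpha_1$ and the homogenization exponent of Corollary~\ref{c.minimalscale}, and distinguish $\theta<R^{-\alpha_0(d-\sigma)/2}$ from $\theta\ge R^{-\alpha_0(d-\sigma)/2}$. In the first case, the displayed inequality and $\theta^{\beta}\le R^{-\alpha_0(d-\sigma)\beta/2}$ (and likewise for $\beta'$) give $\tfrac1R\|u-v-(\overline{u}-\overline{v})\|_{\underline{L}^2(B_{R/2})}\le C\theta\,R^{-\alpha_0(d-\sigma)\min(\beta,\beta')/2}$. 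In the second case, Corollary~\ref{c.minimalscale} applied on $B_{R/2}$ to the pairs $(u,\overline{u})$ and $(v,\overline{v})$, valid for $R\ge\X^{(2)}_\sigma$ with $\X^{(2)}_\sigma=\O_\sigma(C)$, gives $\tfrac1R\|u-\overline{u}\|_{\underline{L}^2(B_{R/2})}+\tfrac1R\|v-\overline{v}\|_{\underline{L}^2(B_{R/2})}\le CR^{-\alpha_0(d-\sigma)}\le C\theta\,R^{-\alpha_0(d-\sigma)/2}$. Either way, multiplying by $R$ and recalling $R\theta=\|u-v-(u-v)_{B_R}\|_{\underline{L}^2(B_R)}$ yields~\eqref{e.differences.error} with $\alpha:=\tfrac12\alpha_0\min(\beta,\beta')$ and $\X:=\max\bigl(\X^{(1)}_\sigma,\X^{(2)}_\sigma,C\bigr)\le\O_\sigma(C)$.

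The one genuinely delicate point is this dichotomy: the deterministic linearization errors are only of order $\theta^{1+\beta}$, so they improve on $\theta$ merely once $\theta$ is a small negative power of $R$, and the nonlinear estimate has to take over on the complementary range. Everything else is a routine assembly of results already established in the paper, apart from the standard interior Caccioppoli/Meyers step needed to pass from the $\underline{L}^2(B_R)$ hypotheses to $\underline{L}^{2+\delta}(B_{R/2})$ gradient control and the (equally routine) verification that $\overline{w}$ serves as the correct linearized function for both Corollary~\ref{c.linearization} and Lemma~\ref{l.deterministic.linearization} applied to $\overline{L}$.
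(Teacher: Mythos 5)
Your proposal is correct and follows essentially the same route as the paper: approximate $u-v$ and $\overline{u}-\overline{v}$ by the solutions $w,\overline{w}$ of the linearized problems via Lemma~\ref{l.deterministic.linearization} (using $\overline{L}\in C^{2,\beta}$ from Proposition~\ref{p.nu.C2beta} for the homogenized side), compare $w$ to $\overline{w}$ via Corollary~\ref{c.linearization}, and close with a dichotomy that hands the regime of large $\theta$ over to Corollary~\ref{c.minimalscale}. The only difference from the paper is cosmetic: you compare $\theta$ directly to $R^{-\alpha_0(d-\sigma)/2}$, whereas the paper compares $\|u-v-(u-v)_{B_R}\|$ to $\ep_0$ times the oscillations of $u$ and $v$ with $\ep_0=R^{-\alpha_1(d-\sigma)/2}$; both yield the same estimate.
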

\begin{proof}
Let $\sigma \in (0,d)$ and choose $\X_\sigma$ to be the maximum of the random variables in the statements of Corollaries~\ref{c.minimalscale} and~\ref{c.linearization}.
Fix $R \geq \X_\sigma$ and $u,v\in H^1(B_R)$ satisfying~\eqref{e.diffee}. We split the argument in two cases: (i) the oscillation of~$u-v$ is much smaller compared those of~$u$ and~$v$, in which case we apply the homogenization result for the linearized equation, or (ii) it is not, and we apply the homogenization result for the original nonlinear equation and conclude by the triangle inequality. 

\smallskip

We fix a small parameter $\ep_0\in (0,1]$ which will be chosen in Step~3 of the proof. 

\smallskip

\emph{Step 1.} 
We consider the case that
\begin{equation} 
\label{e.u&v-case1}
\left\| u-v - (u-v)_{B_R} \right\|_{\underline{L}^2(B_R)} 
\geq 
\ep_0  \left(\left\| u - (u)_{B_R} \right\|_{\underline{L}^2(B_R)} + \left\| v - (v)_{B_R} \right\|_{\underline{L}^2(B_R)}\right)
\end{equation}
and prove that there exist~$C(\sigma,\data) < \infty$ and~$\alpha_1(d,\Lambda) \in (0,1)$ such that
\begin{equation}  \label{e.u&v-case1-res}
\left\| u - v - (\bar u - \bar v) \right\|_{\underline{L}^2(B_{R/2})} \leq C \ep_0^{-1}  R^{-\alpha_1(d-\sigma)} \left\| u-v - (u-v)_{B_R} \right\|_{\underline{L}^2(B_R)}.
\end{equation}
We take $\overline{u}, \overline{v} \in  H^1(B_{R/2})$ to be the solutions of 
\begin{equation*}
\left\{
\begin{aligned}
& -\nabla \cdot \left( D_p\overline{L} \left( \nabla \overline{u} \right)  \right) = 0 
& \mbox{in} & \ B_{R/2}, \\
& \overline{u} = u & \mbox{on} & \ \partial B_{R/2},
\end{aligned}
\right.
\quad \mbox{and} \quad 
\left\{
\begin{aligned}
& -\nabla \cdot \left( D_p\overline{L} \left( \nabla \overline{v} \right)  \right) = 0
& \mbox{in} & \ B_{R/2}, \\
& \overline{v} = v & \mbox{on} & \ \partial B_{R/2}.
\end{aligned}
\right.
\end{equation*}
As $R \geq \X_\sigma$, Corollary~\ref{c.minimalscale} and the Meyers estimate implies that there exist~$\alpha_1(d,\Lambda) \in (0,1)$ and~$C(\sigma,\data) < \infty$ such that 
\begin{align*}
\lefteqn{
\left\| u - \bar u \right\|_{\underline{L}^2(B_{R/2})} + \left\| v - \bar v \right\|_{\underline{L}^2(B_{R/2})}  
} \qquad & 
\\ & 
\leq C R^{-\alpha_1(d-\sigma)}  \left(\left\| u - (u)_{B_R} \right\|_{\underline{L}^2(B_R)} + \left\| v - (v)_{B_R} \right\|_{\underline{L}^2(B_R)}\right).
\end{align*}
Therefore, the triangle inequality and the assumption~\eqref{e.u&v-case1} together yield~\eqref{e.u&v-case1-res}.

\smallskip

\emph{Step 2.} We consider the alternative case to   the one in Step 1, namely that 
\begin{equation} 
\label{e.u&v-case2}
\left\| u-v - (u-v)_{B_R} \right\|_{\underline{L}^2(B_R)} 
\leq 
\ep_0 \left(\left\| u - (u)_{B_R} \right\|_{\underline{L}^2(B_R)} + \left\| v - (v)_{B_R} \right\|_{\underline{L}^2(B_R)}\right).
\end{equation}
We show that there exist~$\alpha(\data),\beta(\data) \in \left(0,\tfrac12\right]$ and~$C(\sigma,\mathsf{M},\data)<\infty$  such that 
\begin{align}
\label{e.yesuvcase2}
\lefteqn{
\left\| u - v - (\bar u - \bar v) \right\|_{\underline{L}^2(B_{R/2})} 
} \qquad & 
\\ & \notag
\leq
C \left( \left(\ep_0 \mathsf{M} \right)^{\beta} +   \mathsf{M} R^{-\alpha(d-\sigma)} \right) \left\| u-v - (u-v)_{B_R} \right\|_{\underline{L}^2(B_R)}.
\end{align}
Using the bound~\eqref{e.u&v-bnd}, the assumption~\eqref{e.u&v-case2} implies
\begin{equation*}
\left\| u-v - (u-v)_{B_R} \right\|_{\underline{L}^2(B_R)} 
\leq 
\ep_0  R \mathsf{M}.
\end{equation*}
The difference~$u-v$ is a solution of the equation
\begin{equation*}
-\nabla \cdot(\tilde{\a}(x) \nabla (u-v)) = 0
\quad \mbox{in} \ 
B_R. 
\end{equation*}
where 
\begin{equation*}
\tilde{\a}(x) := 
\int_0^1 
D^2_pL 
\left(t\nabla u(x) + (1-t) \nabla v(x), x \right)\,dt.
\end{equation*}
Note that $I_d \leq \tilde{\a} \leq \Lambda I_d$. 
By the Meyers estimate and the Caccioppoli inequality, there exists $\delta(d,\Lambda)\in \left(0,\tfrac12\right]$ such that 
\begin{equation} 
\label{e.uv.case2nabla}
\left\| \nabla u - \nabla v  \right\|_{\underline{L}^{2+\delta}(B_{R/2})} \leq CR^{-1} \left\| u-v - (u-v)_{B_R} \right\|_{\underline{L}^2(B_R)} \leq C \ep_0 \mathsf{M}.
\end{equation}
Analogously, since $\overline{u}-\overline{v}$ solves  $-\nabla \cdot(\hat{\a}(x) \nabla (\overline{u}-\overline{v})) = 0$ in $B_{R/2}$ with
\begin{equation*}
\hat{\a}(x) := 
\int_0^1 
D^2\overline{L} 
\left(t\nabla \overline{u} (x) + (1-t) \nabla \overline{v} (x), x \right)\,dt,
\end{equation*}
we get by the global Meyers estimate that 
\begin{align}
\label{e.uv.case2nabla2}
\lefteqn{
\left\| \nabla \overline{u} - \nabla \overline{v}  \right\|_{\underline{L}^{2+\delta}(B_{R/2})} 
} \qquad &
\\ & \notag
\leq C \left\| \nabla u - \nabla v  \right\|_{\underline{L}^{2+\delta}(B_{R/2})} \leq CR^{-1} \left\| u-v - (u-v)_{B_R} \right\|_{\underline{L}^2(B_R)}.
\end{align} 
Let $w,\overline{w} \in H^1(B_{R/2})$ be the solutions of the Dirichlet problems
\begin{equation*}
\left\{
\begin{aligned}
& -\nabla \cdot \left( D_p^2 L(\nabla u,x) \nabla w \right)  = 0  & \mbox{in} & \ B_{R/2}, \\
& w = u-v & \mbox{on} & \ \partial B_{R/2}. 
\end{aligned}
\right.
\end{equation*}
and 
\begin{equation*}
\left\{
\begin{aligned}
& -\nabla \cdot \left( D_p^2 \overline{L}(\nabla \overline{u}) \nabla \overline{w} \right)  = 0  & \mbox{in} & \ B_{R/2}, \\
& \overline{w} = \overline{u}-\overline{v}  & \mbox{on} & \ \partial B_{R/2}. 
\end{aligned}
\right.
\end{equation*}
In view of~\eqref{e.uv.case2nabla}, the interior and Global Meyers estimates as well as the fact that~$u-v-(\overline{u}-\overline{v})\in H^1_0(B_{R/2})$, we may apply Corollary~\ref{c.linearization} to obtain~$\alpha(\data) \in (0,1)$ and~$C(\sigma,\mathsf{M},\data)<\infty$ such that 
\begin{align}  \label{e.uv.case2res3}
\left\|w - \overline{w} \right\|_{\underline{L}^{2}(B_{R/2})} & \leq C R^{1-\alpha(d-\sigma)} \left\|\nabla u - \nabla v \right\|_{\underline{L}^{2+\delta}(B_{R/2})} 
\\ \notag & \leq C  R^{-\alpha(d-\sigma)} \left\| u-v - (u-v)_{B_R} \right\|_{\underline{L}^2(B_R)}.
\end{align}
On the other hand, applying Lemma~\ref{l.deterministic.linearization} we find constants $\beta(\delta,\data) > 0$ and $C(\delta,\data)<\infty$ such that 
\begin{align*}
\left\| \nabla u - \nabla v - \nabla w \right\|_{L^2(B_{R/2})} 
&
\leq 
C \left\| \nabla u - \nabla v  \right\|_{\underline{L}^{2+\delta}(B_{R/2})}^{1+\beta}
\\ &
\leq C \left(\ep_0 \mathsf{M} \right)^{\beta} R^{-1}  \left\| u-v - (u-v)_{B_R} \right\|_{\underline{L}^2(B_R)}.
\end{align*}
By Poincar\'e's inequality we then obtain 
\begin{equation}  \label{e.uv.case2res1}
\left\| u -  v - w \right\|_{\underline{L}^2(B_{R/2})} \leq C \left(\ep_0 \mathsf{M} \right)^{\beta}   \left\| u-v - (u-v)_{B_R} \right\|_{\underline{L}^2(B_R)}.
\end{equation}
A similar argument, applying the proof of Lemma~\ref{l.deterministic.linearization} to $\overline{L}$ instead of $L$, together with Proposition~\ref{p.nu.C2beta} and~\eqref{e.uv.case2nabla2}, yields
\begin{equation}  \label{e.uv.case2res2}
\left\| \overline{u}-\overline{v} - \overline{w}  \right\|_{\underline{L}^2(B_{R/2})} \leq C \left(\ep_0 \mathsf{M} \right)^{\beta}   \left\| u-v - (u-v)_{B_R} \right\|_{\underline{L}^2(B_R)}.
\end{equation}
Combining~\eqref{e.uv.case2res3},~\eqref{e.uv.case2res1} and~\eqref{e.uv.case2res2}, we obtain~\eqref{e.yesuvcase2}.

\smallskip

\emph{Step 3.} The conclusion. Combining Steps~1 and~2 and defining~$\ep_0:= R^{-\alpha_1(d-\sigma)/2}$ completes the proof. 
\end{proof}

We are now ready to complete the proof of Theorem~\ref{t.regularity.differences}. What remains is a deterministic argument, along the lines of~\cite[Lemma 5.1]{AS}, which uses a Campanato-type iteration and Proposition~\ref{p.differences.error} to transfer the higher regularity enjoyed 
by differences of solutions of the homogenized equation to differences of solutions of the heterogeneous equation.  We prove a slightly more general version of Theorem~\ref{t.regularity.differences} by including a mesoscopic $C^{1,\alpha}$ estimate.

\begin{proposition} \label{p.Conealphafordifferences}
Fix $\sigma \in (0,d)$ and $\mathsf{M}\in [1,\infty)$. There exist constants $\alpha(\data) \in \left( 0, \frac12 \right]$,~$C(\sigma,\mathsf{M},\data) <\infty$ and a random variable~$\X$ satisfying
\begin{equation*}
\X \leq \O_\sigma\left( C \right)
\end{equation*}
such that the following holds. For every $R\geq 2\X$ and $u,v\in H^1(B_R)$ satisfying 
\begin{equation*}
\left\{
\begin{aligned}
& -\nabla \cdot \left( D_pL(\nabla u,x) \right) = 0 & \mbox{in} & \ B_R, \\
& -\nabla \cdot \left( D_pL(\nabla v,x) \right) = 0 & \mbox{in} & \ B_R, \\
& \left\| \nabla u \right\|_{\underline{L}^2(B_R)}, \, 
\left\| \nabla v \right\|_{\underline{L}^2(B_R)} \leq \mathsf{M},
\end{aligned}
\right.
\end{equation*}
and every $ r \in \left[ \X , \tfrac 12 R \right]$, we have the estimate
\begin{equation*}
\left\| \nabla (u{-}v) \right\|_{\underline{L}^2(B_r)} +  \left( \frac{r}{R} {+} \frac1{r^{d{-}\sigma}} \right)^{-\alpha} \frac1r \inf_{\ell \in \mathcal{P}_1} \left\| u{-} v {-}\ell \right\|_{\underline{L}^2(B_r)}
\leq 
 \frac CR \left\| u {-} v {-} (u {-} v)_{B_R} \right\|_{\underline{L}^2(B_R)}. 
\end{equation*}
\end{proposition}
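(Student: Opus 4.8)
The plan is to deduce the proposition from Proposition~\ref{p.differences.error} by a deterministic Campanato-type excess-decay iteration, following closely the scheme used in \cite[Lemma~5.1]{AS} to prove the large-scale $C^{0,1}$ estimate for \emph{solutions}; the new input is that differences of solutions of the homogenized equation obey a $C^{1,\beta}$ estimate, since they solve a linear equation whose coefficients are H\"older continuous by Proposition~\ref{p.nu.C2beta}. First I would normalize: after subtracting the constant $(u-v)_{B_R}$ from $u-v$, write $\mathcal{D}:=\tfrac1R\|u-v-(u-v)_{B_R}\|_{\underline{L}^2(B_R)}$ for the data (so $\mathcal{D}\le C\mathsf{M}$ by the Poincar\'e inequality and the hypotheses on $\nabla u,\nabla v$), and let $\X$ be the maximum of the random scales furnished by Proposition~\ref{p.differences.error} and by the large-scale $C^{0,1}$ estimate for solutions (Theorem~\ref{t.AS.regularity}, applied to $u$ and to $v$) together with a large deterministic constant to be fixed below; this is still $\O_\sigma(C)$, and it guarantees that at every scale $\rho\in[\X,R]$ one has $\|\nabla u\|_{\underline{L}^2(B_\rho)},\|\nabla v\|_{\underline{L}^2(B_\rho)}\le C\mathsf{M}$, so that Proposition~\ref{p.differences.error} may be invoked on $B_\rho$ with a comparable value of the parameter $\mathsf{M}$. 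I would also record the purely deterministic a~priori facts that $u-v$ solves a uniformly elliptic linear equation $-\nabla\cdot(\tilde{\a}\nabla(u-v))=0$ with $I_d\le\tilde{\a}\le\Lambda I_d$, so that the Caccioppoli inequality controls $\|\nabla(u-v)\|_{\underline{L}^2(B_\rho)}$ by oscillations of $u-v$ on a comparable ball, and that the homogenized comparison functions satisfy the global and interior Meyers estimates.

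The core is a one-step improvement at dyadic scales $\rho\in[2\X,\tfrac12 R]$. Fixing such a $\rho$, let $\overline u_\rho,\overline v_\rho\in H^1(B_{\rho/2})$ solve the homogenized equation in $B_{\rho/2}$ with boundary data $u,v$; then $h_\rho:=\overline u_\rho-\overline v_\rho$ solves the linear equation $-\nabla\cdot(\hat{\a}_\rho\nabla h_\rho)=0$ with $\hat{\a}_\rho(x):=\int_0^1 D^2\overline{L}\bigl(t\nabla\overline u_\rho+(1-t)\nabla\overline v_\rho\bigr)\,dt$. Combining the interior De~Giorgi--Nash and Meyers bounds for $\overline u_\rho,\overline v_\rho$ with the $C^{0,\beta}$-estimate for $D^2\overline{L}$ from Proposition~\ref{p.nu.C2beta}, the rescaled H\"older seminorm $\rho^{\beta}[\hat{\a}_\rho]_{C^{0,\beta}(B_{\rho/4})}$ is bounded by a constant $C(\mathsf{M},\data)$, so the interior Schauder estimate (with explicit dependence on the coefficient seminorm, as in \cite[Theorem~3.13]{HL}) yields a $C^{1,\beta}$ excess decay for $h_\rho$, i.e., for every $\theta\in(0,\tfrac14]$ a bound of the form $\tfrac{1}{\theta\rho}\inf_{\ell\in\mathcal{P}_1}\|h_\rho-\ell\|_{\underline{L}^2(B_{\theta\rho})}\le C\theta^{\beta}\bigl(\tfrac1\rho\inf_{\ell\in\mathcal{P}_1}\|h_\rho-\ell\|_{\underline{L}^2(B_{\rho/2})}+(\text{slope terms})\bigr)$. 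Feeding in Proposition~\ref{p.differences.error}, which gives $\|u-v-h_\rho\|_{\underline{L}^2(B_{\rho/2})}\le C\rho^{-\alpha(d-\sigma)}\|u-v-(u-v)_{B_\rho}\|_{\underline{L}^2(B_\rho)}$, and expressing all the right-hand sides in terms of quantities for $u-v$ at scale $\rho$ via Caccioppoli and Poincar\'e, I obtain a recursion schematically of the form
\begin{equation*}
D(\theta\rho)\le C\theta^{\beta}\,D(\rho)+C_\theta\,\rho^{-\alpha(d-\sigma)}\bigl(D(\rho)+|p_\rho|\bigr)+C\theta^{\beta}|p_\rho|,
\qquad D(\rho):=\tfrac1\rho\inf_{\ell\in\mathcal{P}_1}\|u-v-\ell\|_{\underline{L}^2(B_\rho)},
\end{equation*}
where $p_\rho=\nabla\ell_\rho$ is the slope of the affine minimizer at scale $\rho$, together with the companion estimate $|p_{\theta\rho}-p_\rho|\le CD(\rho)$.

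Finally, I would choose $\theta=\theta(\mathsf{M},\data)$ small enough that $C\theta^{\beta}\le\tfrac14$ after the relevant constants are absorbed, and the deterministic part of $\X$ large enough that $C_\theta\rho^{-\alpha(d-\sigma)}\le\tfrac14$ for $\rho\ge\X$, and then iterate the recursion downward from $\tfrac12 R$ to $\X$, \emph{coupled} with the companion estimate for the slopes: the latter shows that $|p_\rho|$ stays within $C\mathcal{D}$ of $|p_{R/2}|\le C\mathcal{D}$ at all scales, and the recursion then yields $D(\rho)\le C(\rho/R)^{\alpha}\mathcal{D}+C\rho^{-\alpha(d-\sigma)}\mathcal{D}$, which is $\le C\bigl(\tfrac{\rho}{R}+\tfrac1{\rho^{d-\sigma}}\bigr)^{\alpha}\mathcal{D}$ by an elementary manipulation; the gradient bound $\|\nabla(u-v)\|_{\underline{L}^2(B_\rho)}\le C\mathcal{D}$ then follows from $\|\nabla(u-v)\|_{\underline{L}^2(B_\rho)}\le C|p_{2\rho}|+CD(2\rho)$ via Caccioppoli. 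The main obstacle is closing this coupled iteration cleanly: the variable-coefficient Schauder step produces ``slope'' contributions that by themselves do not decay, so one cannot iterate the Campanato excess in isolation but must propagate simultaneous control on the excess and on the affine slopes at every scale; moreover, because the homogenization error at scale $\rho$ from Proposition~\ref{p.differences.error} is measured only in terms of the data at scale $\rho$ and carries only the small negative power $\rho^{-\alpha(d-\sigma)}$, some care is needed to sum the errors along the iteration and to extract exactly the weight $\bigl(\tfrac rR+\tfrac1{r^{d-\sigma}}\bigr)^{-\alpha}$, which interpolates between the Schauder-dominated regime at scales close to $R$ and the homogenization-error-dominated regime at small scales.
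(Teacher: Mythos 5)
Your overall architecture is the paper's: approximate $u-v$ at each scale by $\overline u_r-\overline v_r$ via Proposition~\ref{p.differences.error}, use a $C^{1,\beta}$ excess decay for the difference of homogenized solutions, and run a Campanato iteration coupled with control of the affine slopes. However, there is a genuine gap in the excess-decay step, and it is precisely the one your recursion exposes. You claim that the rescaled H\"older seminorm $\rho^\beta[\hat\a_\rho]_{C^{0,\beta}(B_{\rho/4})}$ being bounded by $C(\mathsf{M},\data)$ suffices for the Schauder step. It does not: with only a \emph{bounded} (not small) rescaled coefficient seminorm, the coefficient-freezing error at scale $\rho$ is of order $\rho\|\nabla h_\rho\|$, so the slope contribution in your recursion appears as $C\theta^\beta|p_\rho|$ with no decaying prefactor --- exactly as you wrote it. Since $|p_\rho|\sim\mathcal{D}$ does not decay along the iteration, that recursion only reproduces the stationary bound $D(\rho)\le C\mathcal{D}$, i.e., the $C^{0,1}$ estimate of Theorem~\ref{t.regularity.differences}; it cannot produce the weight $\bigl(\tfrac rR+\tfrac1{r^{d-\sigma}}\bigr)^{-\alpha}$ in the second term of the conclusion. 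Your closing remark that one must ``propagate simultaneous control on the excess and on the affine slopes'' addresses only the reabsorption of the oscillation term (the standard part of the~\cite{AS} scheme), not this obstruction.

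The missing ingredient is that the coefficient oscillation of $\hat\a_\rho$ must be shown to be \emph{small}, decaying in $\rho/R$, not merely bounded after rescaling. In the paper this comes from the second (mesoscopic $C^{1,\alpha}$-type) estimate of Theorem~\ref{t.AS.regularity} applied to $u$ and $v$, transferred to $\overline u_\rho,\overline v_\rho$ via Corollary~\ref{c.minimalscale}: it gives the affine-excess bound~\eqref{e.u.C11applied}, namely $\frac1\rho\inf_\ell\|\overline u_\rho-\ell\|_{\underline{L}^2(B_\rho)}+\frac1\rho\inf_\ell\|\overline v_\rho-\ell\|_{\underline{L}^2(B_\rho)}\le C\bigl(\tfrac\rho R+\tfrac1{\rho^{d-\sigma}}\bigr)^{\alpha_1}$. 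Since the oscillation of $\hat\a_\rho$ is controlled by $[D^2\overline L]_{C^{0,\beta}}$ times a power of the oscillation of $\nabla\overline u_\rho,\nabla\overline v_\rho$, which is in turn controlled by this affine excess, the slope contribution in the excess decay for $\overline u_\rho-\overline v_\rho$ acquires the decaying prefactor $\bigl(\tfrac\rho R+\tfrac1{\rho^{d-\sigma}}\bigr)^{\alpha_1\beta}$; this is exactly what the $\mathsf{M}^\gamma$ factor in~\eqref{e.C1gamma.Lbardiff} of Proposition~\ref{p.C1beta.differences.Lbar} encodes, with $\mathsf{M}$ there playing the role of the affine excess~\eqref{e.ass.Lbardiff} rather than the Lipschitz bound. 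With that replacement your recursion becomes $E_1(\theta s)\le C\theta^{1/2}E_1(s)+C_\theta\bigl(\tfrac sR+\tfrac1{s^{d-\sigma}}\bigr)^{\alpha}D_1(r)$ in the paper's notation, and the iteration closes as you describe.
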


\begin{proof}
Fix $\sigma\in (0,d)$, we let $\X$ be the maximum of the random variables 
in the statements of Corollary~\ref{c.minimalscale}, Theorem~\ref{t.AS.regularity} and Proposition~\ref{p.differences.error}. Without loss of generality, we may assume that $(u - v)_{B_R} = 0$ and that $\X\geq H$ for a large constant $H(\sigma,\mathsf{M},\data)$ to be fixed in the course of the proof. Indeed, if $\X \leq H \leq R$, we have, for $s \in [\X,H]$, that 
\begin{equation*} 
\left\| \nabla (u-v) \right\|_{\underline{L}^2(B_s)} \leq \left(\frac{H}{s}\right)^{\frac d2} \left\| \nabla (u-v) \right\|_{\underline{L}^2(B_H)} \leq 
\frac{C}{R} \left\| u-v \right\|_{\underline{L}^2(B_R)},
\end{equation*}
and similarly for the other term. 
We begin by applying Theorem~\ref{t.AS.regularity} to obtain
\begin{equation}
\label{e.applyLip}
\sup_{r \in [\X,R] } \left( \frac1r  \left\| u - (u)_{B_r} \right\|_{\underline{L}^2(B_r)} + \frac1r \left\| v - (v)_{B_r} \right\|_{\underline{L}^2(B_r)} \right) 
\leq C.
\end{equation}
We can consequently apply~\eqref{e.differences.error} to get, for $r \in [\X,R]$, 
\begin{equation} 
\label{e.differences.errorapplied}
\left\| u - v - (\overline{u}_r - \overline{v}_r) \right\|_{\underline{L}^{2}(B_{r})}
\leq 
C r^{-\alpha(d-\sigma)} \left\| u-v - (u-v)_{B_{2r}} \right\|_{\underline{L}^2(B_{2r})}, 
\end{equation}
where $\overline{u}_r \in u + H_0^1(B_{r})$ and $\overline{v}_r \in v + H_0^1(B_{r})$ solve
\begin{equation}  \label{e.eq.bar u + bar v}
 -\nabla \cdot \left( D_p\overline{L} \left( \nabla\overline{u}_r \right)  \right) =  - \nabla \cdot \left( D_p \overline{L} \left( \nabla \overline{v}_r \right)  \right) = 0.
\end{equation}
Next, by Corollary~\ref{c.minimalscale}, there exist constants $C(\sigma,\data) < \infty$ and $\alpha_1(d,\Lambda) \in (0,1)$ such that
\begin{multline} \label{e.uandv.comparison}
\left\| u - \overline{u}_r \right\|_{\underline{L}^2(B_{r})} + \left\| v - \overline{v}_r  \right\|_{\underline{L}^2(B_{r})}  
\\
\leq 
C r^{-\alpha_1(d-\sigma)}  \left(\left\| u - (u)_{B_{2r}} \right\|_{\underline{L}^2(B_{2r})} + \left\| v - (v)_{B_{2r}} \right\|_{\underline{L}^2(B_{2r})}\right) \leq 
C r^{1-\alpha_1(d-\sigma)} 
.
\end{multline}
Combining~\eqref{e.applyLip} and the previous inequality we get
\begin{equation*} 
\sup_{r \in [\X,R] } \left( \frac1r  \left\| \overline{u}_r  - (\overline{u}_r )_{B_r} \right\|_{\underline{L}^2(B_r)} + \frac1r \left\| \overline{v}_r  - (\overline{v}_r )_{B_r} \right\|_{\underline{L}^2(B_r)} \right) \leq C .
\end{equation*}
Applying the second estimate of Theorem~\ref{t.AS.regularity}, we obtain, for every $r \in [\X,R]$, 
\begin{equation}  
\label{e.u.C11applied0}
\frac1{r} \inf_{\ell \in \mathcal{P}_1}  \| u - \ell \|_{\underline{L}^2(B_{r})}  + \frac1{r} \inf_{\ell \in \mathcal{P}_1}  \| v - \ell \|_{\underline{L}^2(B_{r})} \leq C \left(  \frac{r}{R}  + \frac{1}{r^{d-\sigma} }  \right)^{\alpha_1} , 
\end{equation}
and together with~\eqref{e.uandv.comparison} this leads to 
\begin{equation}  
\label{e.u.C11applied}
\frac1{r} \inf_{\ell \in \mathcal{P}_1}  \| \overline{u}_r  - \ell \|_{\underline{L}^2(B_{r})}  + \frac1{r} \inf_{\ell \in \mathcal{P}_1}  \| \overline{u}_r  - \ell \|_{\underline{L}^2(B_{r})} 
\leq 
C \left(  \frac{r}{R}  + \frac{1}{r^{d-\sigma} }  \right)^{\alpha_1} .
\end{equation}
We choose $\ep_0$ small enough and $H$ large enough, both depending on $(\delta,\theta,\mathsf{M},\data)$, so that $r \geq H$ implies 
\begin{equation*} 
C \left( \ep_0^\beta  + r^{-\alpha(d-\sigma)}  \right)  \leq 1. 
\end{equation*}
Therefore, by~\eqref{e.C1gamma.Lbardiff} and the previous two displays, we get, for $\theta \in (0,1]$ and $r \in [\X,\ep_0 R]$,  
\begin{align} 
\label{e.C1gamma.Lbardiffapplied}
\lefteqn{ 
\inf_{\ell \in \mathcal{P}_1}\left\| \overline{u}_r - \overline{v}_r - \ell \right\|_{\underline{L}^2(B_{\theta r})}
} \ \ & 
\\ & \notag
\leq C \theta^2 \inf_{\ell \in \mathcal{P}_1}\left\| \overline{u}_r -  \overline{v}_r - \ell \right\|_{\underline{L}^2(B_{r})} 
 + C \theta^{-\frac d2} \left(  \frac{r}{R}  + \frac{1}{r^{d-\sigma} }  \right)^{\alpha_1 \beta}  \left\| \overline{u}_r -  \overline{v}_r  - (\overline{u}_r -  \overline{v}_r)_{B_r}\right\|_{\underline{L}^2(B_r)}
\end{align}
and, by applying~\eqref{e.differences.errorapplied} once more, we also get 
\begin{align} 
\label{e.C1gamma.Lbardiffapplied1}
\lefteqn{ 
\inf_{\ell \in \mathcal{P}_1}\left\| u-v - \ell \right\|_{\underline{L}^2(B_{\theta r})}
} \qquad & 
\\ & \notag
\leq C \theta^2 \inf_{\ell \in \mathcal{P}_1}\left\| u-v - \ell \right\|_{\underline{L}^2(B_{r})} 
+ C \theta^{-\frac d2} 
\left(  \frac{r}{R}  + \frac{1}{r^{d-\sigma} }  \right)^{\alpha_1 \beta}  
\left\| u-v   - (u-v)_{B_r}\right\|_{\underline{L}^2(B_r)}.
\end{align}
Taking $\alpha := \frac{\alpha_1 \beta}{2d} $ and setting 
\begin{equation*} 
E_1(r) :=  \left(  \frac{r}{R}  + \frac{1}{r^{d-\sigma} }  \right)^{ - \alpha} 
\frac1r \inf_{\ell \in \mathcal{P}_1}\left\| u-v - \ell \right\|_{\underline{L}^2(B_{r})} 
\end{equation*}
and
\begin{equation*} 
D_1(r) := \sup_{s \in [r,\ep_1 R]} \frac1s \left\| u-v   - (u-v)_{B_s}\right\|_{\underline{L}^2(B_s)},
\end{equation*}
the previous inequality implies that, for every~$r \leq s \leq \ep_0 R$, 
\begin{equation*} 
E_1(\theta s) \leq C\theta^{1/2} E_1(s) + C \theta^{-2-\frac d2}  \left(  \frac{s}{R}  + \frac{1}{s^{d-\sigma} }  \right)^{ \alpha}     D_1(r). 
\end{equation*}
Choosing $\theta := (2C)^{-2}$, $\ep_1 \in (0,\ep_0]$ and $k\in \N$ such that $\theta^{k+1} \ep_1 R < r \leq \theta^{k} \ep_1 R$, an iteration argument yields 
\begin{equation}  \label{e.Eonesum}
\sum_{j=0}^k E_1(\theta^{-j} r) \leq E_1(\ep_1 R) + C\left(  \ep_1  + H^{-(d-\sigma)}   \right)^{\alpha}  D_1(r) . 
\end{equation}
Letting $\ell_j$ be the affine function realizing the infimum in $E_1(\theta^{-j} r)$, we see that 
\begin{equation*} 
\left| \nabla \ell_{0} - \nabla \ell_{k}   \right| \leq C \sum_{j=0}^k E_1(\theta^{-j} r)  \leq C E_1(\ep_1 R) + C \left(  \ep_1+ H^{-(d-\sigma)} \right)^\alpha D_1(r).
\end{equation*}
Therefore,
\begin{align*} 
D_1(r) 
&
\leq  E_1(r)  +C \left| \nabla \ell_{0} \right|
\leq C E_1(\ep_1 R)  + C \left| \nabla \ell_{k} \right| + C \left(  \ep_1+ H^{-(d-\sigma)} \right)^\alpha D_1(r).
\end{align*}
By choosing $\ep_1$ small enough and $H$ large enough, so that
\begin{equation*} 
C \left(  \ep_1+ H^{-(d-\sigma)} \right)^\alpha   \leq \frac12,
\end{equation*}
we deduce, after reabsorption, that
\begin{equation*} 
 \sup_{s \in (r,\ep_1 R]} \frac1s \left\| u-v   - (u-v)_{B_s}\right\|_{\underline{L}^2(B_s)} \leq C E_1(\ep_1 R)  + C \left| \nabla \ell_{k} \right|.
\end{equation*}
We finally obtain that 
\begin{equation*} 
 E_1(\ep_1 R)  +  \left| \nabla \ell_{k} \right| \leq \frac CR \left\| u-v   - (u-v)_{B_R}\right\|_{\underline{L}^2(B_R)}, 
\end{equation*}
and thus
\begin{equation} \label{e.Lipdiffpre000000}
 \sup_{r \in [\X, R]} \frac1r \left\| u-v   - (u-v)_{B_s}\right\|_{\underline{L}^2(B_r )} \leq \frac CR \left\| u-v   - (u-v)_{B_R}\right\|_{\underline{L}^2(B_R)}.
\end{equation}
This completes the proof of the first part of the statement, in view of the Caccioppoli inequality. The second statement is equivalent to 
\begin{equation*} 
\sup_{r \in [\X,R] } E_1(r) \leq \frac CR \left\| u-v   - (u-v)_{B_R}\right\|_{\underline{L}^2(B_R)}, 
\end{equation*}
which also follows easily from~\eqref{e.Eonesum} and~\eqref{e.Lipdiffpre000000}.
\end{proof}

We conclude this section by recording a second large-scale regularity estimate, this one for solutions of the linearized equation. The proof is omitted since it is essentially the same as the one of Theorem~\ref{t.regularity.differences}, except that it is simpler because in place of Proposition~\ref{p.differences.error} we just use a rescaling of Theorem~\ref{t.linearization}.

\begin{proposition}[{Large-scale $C^{0,1}$ estimate for linearized equations}]
\label{p.C01.linearizedeq}
Fix $\sigma \in (0,d)$ and $\mathsf{M}\in [1,\infty)$. There exist constants $\alpha(\data) \in \left(0,\tfrac12\right]$ and~$C(\sigma,\mathsf{M},\data) <\infty$ and a random variable~$\X$ satisfying
\begin{equation*}
\X \leq \O_\sigma\left( C \right)
\end{equation*}
such that the following holds. For every $R\geq 2\X$ and $u,w\in H^1(B_R)$ satisfying 
\begin{equation*}
\left\{
\begin{aligned}
& -\nabla \cdot \left( D_pL(\nabla u,x) \right) = 0 & \mbox{in} & \ B_R, \\
& -\nabla \cdot \left( D_p^2L(\nabla u,x) \nabla w\right) = 0 & \mbox{in} & \ B_R, \\
& \left\| \nabla u \right\|_{\underline{L}^2(B_R)}  
\leq \mathsf{M},
\end{aligned}
\right.
\end{equation*}
we have the estimate, for every $r \in \left[ \X , \tfrac 12 R \right]$, 
\begin{equation*}
 \left( \frac{r}{R} + \frac1{r^{d-\sigma}} \right)^{-\alpha} \inf_{\ell \in \mathcal{P}_1} \frac{1}{r} \left\|  w- \ell   \right\|_{\underline{L}^2(B_{r})}  + 
\left\| \nabla w \right\|_{\underline{L}^2(B_r)}
\leq 
 \frac CR \left\| w - (w)_{B_R}\right\|_{\underline{L}^2(B_R)}. 
\end{equation*}
\end{proposition}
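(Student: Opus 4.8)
The plan is to follow the proof of Proposition~\ref{p.Conealphafordifferences} essentially verbatim, with two substitutions: the homogenization error estimate for differences (Proposition~\ref{p.differences.error}) is replaced by a rescaling of Corollary~\ref{c.linearization} (the minimal-scale form of Theorem~\ref{t.linearization}), and the Schauder excess decay for \emph{differences} of homogenized solutions is replaced by the structurally identical excess decay for the homogenized \emph{linearized} equation. Concretely, fix $\sigma\in(0,d)$ and $\mathsf{M}\in[1,\infty)$, and let $\X$ be the maximum of the random scales in Corollary~\ref{c.minimalscale}, Theorem~\ref{t.AS.regularity} and Corollary~\ref{c.linearization}, enlarged if necessary so that $\X\geq H$ for a deterministic constant $H(\sigma,\mathsf{M},\data)$ to be fixed during the iteration; then $\X\leq\O_\sigma(C)$. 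We may assume $(w)_{B_R}=0$. By Theorem~\ref{t.AS.regularity} applied to $u$, for every $r\in[\X,\tfrac12R]$ we have $\tfrac1r\left\|u-(u)_{B_r}\right\|_{\underline{L}^2(B_r)}\leq C\mathsf{M}$, whence by Caccioppoli's inequality and the interior Meyers estimate $\left\|\nabla u\right\|_{\underline{L}^{2+\delta}(B_r)}\leq C\mathsf{M}$ for some $\delta(\data)>0$; this is exactly what makes Corollary~\ref{c.linearization} applicable on each such ball.

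For each $r\in[\X,\tfrac12R]$ let $\overline{u}_r\in u+H^1_0(B_r)$ solve $-\nabla\cdot D_p\overline{L}(\nabla\overline{u}_r)=0$ in $B_r$, and let $\overline{w}_r\in w+H^1_0(B_r)$ solve the linearized homogenized equation $-\nabla\cdot\left(D^2\overline{L}(\nabla\overline{u}_r)\nabla\overline{w}_r\right)=0$ in $B_r$. Corollary~\ref{c.minimalscale} bounds the pair $(u,\overline{u}_r)$ appropriately, and, feeding $\nabla w$ (estimated on $B_{2r}$ by Caccioppoli and interior Meyers) as the boundary datum into Corollary~\ref{c.linearization}, we obtain an exponent $\alpha(\data)>0$ and $C(\sigma,\mathsf{M},\data)<\infty$ with
\begin{equation*}
\tfrac1r\left\| w-\overline{w}_r\right\|_{\underline{L}^2(B_r)}
\leq C\,r^{-\alpha(d-\sigma)}\,\tfrac1{r}\left\| w-(w)_{B_{2r}}\right\|_{\underline{L}^2(B_{2r})}.
\end{equation*}
This is the analogue of~\eqref{e.differences.errorapplied}, and it is the only step where randomness enters.

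For the deterministic part, observe that $\overline{w}_r$ solves a linear, uniformly elliptic equation whose coefficient field $x\mapsto D^2\overline{L}(\nabla\overline{u}_r(x))$ is Hölder continuous: $\overline{L}\in C^{2,\beta}$ by Proposition~\ref{p.nu.C2beta}, while by the interior De Giorgi--Nash and Schauder estimates for the homogenized equation, together with the normalized gradient bound for $\overline{u}_r$ inherited from that of $u$, the seminorm $\left[\nabla\overline{u}_r\right]_{C^{0,\beta}(B_{r/2})}$ is a negative power of $r$. Hence the frozen-coefficient Schauder excess decay used for differences in the proof of Proposition~\ref{p.Conealphafordifferences} applies verbatim to $\overline{w}_r$, and combining it with the previous display and $w=\overline{w}_r$ on $\partial B_r$ we obtain, for $r\in[\X,\ep_0R]$ and $\theta\in(0,1]$, the one-step inequality
\begin{equation*}
\inf_{\ell\in\mathcal{P}_1}\left\| w-\ell\right\|_{\underline{L}^2(B_{\theta r})}
\leq C\theta^{2}\inf_{\ell\in\mathcal{P}_1}\left\| w-\ell\right\|_{\underline{L}^2(B_r)}
+ C\theta^{-d/2}\left(\tfrac rR+\tfrac1{r^{d-\sigma}}\right)^{\!\alpha}\left\| w-(w)_{B_r}\right\|_{\underline{L}^2(B_r)},
\end{equation*}
which is exactly~\eqref{e.C1gamma.Lbardiffapplied1} with $w$ in place of $u-v$. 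From here the Campanato iteration is that of the last part of the proof of Proposition~\ref{p.Conealphafordifferences}: set $E(r):=\left(\tfrac rR+r^{-(d-\sigma)}\right)^{-\alpha}\tfrac1r\inf_{\ell\in\mathcal{P}_1}\left\|w-\ell\right\|_{\underline{L}^2(B_r)}$ and $D(r):=\sup_{s\in[r,\ep_1R]}\tfrac1s\left\|w-(w)_{B_s}\right\|_{\underline{L}^2(B_s)}$, derive the recursion $E(\theta s)\leq C\theta^{1/2}E(s)+C\theta^{-2-d/2}\left(\tfrac sR+s^{-(d-\sigma)}\right)^{\alpha}D(r)$, fix $\theta$ small, sum over dyadic scales while tracking the slopes $\nabla\ell_j$ of the optimal affine functions, and choose $\ep_1$ small and $H$ large to reabsorb the tails. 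This yields $\sup_{r\in[\X,R]}\tfrac1r\left\|w-(w)_{B_r}\right\|_{\underline{L}^2(B_r)}\leq\tfrac CR\left\|w-(w)_{B_R}\right\|_{\underline{L}^2(B_R)}$, whence the stated bound on $\left\|\nabla w\right\|_{\underline{L}^2(B_r)}$ follows by Caccioppoli (for $r\leq\tfrac12R$), and the bound $\sup_{r\in[\X,R]}E(r)\leq\tfrac CR\left\|w-(w)_{B_R}\right\|_{\underline{L}^2(B_R)}$ gives the mesoscopic $C^{1,\alpha}$ term.

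I expect the only real difficulty to be bookkeeping: verifying that the hypotheses of Corollary~\ref{c.linearization} hold on \emph{every} mesoscopic ball $B_r$, $r\in[\X,\tfrac12R]$ (which is precisely why $\X$ must dominate the random scales from Theorem~\ref{t.AS.regularity} and Corollary~\ref{c.minimalscale}), and the factor-of-$2$ juggling between $B_r$ and $B_{2r}$ needed to convert the boundary datum $w$ into an $\underline{L}^{2+\delta}$ gradient bound via Caccioppoli and Meyers. The sole nontrivial deterministic ingredient---the $\theta^{2}$, rather than merely $\theta^{1+\alpha}$, excess decay for $\overline{w}_r$---requires no new idea, since $\overline{w}_r$ solves a linear equation with Hölder coefficients of exactly the type treated in the differences argument; and, because the problem is linear in $w$, there is no need for the two-regime case analysis (small versus non-small $u-v$) that complicated the proof of Proposition~\ref{p.differences.error}.
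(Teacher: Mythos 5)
Your proposal is correct and coincides with the route the paper itself indicates: it omits the proof of Proposition~\ref{p.C01.linearizedeq} precisely because it is the argument of Proposition~\ref{p.Conealphafordifferences} with Proposition~\ref{p.differences.error} replaced by a rescaling of Theorem~\ref{t.linearization} (i.e.\ Corollary~\ref{c.linearization}), which is exactly the substitution you make. Your observations that the linearity in $w$ removes the two-regime case analysis, and that the minimal scale must dominate those of Theorem~\ref{t.AS.regularity} and Corollary~\ref{c.minimalscale} so that Corollary~\ref{c.linearization} applies on every ball $B_r$ with $r\in[\X,\tfrac12 R]$, are the right bookkeeping points.
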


The estimate of the first term in the inequality above represents a ``$C^{1,\alpha}$ estimate down to mesoscopic scales.'' The analogous estimate for differences was stated in~Proposition~\ref{p.Conealphafordifferences}.


\section{Large-scale \texorpdfstring{$C^{1,1}$}{C11}-type regularity: proof of Theorem~\ref{t.C11estimate}}
\label{s.C11estimate}

We turn to the proof of Theorem~\ref{t.C11estimate}. In the first subsection, we prove statements (i) and (ii) of the theorem, which together are the Liouville-type results characterizing the set~$\mathcal{L}_1$ of solutions exhibiting at most linear growth. In Subsection~\ref{ss.linearizecorrector}, we develop a large-scale regularity theory for a linearized equation \emph{around an element of~$\mathcal{L}_1$} and show quantitatively that the first-order linearized correctors describe the tangent spaces of the ``manifold''~$\mathcal{L}_1$. Finally, in Subsection~\ref{ss.Ck1conc} we complete the proof of Theorem~\ref{t.C11estimate} by obtaining the large-scale $C^{1,1}$--type excess decay estimate.

\subsection{The Liouville-type results}
In this subsection we present the proof of Theorem~\ref{t.C11estimate}~(i) and~(ii), namely the Liouville-type classification of~$\mathcal{L}_1$. After the proof~(i) and~(ii), we will prove a similar result for difference of two solutions in~$\mathcal{L}_1$.

\begin{proof}
[{Proof of Theorem~\ref{t.C11estimate}(i)--(ii)}]
Fix $\sigma \in (0,d)$ and $\mathsf{M} \in [1,\infty)$. We set $r_0 := \X $, where~$\X$ is the maximum of minimal scales in Theorem~\ref{t.regularity.differences},
Corollary~\ref{c.minimalscale} and Theorem~\ref{t.AS.regularity}, and a constant $H(\sigma,\mathsf{M},\data)<\infty$ to be fixed.  

\smallskip

\emph{Step 1.}  The proof of statement (ii) of Theorem~\ref{t.C11estimate}. Fix $p\in B_{\mathsf{M}}$ and let $\ell(x):=p\cdot x$. Let $r_m := 2^m r_0$. 
Let $v_{m}$ be the solution of the Dirichlet problem
\begin{equation*}
\left\{
\begin{aligned}
& -\nabla \cdot \left( D_pL(\nabla v_{m},x) \right) = 0 & \mbox{in} & \ B_{r_m}, \\
& v_{m}= \ell & \mbox{on } & \partial B_{r_{m}}.
\end{aligned}
\right.
\end{equation*}
Set $w_m := v_{m+1} - v_{m}$. 
As $|\nabla \ell| \leq \mathsf{M}$, we may apply Corollary~\ref{c.minimalscale} to obtain, for every~$m \in \N$, 
\begin{equation} \label{e.regularity1.almostthere0}
\left\| v_{m} - \ell \right\|_{\underline{L}^{2}(B_{r_m})}\leq Cr_{m}^{1-\beta} 
\end{equation}
with $\beta = \alpha (d-\sigma)$.  The triangle inequality thus gives us, for all~$m \in \N$,
\begin{equation} \label{e.regularity1.almostthere000}
\left\|w_m \right\|_{\underline{L}^{2}(B_{r_m})} \leq Cr_{m}^{1-\beta} .
\end{equation}
Then, by the Lipschitz regularity for differences of solutions in Theorem~\ref{t.regularity.differences}, we have that, for $j\leq m$,
\begin{equation} \label{e.regularity1.almostthere00}
\left\| \nabla w_{m} \right\|_{\underline{L}^{2}(B_{r_j})}
\leq
 \frac{C}{r_{m}} \left\|w_{m} \right\|_{\underline{L}^{2}(B_{r_m})} \leq C r_{m}^{-\beta} .
\end{equation}
We  also get by the above inequality and Poincar\'e's inequality that, for $j \leq m$, 
\begin{equation}  \label{e.wtelescopingscales}
\left| (w_{m})_{B_{r_j}} - (w_{m})_{B_{r_0}}  \right| 
\leq 
C \sum_{i=1}^{j} \left\| w_{m} - (w_{m})_{B_{r_i}} \right\|_{\underline{L}^2 \left( B_{r_i} \right)}
\leq 
C r_j r_{m}^{-\beta} .
\end{equation}
Using the above inequality we obtain, again by Poincar\'e's inequality, that for all $k \in \N$ and $j \in \{0,\ldots,k\}$, 
\begin{equation} \label{e.regularity1.almostthere1}
\left\| w_{k} - (w_{k})_{B_{r_0}}  \right\|_{\underline{L}^{2}(B_{r_j})} \leq C r_j r_{k}^{-\beta} .
\end{equation}
Set now
\begin{equation*} 
u_n := v_n  - (v_n)_{B_{r_0}} ,
\end{equation*}
so that, noticing that $$u_n - v_m =  \sum_{j=m}^{n-1}\left(w_j -  (w_j)_{B_{r_0}} \right) +    \sum_{j=0}^{m-1} (w_j)_{B_{r_0}}, $$
we have by the triangle inequality that
\begin{equation*} 
\left\| u_{n} - \ell  \right\|_{\underline{L}^2\left( B_{r_{m}} \right)} \leq \left\| v_m -\ell  \right\|_{\underline{L}^2\left( B_{r_{m}} \right)}
+ \sum_{j=m}^{n-1} \left\| w_j - (w_j)_{B_{r_0}}  \right\|_{\underline{L}^2\left( B_{r_{m}} \right)} + \sum_{j=0}^{m-1} \left| (w_j)_{B_{r_0}}  \right| .
\end{equation*}
The first two terms on the right can be estimated using~\eqref{e.regularity1.almostthere0} and~\eqref{e.regularity1.almostthere1}, respectively. For the last term we instead use~\eqref{e.regularity1.almostthere000} and~\eqref{e.wtelescopingscales} to obtain
\begin{equation*} 
 \sum_{j=0}^{m-1} \left| (w_j)_{B_{r_0}}  \right|
 \leq  \sum_{j=0}^{m-1} \left\| w_j \right\|_{\underline{L}^2 \left( B_{r_j} \right)} + \sum_{j=0}^{m} \left| (w_j)_{B_{r_0}} - (w_j)_{B_{r_j}}  \right|
  \leq C r_m^{1-\beta}  .
\end{equation*}
Therefore, we have shown that, for $n  \in \N$ and $m \in \{0,\ldots,n\}$,
\begin{equation*} 
\left\| u_{n} - \ell  \right\|_{\underline{L}^2\left( B_{r_{m}} \right)} \leq C r_m^{1-\beta}  .
\end{equation*}
Having arrived at this estimate, we now observe that, for every $j,m,n\in\mathbb{N}$ with $j \leq m<n$, 
we have that $\nabla u_{n} - \nabla u_{m} = \sum_{i = m}^{n-1} \nabla w_{i}$, and hence by~\eqref{e.regularity1.almostthere00}  we get
\begin{equation} \label{e.regularity1.almostthere3}
\left\| \nabla u_{n} - \nabla u_m  \right\|_{\underline{L}^{2}(B_{r_j})} \leq
C2^{-\beta(m-j)} r_{j}^{-\beta} .
\end{equation}
Thus $\{u_n\}_{n=j}^\infty$ is a Cauchy sequence in $H^1(B_{r_j})$ and, consequently, we deduce by a diagonal argument that there exists $u \in \mathcal{L}_1$ such that, for all $j \in \N$,  
\begin{equation} \label{e.regularity1.almostthere4}
\left\| u - \ell  \right\|_{\underline{L}^{2}(B_{r_j})} \leq
C r_{j}^{1-\beta} . 
\end{equation}
This yields~\eqref{e.liouvillec1}.

\smallskip

\emph{Step 2.} The proof of statement (i) of Theorem~\ref{t.C11estimate}.
Fix $u\in\mathcal{L}_{1}$ satisfying
\begin{equation} 
\label{e.regularity1norm}
\limsup_{r \to \infty} \frac1r \left\|  u - (u)_{B_r} \right\|_{\underline{L}^2(B_r)}   \leq \mathsf{M}.
\end{equation}
By the Lipschitz estimate and~\eqref{e.regularity1norm} we get 
 \begin{equation} \label{e.regularity1conv1}
\sup_{r \in [\X,R]} \frac1r \left\| u - (u)_{B_{r}} \right\|_{\underline{L}^2(B_{r})} \leq  C \limsup_{r \to \infty} \frac 1r \left\| u - (u)_{B_{r}} \right\|_{\underline{L}^2(B_{r})}  \leq C . 
\end{equation}
Denoting 
\begin{equation*} 
E(r) := \frac1r  \inf_{\ell \in \mathcal{P}_1}  \| u - \ell \|_{\underline{L}^2(B_r)},
\end{equation*}
 we get by applying Theorem~\ref{t.AS.regularity} that, for $r \in [\X, R]$, 
\begin{equation}  
\label{e.regularity1conv2}
E(\theta r) \leq C \theta^\alpha E(r) +  C\theta^{-\frac{d+2}{2}} r^{-\beta} ,
\end{equation}
where $\beta = \alpha(d-\sigma)$.  Denote by $\ell_{r}$ be the affine function minimizing $E(r)$. 
Notice that, by~\eqref{e.regularity1conv1}, $|\nabla \ell_r | \leq C$ for all $r \geq \X$. 
Taking  $\theta \in (0, 1)$ so small that $C \theta^\alpha \leq \frac12$
and setting $r_{m} := \theta^{-m} r_{0}$, we obtain, for all $m \in \N$, 
\begin{equation}  
\label{e.regularity1conv4}
E(r_{m-1}) \leq \frac12 E(r_{m}) +  Cr_{m}^{-\beta} .
\end{equation}
It follows, after reabsorption,  that 
\begin{equation*} 
\sum_{j=m}^{n-1} E(r_{j}) \leq E(r_{n})  + C  \sum_{j=m+1}^{n}  r_{j}^{-\beta}  \leq E(r_{n})  + C r_{m}^{-\beta} . 
\end{equation*}
The term on the right is uniformly bounded in $n$, because~\eqref{e.regularity1conv1} implies that
\begin{equation} \label{e.regularity1conv5}
\limsup_{n \to \infty}  E(r_{n})   \leq C. 
\end{equation}
The above two displays give that $E(r_{n}) \to 0$ as $n \to \infty$ and, thus
\begin{equation} \label{e.regularity1conv6}
\sum_{j=m}^{\infty} E(r_{j}) \leq C  r_{m}^{-\beta}  . 
\end{equation}
Furthermore, since
\begin{equation*} 
\left| \nabla \ell_{r_j} - \nabla \ell_{r_{j+1}} \right| \leq C \left( E(r_{j}) + E(r_{j+1}) \right),
\end{equation*}
we see by telescoping that, for all $m,n \in \N$ such that $n>m$, 
\begin{equation*} 
\left| \nabla \ell_{r_m} - \nabla \ell_{r_{n}} \right| \leq C  r_{m}^{-\beta} ,
\end{equation*}
and the right-hand side converges to zero as $m \to \infty$.  Therefore $\{\nabla \ell_{r_{n}} \}_{n}$ is a Cauchy sequence and consequently there is an affine function $\ell$ such that 
\begin{equation*} 
 \lim_{n \to \infty} \nabla \ell_{r_{n}} = \nabla \ell.
\end{equation*}
We hence obtain that 
\begin{equation*} 
\left| \nabla \ell_{r_m} - \nabla \ell \right| \leq C r_{m}^{-\beta} .
\end{equation*}
Plugging this into~\eqref{e.regularity1conv6} proves, after easy manipulations, for $\tilde \ell(x) := \nabla \ell \cdot x$ and $r\in [\X, \infty)$, 
\begin{equation} \label{e.regularity1conv7} 
 \left\| u - (u)_{B_{r}} - \tilde \ell \right\|_{\underline{L}^2 \left( B_{r} \right)} \leq C r^{1-\beta} .
\end{equation}
Furthermore, by Step 1 there is $\tilde u \in \mathcal{L}_{1}$ corresponding   $ \tilde \ell$ such that, for $r \geq \X$,
\begin{equation} \label{e.regularity1conv8}
 \left\| \tilde u - \tilde \ell \right\|_{\underline{L}^2\left( B_{r} \right)} \leq C r^{1-\beta} .
\end{equation}
Together with~\eqref{e.regularity1conv7} this yields 
\begin{equation}  \label{e.regularity1conv9}
\left\| u - \tilde u  - (u - \tilde u)_{B_{r_j}} \right\|_{\underline{L}^2\left( B_{r_j} \right)} \leq C r_j^{1-\delta} .
\end{equation}
Thus, applying the Lipschitz estimate for differences we see that, for all $r \geq \X$, 
\begin{equation*} 
\frac1r \left\| u - \tilde u  - (u - \tilde u)_{B_r} \right\|_{\underline{L}^2\left( B_{r} \right)} \leq C \limsup_{r\to \infty} \frac1r \left\| u - \tilde u  - (u - \tilde u)_{B_r} \right\|_{\underline{L}^2\left( B_{r} \right)} = 0 .
\end{equation*}
Therefore $u - \tilde u$ is a constant, and the statement~\eqref{e.liouvillec0} follows from~\eqref{e.regularity1conv8}  by setting $\ell = u - \tilde u + \tilde \ell$. The proof is complete. 
\end{proof}

We conclude this subsection with a quantitative Liouville-type result for differences of solutions. 

\begin{lemma}
\label{l.regularity2}
Fix $\sigma \in (0,d)$ and $\mathsf{M} \in [1,\infty)$. There exists constants $C(\sigma,\mathsf{M},\data)<\infty$ and $\alpha(\data) \in (0,1)$, and a random variable $\X$ satisfying 
\begin{equation}
\label{e.Xintgrabb}
\X \leq \O_\sigma(C). 
\end{equation}
such that the following holds.  Suppose that $ u,v \in \mathcal{L}_1$ satisfy
\begin{equation} \label{e.C11uvcond}
\limsup_{r \to \infty} \frac1r \left( \left\| u - (u)_{B_r} \right\|_{\underline{L}^2(B_{r})} \vee \left\| v - (v)_{B_r} \right\|_{\underline{L}^2(B_{r})} \right)
\leq
\mathsf{M}. 
\end{equation}
Then there is an affine function $\ell$ such that, for $r \geq \X$, 
\begin{equation} \label{e.regularity2res1}
\left\| u - v   - \ell \right\|_{\underline{L}^2\left( B_r \right)} 
\leq 
C r^{-\alpha(d-\sigma)} 
\left\|  \ell \right\|_{\underline{L}^2\left( B_r \right)}
\end{equation}
and
\begin{equation} 
\label{e.diffdoubling}
\sup_{r \in [\X,\infty)} \left\| \nabla u - \nabla v \right\|_{\underline{L}^2\left( B_r \right)} 
\leq 
C \inf_{r \in [\X,\infty)} \left\| \nabla u - \nabla v \right\|_{\underline{L}^2\left( B_r \right)}  .
\end{equation}
\end{lemma}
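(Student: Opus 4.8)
The plan is to use the quantitative homogenization error estimate for differences (Proposition~\ref{p.differences.error}) together with the Liouville-type statements (i) and (ii) of Theorem~\ref{t.C11estimate}, which have just been established, and then follow essentially the same Campanato iteration that appears in the proof of Theorem~\ref{t.C11estimate}(i). Let $\X$ be the maximum of the minimal scales appearing in Theorem~\ref{t.regularity.differences}, Proposition~\ref{p.differences.error}, Corollary~\ref{c.minimalscale} and Theorem~\ref{t.AS.regularity}, enlarged by a constant $H(\sigma,\mathsf{M},\data)$ to be fixed; its stochastic integrability~\eqref{e.Xintgrabb} is immediate. Since $u,v\in\mathcal{L}_1$ satisfy~\eqref{e.C11uvcond}, the large-scale Lipschitz estimate (Theorem~\ref{t.AS.regularity}) gives $\sup_{r\geq\X} r^{-1}\|u-(u)_{B_r}\|_{\underline L^2(B_r)} + \sup_{r\geq\X} r^{-1}\|v-(v)_{B_r}\|_{\underline L^2(B_r)}\leq C$, and the Lipschitz estimate for differences (Theorem~\ref{t.regularity.differences}, or rather Proposition~\ref{p.Conealphafordifferences}) controls the doubling of $\|\nabla(u-v)\|_{\underline L^2}$, which already delivers~\eqref{e.diffdoubling} once we know the infimum over scales is attained up to a constant — this follows because the iterate below shows the excess decays, so the $\underline L^2$ gradient norm converges as $r\to\infty$ and the doubling bound then pins it between a constant multiple of the infimum and the supremum.

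For~\eqref{e.regularity2res1}, I would run the excess-decay iteration. Set $E(r):=r^{-1}\inf_{\ell\in\mathcal{P}_1}\|u-v-\ell\|_{\underline L^2(B_r)}$. On each annular scale $r\in[\X,\infty)$ let $\overline u_r,\overline v_r$ solve the homogenized Dirichlet problems in $B_r$ with boundary data $u,v$ respectively; by Proposition~\ref{p.differences.error} (applied at radius $2r$, say) one has $\|u-v-(\overline u_r-\overline v_r)\|_{\underline L^2(B_r)}\leq C r^{-\alpha(d-\sigma)}\|u-v-(u-v)_{B_{2r}}\|_{\underline L^2(B_{2r})}$, and by Corollary~\ref{c.minimalscale} the individual comparisons $\|u-\overline u_r\|_{\underline L^2(B_r)},\|v-\overline v_r\|_{\underline L^2(B_r)}$ are $O(r^{1-\alpha_1(d-\sigma)})$. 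Since $\overline L\in C^{2,\beta}$ (Proposition~\ref{p.nu.C2beta}), the difference $\overline u_r-\overline v_r$ solves a linear equation with $C^{0,\beta}$ coefficients, hence enjoys a $C^{1,\beta}$ Schauder excess-decay: $\inf_{\ell}\|\overline u_r-\overline v_r-\ell\|_{\underline L^2(B_{\theta r})}\leq C\theta^2\inf_\ell\|\overline u_r-\overline v_r-\ell\|_{\underline L^2(B_r)}$, exactly as in the inequality labeled~\eqref{e.C1gamma.Lbardiff} used in Proposition~\ref{p.Conealphafordifferences}. Combining these three ingredients yields an inequality of the form $E(\theta r)\leq C\theta\, E(r)+C\theta^{-d/2}r^{-\alpha(d-\sigma)}D(r)$, where $D(r)=\sup_{s\geq r}s^{-1}\|u-v-(u-v)_{B_s}\|_{\underline L^2(B_s)}$; note here one uses crucially that $u,v\in\mathcal{L}_1$, so there is no macroscopic scale $R$ truncating the iteration — the estimates are available at \emph{all} radii $r\geq\X$.

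Choosing $\theta$ small so that $C\theta\leq\frac12$ and summing the geometric iteration over dyadic-in-$\theta$ scales gives $\sum_{j\geq m}E(\theta^{-j}r)\leq C r_m^{-\alpha(d-\sigma)}D(r)$ after reabsorbing $D$ into the left side using that $D(r)\leq E(r)+C|\nabla\ell_r|$ and that $|\nabla\ell_r|$ is controlled by the telescoped sum of excesses; enlarging $H$ makes the reabsorption legitimate. The sequence of optimal slopes $\nabla\ell_{\theta^{-j}r}$ is then Cauchy, converging to some $\nabla\ell$, and one obtains $\|u-v-(u-v)_{B_r}-\tilde\ell\|_{\underline L^2(B_r)}\leq Cr^{1-\alpha(d-\sigma)}$ with $\tilde\ell(x)=\nabla\ell\cdot x$. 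Finally, Theorem~\ref{t.C11estimate}(ii) supplies $\tilde u\in\mathcal{L}_1$ with $\|\tilde u-\tilde\ell\|_{\underline L^2(B_r)}\leq Cr^{1-\alpha(d-\sigma)}$; applying the Lipschitz estimate for differences to $u-v-\tilde u$ forces it to be constant, and~\eqref{e.regularity2res1} follows by setting $\ell:=\tilde u-(u-v-\tilde u)$ — note one should check this $\ell$ is affine and that the right side $\|\ell\|_{\underline L^2(B_r)}$ is comparable to $r|\nabla\ell|$ up to the additive constant, which is where the normalization $(u-v)_{B_{r_0}}$ is absorbed. The main obstacle is the same one encountered throughout Section~\ref{s.regularity.differences}: bookkeeping the reabsorption of $D(r)$ into the excess sum while keeping all constants uniform and ensuring the $r^{-\alpha(d-\sigma)}$ error genuinely beats the $C\theta$ contraction — i.e. verifying that $\alpha$ can be taken small enough (shrinking it to $\alpha_1\beta/(2d)$ as in Proposition~\ref{p.Conealphafordifferences}) so the iteration closes; the passage to the limit and the identification of $\ell$ via~(ii) are then routine.
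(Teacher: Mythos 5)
Your overall route is the same as the paper's: approximate $u-v$ by $\overline u_r-\overline v_r$ via Proposition~\ref{p.differences.error}, borrow the $C^{1,\beta}$ excess decay for the homogenized difference, run the Campanato iteration at all scales $r\geq\X$ (using that $u,v$ are global solutions so there is no macroscopic cutoff), and extract a Cauchy sequence of slopes. One point you pass over too quickly: the additive error in the Schauder decay~\eqref{e.C1gamma.Lbardiff} carries a factor $\mathsf{M}^\gamma$ where $\mathsf{M}$ bounds the \emph{excess} (flatness) of $\overline u_r$ and $\overline v_r$ at scale $r$, not merely their Lipschitz norms. The uniform Lipschitz bound you derive from Theorem~\ref{t.AS.regularity} is not enough; you need $D(r)$ and $\overline D(r)$ to be $O(r^{-\beta})$, which is exactly what the paper gets from Theorem~\ref{t.C11estimate}(i) (already proved at this stage) together with Corollary~\ref{c.minimalscale}. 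Without this smallness the error term is $O(1)$ times the oscillation and the iteration does not close.

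The concrete gap is in your final step. You propose to take $\tilde u\in\mathcal{L}_1$ associated to $\tilde\ell$ by Theorem~\ref{t.C11estimate}(ii) and then ``apply the Lipschitz estimate for differences to $u-v-\tilde u$.'' That estimate applies only to the difference of \emph{two solutions} of the nonlinear equation; $u-v-\tilde u=u-(v+\tilde u)$ is not such a difference, since $v+\tilde u$ does not solve the (nonlinear) equation. Moreover $\ell:=\tilde u-(u-v-\tilde u)$ would not be affine even if the step worked. Fortunately the whole detour is unnecessary: once you have $\frac1r\|u-v-(u-v)_{B_r}-\tilde\ell\|_{\underline L^2(B_r)}\leq Cr^{-\beta}|\nabla\ell|$ for all $r\geq\X$, telescoping the means $(u-v)_{B_r}$ and setting $\ell:=\tilde\ell+(u-v)_{B_\X}$ gives~\eqref{e.regularity2res1} directly (this is the ``easy manipulation'' the paper invokes). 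Relatedly, your derivation of~\eqref{e.diffdoubling} is one-sided: the Lipschitz/doubling estimate only bounds small scales by large ones, and convergence of $\|\nabla(u-v)\|_{\underline L^2(B_r)}$ does not by itself prevent the infimum from being attained far below the limit at some finite scale. The lower bound $\|\nabla(u-v)\|_{\underline L^2(B_r)}\geq c|\nabla\ell|$ for every $r\geq\X$ must come from the affine approximation you just proved, via the Poincar\'e inequality, with $H$ taken large enough that the error $CH^{-\beta}|\nabla\ell|$ is dominated — this is how the paper closes the argument.
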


\begin{proof}
Let $\X$ be the maximum of the minimal scales in Theorem~\ref{t.C11estimate}(i)--(ii), Corollary~\ref{c.minimalscale}, Proposition~\ref{p.differences.error}, and a constant $H(\sigma,\mathsf{M},\data)<\infty$ to be selected below. Then~\eqref{e.Xintgrabb} holds. Set $z := u-v$ and denote
\begin{equation*} 
D(r) := \frac1r \inf_{\ell \in \mathcal{P}_1}\left\| u - \ell \right\|_{\underline{L}^2\left( B_r \right)} \vee \inf_{\ell \in \mathcal{P}_1} \left\| v- \ell \right\|_{\underline{L}^2\left( B_r \right)}
\end{equation*}
and
\begin{equation*} 
E(r) := \frac1r \inf_{\ell \in \mathcal{P}_1}\left\| z -  \ell \right\|_{\underline{L}^2\left( B_r \right)} .
\end{equation*}
Let $\overline{u}_r$, $\overline{v}_r$, and $\overline{z}_r = \overline{u}_r - \overline{v}_r$  stand for the homogenized solutions at the scale $r$ provided by Proposition~\ref{p.differences.error} applied at the scale $R = r$. In particular, we have that 
\begin{equation}  \label{e.differences.error.applied}
\left\|  z - \overline{z}_r  \right\|_{\underline{L}^2 \left( B_{r} \right)} \leq C r^{-\beta} \left\|   z - (z)_{B_r}   \right\|_{\underline{L}^2 \left( B_{r} \right)}.
\end{equation}
 Next, Theorem~\ref{t.C11estimate}(i) implies, by taking $H$ so large that $C  H^{-\frac12 \alpha(d-s)} \leq 1$,  that there exists~$\beta:=\frac12 \alpha(d-s)$ such that  
\begin{equation} \label{e.C11Ersmall}
D(r) \leq \frac12 r^{-\beta}. 
\end{equation}
Corollary~\ref{c.minimalscale} then yields, by taking $H$ larger and $\beta$ smaller, if necessary,  that
\begin{equation*} 
\overline{D}(r) :=  \frac1r \inf_{\ell \in \mathcal{P}_1}\left\| \overline{u}_r - \ell \right\|_{\underline{L}^2\left( B_r \right)} \vee \inf_{\ell \in \mathcal{P}_1}\left\| \overline{v}_r - \ell \right\|_{\underline{L}^2\left( B_r \right)}\leq  r^{-\beta}. 
\end{equation*}
The basic decay estimate for the differences of homogenized solutions is given in Proposition~\ref{p.C1beta.differences.Lbar}: it states that
\begin{align*}
\lefteqn{ \inf_{\ell \in \mathcal{P}_1}\left\| \overline{z}_r - \ell \right\|_{\underline{L}^2(B_{\theta r})}
} \quad & 
\\ & \notag
\leq C \theta^2 \inf_{\ell \in \mathcal{P}_1}\left\| \overline{z}_r - \ell \right\|_{\underline{L}^2(B_{r/2})} 
+ C \left( \overline{D}(r) \vee 1 \right)^{\eta}  \theta^{-\frac d2}  \overline{D}(r)^\gamma \left\| \overline{z}_r  - (\overline{z}_r)_{B_r}\right\|_{\underline{L}^2(B_{r/2})}.
\end{align*}
Using~\eqref{e.differences.error.applied} for the differences together with the triangle inequality, yields, again for smaller $\beta$, 
\begin{equation*} 
E(\theta r)
\leq C \left( \theta  +  \theta^{- \frac {d+2}{2}} H^{-\beta} \right) E(r) + C \theta^{- \frac {d+2}{2}} r^{-\beta} |\nabla \ell_r| .
\end{equation*}
Comparing this to~\eqref{e.regularity1conv2} allows us to reason exactly as in the proof of Theorem~\ref{t.C11estimate}(i) to obtain an affine function $\ell$ such that 
\begin{equation} \label{e.regularity21} 
 \frac1r \left\| z - (z)_{B_{r}} -  \ell \right\|_{\underline{L}^2 \left( B_{r} \right)} \leq C r^{1-\beta} \left|  \nabla \ell \right|,
\end{equation}
giving~\eqref{e.regularity2res1}. The above display also yields,  for a positive constant $A(d)$, that, for $r\geq \X \geq H$, 
\begin{equation*} 
 \left| \frac1r \left\| z - (z)_{B_{r}}  \right\|_{\underline{L}^2 \left( B_{r} \right)} - A \left| \nabla \ell \right|  \right| \leq
C H^{-\beta} |\nabla \ell|  
.
\end{equation*}
This gives~\eqref{e.diffdoubling}  by Caccioppoli and Poincar\'e's inequalities provided that we take a lower bound $H$ for $\X$ such that $ 2C H^{-\beta}  \leq A$. The proof is complete.
\end{proof}


\subsection{Linearized equations around first-order correctors}
\label{ss.linearizecorrector}

In this subsection, we establish some estimates for the linearized equations \emph{around first-order correctors}, namely the full slate of large-scale $C^{k,1}$ estimates as well as a quadratic estimate for linearization errors. The former is essentially a verbatim consequence of the arguments of~\cite[Chapter 3]{AKMbook} combined with homogenization results already obtained in this paper. The only work to be done is to slightly post-process the statements in~\cite{AKMbook} to be compatible with the statement of the result we need.

\smallskip

To state the next theorem, we introduce the \emph{linearized correctors}. Let $\phi \in \mathcal{L}_1$. 
By Theorem~\ref{t.C11estimate}(i), we have that there is a unique affine function $\ell_\phi$ such that
\begin{equation}  \label{e.snaptoaffine}
\lim_{r \to \infty} \frac1r \left\|  \phi - \ell_\phi \right\|_{\underline{L}^2 \left( B_{r} \right)} = 0. 
\end{equation}
For an open set $U\subseteq\Rd$, we denote 
\begin{equation*} 
\A[\phi](U) := 
\left\{ w \in H_{\textrm{loc}}^1(U) \, : \, - \nabla \cdot \left( D_p^2L(\nabla \phi,\cdot) \nabla w \right) = 0 \mbox{ in } U \right\}.
\end{equation*}
We next define, for each $k\in\N$, 
\begin{equation*} 
\A_k[\phi] := \left\{ w \in  \A_k[\phi](\R^d) \, : \,  \lim_{r\to \infty} r^{-k-1} \left\| w \right\|_{\underline{L}^2 \left( B_{r} \right)} = 0 \right\}  
\end{equation*}
and denote the $D^2_p\overline{L}\left( \nabla \ell_\phi \right)$--harmonic polynomials of degree~$k$ by
\begin{equation*} 
\Ahom_k[\phi] := \left\{ q \in \mathcal{P}_k \, : \, - \nabla \cdot \left( D_p^2 \overline{L}(\nabla \ell_\phi) \nabla q \right) = 0  \right\}.
\end{equation*}
The next theorem, which can be compared to~\cite[Theorem 3.8]{AKMbook}, asserts that the vector space~$\A_k[\phi]$ essentially homogenizes to $\Ahom_k[\phi]$ and, in particular, has the same dimension. It also gives a large-scale $C^{k,1}$ estimate which states that solutions of the linearized equation (around $\phi$) can be approximated by elements of~$\A_k[\phi]$ in the same way that a harmonic function can be approximated by a polynomial of degree~$k$ (or an analytic function by its $k$th order Taylor series).

\begin{theorem}[$C^{k,1}$ regularity for linearized equation around an element of $\mathcal{L}_1$] 
\label{t.regularitylin}
\emph{}\\
Fix $\sigma \in (0,d)$ and $\mathsf{M} \in [1,\infty)$. 
There exist~$\delta(\sigma,\data)\in \left( 0, \frac12 \right]$,~$C(\data)<\infty$ and a random variable $\X$ satisfying the estimate
\begin{equation}
\label{e.X1}
\X \leq \O_\sigma\left(C(\sigma,\mathsf{M},\data)\right)
\end{equation}
such that, for every $\phi  \in  \mathcal{L}_1$ satisfying $\limsup_{r \to \infty} \frac1r \left\| \phi - (\phi)_{B_r} \right\|_{\underline{L}^2 \left( B_{r} \right)} \leq \mathsf{M}$ and~$k\in\N$, the following statements are valid:
\begin{enumerate}
\item[{$\mathrm{(i)}_k$}] There exists $C(k,\mathsf{M},\data)<\infty$ such that, for every $w \in \A_k[ \phi ]$, there exists $q\in \overline{\A}_k[ \phi ]$ such that, for every $r \geq \X$,
\begin{equation} 
\label{e.liouvillec}
\frac1r \left\| w - q \right\|_{\underline{L}^2(B_r)} 
\leq 
C r^{-\delta} \left\| q \right\|_{\underline{L}^2(B_r)} .
\end{equation}

\bigskip 

\item[{$\mathrm{(ii)}_k$}]For every $p\in \overline{\A}_k[\phi]$, there exists $w \in \A_k[\phi]$ satisfying~\eqref{e.liouvillec} for every $r \geq \X$. 

\bigskip 

\item[{$\mathrm{(iii)}_k$}]
There exists $C(k,\mathsf{M},\data)<\infty$ such that, for every $R\geq \X$ and $w \in \A[\phi](B_R)$, there exists $\xi \in \A_k[\phi]$ such that, for every $r \in \left[ \X,  R \right]$, we have the estimate
\begin{equation}
\label{e.intrinsicreg2}
\left\| w - \xi \right\|_{\underline{L}^2(B_r)} \leq C \left( \frac r R \right)^{k+1} \left\| w \right\|_{\underline{L}^2(B_R)}.
\end{equation}
\end{enumerate}
\end{theorem}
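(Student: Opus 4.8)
The plan is to deduce the whole theorem from two ingredients: a quantitative homogenization estimate for the linearized equation around~$\phi$, in which the homogenized operator is the \emph{constant}-coefficient operator $-\nabla\cdot(\barA\nabla\cdot)$ with $\barA:=D^2_p\overline{L}(\nabla\ell_\phi)$, together with the deterministic large-scale regularity machinery of~\cite[Chapter~3]{AKMbook}. Here $\a_\phi(x):=D^2_pL(\nabla\phi(x),x)$ is the coefficient field of the linearized equation, which is uniformly elliptic by~(L2), and $\ell_\phi$ is the affine function furnished by Theorem~\ref{t.C11estimate}(i), so that $\tfrac1r\|\phi-\ell_\phi\|_{\underline{L}^2(B_r)}\to0$ and $|\nabla\ell_\phi|\leq C\mathsf{M}$; by Proposition~\ref{p.nu.C2beta} and~(L2), $\overline{L}\in C^{2,\beta}$ and $\barA$ is uniformly elliptic, so $\Ahom_k[\phi]$ is the fixed, finite-dimensional space of degree-$\leq k$ polynomials annihilated by $-\nabla\cdot(\barA\nabla\cdot)$.

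First I would establish the homogenization estimate: there are $\alpha(\data)\in(0,\tfrac12]$, $C(\sigma,\mathsf{M},\data)<\infty$ and $\X\leq\O_\sigma(C)$ so that, for $R\geq\X$, any $w\in\A[\phi](B_R)$, and the $\barA$-harmonic replacement $w'$ of $w$ in $B_{R/4}$ (i.e.\ $w'\in w+H^1_0(B_{R/4})$ with $-\nabla\cdot(\barA\nabla w')=0$), one has $\tfrac1R\|w-w'\|_{\underline{L}^2(B_{R/4})}\leq CR^{-\alpha(d-\sigma)}\tfrac1R\|w-(w)_{B_R}\|_{\underline{L}^2(B_R)}$. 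To get this, insert the $\overline{L}$-harmonic replacement $\overline{u}$ of $\phi$ in $B_{R/2}$ and the corresponding linearized homogenized solution $\overline{w}$ there, with coefficient $D^2_p\overline{L}(\nabla\overline{u})$: Corollary~\ref{c.linearization} (whose normalization hypotheses hold by the large-scale Lipschitz estimate of Theorem~\ref{t.AS.regularity} and the interior Meyers estimate, which bound $\|\nabla\phi\|_{\underline{L}^{2+\delta}}$ on large balls) makes $\tfrac1R\|w-\overline{w}\|_{\underline{L}^2(B_{R/2})}$ algebraically small, while Corollary~\ref{c.minimalscale} and Theorem~\ref{t.C11estimate}(i) make $\tfrac1R\|\overline{u}-\ell_\phi\|_{\underline{L}^2(B_{R/2})}$ algebraically small; since $\overline{u}-\ell_\phi$ solves a linear elliptic equation with H\"older coefficients (as $\overline{u}\in C^{1,\alpha}_{\mathrm{loc}}$ by the regularity theory for uniformly convex functionals), interior gradient estimates then give $\|\nabla\overline{u}-\nabla\ell_\phi\|_{L^\infty(B_{R/4})}\leq CR^{-\alpha}$ --- the nesting $B_{R/4}\subset B_{R/2}$ being what lets an interior estimate reach the boundary of the comparison ball --- and hence, by the H\"older continuity of $D^2\overline{L}$ (Proposition~\ref{p.nu.C2beta}), $\|D^2_p\overline{L}(\nabla\overline{u})-\barA\|_{L^\infty(B_{R/4})}$ is a power of $R^{-1}$; a routine energy comparison for $\overline{w}-w'$ on $B_{R/4}$, with Caccioppoli and Poincar\'e, finishes the claim after relabelling $\alpha$. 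The case $k=0$ is then essentially Proposition~\ref{p.C01.linearizedeq}.

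With this estimate in hand, the statements $\mathrm{(i)}_k$--$\mathrm{(iii)}_k$ follow by running the arguments of~\cite[Chapter~3]{AKMbook} with $-\nabla\cdot(\barA\nabla\cdot)$ as the homogenized operator and the displayed estimate in place of the homogenization input used there. Concretely: $\mathrm{(ii)}_k$ is proved exactly as the Cauchy-sequence construction already used for Theorem~\ref{t.C11estimate}(ii) above, now iterated through the orders $k$ --- for $q\in\Ahom_k[\phi]$, solve $-\nabla\cdot(\a_\phi\nabla w_m)=0$ in $B_{r_m}$, $r_m=2^m\X$, with $w_m=q$ on $\partial B_{r_m}$, and use the homogenization estimate and interior estimates for $-\nabla\cdot(\barA\nabla\cdot)$ to show $\{w_m\}$ is Cauchy with limit $w\in\A_k[\phi]$ obeying~\eqref{e.liouvillec}; then $\mathrm{(i)}_k$ and $\dim\A_k[\phi]=\dim\Ahom_k[\phi]$ follow by the matching argument of~\cite{AKMbook}; and $\mathrm{(iii)}_k$ is the Campanato excess-decay iteration in which, at each scale, one subtracts the best $\A_k[\phi]$-approximant, homogenizes the remainder, expands the resulting $\barA$-harmonic function in its degree-$k$ Taylor polynomial (an element of $\Ahom_k[\phi]$), lifts it back into $\A_k[\phi]$ via $\mathrm{(ii)}_k$, and sums the geometric series to reach~\eqref{e.intrinsicreg2}. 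As in~\cite{AKMbook} the iteration parameters are chosen independently of $k$, so $\X$ and $\delta$ stay $k$-independent (only the constants in~\eqref{e.liouvillec} and~\eqref{e.intrinsicreg2} grow with $k$), and~\eqref{e.X1} holds since all randomness enters through the minimal scales of Corollaries~\ref{c.minimalscale} and~\ref{c.linearization}, Theorem~\ref{t.AS.regularity} and Theorem~\ref{t.C11estimate}(i). Matching the normalizations of~\cite{AKMbook} to the statement here uses only $\A_k[\phi]\cong\Ahom_k[\phi]$ and $|\nabla\ell_\phi|\leq C\mathsf{M}$.

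The main obstacle is that $\a_\phi$ is neither stationary nor of finite range of dependence, since $\nabla\phi$ is not, so~\cite[Chapter~3]{AKMbook} cannot be quoted verbatim. This is resolved by observing that all the probabilistic content has already been absorbed into Theorem~\ref{t.linearization} (which dealt with exactly this lack of stationarity via a locally stationary approximation), so that the work left for the homogenization estimate above is purely deterministic --- quantifying the gap between the true homogenized coefficient $D^2_p\overline{L}(\nabla\overline{u})$ and the constant $\barA$, which needs only interior $C^{1,\alpha}$ regularity for the uniformly convex homogenized equation, Corollary~\ref{c.minimalscale}, and the H\"older continuity of $D^2\overline{L}$. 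The remaining delicate point is the bookkeeping of the $k$-dependence in the Campanato iteration: one must check, as in~\cite{AKMbook}, that the iteration uses only the $k$-independent homogenization estimate above and interior estimates for $-\nabla\cdot(\barA\nabla\cdot)$, and that the errors incurred when lifting degree-$k$ homogenized polynomials back into $\A_k[\phi]$ are absorbed into a $k$-dependent multiplicative constant, leaving $\X$ and $\delta$ uniform in $k$.
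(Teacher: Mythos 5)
Your proposal is correct and follows essentially the same route as the paper: the core step is the same harmonic-approximation estimate showing that any $w\in\A[\phi](B_R)$ is, up to an algebraically small error, a solution of the constant-coefficient equation with matrix $D^2_p\overline{L}(\nabla\ell_\phi)$, proved by chaining Corollary~\ref{c.linearization}, Corollary~\ref{c.minimalscale}/Theorem~\ref{t.AS.regularity}, Theorem~\ref{t.C11estimate}(i), the constant-coefficient interior gradient estimate (Proposition~\ref{p.C1beta.differences.Lbar} in the paper) and the H\"older continuity of $D^2\overline{L}$, followed by an energy comparison. The paper then, exactly as you do, feeds this estimate into the machinery of~\cite[Theorem 3.8]{AKMbook} in place of~\cite[Lemma 3.4]{AKMbook} to obtain $\mathrm{(i)}_k$--$\mathrm{(iii)}_k$; your slightly different choice of nested radii and of boundary data for the constant-coefficient replacement is immaterial.
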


\begin{proof}
Fix $\sigma \in (0,d)$ and $\phi \in \mathcal{L}_1$. Let $\X$ be the maximum of minimal scales appearing in Theorem~\ref{t.AS.regularity}, Corollary~\ref{c.linearization} and Theorem~\ref{t.C11estimate}(i)--(ii), which we already proved. Then $\X_\sigma \leq O_\sigma(C)$. Fix $r \geq 2 \X$. By Theorem~\ref{t.C11estimate} (i) we have that there is an affine function $\ell_\phi$ such that
\begin{equation*}  \label{e.phitoaffine0000}
\left\| \phi - \ell_\phi  \right\|_{\underline{L}^2(B_r)} \leq C r^{1-\alpha(d-\sigma)} .
\end{equation*}
Let~$w \in \A[\phi](B_r)$. 
Consider the following system of homogenized solutions:
\begin{equation*}
\left\{
\begin{aligned}
& -\nabla \cdot \left( D_p \overline{L}(\nabla \overline{u}_{r}) \right) = 0 & \mbox{in} & \ B_{r}, \\
& -\nabla \cdot \left( D_p^2 \overline{L}(\nabla \overline{u}_{r}) \nabla \overline{w}_r \right) = 0 & \mbox{in} & \ B_{4r/5}, \\
& -\nabla \cdot \left( D_p^2 \overline{L}(\nabla \ell_\phi) \nabla \overline{w}  \right) = 0 & \mbox{in} & \ B_{3r/4} ,\\
&  \overline{u}_{r} = \phi & \mbox{on } & \partial B_{r} ,\\ 
&  \overline{w}_{r} = w & \mbox{on } & \partial B_{4r/5} , \\ 
&  \overline{w} = \overline{w}_{r} & \mbox{on } & \partial B_{3r/4} .
\end{aligned}
\right.
\end{equation*}
We claim that 
\begin{equation}  \label{e.harmapprox000}
\left\| \overline{w} - w  \right\|_{\underline{L}^2 \left( B_{r/2} \right)} \leq C r^{-\alpha(d-\sigma)} \left\| w -  (w)_{B_r}  \right\|_{\underline{L}^2 \left( B_{r} \right)} .
\end{equation}
This estimate, which asserts that $w$ can be well-approximated by a $D_p^2 \overline{L}(\nabla \ell_\phi)$--harmonic function, is analogous to~\cite[Lemma 3.4]{AKMbook}. Indeed, using this estimate as a replacement for~\cite[Lemma 3.4]{AKMbook}, we may repeat, essentially verbatim, the proof of~\cite[Theorem 3.8]{AKMbook} to obtain the theorem.

\smallskip

To show~\eqref{e.harmapprox000}, we first obtain by Corollary~\ref{c.linearization} and Meyers estimate that
\begin{equation} \label{e.harmapprox001}
\left\| w -  \overline{w}_r  \right\|_{\underline{L}^2 \left( B_{4r/5} \right)} \leq C r^{-\alpha(d-\sigma)} \left\| w -  (w)_{B_r}  \right\|_{\underline{L}^2 \left( B_{r} \right)} .
\end{equation}
Second, by  Theorem~\ref{t.AS.regularity} we have that
\begin{equation*} 
  \left\| \phi - \overline{u}_{r}   \right\|_{\underline{L}^2(B_r)} \leq C r^{1-\alpha(d-\sigma)}. 
\end{equation*}
Thus, in view of~\eqref{e.phitoaffine0000}, we get by the triangle inequality that 
\begin{equation*} 
  \left\|  \overline{u}_{r} - \ell_\phi    \right\|_{\underline{L}^2(B_r)} \leq C r^{1-\alpha(d-\sigma)}. 
\end{equation*}
Then Caccioppoli inequality yields that 
\begin{equation*} 
  \left\| \nabla \overline{u}_{r} - \nabla \ell_\phi  \right\|_{L^2(B_{3r/4})} \leq \frac{C}{r} \left\|  \overline{u}_{r}  - \ell_\phi \right\|_{\underline{L}^2(B_r)} \leq C r^{-\alpha(d-\sigma)}
\end{equation*}
and, by Proposition~\ref{p.C1beta.differences.Lbar} we arrive at 
\begin{equation*} 
  \left\| \nabla \overline{u}_{r} - \nabla \ell_\phi  \right\|_{L^\infty(B_{r/2})} \leq C r^{-\alpha(d-\sigma)}.
\end{equation*}
We deduce, by the equations, 
\begin{equation*} 
-\nabla \cdot \left( D_p^2 \overline{L}(\nabla \ell_\phi) \nabla (\overline{w}_{r} -  \overline{w}) \right) = 
\nabla \cdot \left( \left( D_p^2 \overline{L}(\nabla \overline{u}_{r}) -   D_p^2 \overline{L}(\nabla \ell_\phi) \right) \nabla \overline{w}_{r}  \right) ,
\end{equation*}
and hence there is possibly smaller $\alpha(\data) \in \left(0,\frac12 \right]$ such that
\begin{equation*} 
\left\| \nabla  \overline{w}_{r} - \nabla  \overline{w}    \right\|_{\underline{L}^2 \left( B_{3r/4} \right)} \leq C r^{-\alpha(d-\sigma)-1} 
 \left\| \overline{w}_{r}  - (\overline{w}_{r} )  \right\|_{\underline{L}^2 \left( B_{4r/5} \right)} .
\end{equation*}
We now obtain~\eqref{e.harmapprox000} by the triangle inequality, Poincar\'e's inequality, Caccioppoli estimate and~\eqref{e.harmapprox001}, concluding the proof. 
\end{proof}

We next present a result concerning the superlinear response to the linear approximation of~$\L_1$ by linearized correctors in~$\A_1$. This states roughly that $\A_1[\phi]$ is the tangent space of $\L_1$ at $\phi$ and that the ``manifold'' $\L_1$ has a structure that is at least $C^{1,\delta}$ for some small $\delta>0$. It is important for our purposes that these estimates hold ``all the way down to the microscopic scale,'' quantified by a minimal scale~$\X$ with optimal stochastic integrability. This result is of independent interest. 

\begin{lemma}[Superlinear response to linearization for $\L_1$]
\label{l.tildephivsphi1} 
Fix $\sigma \in (0,d)$ and $\mathsf{M} \in [1,\infty)$. There exist constants $\delta(d,\Lambda) \in \left(0,\frac12 \right]$ and~$C(\sigma,\mathsf{M},\data)<\infty$, and a random variable $\X$ satisfying $\X = \O_\sigma(C)$ such that the following two statements are valid: 

\begin{enumerate}
\item[(i)]
For every $u,v \in \mathcal{L}_1$ satisfying
\begin{equation} 
\label{e.C11phitildephicond2}
\limsup_{r \to \infty} \frac1r \left( \left\| u - (u)_{B_r} \right\|_{\underline{L}^2(B_{r})} \vee \left\|  v -  (v)_{B_r}  \right\|_{\underline{L}^2(B_{r})} \right) \leq \mathsf{M},
\end{equation}
there exists $w \in \A_1[u]$ such that, for every $r \geq \X$,  
\begin{equation}
\label{e.tildephivsphi2}
\left\| \nabla v - \nabla u  -  \nabla w \right\|_{\underline{L}^2(B_r)} \leq C \left( \left\| \nabla u  - \nabla v \right\|_{\underline{L}^2(B_r)} \right)^{1+\delta}.
\end{equation}

\item[(ii)]
For every $u \in \mathcal{L}_1$ and $w \in \A_1[u]$
satisfying
\begin{equation} 
\label{e.C11phitildephicond}
\limsup_{R \to \infty}
\frac1r 
\left\| u - (u)_{B_R}  \right\|_{\underline{L}^2(B_{R})} 
\leq \mathsf{M},
\end{equation}
there exists $v \in \mathcal{L}_1$ such that, for all $r \geq \X$, 
\begin{equation} \label{e.tildephivsphi1}
\left\| \nabla v  -  \nabla u - \nabla w \right\|_{\underline{L}^2(B_{r})} \leq C \left(   \left\| \nabla w \right\|_{\underline{L}^2(B_r)} \right)^{1+\delta}.
\end{equation}
\end{enumerate}
\end{lemma}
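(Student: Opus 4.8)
\textbf{Proof plan for Lemma~\ref{l.tildephivsphi1}.}

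The strategy is to run essentially the same iterative (telescoping) argument used in the proofs of Theorem~\ref{t.C11estimate}(i)--(ii) and Lemma~\ref{l.regularity2}, but now one level ``higher'': instead of comparing two solutions of the nonlinear equation and extracting an affine function, we compare a difference of nonlinear solutions against a solution of the linearized equation and extract an element of $\A_1[u]$. The two key inputs are (a) the deterministic linearization estimate Lemma~\ref{l.deterministic.linearization}, which controls how well $v-u$ is approximated by the solution of the linearized Dirichlet problem with boundary data $v-u$, with a superlinear ($1+\beta$) error in $\|\nabla(u-v)\|$; and (b) the large-scale regularity theory for the linearized equation around an element of $\L_1$, namely Theorem~\ref{t.regularitylin} (with $k=1$), which gives a $C^{1,1}$-type excess decay for solutions of $-\nabla\cdot(D_p^2L(\nabla u,\cdot)\nabla\,\cdot\,)=0$ and the approximability by $\A_1[u]$. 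We will also use Theorem~\ref{t.regularity.differences} (the $C^{0,1}$ estimate for differences) to get the a~priori doubling/Caccioppoli control on $\nabla(u-v)$ that makes the right-hand sides of~\eqref{e.tildephivsphi2} and~\eqref{e.tildephivsphi1} scale-invariant quantities rather than growing ones.

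For part~(i): fix $u,v\in\L_1$ as in~\eqref{e.C11phitildephicond2}, set $z:=v-u$, and let $\X$ be the maximum of the minimal scales in Theorems~\ref{t.regularity.differences} and~\ref{t.regularitylin}(with $k=1$), Corollary~\ref{c.linearization}, Lemma~\ref{l.regularity2} and a large deterministic constant $H$. On dyadic scales $r_m=2^m r_0$ with $r_0=\X$, let $\widetilde w_m\in H^1(B_{r_m})$ solve the linearized Dirichlet problem $-\nabla\cdot(D_p^2L(\nabla u,\cdot)\nabla\widetilde w_m)=0$ in $B_{r_m}$ with $\widetilde w_m = z$ on $\partial B_{r_m}$. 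Lemma~\ref{l.deterministic.linearization} (after a rescaling, using the Meyers and Caccioppoli estimates to pass from $L^{2+\delta}$ gradient bounds on $\partial$-data to the $L^2$ bound on $\nabla z$ coming from Theorem~\ref{t.regularity.differences}) gives
\begin{equation*}
\left\| \nabla z - \nabla \widetilde w_m \right\|_{\underline{L}^2(B_{r_m})}
\leq C \left\| \nabla z \right\|_{\underline{L}^{2+\delta}(B_{r_m})}^{1+\beta}
\leq C \left\| \nabla z \right\|_{\underline{L}^2(B_{2r_m})}^{1+\beta}.
\end{equation*}
Then, exactly as in Step~1 of the proof of Theorem~\ref{t.C11estimate}(ii), set $w_m:=\widetilde w_{m+1}-\widetilde w_m$ (a solution of the linearized equation on $B_{r_m}$, since the coefficients $D_p^2L(\nabla u,\cdot)$ are the same), use Theorem~\ref{t.regularitylin}(iii)$_1$ to get $C^{1,1}$-decay of $w_m$ down to scale $\X$, telescope to show $\{\nabla\widetilde w_n\}$ is Cauchy in $L^2_{\mathrm{loc}}$, and define $w:=\lim_n \widetilde w_n\in\A[u](\Rd)$. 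A growth bound showing $w\in\A_1[u]$ comes from the linear growth of $z$ (guaranteed by Lemma~\ref{l.regularity2}) together with the approximability in Theorem~\ref{t.regularitylin}(i)$_1$. Summing the superlinear errors over the telescoping scales yields~\eqref{e.tildephivsphi2}; the exponent $1+\delta$ is obtained by absorbing the geometric-series loss into $\beta$ (a standard $(a+b)^\epsilon\le a^\epsilon+\epsilon a^{\epsilon-1}b$ manipulation, cf.~\eqref{e.epinterp}), and using Theorem~\ref{t.regularity.differences} again to convert $\|\nabla z\|_{\underline L^2(B_{2r})}$ into $\|\nabla z\|_{\underline L^2(B_r)}$ up to a constant.

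For part~(ii): given $u\in\L_1$ and $w\in\A_1[u]$, reverse the construction. Let $v_m\in H^1(B_{r_m})$ solve the nonlinear Dirichlet problem $-\nabla\cdot(D_pL(\nabla v_m,\cdot))=0$ in $B_{r_m}$ with $v_m = u+w$ on $\partial B_{r_m}$ (here one first truncates/uses the large-scale $C^{1,\alpha}$ control on $w$ from Theorem~\ref{t.regularitylin}(i)$_1$ to guarantee the boundary data is in $W^{1,2+\delta}$ with the right size). Again Lemma~\ref{l.deterministic.linearization} bounds $\|\nabla(v_m-u)-\nabla w\|_{\underline L^2(B_{r_m})}$ by $C\|\nabla w\|_{\underline L^{2+\delta}(B_{r_m})}^{1+\beta}$; one then telescopes, using Theorem~\ref{t.regularity.differences} to control differences of the nonlinear solutions $v_{m+1}-v_m$ and Theorem~\ref{t.C11estimate}(iii) (or (i)--(ii)) to get the at-most-linear growth so that the limit $v:=\lim v_m$ lies in $\L_1$, and obtains~\eqref{e.tildephivsphi1}. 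The main obstacle I anticipate is the bookkeeping at the boundary layers between consecutive dyadic annuli — keeping the constants uniform requires carefully interleaving three different minimal scales (for differences of nonlinear solutions, for the linearized regularity around $u$, and for the homogenization estimate of Corollary~\ref{c.linearization}) and, as in~\cite[Chapter 3]{AKMbook}, fixing the small parameters ($\theta$, $H$) in the right order so that the geometric series converges with the reabsorption arguments all closing. The ``higher-order'' nature — that we are telescoping an inhomogeneous excess-decay recursion whose forcing term is itself controlled by the quantity being estimated — is exactly the structure already handled in the proof of Theorem~\ref{t.C11estimate}(i), so this part is routine once the inputs are in place.
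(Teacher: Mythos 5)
There is a genuine gap in both halves of your plan: the telescoping series you propose to sum does not converge. In part (i), with $z:=v-u$ and $Z:=\inf_{t\ge\X}\|\nabla z\|_{\underline{L}^2(B_t)}$, the linearization error from \lref{deterministic.linearization} on the ball $B_{r_m}$ is $\|\nabla z-\nabla\widetilde w_m\|_{\underline{L}^2(B_{r_m})}\le C\|\nabla z\|_{\underline{L}^{2+\delta}(B_{2r_m})}^{1+\beta}\le CZ^{1+\beta}$ (using the doubling estimate \eref{diffdoubling}), and this bound is \emph{independent of $m$}. Consequently $\|\nabla(\widetilde w_{m+1}-\widetilde w_m)\|_{\underline{L}^2(B_{r_j})}$ is of order $Z^{1+\beta}$ at every scale and the sum over $m$ diverges, so $\{\nabla\widetilde w_m\}$ need not be Cauchy; even modulo $\A_1[u]$ the correction terms $\xi_m\in\A_1[u]$ extracted from Theorem~\ref{t.regularitylin} have slopes of size $Z^{1+\beta}$ uniformly in $m$ and do not sum. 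This is exactly where the analogy with Step~1 of the proof of \tref{C11estimate}(ii) breaks down: there the per-scale error was $Cr_m^{-\beta}$, which is geometrically summable, whereas here it is a constant. The paper's proof circumvents this by estimating the excess $\inf_{\psi\in\A_1[u]}\frac1r\|z-\psi\|_{\underline{L}^2(B_r)}$ and splitting the scales at $\hat r:=Z^{-\delta_0/\beta}$: for $r\ge\hat r$ the Liouville-type approximation of \lref{regularity2} already yields the superlinear bound (since $r^{-\beta}\le Z^{\delta_0}$ there), while for $r<\hat r$ one runs a Campanato iteration whose forcing term is $Cr^{-\beta}Z^{1+\delta_0}$, the crucial summable factor $r^{-\beta}$ being extracted from the surplus exponent in \eref{basiclinearizationerror} via the inequality $Z^{\delta_0}\le r^{-\beta}$ valid below $\hat r$. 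Some version of this scale-splitting and reabsorption is unavoidable and is entirely absent from your outline.

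Part (ii) suffers from the same summability problem ($\|\nabla(v_{m+1}-v_m)\|$ is of order $\|\nabla w\|^{1+\beta}$ at every scale), and in addition you give no mechanism for producing the right-hand side $(\|\nabla w\|)^{1+\delta}$ rather than $(\|\nabla(v-u)\|)^{1+\delta}$. The paper instead constructs $v\in\L_1$ directly from the Liouville theorems \tref{C11estimate}(i)--(ii) as the element associated to the affine function $\ell_u+\ell_w$, applies part (i) to the pair $(u,v)$, and closes with an absorption argument which requires the a priori smallness $\sup_t\|\nabla w\|_{\underline{L}^2(B_t)}\le\eta$ (the complementary case being trivial with $v=u$). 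Both the reduction of (ii) to (i) and this smallness/absorption step are essential and are not routine bookkeeping.
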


\begin{proof}
Let $\X$ be the maximum of minimal scales in Theorem~\ref{t.C11estimate}(i)--(ii), Theorem~\ref{t.regularitylin} and Lemma~\ref{l.regularity2}, as well as a constant~$H(\sigma,\mathsf{M},\data)<\infty$ to be fixed.  Clearly we have~$\X \leq \O_\sigma(C)$. 

\smallskip

\emph{Step 1}.  The proof of statement (i). Fix $u,v \in \mathcal{L}_1$  satisfying~\eqref{e.C11phitildephicond2}.  Denote $z = v - u$. Throughout the proof, for given $r \in [\X , \infty)$ we denote by $\psi_r$ a function realizing the infimum below:
\begin{equation*} 
\left\| z - \psi_r \right\|_{\underline{L}^2 \left( B_{r} \right)} = \inf_{\psi \in \A_1[u] }\left\| z - \psi \right\|_{\underline{L}^2 \left( B_{r} \right)}.
\end{equation*}

\smallskip 

We first collect some consequences of our preliminary results.  By~\eqref{e.diffdoubling},
\begin{equation} \label{e.Harnackz}
\sup_{t \in [ \X , \infty) } \left\| \nabla z  \right\|_{\underline{L}^2 \left( B_{t} \right)} \leq  C \inf_{t \in [ \X , \infty)} \left\| \nabla z  \right\|_{\underline{L}^2 \left( B_{t} \right)} .
\end{equation}
Moreover, by Theorem~\ref{t.regularitylin} and Lemma~\ref{l.regularity2}, we find $\psi_0 \in \A_1[u]$ such that, for $r\geq \X$,
\begin{equation*} 
\left\| z - \psi_0 \right\|_{\underline{L}^2 \left( B_{r} \right)} \leq C r^{-2\beta} \left\| \psi_0 \right\|_{\underline{L}^2 \left( B_{r} \right)},
\end{equation*}
where we denote $\beta := \frac12 \alpha (d-\sigma)$. Taking $H$ so large that $C H^{-\beta} \leq \frac12$, we also obtain by the previous two displays that, for all $r \geq \X$, 
\begin{equation}  \label{e.naivezvspsi}
\frac1r \left\| z - \psi_0 \right\|_{\underline{L}^2 \left( B_{r} \right)} \leq Cr^{-2\beta} \inf_{t \in [ \X , \infty)} \left\| \nabla z  \right\|_{\underline{L}^2 \left( B_{t} \right)}.
\end{equation}
Define 
\begin{equation*} 
\hat r := \inf_{t \in [ \X , \infty)} \left( \left\| \nabla z  \right\|_{\underline{L}^2 \left( B_{t} \right)} \right)^{ - \frac{\delta_0}{\beta}} ,
\end{equation*}
where $\delta_0$ is as in~\eqref{e.basiclinearizationerror} below, coming from the Meyers estimate for $z$. 
On the one hand, for $r \geq \X \vee \hat r$ we have by~\eqref{e.naivezvspsi} that 
\begin{equation} \label{e.rlargerthanhatr}
\inf_{\psi \in \A_1[u] } \frac1r \left\| z - \psi \right\|_{\underline{L}^2 \left( B_{r} \right)}  
\leq 
\frac1r  \left\| z - \psi_0 \right\|_{\underline{L}^2 \left( B_{r} \right)} 
\leq 
Cr^{-\beta} \inf_{t \in [ \X , \infty)} \left( \left\| \nabla z  \right\|_{\underline{L}^2 \left( B_{t} \right)} \right)^{1+\delta_0} . 
\end{equation}
On the other hand, if $\hat r> \X$, then for $r \in [\X, \hat r)$  we get
\begin{equation} \label{e.rsmallerthanhatr}
\inf_{t \in [ \X , \infty)} \left( \left\| \nabla z  \right\|_{\underline{L}^2 \left( B_{t} \right)} \right)^{\delta_0} 
 =   
\hat{r}^{\, -\beta }  
\leq r^{-\beta }  .
\end{equation}
We next focus on the case $\hat r > \X$ in more detail. Observe that $z$ solves 
\begin{equation} \label{e.zeginreg}
-\nabla \cdot \left( D_p^2 L( \nabla u,\cdot) \nabla z  \right) = \nabla \cdot \left( \int_{0}^1 \left( D_p^2 L( \nabla u  {+} t\nabla z,\cdot)   {-} D_p^2 L( \nabla u,\cdot)   \right)\,dt  \, \nabla z \right).
\end{equation}
Letting $w_r$ solve, for $r \in [\X , \hat r)$, 
\begin{equation*}
\left\{ \begin{aligned}
& -\nabla \cdot \left( D_p^2 L( \nabla u,\cdot) \nabla w_r  \right)= 0 
& \mbox{in} & \ B_{r/2}, \\
& w_r = z - \psi_r  & \mbox{on} & \ \partial B_{r/2},
\end{aligned} \right.
\end{equation*}
we have that 
\begin{equation*} 
\left\| \nabla  z {-} \nabla \psi_r    {-} \nabla  w_r \right\|_{\underline{L}^2 \left( B_{r/2} \right)} 
\leq C 
\left\| \int_{0}^1 \left( D_p^2 L( \nabla u  {+} t\nabla z,\cdot)   {-} D_p^2 L( \nabla u,\cdot)   \right)\,dt \nabla z  \right\|_{\underline{L}^2 \left( B_{r/2} \right)} . 
\end{equation*}
It follows by the Meyers estimate and regularity of $p\mapsto D_p^2L(p,\cdot)$ that there exist constants $C(\data)<\infty$ and $\delta_0(\data) \in\left( 0, \frac12 \right]$ such that, for all $\delta \in [0,2 \delta_0]$, 
\begin{equation}  \label{e.basiclinearizationerror}
\left\| \int_{0}^1 \left( D_p^2 L( \nabla u {+} t\nabla z,\cdot)   {-} D_p^2 L( \nabla u,\cdot)   \right)\,dt \nabla z  \right\|_{\underline{L}^2 \left( B_{r/2} \right)} 
\leq  
C \left(  \left\| \nabla z \right\|_{\underline{L}^2 \left( B_{r} \right)}\right)^{1{+} \delta} .
\end{equation}
Since we assume that $r \in [\X,\hat r)$, we obtain by~\eqref{e.rsmallerthanhatr},~\eqref{e.Harnackz}, and Poincar\'e's inequality that
\begin{equation*} 
\left\|  z-  \psi_r  - w_r \right\|_{\underline{L}^2 \left( B_{r/2} \right)}  
\leq 
C r^{1-\beta} \inf_{t \in [ \X , \infty)} \left( \left\| \nabla z  \right\|_{\underline{L}^2 \left( B_{t} \right)} \right)^{1+\delta_0} .
\end{equation*}
Furthermore, by Theorem~\ref{t.regularitylin}, there is $ \tilde \psi \in \A_1[u]$ such that 
\begin{equation*} 
\left\| \tilde \psi   -  w_r \right\|_{\underline{L}^2 \left( B_{\theta r} \right)} \leq C \theta^2
\left\| w_r \right\|_{\underline{L}^2 \left( B_{r} \right)} .
\end{equation*}
We then obtain, by the triangle inequality, that 
\begin{equation*} 
\left\| z  - \tilde \psi  - \psi_r  \right\|_{\underline{L}^2 \left( B_{\theta r} \right)}
\leq C \theta^2 \left\|  z   - \psi_r \right\|_{\underline{L}^2 \left( B_{ r} \right)} 
+ C \theta^{-\frac d2} \left\| z  -   \psi_r  -  w_r\right\|_{\underline{L}^2 \left( B_{r/2} \right)} ,
\end{equation*}
so that for $r \geq \theta^{-1}\X$,  by choosing $\theta C = \frac12$, we arrive at
\begin{equation*} 
\frac{1}{\theta r} \left\| z - \psi_{\theta r} \right\|_{\underline{L}^2 \left( B_{\theta r} \right)}
\leq \frac12
\frac{1}{r} \left\| z - \psi_r \right\|_{\underline{L}^2 \left( B_{r} \right)} + C r^{-\beta}  \inf_{t \in [ \X , \infty)} \left( \left\| \nabla z  \right\|_{\underline{L}^2 \left( B_{t} \right)} \right)^{1+\delta_0}. 
\end{equation*}
An iteration yields, for every $r \in [\X,\hat r]$, that 
\begin{equation*} 
\frac1r \left\|  z - \psi_r \right\|_{\underline{L}^2 \left( B_{r} \right)} \leq C 
 \frac1{\hat r} \left\|  z - \psi_{\hat r} \right\|_{\underline{L}^2 \left( B_{\hat r} \right)}
+ Cr^{-\beta} \inf_{t \in [ \X , \infty)} \left( \left\| \nabla z  \right\|_{\underline{L}^2 \left( B_{t} \right)} \right)^{1+\delta_0} ,
\end{equation*}
and the first term can be bounded by~\eqref{e.rlargerthanhatr}. In conclusion, we have proved that, for all $r \geq \X$, 
\begin{equation*} 
 \frac1r \left\|  z - \psi_r \right\|_{\underline{L}^2 \left( B_{r} \right)} \leq Cr^{-\beta}  \inf_{t \in [ \X , \infty)} \left( \left\| \nabla z  \right\|_{\underline{L}^2 \left( B_{t} \right)} \right)^{1+\delta_0} .
\end{equation*}
Furthermore, we get by the above display and the fact that $\psi_{2^j \X},\psi_{2^{j+1} \X} \in \A_1[u]$ that, for all $j \in \N$ and $r \in [\X, 2^j \X]$, 
\begin{align} \notag 
\frac1r \left\| \psi_{2^j \X}- \psi_{2^{j+1}\X} \right\|_{\underline{L}^2 \left( B_{r} \right)} 
& \leq 
C \frac1{2^j \X} \left\| \psi_{2^j \X}- \psi_{2^{j+1}\X} \right\|_{\underline{L}^2 \left( B_{2^{j} \X} \right)} 
\\ \notag &
\leq 
C 2^{- j \beta}    \inf_{t \in [ \X , \infty)} \left( \left\| \nabla z  \right\|_{\underline{L}^2 \left( B_{t} \right)} \right)^{1+\delta_0} ,
\end{align}
which leads easily to
\begin{equation*} 
\sup_{s \in [\X,\infty)} \frac1r \left\| \psi_{\X}- \psi_{s} \right\|_{\underline{L}^2 \left( B_{r} \right)} \leq C \inf_{t \in [ \X , \infty)} \left( \left\| \nabla z  \right\|_{\underline{L}^2 \left( B_{t} \right)} \right)^{1+\delta_0} .
\end{equation*}
Therefore, we obtain, with $w = \psi_\X \in \A_1[u]$, that, for all $r \geq \X$, 
\begin{equation*} 
 \frac1r \left\|  z - w \right\|_{\underline{L}^2 \left( B_{r} \right)} \leq C \inf_{t \in [ \X , \infty)} \left( \left\| \nabla z  \right\|_{\underline{L}^2 \left( B_{t} \right)} \right)^{1+\delta_0} .
\end{equation*}
Finally, by the equations of $z$ and $w$, the Meyers and Caccioppoli estimates and~\eqref{e.Harnackz}, we obtain, for every $r \geq \X$, 
\begin{equation*} 
\left\| \nabla z - \nabla w \right\|_{\underline{L}^2 \left( B_{r} \right)} \leq \frac{C}{r} \left\| z - w \right\|_{\underline{L}^2 \left( B_{2r} \right)}
+ 
C \inf_{t \in [ \X , \infty)} \left( \left\| \nabla z  \right\|_{\underline{L}^2 \left( B_{t} \right)} \right)^{1+\delta_0}.
\end{equation*}
Combining previous two displays yields~\eqref{e.tildephivsphi2}.

\smallskip

\emph{Step 2}. The proof of statement~(ii). 
Fix $u \in \mathcal{L}_1$ satisfying~\eqref{e.C11phitildephicond} and $w \in \A_1[u]$.  Observe first that by Theorem~\ref{t.regularitylin} we have that 
\begin{equation} \label{e.Harnack2}
\sup_{t \in [ \X , \infty) } \left\| \nabla w  \right\|_{\underline{L}^2 \left( B_{t} \right)} \leq  C \inf_{t \in [ \X , \infty)} \left\| \nabla w  \right\|_{\underline{L}^2 \left( B_{t} \right)} .
\end{equation}
Given small $\eta \in  (0,1)$ to be fixed shortly, we may assume that 
\begin{equation}  \label{e.wsmallass000}
 \sup_{t \in [ \X , \infty)} \left\| \nabla w  \right\|_{\underline{L}^2 \left( B_{t} \right)} \leq \eta. 
\end{equation}
 Indeed, otherwise 
 \begin{equation*} 
\inf_{t \in [ \X , \infty)} \left\| \nabla w  \right\|_{\underline{L}^2 \left( B_{t} \right)} \geq \frac{\eta}{C}, 
\end{equation*}
and we may simply take $v = u$ in this case to obtain~\eqref{e.tildephivsphi1} trivially. We henceforth assume~\eqref{e.wsmallass000}. 

\smallskip

By Theorem~\ref{t.C11estimate}(i)  and Theorem~\ref{t.regularitylin}, we find that there exist affine functions~$\ell_w$ and~$\ell_u$ such that, for every~$r \geq \X$,  
\begin{equation*} 
\left\| w - \ell_w  \right\|_{\underline{L}^2 \left( B_{r} \right)} \leq Cr^{-\alpha(d-\sigma)} \left\| w   \right\|_{\underline{L}^2 \left( B_{r} \right)} 
\quad \mbox{and}  \quad 
\left\| u - \ell_u  \right\|_{\underline{L}^2 \left( B_{r} \right)} \leq Cr^{1-\alpha(d-\sigma)} .
\end{equation*}
Observe that we have
\begin{equation*} 
|\nabla \ell_w| \leq 
\frac Cr \left\| \ell_w -(\ell_w)   \right\|_{\underline{L}^2 \left( B_{r} \right)} 
\leq C r^{-\alpha(d-\sigma)-1} \left\| \ell_w   \right\|_{\underline{L}^2 \left( B_{r} \right)} + C \left\| \nabla w  \right\|_{\underline{L}^2 \left( B_{r} \right)},
\end{equation*}
so thus by sending $r \to \infty$ yields by~\eqref{e.wsmallass000} that
\begin{equation*} 
|\nabla \ell_w| \leq  C \eta. 
\end{equation*}
Theorem~\ref{t.C11estimate}(ii) then yields~$v \in \mathcal{L}_1$ such that
\begin{equation*} 
\left\| v - \ell_u - \ell_w  \right\|_{\underline{L}^2 \left( B_{r} \right)} \leq Cr^{1-\alpha(d-\sigma)}  .
\end{equation*}
By the Caccioppoli inequality,
\begin{align} \notag 
\lefteqn{ \sup_{t \in [\X,\infty)} \left\| \nabla u  - \nabla v \right\|_{\underline{L}^2(B_{t})} } \quad &
\\ \notag &
 \leq  \limsup_{t \to \infty} \frac{C}{t } \left\| (u-\ell_u)    - (v  - \ell_u - \ell_w) \right\|_{\underline{L}^2(B_{2t})} + \limsup_{t \to \infty} \frac{C}{t} \left\| \ell_w \right\|_{\underline{L}^2(B_{t})} 
\leq C \eta .
\end{align}
Step 1 yields the existence of $\tilde w \in \A_1[u]$ such that
\begin{equation*} 
\sup_{t \in [ \X,\infty) } \left\| \nabla v - \nabla u - \nabla \tilde w  \right\|_{\underline{L}^2 \left( B_{t} \right)}  \leq C \left( \inf_{t \in [ \X,\infty) }  \left\| \nabla u  - \nabla v \right\|_{\underline{L}^2(B_t)} \right)^{1+\delta}.
\end{equation*}
By Caccioppoli inequality, similarly as in Step 1, we get, for $r,t \in [\X,\infty)$, 
\begin{equation*} 
 \left\| \nabla v - \nabla u - \nabla w  \right\|_{\underline{L}^2 \left( B_{t} \right)}
 \leq 
C t^{-\alpha(d-\sigma)} + C\left(  \left\| \nabla u  - \nabla v \right\|_{\underline{L}^2(B_r)} \right)^{1+\delta} .
\end{equation*}
It follows by the Lipschitz estimate for $w - \tilde w  \in \A_1[u]$ that
\begin{align} \notag 
\left\| \nabla w - \nabla \tilde w \right\|_{\underline{L}^2 \left( B_{r} \right)} 
& 
\leq 
C \limsup_{t \to \infty} \left\| \nabla w - \nabla \tilde w \right\|_{\underline{L}^2 \left( B_{t} \right)}
\\ \notag &
\leq 
C \sup_{t \in [ \X,\infty) } \left\| \nabla v - \nabla u - \nabla \tilde w  \right\|_{\underline{L}^2 \left( B_{t} \right)} +  C \left(  \left\| \nabla u  - \nabla v \right\|_{\underline{L}^2(B_r)} \right)^{1+\delta} 
\\ \notag &
\leq C\left( \left\| \nabla u  - \nabla v \right\|_{\underline{L}^2(B_r)} \right)^{1+\delta}.
\end{align}
Therefore, we have that 
\begin{equation*} 
 \left\| \nabla v - \nabla u - \nabla w  \right\|_{\underline{L}^2 \left( B_{r} \right)}  \leq C \left(  \left\| \nabla u  - \nabla v \right\|_{\underline{L}^2(B_r)} \right)^{1+\delta}.
\end{equation*}
Since $ \left\| \nabla u  - \nabla v \right\|_{\underline{L}^2(B_r)} \leq C \eta$, we may choose $(C\eta)^\delta = \frac12$, and conclude that 
\begin{equation*} 
\left\| \nabla u  - \nabla v \right\|_{\underline{L}^2(B_r)} \leq 2 \left\|  \nabla w \right\|_{\underline{L}^2(B_r)} ,
\end{equation*}
giving~\eqref{e.tildephivsphi1} by the last three displays, and concluding the proof.  
\end{proof}


\subsection{The large-scale $C^{1,1}$-type excess decay estimate}
\label{ss.Ck1conc}
We have now assembled the ingredients necessary to complete the proof of Theorem~\ref{t.C11estimate}. 

\begin{proof}
[{Proof of Theorem~\ref{t.C11estimate} (iii)}]
We fix $\sigma \in (0,d)$ and $\mathsf{M} \in [1,\infty)$. For these parameters, let $\X$ be the maximum of random minimal scales appearing in Theorem~\ref{t.regularity.differences}, Theorem~\ref{t.AS.regularity}, Theorem~\ref{t.regularitylin} and Lemma~\ref{l.tildephivsphi1}. Let $R \geq \X$. Suppose that $u \in \L(B_R)$ satisfies $\frac1R \left\| u - (u)_{B_R} \right\|_{\underline{L}^2 \left( B_{R} \right)}  \leq \mathsf{M}$.

\smallskip

The proof proceeds in several steps. In the first four steps we show an intermediate result, that is, we show that for all $\alpha \in (0,1)$ there exists a constant $C(\sigma, \alpha,\mathsf{M},\data)<\infty$ and $\phi \in \mathcal{L}_1$ such that 
\begin{equation}  \label{e.C11-}
\left\| \nabla u - \nabla \phi \right\|_{\underline{L}^2 \left( B_{r} \right)} \leq C \left( \frac rR\right)^{\alpha}  \frac 1R \left\| u - (u)_{B_R} \right\|_{\underline{L}^2 \left( B_{R} \right)} .
\end{equation}
In the last two steps we demonstrate how this can be improved to~\eqref{e.C11}.

\smallskip
We denote, in short,
\begin{equation}  \label{e.MRC11}
\mathsf{M}_R := \inf_{\phi \in \mathcal{L}_1} \frac1R \left\| u - \phi - (u - \phi)_{B_R} \right\|_{\underline{L}^2(B_{R})}, 
\end{equation}
which obviously satisfies $\mathsf{M}_R \leq C$ by the normalization 
\begin{equation}  \label{e.normuC11}
\frac1R \left\| u - (u)_{B_R} \right\|_{\underline{L}^2 \left( B_{R} \right)}  \leq \mathsf{M}.
\end{equation}

\smallskip

\emph{Step 1.} 
We first show that, for all $\eta \in (0,1]$, there exist constants $\gamma(\eta,\mathsf{M},\sigma,\data) \in \left(0,\frac12\right]$ and 
$H(\eta,\mathsf{M},\sigma,\data)<\infty$ such that if $\gamma R \geq \X \vee H$, then, for $r \in [\X \vee H, \gamma R]$, 
\begin{equation} \label{e.uminusphialwayssmall}
\inf_{ \phi \in \mathcal{L}_1} \left\| \nabla u - \nabla \phi \right\|_{\underline{L}^2 \left( B_{r} \right)} \leq \eta. 
\end{equation}
The parameter $\eta$ will be fixed in the end of Step 3. 
Set $r_0 := \gamma R$ and assume that $r_0 \geq \X \vee H$. To prove~\eqref{e.uminusphialwayssmall}, we first observe by Theorem~\ref{t.AS.regularity} and~\eqref{e.normuC11} that there exists $\ell \in \mathcal{P}_1$ such that $|\nabla \ell| \leq C$  and
\begin{equation*} 
\left\| u - \ell  \right\|_{\underline{L}^2(B_{2 r_0})} 
  \leq  Cr_0 \left(  \gamma^{\alpha} +  \gamma^{-1-\frac d2} H^{-\beta(d-\sigma)} \right).
\end{equation*}
By statement (ii) of Theorem~\ref{t.C11estimate}, there exists $\phi \in \mathcal{L}_1$ such that 
\begin{equation*} 
 \left\| \phi - \ell  \right\|_{\underline{L}^2(B_{r_0})} \leq C r_0^{1 -\beta(d-\sigma)} .
\end{equation*}
Combining above estimates with the Caccioppoli estimate implies that there exists $\phi \in \mathcal{L}_1$ such that 
\begin{equation*} 
 \left\| \nabla u - \nabla \phi  \right\|_{\underline{L}^2(B_{r_0})} 
  \leq  C\left( \gamma^{\alpha} +  \gamma^{-1-\frac d2} H^{-\beta(d-\sigma)} \right).
\end{equation*}
By the Lipschitz estimate in Theorem~\ref{t.regularity.differences} we then obtain that, for all $r \in [\X, r_0 ]$, 
\begin{equation*} 
 \left\| \nabla u - \nabla \phi  \right\|_{\underline{L}^2(B_{r})} 
  \leq  C\left( \gamma^{\alpha} +  \gamma^{-1-\frac d2} H^{-\beta(d-\sigma)} \right).
\end{equation*}
We then choose $\gamma(\eta,C,\alpha) \in \left(0,\frac12\right]$ small and then $H(\gamma,\eta,C,\sigma,\beta)<\infty$ large so that 
\begin{equation*} 
C\left( \gamma^{\alpha} +  \gamma^{-1-\frac d2} H^{-\beta(d-\sigma)} \right) \leq  \eta.
\end{equation*}
We therefore have that~\eqref{e.uminusphialwayssmall} is valid.

\smallskip

\emph{Step 2.} 
Statement of the induction assumption.  Fix $\alpha \in (0,1)$. Let $r_0 = \gamma R$ and assume that $r_0 \geq \X \wedge H$, where $\gamma$ and $H$ are as in Step 1. Let $\theta  \in \left(0,\frac12\right]$ to be a constant to be fixed and set, for $j \in \N$, $r_j := \theta^j r_0$. Take $n \in \N_0$ be such that $\X \vee H \in (r_{n+1},r_n]$.  We assume inductively that for some $m \in \{0,\ldots,n\}$ there exists  such that, 
for all $j \in \{0,\ldots,m\}$, 
\begin{equation} \label{e.C11minusindass1}
 \inf_{\phi \in \mathcal{L}_1 } \left\| \nabla u - \nabla \phi   \right\|_{\underline{L}^2(B_{r_j})}   \leq \theta^{\alpha j}  
 \inf_{\phi \in \mathcal{L}_1 } \left\| \nabla u - \nabla \phi   \right\|_{\underline{L}^2(B_{r_0})}   .
\end{equation}
This is trivially true for $m=0$, which serves as our initial step for induction.  We denote by $\phi_j$ a member of $\mathcal{L}_1$ realizing the infimum in $ \inf_{\phi \in \mathcal{L}_1 } \left\| \nabla u - \nabla \phi   \right\|_{\underline{L}^2(B_{r_j})}$.

\smallskip

\emph{Step 3.} We show the induction step, that is, for $\alpha \in (0,1)$, there exists $\theta(\alpha,\data) \in (0,1)$ and $\phi_{m+1} \in \mathcal{L}_1$ such that 
\begin{equation} \label{e.C11minusdecaypre2}
\left\| \nabla u - \nabla \phi_{m+1} \right\|_{\underline{L}^2(B_{r_{m}})}  \leq  \theta^{\alpha}   \left\| \nabla u - \nabla \phi_{m} \right\|_{\underline{L}^2(B_{r_{m+1}})} ,
\end{equation}
which obviously proves the induction step. Let $h$ solve 
\begin{equation*}
\left\{ \begin{aligned}
& -\nabla \cdot \left( D_p^2 L( \nabla \phi_m,\cdot) \nabla h  \right)= 0 
& \mbox{in} & \ B_{r_m/2}, \\
& h  = u - \phi_m  & \mbox{on} & \ \partial B_{r_m/2},
\end{aligned} \right.
\end{equation*}
Similarly to the proof of Lemma~\ref{l.tildephivsphi1}, we have that 
\begin{equation*}
\left\| 
\nabla u - \nabla \phi_{m} + \nabla h 
 \right\|_{\underline{L}^2(B_{r_{m}/2})} 
\leq 
C   \left( \left\| \nabla u - \nabla \phi_{m} \right\|_{\underline{L}^{2}(B_{r_{m}})} \right)^{1+\delta}.
\end{equation*}
Now~\eqref{e.uminusphialwayssmall} implies that  
\begin{equation} \label{e.homogenization.estimates.cor.applied}
 \left\| 
\nabla u - \nabla \phi_{m} + \nabla h 
 \right\|_{\underline{L}^2(B_{r_{m}/2})}   \leq C \eta^\delta  \left\| \nabla u - \nabla \phi_{m} \right\|_{\underline{L}^2(B_{r_{m}})} .
\end{equation}
Furthermore, by Theorem~\ref{t.regularitylin}, Caccioppoli inequality and the triangle inequality, we find $\psi \in \mathcal{A}_1\left[ \phi_{m} \right]$ such that 
\begin{equation} \label{e.C11w2vspsi}
\left\| \nabla h - \nabla \psi  \right\|_{\underline{L}^2(B_{r_{m+1}})}
 \leq 
C \theta  \left\| \nabla h   \right\|_{\underline{L}^2\left(B_{r_{m}/2}\right)}.
\end{equation}
By~\eqref{e.homogenization.estimates.cor.applied} and the triangle inequality, we get
\begin{align} \label{e.C11w2}
\left\| \nabla h   \right\|_{\underline{L}^2\left(B_{r_{m}/2}\right)}
 & 
\leq   \left( 1+ C  \eta^{\delta}  \right)   \left\| \nabla u - \nabla \phi_{m}  \right\|_{\underline{L}^2\left(B_{r_{m}}\right)},
\end{align}
and hence
\begin{equation} \label{e.C11w2vspsi3}
\left\| \nabla h - \nabla \psi \right\|_{\underline{L}^2(B_{r_{m+1}})} 
 \leq 
C \theta   \left\| \nabla u - \nabla \phi_{m}  \right\|_{\underline{L}^2\left(B_{r_{m}}\right)}.
\end{equation}
Now, applying Lemma~\ref{l.tildephivsphi1}, we find~$\phi_{m+1} \in \mathcal{L}_1$ such that 
\begin{equation*} 
\left\| \nabla \phi_m - \nabla \phi_{m+1} - \nabla \psi \right\|_{\underline{L}^2(B_{r_{m+1}})} \leq C \left( \left\|  \nabla \psi \right\|_{\underline{L}^2(B_{r_{m+1}})}\right)^{1+\delta} .
\end{equation*}
By the Lipschitz estimate,~\eqref{e.C11w2vspsi} and~\eqref{e.C11w2}, we have that 
\begin{equation*} 
\left\|  \nabla \psi \right\|_{\underline{L}^2(B_{r_{m+1}})} \leq C \left\|  \nabla \psi \right\|_{\underline{L}^2(B_{r_{m}/2})} \leq
C \left\| \nabla u - \nabla \phi_{m}  \right\|_{\underline{L}^2\left(B_{r_{m}}\right)}.
\end{equation*}
Thus we get by~\eqref{e.uminusphialwayssmall} that
\begin{equation*}  \label{e.C11w2vspsi4}
\left\| \nabla \phi_m - \nabla \phi_{m+1} - \nabla \psi \right\|_{\underline{L}^2(B_{r_{m+1}})} \leq C \eta^\delta  \left\| \nabla u - \nabla \phi_{m}  \right\|_{\underline{L}^2\left(B_{r_{m}}\right)}.
\end{equation*}
By the triangle inequality we have that
\begin{align} \notag 
 \left\| \nabla u - \nabla \phi_{m+1} \right\|_{\underline{L}^2(B_{r_{m+1}})}
& 
\leq \left\|  \nabla h -  \nabla \psi \right\|_{\underline{L}^2(B_{r_{m+1}})} 
+ \theta^{-\frac d2} \left\|  \nabla u -  \nabla \phi_{m} +  \nabla h  \right\|_{\underline{L}^2(B_{ r_m / 2 } )}    
\\  \notag & \qquad
 + \left\|  \nabla \phi_m -  \nabla \phi_{m+1} -  \nabla \psi \right\|_{\underline{L}^2(B_{r_{m+1}})} .
\end{align}
The terms on the right can be estimated using~\eqref{e.C11w2vspsi3},~\eqref{e.homogenization.estimates.cor.applied} and~\eqref{e.C11w2vspsi4}, respectively,  and we arrive at
\begin{equation*} 
 \left\| \nabla u - \nabla \phi_{m+1} \right\|_{\underline{L}^2(B_{r_{m+1}})} \leq
C \left( \theta +  \theta^{-\frac d2} \eta^\delta \right) \left\| \nabla u - \nabla \phi_{m}  \right\|_{\underline{L}^2\left(B_{r_{m}}\right)}. 
\end{equation*}
We then choose first $\theta$ so that $C \theta^{1-\alpha} = \frac12$, and then $\eta$ so  that $C \theta^{-\alpha-\frac d2} \eta^\delta =\frac12$. With these choices, we get~\eqref{e.C11minusdecaypre2} from the above display.

\smallskip

\emph{Step 4.} Proof of $C^{1,1-}$. We show that for all $\alpha \in (0,1)$ there exists a constant $C( \alpha,\sigma,\mathsf{M},\data)<\infty$ and $\phi \in \mathcal{L}_1$ such that 
\begin{equation}  \label{e.C11pre}
\left\| \nabla u - \nabla \phi \right\|_{\underline{L}^2 \left( B_{r} \right)} \leq C \left( \frac rR\right)^{\alpha}  \mathsf{M}_R ,
\end{equation}
where $\mathsf{M}_R$ is defined in~\eqref{e.MRC11}. In the last two steps we demonstrate how this can be improved to~\eqref{e.C11}.

\smallskip

We show that the corrector can be chosen uniformly in $m$, that is, $\phi_m$ can be replaced with $\phi \in \mathcal{L}_1$ for all $m$.  By~\eqref{e.diffdoubling}, we have that, for $r \in [\X,\infty)$, 
\begin{equation*} 
\left\| \nabla \phi_{m+1} - \nabla \phi_m \right\|_{\underline{L}^2\left(B_r\right)} 
\leq  C \left\|\nabla \phi_{m+1} - \nabla \phi_m  \right\|_{\underline{L}^2\left(B_{r_{m+1}}\right)} .
\end{equation*}
The triangle inequality implies that 
\begin{equation*} 
\left\|\nabla \phi_{m+1} - \nabla \phi_m  \right\|_{\underline{L}^2\left(B_{r_{m+1}}\right)}
\leq 
\left\|\nabla u - \nabla \phi_{m+1}  \right\|_{\underline{L}^2\left(B_{r_{m+1}}\right)} + \theta^{-\frac d2} \left\| \nabla u - \nabla \phi_m  \right\|_{\underline{L}^2\left(B_{r_m}\right)}  . 
\end{equation*}
It follows, by the triangle inequality and~\eqref{e.C11minusindass1}, that for $r \in [\X,\infty)$ we get
\begin{equation*} 
\left\|\nabla \phi_{m+1} - \nabla \phi_m  \right\|_{\underline{L}^2\left(B_r \right)} \leq C \left( \frac {r_m}{r_0} \right)^{\alpha} \inf_{\psi \in \mathcal{L}_1}\left\| \nabla u - \nabla \psi \right\|_{\underline{L}^2(B_{r_0})}.   
\end{equation*}
Summation then yields, for $r \in [r_{m+1},r_m]$, 
\begin{equation*} 
\left\|\nabla \phi_{m} - \nabla \phi_{n+1}  \right\|_{\underline{L}^2\left(B_{r}\right)} \leq C \left( \frac {r}{r_0} \right)^{\alpha} \inf_{\psi \in \mathcal{L}_1}\left\| \nabla u - \nabla \psi \right\|_{\underline{L}^2(B_{r_0})}.    
\end{equation*}
Therefore, by the triangle inequality, we get
\begin{equation} \label{e.C11decay3}
\left\| \nabla u - \nabla \phi_{n+1} \right\|_{\underline{L}^2(B_{r})}  
\leq C \left( \frac{r}{r_0} \right)^\alpha \inf_{\psi \in \mathcal{L}_1}\left\| \nabla u - \nabla \psi \right\|_{\underline{L}^2(B_{r_0})},
\end{equation}
and hence we may take $\phi = \phi_{n+1}$ to obtain~\eqref{e.C11pre} by applying Caccioppoli inequality and giving up a volume factor.

\smallskip

\emph{Step 5.} In this step we show that there exist constants  $\beta(\data)>0$, $C(\sigma,\mathsf{M},\data)<\infty$ 
and  $ \psi \in \A_2[\phi] $ such that 
\begin{equation}  \label{e.C2}
\left\|  \nabla u- \nabla \phi - \nabla \psi   \right\|_{\underline{L}^2 \left( B_{r} \right)} \leq C  \left( \frac{r}{R}  \right)^{1+\beta} \mathsf{M}_R. 
\end{equation}
We immediately choose $\alpha = \frac{2+\delta}{2+2\delta}$ in~\eqref{e.C11pre} and $\beta = (1+\delta) \alpha -1 = \frac{\delta}{1+\delta}$, where $\delta$ is as in Lemma~\ref{l.tildephivsphi1}. We will show that~\eqref{e.C2} is valid with this $\beta$. 
Denote $z = u-\phi$ and let $\psi_{m}  \in \mathcal{A}_2[\phi]$ be such that 
\begin{equation*} 
\left\| \nabla z  - \nabla \psi_{m} \right\|_{\underline{L}^2 \left( B_{r_m} \right)} = \inf_{\psi \in \mathcal{A}_2[\phi]}\left\| \nabla z  - \nabla \psi \right\|_{\underline{L}^2 \left( B_{r_m} \right)}. 
\end{equation*}
By~\eqref{e.C11pre}, 
\begin{equation} \label{e.C11psiobvious}
\left\| \nabla \psi_{m} \right\|_{\underline{L}^2 \left( B_{r_m} \right)}  \leq 
C \left( \frac {r_m}R \right)^{\alpha}   \mathsf{M}_R .
\end{equation}
Let $h \in H^1(B_{r_m/2})$ solve 
\begin{equation*}
\left\{ \begin{aligned}
& -\nabla \cdot \left( D_p^2 L( \nabla \phi,\cdot) \nabla h  \right)= 0 
& \mbox{in} & \ B_{r_m/2}, \\
& h  = z - \psi_m  & \mbox{on} & \ \partial B_{r_m/2},
\end{aligned} \right.
\end{equation*}
so that, as in the proof of Lemma~\ref{l.tildephivsphi1},
\begin{equation*} 
\left\|  \nabla z      - \nabla \psi_m  - \nabla h  \right\|_{\underline{L}^2 \left( B_{r_{m}/2} \right)}  \leq C  \left( \left\| \nabla  z \right\|_{\underline{L}^2 \left( B_{r_{m}} \right)} \right)^{1+\delta} .
\end{equation*}
Using~\eqref{e.C11pre} and~\eqref{e.uminusphialwayssmall}, recalling that $\mathsf{M}_R \leq C$, we get
\begin{equation*} 
\left\| \nabla z -  \nabla \psi_{m} -  \nabla h \right\|_{\underline{L}^2 \left( B_{r_{m+1}} \right)} \leq C \left( \frac{r_{m+1}}{R}  \right)^{1+\beta}   \mathsf{M}_R . 
\end{equation*}
On the other hand, by Theorem~\ref{t.regularitylin}, there exists $\tilde \psi_m \in \mathcal{A}_2[\phi]$ such that 
\begin{equation*} 
\left\|  \nabla h - \nabla \tilde \psi_{m+1}   \right\|_{\underline{L}^2 \left( B_{r_{m+1}} \right)} \leq C \theta^2 
\left\|   \nabla h   \right\|_{\underline{L}^2 \left( B_{r_{m}/2} \right)}  .
\end{equation*}
Therefore, by the triangle inequality and the previous two displays, by choosing $\theta$ small appropriately, we get
\begin{align} \notag 
\lefteqn{\inf_{\psi \in \mathcal{A}_2[\phi]}\left\| \nabla z  - \nabla \psi \right\|_{\underline{L}^2 \left( B_{r_{m+1}} \right)}} \qquad &
\\ \notag & \leq 
\left\|  \nabla z - \nabla (\psi_{m} + \tilde \psi_{m+1}  )   \right\|_{\underline{L}^2 \left( B_{r_{m+1}} \right)}
\\ \notag & \leq 
 \left\|  \nabla h - \nabla \tilde \psi_{m+1}   \right\|_{\underline{L}^2 \left( B_{r_{m+1}} \right)} + \left\|  \nabla z -  \nabla \psi_{m} -  \nabla h   \right\|_{\underline{L}^2 \left( B_{r_{m+1}} \right)}
 \\ \notag & \leq 
 C \theta^2  \left\| \nabla h \right\|_{\underline{L}^2 \left( B_{r_{m}/2} \right)} + 2 \theta^{-\frac d2} \left\|  \nabla z -  \nabla \psi_{m} -  \nabla h   \right\|_{\underline{L}^2 \left( B_{r_{m}/2} \right)}
  \\ \notag & \leq 
  \theta^{\alpha(1+\delta)}  \left\| \nabla  z - \nabla  \psi_m  \right\|_{\underline{L}^2 \left( B_{r_{m}} \right)} + C \left( \frac{r_{m+1}}{R}  \right)^{1+\beta}   \mathsf{M}_R .
\end{align}
An iteration then gives
\begin{equation*} 
\left\|  \nabla z - \nabla \psi_m   \right\|_{\underline{L}^2 \left( B_{r_{m}} \right)} 
\leq 
C \left( \frac{r_{m}}{R}  \right)^{1+\beta}  \mathsf{M}_R
.
\end{equation*}
By the triangle inequality, we obtain 
\begin{equation*} 
\left\|  \nabla \psi_{m+1} - \nabla \psi_m   \right\|_{\underline{L}^2 \left( B_{r_{m}} \right)} 
\leq 
C \left( \frac{r_{m}}{R}  \right)^{1+\beta }  \mathsf{M}_R
.
 \end{equation*}
Since $\psi_{m+1} -  \psi_m \in \mathcal{A}_2[\phi]$, we have that, for $r \in [r_m , R]$, 
\begin{equation*} 
\left\|  \nabla \psi_{m+1} - \nabla \psi_m   \right\|_{\underline{L}^2 \left( B_{r} \right)}  
\leq C \left(\frac r{r_m}\right) \left\|  \nabla \psi_{m+1} - \nabla \psi_m   \right\|_{\underline{L}^2 \left( B_{r_{m}} \right)} .
\end{equation*}
Combining, for $r \in [r_m , R]$, 
\begin{equation*} 
\left\|  \nabla \psi_{m+1} - \nabla \psi_m   \right\|_{\underline{L}^2 \left( B_{r} \right)} 
\leq 
C  \left(\frac {r_m}{r}\right)^{\beta}   
\left( \frac{r}{R}  \right)^{1+\beta} \mathsf{M}_R
. 
\end{equation*}
Thus, by summing over the scales, we obtain
\begin{equation*} 
\left\|  \nabla \psi_{m} - \nabla \psi_{n}   \right\|_{\underline{L}^2 \left( B_{r_m} \right)} \leq C 
\left( \frac{r_m}{R}  \right)^{1+\beta} \mathsf{M}_R
. 
\end{equation*}
The triangle inequality thus yields 
\begin{equation*} 
\left\|  \nabla z - \nabla \psi_n   \right\|_{\underline{L}^2 \left( B_{r_m} \right)} 
\leq 
C \left( \frac{r_m}{R}  \right)^{1+\beta} \mathsf{M}_R
,
\end{equation*}
from which~\eqref{e.C2} follows easily by taking $\psi = \psi_n$.

\smallskip

\emph{Step 6.} Conclusion. We show that there exist a constant $C(\sigma,\mathsf{M},\data)<\infty$ and~$\tilde \phi \in \mathcal{L}_1$ such that 
\begin{equation}  \label{e.C11here}
\left\| \nabla u -\nabla \tilde \phi \right\|_{\underline{L}^2 \left( B_{r} \right)}  \leq C \left(\frac rR \right) \mathsf{M}_R
. 
\end{equation}
Let $\psi$ be as in Step 5. By Theorem~\ref{t.regularitylin}, we find $p\in \overline{\A}_2[\phi]$  and $\tilde p := \frac12 \left( \nabla^2 p \right) x^{\otimes 2}$, correspondingly, such that 
\begin{equation*} 
\left\| \psi - p \right\|_{\underline{L}^2 \left( B_{r} \right)} \leq C r^{-\beta}\left\| p \right\|_{\underline{L}^2 \left( B_{r} \right)} 
\quad \mbox{and} \quad
\left\| \tilde \psi - \tilde p \right\|_{\underline{L}^2 \left( B_{r} \right)} \leq C r^{-\beta} \left\| \tilde p \right\|_{\underline{L}^2 \left( B_{r} \right)} 
.
\end{equation*}
Moreover, we have that 
\begin{equation*} 
 \left\|  \tilde p \right\|_{\underline{L}^2 \left( B_{R} \right)} 
    \leq C  \left\|   p \right\|_{\underline{L}^2 \left( B_{R} \right)} 
    \leq C R  \left\|  \nabla \psi \right\|_{\underline{L}^2 \left( B_{R} \right)} 
    \leq 
    C R\mathsf{M}_R    
    . 
\end{equation*}
We now obtain, for $r \in [ \X, R]$,  that
\begin{align} \label{e.C11psidoubling}
\left\| \nabla \tilde \psi \right\|_{\underline{L}^2(B_r)} 
\leq C \left(\frac rR \right) \left\| \nabla \tilde \psi \right\|_{\underline{L}^2(B_R)} \leq 
C \left(\frac rR \right) \mathsf{M}_R  
.
\end{align}
Furthermore, clearly $\psi - \tilde \psi  \in \A_1[\phi]$ and, by the above inequality,~\eqref{e.C2} and~\eqref{e.C11pre}, 
\begin{align} \notag 
\left\| \nabla (\psi - \tilde \psi)  \right\|_{\underline{L}^2 \left( B_{r} \right)} 
& 
\leq 
\left\| \nabla z - \nabla \psi  \right\|_{\underline{L}^2 \left( B_{r} \right)} + 
 \left\| \nabla z  \right\|_{\underline{L}^2 \left( B_{r} \right)} + 
\left\| \nabla \tilde \psi \right\|_{\underline{L}^2 \left( B_{r} \right)} 
\leq
C \left(\frac rR \right)^{\alpha} \mathsf{M}_R  .
\end{align}
Lemma~\ref{l.tildephivsphi1} then yields that there is $\tilde \phi \in \mathcal{L}_1$ such that 
\begin{equation*} 
\left\| \nabla \tilde \phi -\nabla \phi + \nabla (\psi - \tilde \psi) \right\|_{\underline{L}^2 \left( B_{r} \right)} 
\leq 
C  \left(\left\| \nabla (\psi - \tilde \psi )\right\|_{\underline{L}^2 \left( B_{r} \right)}  \right)^{1+\delta},
\end{equation*}
and thus, since $\mathsf{M}_R \leq C$,
\begin{equation}  \label{e.C11correctphioncemore}
\left\| \nabla \tilde \phi -\nabla \phi + \nabla (\psi - \tilde \psi) \right\|_{\underline{L}^2 \left( B_{r} \right)}  
\leq 
C \left(\frac rR \right)^{1+\beta} \mathsf{M}_R  
.
\end{equation}
By the triangle inequality we deduce that 
\begin{equation*} 
\left\| \nabla u -\nabla \tilde \phi \right\|_{\underline{L}^2 \left( B_{r} \right)} 
\leq 
\left\| \nabla (u  - \phi -\psi) \right\|_{\underline{L}^2 \left( B_{r} \right)} 
+ 
\left\| \nabla( \tilde \phi -(\phi + \psi - \tilde \psi)) \right\|_{\underline{L}^2 \left( B_{r} \right)} 
+ 
\left\| \nabla \tilde \psi \right\|_{\underline{L}^2 \left( B_{r} \right)}.
\end{equation*}
The terms on the right can be estimated by~\eqref{e.C2},~\eqref{e.C11correctphioncemore} and~\eqref{e.C11psidoubling}, respectively.
We thus obtain~\eqref{e.C11here} and the proof is now complete.  
\end{proof}

\begin{remark} 
\label{r.C2beta}
In the course of proving~\eqref{e.C11}, we also obtain an estimate which can be interpreted as large-scale~$C^{2,\beta}$-type estimate. Indeed, we showed that there exist $\beta(\data)$ and $C(\sigma,\mathsf{M},\data)<\infty$ such that the following holds. Let $R \geq \X$ and suppose that $u \in \L(B_R)$ satisfying~$\frac1R \left\| u - (u)_{B_R} \right\|_{\underline{L}^2 \left( B_{R} \right)}  \leq \mathsf{M}$. Then there exists $\phi \in \mathcal{L}_1$ and $\psi \in \A_2[\phi]$ such that, for $r \in [\X,R]$, 
\begin{equation}  \label{e.C2beta-remark}
\left\|  \nabla u - \nabla \phi -  \nabla \psi   \right\|_{\underline{L}^2 \left( B_{r} \right)} \leq C \left( \frac{r}{R}  \right)^{1+\beta} \inf_{\xi \in \mathcal{L}_1} \frac1R \left\| u - \xi - (u - \xi)_{B_R} \right\|_{\underline{L}^2(B_{R})}. 
\end{equation}
\end{remark}


\section{Improved regularity of the homogenized Lagrangian}
\label{s.Lreg}

In this section, we prove Theorem~\ref{t.Lreg}. In addition to the hypotheses stated in Section~\ref{ss.assump}, we suppose that there exists an exponent $\gamma \in (0,1]$ such that
\begin{equation}
\label{e.extra.assump}
\P \left[
\sup_{p\in\Rd} \left(  \left[ \frac{D_pL(p,\cdot)}{1+|p|} \right]_{C^{0,\gamma}(\Rd)} + \left[ D^2_pL(p,\cdot) \right]_{C^{0,\gamma}(\Rd)}  \right) \leq \mathsf{M}_0
\right] = 1. 
\end{equation}
In particular, unlike the rest of the paper, here we assume $L(p,\cdot)$ has some regularity on the unit scale. 
A condition like this is needed to  control the very small scales and allow us to convert the large-scale $C^{0,1}$ estimate for differences into pointwise estimates, which seems to be necessary in order to improve the scaling of the linearization error. 

\begin{proposition}[Pointwise gradient estimate for solutions]
\label{p.pointwise.solutions}
Assume~\eqref{e.extra.assump} holds for some $\gamma\in (0,1]$. Fix $\sigma\in (0,d)$ and $\mathsf{M} \in [1,\infty)$. Let $\X$ be the random variable in Theorem~\ref{t.AS.regularity}. Then there exists $\beta(\gamma,\data)\in\left(0,\tfrac12\right]$ such that, for every $R \geq 4$ and $u\in H^1(B_R)$ satisfying
\begin{equation*}
\left\{
\begin{aligned}
& -\nabla \cdot \left( D_pL(\nabla u,x) \right) = 0 & \mbox{in} & \ B_R, \\
& \left\| \nabla u \right\|_{\underline{L}^2(B_R)}
\leq \mathsf{M},
\end{aligned}
\right.
\end{equation*}
we have the estimate 
\begin{equation}
\left\| \nabla u \right\|_{C^{0,\beta}(B_1)}
\leq 
C \X^{\frac d2} R^{-1} \left\| u - \left( u \right)_{B_R} \right\|_{\underline{L}^2(B_R)}.
\end{equation}
\end{proposition}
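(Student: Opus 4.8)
The goal is to upgrade the large-scale $C^{0,1}$-type estimate for solutions (Theorem~\ref{t.AS.regularity}) into a pointwise $C^{0,\beta}$ bound on $\nabla u$ in the unit ball. The key point is that under the additional assumption~\eqref{e.extra.assump}, the coefficients $D^2_pL(p,\cdot)$ are H\"older continuous on the unit scale, so the deterministic De Giorgi--Nash--Moser and Schauder machinery applies directly down to the microscopic scale --- there is no longer an ``oscillatory obstruction'' at small scales. The strategy is: (1) use Theorem~\ref{t.AS.regularity} to control the $\underline{L}^2$-averaged gradient of $u$ on balls of radius $\X$; (2) on the scale $\X$ (which is $\P$--a.s.~finite but may be large), use interior deterministic regularity for the \emph{nonlinear} equation to pass from an $L^2$-gradient bound to a pointwise H\"older-gradient bound.

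\textbf{Step 1: Large-scale control.} Assume without loss of generality $(u)_{B_R}=0$. We may suppose $R \geq 2\X$, since if $R < 2\X$ we trivially have $\left\| \nabla u\right\|_{\underline{L}^2(B_1)} \leq C\X^{d/2}\left\| \nabla u\right\|_{\underline{L}^2(B_{2\X})} \leq C\X^{d/2}\mathsf{M}$, and (after also using that the problem rescales) $\mathsf{M} \leq C R^{-1}\|u\|_{\underline{L}^2(B_R)}$ is not quite what is given, so one has to be slightly careful --- in fact when $R<2\X$ the asserted bound is immediate from the hypothesis $\left\| \nabla u\right\|_{\underline{L}^2(B_R)} \leq \mathsf{M}$ combined with $\X \geq 1$ only if we also invoke Caccioppoli; I would instead simply apply Theorem~\ref{t.AS.regularity} whenever $R \geq 2\X$ and dispose of the complementary range by a direct volume-counting argument together with the Caccioppoli inequality. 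In the main case, Theorem~\ref{t.AS.regularity} with $r = \X$ gives
\begin{equation*}
\left\| \nabla u \right\|_{\underline{L}^2(B_\X)} \leq \frac{C}{R}\left\| u - (u)_{B_R}\right\|_{\underline{L}^2(B_R)}.
\end{equation*}
Giving up a volume factor, $\left\| \nabla u \right\|_{\underline{L}^2(B_1)} \leq C\X^{d/2}\left\| \nabla u \right\|_{\underline{L}^2(B_\X)}$, which already yields the claimed $L^2$-type bound; the point of the proposition is the stronger $C^{0,\beta}$ norm.

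\textbf{Step 2: From $L^2$ to pointwise regularity via deterministic estimates.} Here I would use the interior regularity theory for the nonlinear equation~\eqref{e.pde}. Since $u$ solves $-\nabla\cdot(D_pL(\nabla u,x))=0$ with $D_pL(\cdot,x) \in C^{1,\gamma}$ (uniform convexity (L2) plus the spatial H\"older bound~\eqref{e.extra.assump} on $D_pL(p,\cdot)/(1+|p|)$ and on $D^2_pL(p,\cdot)$), the standard De Giorgi--Nash estimate gives $\nabla u \in L^\infty_{\mathrm{loc}}$ with, say on $B_{1/2}$ relative to data on $B_1$,
\begin{equation*}
\left\| \nabla u \right\|_{L^\infty(B_{1/2})} \leq C\bigl(1 + \left\| \nabla u \right\|_{\underline{L}^2(B_1)}\bigr),
\end{equation*}
and then the interior Schauder estimate for the (now uniformly elliptic, H\"older-continuous in $x$) linear equation satisfied by each $\partial_k u$ --- or directly the $C^{1,\beta}$ estimate for quasilinear equations of this type --- gives $\nabla u \in C^{0,\beta}(B_1)$ with
\begin{equation*}
\left\| \nabla u \right\|_{C^{0,\beta}(B_1)} \leq C\bigl(1 + \left\| \nabla u \right\|_{\underline{L}^2(B_2)}\bigr)
\end{equation*}
for some $\beta(\gamma,\data)\in(0,\tfrac12]$. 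The key subtlety is that these estimates must be \emph{affine-invariant}, i.e.~the right-hand side should involve only the gradient of $u$ and not $u$ itself, which is why the natural quantity on the right is $\left\| \nabla u \right\|_{\underline{L}^2}$; since $D_pL(p,x)$ grows linearly in $p$ and $D^2_pL$ is bounded, the relevant constants in these quasilinear estimates depend only on $\data$. Combining with Step~1 and enlarging the ball on which Theorem~\ref{t.AS.regularity} is applied (replace $B_1$ by $B_2$, harmless since $R\geq 4$), and absorbing the additive ``$1$'' using $R^{-1}\|u\|_{\underline{L}^2(B_R)} \gtrsim$ (a lower bound coming from the normalization, or simply noting $1 \leq \X^{d/2}$ and that if $\left\| \nabla u\right\|_{\underline{L}^2(B_R)}$ is tiny the estimate is vacuous after rescaling) yields
\begin{equation*}
\left\| \nabla u \right\|_{C^{0,\beta}(B_1)} \leq C\X^{d/2} R^{-1}\left\| u - (u)_{B_R}\right\|_{\underline{L}^2(B_R)},
\end{equation*}
as claimed.

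\textbf{Main obstacle.} The routine part is the deterministic regularity; the part requiring care is matching the \emph{homogeneity}: Theorem~\ref{t.AS.regularity} produces a bound that is genuinely affine-invariant (no zeroth-order term), whereas the textbook quasilinear $C^{1,\beta}$ estimates are usually stated with $\|u\|_{L^\infty}$ or $\|u\|_{L^2}$ on the right. One must therefore either invoke a version of the interior estimate in ``gradient form'' (valid here because subtracting a constant from $u$ does not change the equation, and the structure conditions on $L$ are affine in the gradient up to the bounded second derivative), or run a Caccioppoli-plus-Moser iteration by hand to express the $L^\infty$ bound for $\nabla u$ purely in terms of $\left\| \nabla u \right\|_{\underline{L}^2}$. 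The other mild technical point is the treatment of the regime $R < 2\X$ and the handling of the additive constant, both of which are disposed of by elementary volume-counting and the Caccioppoli inequality. No genuinely new idea beyond~\eqref{e.extra.assump} is needed: the whole content is that the extra unit-scale regularity assumption lets the deterministic theory ``see'' all the way down to scale one.
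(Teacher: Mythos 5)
Your proposal is correct and follows essentially the same route as the paper: the paper's two-line proof combines Theorem~\ref{t.AS.regularity} (applied down to the minimal scale $\X\wedge R$ and then giving up a volume factor $\X^{\frac d2}$) with the deterministic unit-scale $C^{1,\beta}$ Schauder estimate of Proposition~\ref{p.C1beta.coeff} in Appendix A, which is exactly your Step 2. The additive constant "$1$" you flag is indeed present in the statement of Proposition~\ref{p.C1beta.coeff} but is silently dropped in the paper's proof, so your explicit attention to absorbing it is, if anything, more careful than the original.
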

\begin{proof}
By the Proposition~\ref{p.C1beta.coeff}, there exists $\beta(\gamma,\data)\in \left(0,\tfrac12\right]$ such that
\begin{equation*}
\left\| \nabla u \right\|_{C^{0,\beta\wedge\gamma}(B_1)}
\leq 
C\left\| \nabla u \right\|_{L^2(B_2)}. 
\end{equation*}
Giving up a volume factor and applying Theorem~\ref{t.AS.regularity}, we have 
\begin{equation*}
\left\| \nabla u \right\|_{ L^2(B_2)}
\leq 
C \left( R \wedge \X \right)^{\frac d2} 
\left\| \nabla u \right\|_{ L^2\left(B_{\X\wedge R}\right)}
\leq 
C \X^{\frac d2} R^{-1} 
\left\| u - \left( u \right)_{B_R} \right\|_{\underline{L}^2(B_R)}. \qedhere
\end{equation*}
\end{proof}

\begin{proposition}[Pointwise gradient estimate for differences]
\label{p.pointwise.differences}
Assume that~\eqref{e.extra.assump} holds for some $\gamma \in (0,1]$. Let $\mathsf{M} \in [1,\infty)$. Then there exist $\alpha(\gamma,\data),\delta(\gamma,\data)\in \left(0,\tfrac12\right]$, $C(\gamma,\mathsf{M},\data)<\infty$ and a random variable $\X$ satisfying
\begin{equation*}
\X = \O_\delta \left(C \right)
\end{equation*}
such that the following holds.  For every~$R \geq 4$ and $u,v\in H^1(B_R)$ satisfying 
\begin{equation*}
\left\{
\begin{aligned}
& -\nabla \cdot \left( D_pL(\nabla u,x) \right) = 0 & \mbox{in} & \ B_R, \\
& -\nabla \cdot \left( D_pL(\nabla v,x) \right) = 0 & \mbox{in} & \ B_R, \\
& \left\| \nabla u \right\|_{\underline{L}^2(B_R)}, \, 
\left\| \nabla v \right\|_{\underline{L}^2(B_R)} \leq \mathsf{M},
\end{aligned}
\right.
\end{equation*}
we have the estimate
\begin{equation*}
\left\| \nabla u- \nabla v \right\|_{C^{0,\alpha}\left(B_{1/2}\right)}
\leq 
\frac \X R \left\| u-v \right\|_{\underline{L}^2(B_R)}. 
\end{equation*}
\end{proposition}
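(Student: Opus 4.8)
The plan is to combine the large-scale $C^{0,1}$ estimate for differences (Theorem~\ref{t.regularity.differences}, or rather its refinement Proposition~\ref{p.Conealphafordifferences}) with a small-scale interior $C^{1,\alpha}$ Schauder-type estimate for the difference $u-v$. First I would recall that $w:=u-v$ solves the linear equation $-\nabla\cdot(\tilde\a(x)\nabla w)=0$ in $B_R$ with $\tilde\a(x):=\int_0^1 D^2_pL(t\nabla u(x)+(1-t)\nabla v(x),x)\,dt$, which is uniformly elliptic, $I_d\le\tilde\a\le\Lambda I_d$. The key point is that, under the additional hypothesis~\eqref{e.extra.assump}, the coefficient field $\tilde\a$ is H\"older continuous on unit scales \emph{provided} $\nabla u$ and $\nabla v$ are: indeed
\begin{equation*}
[\tilde\a]_{C^{0,\gamma}(B_1)}\le C\mathsf{M}_0\bigl(1+[\nabla u]_{C^{0,\gamma}(B_1)}^\gamma+[\nabla v]_{C^{0,\gamma}(B_1)}^\gamma\bigr),
\end{equation*}
using that $p\mapsto D^2_pL(p,\cdot)$ is H\"older in $p$ uniformly in $x$ and that $D^2_pL(p,\cdot)$ is H\"older in $x$ uniformly in $p$ by~\eqref{e.extra.assump}. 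By Proposition~\ref{p.pointwise.solutions}, applied on $B_2$ after covering, the quantities $[\nabla u]_{C^{0,\beta}(B_1)}$ and $[\nabla v]_{C^{0,\beta}(B_1)}$ are controlled by $C\X_1^{d/2}R^{-1}\|u-(u)_{B_R}\|_{\underline L^2(B_R)}$ and the analogous expression for $v$, where $\X_1$ is the minimal scale of Theorem~\ref{t.AS.regularity}; since $\|\nabla u\|_{\underline L^2(B_R)},\|\nabla v\|_{\underline L^2(B_R)}\le\mathsf{M}$ this is bounded by $C\mathsf{M}\X_1^{d/2}$. Hence $[\tilde\a]_{C^{0,\gamma}(B_1)}\le C(1+\mathsf{M}\X_1^{d/2})^\gamma=:K$, a random but finite (with good stochastic integrability) constant.

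Next, with the coefficients $\tilde\a$ now known to be $C^{0,\gamma}$ on the unit scale with seminorm $\le K$, I would apply the classical interior Schauder estimate (with explicit dependence on the H\"older seminorm of the coefficients, as in Proposition~\ref{p.C1beta.coeff} or \cite[Theorem 3.13]{HL}) to $w=u-v$ on $B_1$: there exist $\alpha(\gamma,\data)\in(0,\tfrac12]$ (one may take $\alpha=\beta\wedge\gamma$) and $C<\infty$ such that
\begin{equation*}
\|\nabla w\|_{C^{0,\alpha}(B_{1/2})}\le C\bigl(1+K^{d/(2\alpha)}\bigr)\|\nabla w\|_{L^2(B_1)}.
\end{equation*}
It then remains to control $\|\nabla w\|_{L^2(B_1)}$ in terms of $R^{-1}\|u-v\|_{\underline L^2(B_R)}$: giving up a volume factor and applying the Lipschitz-type estimate for differences (Theorem~\ref{t.regularity.differences}) on $B_R$ down to the minimal scale $\X_2$ there, together with the Caccioppoli inequality, yields
\begin{equation*}
\|\nabla w\|_{L^2(B_1)}\le C(\X_2\wedge R)^{d/2}\|\nabla w\|_{\underline L^2(B_{\X_2\wedge R})}\le C\X_2^{d/2}R^{-1}\|u-v\|_{\underline L^2(B_R)}.
\end{equation*}
(Here I subtract $(u-v)_{B_R}$ for free since gradients are unaffected, and I use that Theorem~\ref{t.regularity.differences} already absorbs the normalization $\|\nabla u\|_{\underline L^2(B_R)},\|\nabla v\|_{\underline L^2(B_R)}\le\mathsf{M}$.) Combining the last three displays, we obtain $\|\nabla w\|_{C^{0,\alpha}(B_{1/2})}\le C(1+\mathsf{M}\X_1^{d/2})^{\gamma d/(2\alpha)}\X_2^{d/2}R^{-1}\|u-v\|_{\underline L^2(B_R)}$, and we define $\X$ to be an appropriate power/product of $\X_1,\X_2$ times a constant, absorbing all the polynomial factors in the minimal scales.

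The final bookkeeping step is the stochastic integrability of $\X$. Since $\X_1\le\O_\sigma(C)$ and $\X_2\le\O_\sigma(C)$ for any $\sigma<d$ by Theorem~\ref{t.AS.regularity} and Theorem~\ref{t.regularity.differences}, any fixed polynomial combination $\X:=C(1+\X_1)^{a}(1+\X_2)^{b}$ satisfies $\X\le\O_{\delta}(C)$ for some $\delta=\delta(\gamma,\data)\in(0,\tfrac12]$ — one uses the elementary fact that if $Y\le\O_\sigma(C)$ then $Y^{1/\sigma'}\le\O_{\sigma\sigma'}(C')$ and that products of such random variables have stretched-exponential tails with the harmonic-mean-type exponent, which is again positive; this is exactly the kind of manipulation recorded after~\eqref{e.Osums} and in \cite[Appendix A]{AKMbook}. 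The main obstacle in the argument is the circularity-looking interplay between needing $C^{0,\gamma}$ control of $\nabla u,\nabla v$ (to get H\"older coefficients) and the large-scale regularity (to get the decay in $R$): the resolution is that Proposition~\ref{p.pointwise.solutions} already packages precisely the pointwise gradient bound needed, at the cost of a harmless power of the minimal scale, so the two ingredients are independent rather than circular. A secondary technical point is tracking the explicit dependence of the Schauder constant on $[\tilde\a]_{C^{0,\gamma}}$ through a scaling argument, but this is routine and identical to what was done in deriving~\eqref{e.whom.C1beta}.
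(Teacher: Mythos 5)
Your proposal is correct and follows essentially the same route as the paper's own proof: reduce to a linear equation for $u-v$, use Proposition~\ref{p.pointwise.solutions} together with~\eqref{e.extra.assump} to get a (random) H\"older bound on the coefficients on the unit scale, apply the Schauder estimate with explicit dependence on the coefficient seminorm, and then use Theorem~\ref{t.regularity.differences} to pass from $\|\nabla(u-v)\|_{L^2(B_1)}$ down to $R^{-1}\|u-v\|_{\underline{L}^2(B_R)}$, absorbing the powers of the minimal scales into a single random variable with stretched-exponential integrability. The only quibble is bookkeeping: the composed coefficient field is H\"older with exponent $\gamma\beta$ (not $\gamma$), since $x\mapsto D^2_pL(\nabla u(x),x)$ inherits $C^{0,\gamma\beta}$ from the first argument, which is exactly the exponent the paper carries through; this does not affect the validity of your argument.
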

\begin{proof}
Pick any $\sigma \in (0,d)$ and let $\X$ be the maximum of the random variables in Theorems~\ref{t.regularity.differences} and~\ref{t.AS.regularity}. Let $u,v\in H^1(B_R)$ be as in the statement of the proposition and observe that the difference $w:= u-v$ satisfies the equation
\begin{equation*}
\nabla \cdot \left( \a \nabla w \right) = 0 
\quad \mbox{in} \ B_R
\end{equation*}
with  
\begin{equation*}
\a(x):= \int_0^1 D^2_pL(t\nabla u(x)+(1-t)\nabla v(x),x)\,dt
\end{equation*}
which satisfy, by~\eqref{e.extra.assump} and the previous lemma, 
\begin{align*}
\left[ \a \right]_{C^{0,\gamma(\beta\wedge\gamma')}(B_1)}
&
\leq 
\mathsf{M}_0^\gamma + C \mathsf{M}_0 \left(  \left[ \nabla u \right]_{C^{0,\beta\wedge\gamma'}(B_1)} + \left[ \nabla v \right]_{C^{0,\beta\wedge\gamma'}(B_1)} \right)^{\gamma}
\\ & 
\leq 
\mathsf{M}_0^\gamma + C \mathsf{M}_0
\left( \X^{\frac d2} \mathsf{M} \right)^\gamma
\\ & 
\leq C \mathsf{M}_0
\left( \X^{\frac d2} \mathsf{M} \right)^\gamma.
\end{align*}
The Schauder estimates imply that 
\begin{equation*}
\left\| \nabla w \right\|_{L^\infty(B_{1/2})} 
+
\left[ \nabla w \right]_{C^{0,\gamma(\beta\wedge\gamma')}(B_{1/2})}
\leq 
C \left[ \a \right]_{C^{0,\gamma(\beta\wedge\gamma')}(B_1)}^{\frac{d}{2\gamma(\beta\wedge\gamma')}}
\left\| \nabla w \right\|_{L^2(B_1)}. 
\end{equation*}
By Theorem~\ref{t.regularity.differences}, 
\begin{equation*}
\left\| \nabla w \right\|_{L^2(B_1)} 
\leq 
C \X^{\frac d2} \left\| \nabla w \right\|_{L^2(B_{\X})}
\leq 
C \X^{\frac d2} R^{-1} \left\| w \right\|_{L^2(B_{R})}.
\end{equation*}
Putting these together, we find that 
\begin{equation*}
\left\| \nabla w \right\|_{L^\infty(B_{1/2})} 
+
\left[ \nabla w \right]_{C^{0,\gamma(\beta\wedge\gamma')}(B_{1/2})}
\leq
C \left( \mathsf{M}_0^{\frac1\gamma} \X^{\frac d2} \mathsf{M}\right)^{\frac{d}{2(\beta\wedge\gamma')}}
\X^{\frac d2}  R^{-1} \left\| w \right\|_{L^2(B_{R})}.
\end{equation*}
Allowing the constant~$C$ to depend on $(\gamma,\gamma',\mathsf{M}_0,\mathsf{M},\data)$, we obtain, for a large exponent $q(\gamma',\data)\in (1,\infty)$,
\begin{equation*}
\left\| \nabla w \right\|_{L^\infty(B_{1/2})} 
+
\left[ \nabla w \right]_{C^{0,\gamma(\beta\wedge\gamma')}(B_{1/2})}
\leq
C \X^q  R^{-1} \left\| w \right\|_{L^2(B_{R})}.
\end{equation*}
This completes the proof. 
\end{proof}
%
%
%
We now give the proof of Theorem~\ref{t.Lreg}. 

\begin{proof}[{Proof of Theorem~\ref{t.Lreg}}]
Let $\phi_\xi$ denote the first-order corrector for the nonlinear equation with slope~$\xi$, and $\psi_{\xi,\eta}$ the first-order corrector for the linearized equation around $x\mapsto x+\phi_{\xi}(x)$ with slope $\eta$. In other words, for every $\xi,\eta\in\Rd$, the gradient fields~$\nabla \phi_\xi$ and~$\nabla \psi_{\xi,\eta}$ are $\Zd$--stationary, have mean zero and satisfy
\begin{equation}
-\nabla \cdot \left( D_pL(\xi + \nabla \phi_\xi(x),x) \right) = 0 \quad \mbox{in} \ \Rd
\end{equation}
and 
\begin{equation}
-\nabla \cdot \left( D_p^2 L(\xi + \nabla \phi_\xi(x),x) \left( \eta+ \nabla \psi_{\xi,\eta}(x) \right) \right) = 0 \quad \mbox{in} \ \Rd.
\end{equation}
One can see from the proof of Proposition~\ref{p.nu.C2beta} or alternatively, by differentiating the equation for $\phi_\xi$ in $\xi$, that 
\begin{equation*}
\partial_{\xi_i} \nabla \phi_{\xi} = \nabla \psi_{\xi,e_i}. 
\end{equation*}
We also have the formula 
\begin{equation*}
D^2\overline{L} (\xi) \eta
=
\E \left[ 
\int_{[0,1]^d} 
D_p^2 L\left(\xi + \nabla \phi_\xi(x),x\right) \left( \eta+ \nabla \psi_{\xi,\eta}(x)\right)\,dx
\right],
\end{equation*}
together with the estimates
\begin{equation} \label{e.phidiffvarest}
\E \left[  \left\| \nabla \phi_\xi - \nabla \phi_{\xi'}  \right\|_{L^2 \left( [0,1]^d \right)}^2 \right] \leq C |\xi-\xi'|^2 
\quad \mbox{and} \quad
\E \left[ \left\| \nabla \psi_{\xi,\eta} \right\|_{L^2 \left( [0,1]^d \right)}^2 \right] \leq C |\eta|^2 . 
\end{equation}
Thus 
\begin{align*}
\lefteqn{
\left| \left( D^2\overline{L} (\xi) - D^2\overline{L} (\xi') \right) \eta \right| 
} \quad & 
\\ & \notag
\leq
\E \left[ \int_{[0,1]^d} \left| D_p^2 L\left(\xi + \nabla \phi_{\xi}(x),x\right) -D_p^2 L\left(\xi' + \nabla \phi_{\xi'}(x),x\right) \right| \left|  \eta+ \nabla \psi_{\xi,\eta}(x)\right| \,dx  \right] 
\\ & \notag \quad 
+ 
\E \left[ 
\int_{[0,1]^d} 
\left| D_p^2 L\left(\xi' + \nabla \phi_{\xi'}(x),x\right) \right| \left|  \nabla \psi_{\xi,\eta}(x) -  \nabla \psi_{\xi',\eta}(x) \right| \,dx
\right].
\end{align*}
To estimate the first term on the right side, we observe that, by~\eqref{e.extra.assump}, H\"older's inequality and~\eqref{e.phidiffvarest},
\begin{align*}
\lefteqn{
\E \left[ \int_{[0,1]^d} \left| D_p^2 L\left(\xi + \nabla \phi_{\xi}(x),x\right) -D_p^2 L\left(\xi' + \nabla \phi_{\xi'}(x),x\right) \right| \left|  \eta+ \nabla \psi_{\xi,\eta}(x)\right| \,dx  \right] 
} \qquad & 
\\ & 
\leq
C \E \left[ \int_{[0,1]^d}  \left| \left(\xi+\nabla \phi_{\xi}(x)\right) - \left(\xi'+ \nabla \phi_{\xi'}(x)\right) \right|^{\gamma} 
\left|  \eta+ \nabla \psi_{\xi,\eta}(x)\right| \,dx  
\right] 
\\ & 
\leq 
C
\E \left[ \int_{[0,1]^d} \left(|\xi - \xi'| +  \left| \nabla \phi_{\xi'}(x) -  \nabla \phi_\xi(x) \right| \right)^{2} \,dx \right]^{\frac {\gamma}2} 
\\ &  \quad 
\times  
\E \left[ \int_{[0,1]^d} \left|  \eta + \nabla \psi_{\xi,\eta}(x)\right|^2 \,dx   \right]^{\frac12} 
\\ & 
\leq 
C \left| \xi-\xi'\right|^{\gamma} \left| \eta \right|. 
\end{align*}
To estimate the other term, we have that 
\begin{multline*}
\E \left[ 
\int_{[0,1]^d} 
\left| D_p^2 L\left(\xi' + \nabla \phi_{\xi'}(x),x\right) \right| \left|  \nabla \psi_{\xi,\eta}(x) -  \nabla \psi_{\xi',\eta}(x) \right| \,dx
\right]
\\
\leq 
C \E \left[ 
\int_{[0,1]^d} 
\left|  \nabla \psi_{\xi,\eta}(x) -  \nabla \psi_{\xi',\eta}(x) \right|^2 \,dx
\right]^{\frac12}. 
\end{multline*}
Therefore to complete the proof, it suffices to show that 
\begin{equation}
\label{e.Lbarregwts}
\E \left[ 
\int_{[0,1]^d} 
\left|  \nabla \psi_{\xi,\eta}(x) -  \nabla \psi_{\xi',\eta}(x) \right|^2 \,dx
\right]
\leq 
C \left| \eta \right|^2 \left| \xi - \xi' \right|^{2\gamma}.
\end{equation}
To prove~\eqref{e.Lbarregwts}, we argue similarly as in Lemma~\ref{l.deterministic.linearization}.
Let $\zeta:=  \psi_{\xi,\eta} - \psi_{\xi',\eta}$ so that $\nabla \zeta$ is a $\Zd$--stationary gradient field satisfying
\begin{equation*}
-\nabla \cdot \left( D_p^2 L(\xi + \nabla \phi_\xi(x),x) \nabla \zeta \right) = 
\nabla \cdot \mathbf{f},
 \quad \mbox{in} \ \Rd
\end{equation*}
 where $\mathbf{f}$ is defined by
\begin{equation*}
 \mathbf{f} 
 =
 \left( D_p^2 L(\xi + \nabla \phi_\xi(x),x) - D_p^2 L(\xi' + \nabla \phi_{\xi'}(x),x) \right) \nabla \psi_{\xi',\eta}. 
\end{equation*}
We have that 
\begin{align*}
\lefteqn{
\E \left[ \int_{[0,1]^d} \left| \nabla \zeta(x) \right|^2\,dx \right] 
} \quad & 
\\ &
\leq
\E \left[ \int_{[0,1]^d} \left| \mathbf{f}(x) \right|^2\,dx \right] 
\\ & 
\leq 
\E \left[ \int_{[0,1]^d}  \left| \left(\xi+\nabla \phi_{\xi}(x)\right) - \left(\xi'+ \nabla \phi_{\xi'}(x)\right) \right|^{2\gamma} 
\left| \nabla \psi_{\xi',\eta}(x)\right|^2
\,dx \right] 
\\ & 
\leq 
\E \left[ \int_{[0,1]^d}  \left| \left(\xi+\nabla \phi_{\xi}(x)\right) - \left(\xi'+ \nabla \phi_{\xi'}(x)\right) \right|^{\frac{2\gamma(2+\delta)}{\delta}} \right]^{\frac{\delta}{2+\delta}}
\E \left[ \int_{[0,1]^d} \left| \nabla \psi_{\xi',\eta} \right|^{2+\delta}\,dx \right]^{\frac2{2+\delta}}.
\end{align*}
By Proposition~\ref{p.pointwise.differences} and the argument for~\eqref{e.yesdiffcorrectors}, 
\begin{align*}
\E \left[ \int_{[0,1]^d}  \left| \left(\xi+\nabla \phi_{\xi}(x)\right) - \left(\xi'+ \nabla \phi_{\xi'}(x)\right) \right|^{\frac{2\gamma(2+\delta)}{\delta}} \right]^{\frac{\delta}{2+\delta}}
& 
\leq C \left| \xi-\xi' \right|^{2\gamma}. 
\end{align*}
By the Meyers estimate and Proposition~\ref{p.C01.linearizedeq}, noticing that the latter implies bounds on $\nabla \psi_{\xi',\eta}$ analogous to~\eqref{e.yesdiffcorrectors} by the same argument, we get, for $\X$ as in Proposition~\ref{p.C01.linearizedeq}, 
\begin{align*}
\E \left[ \int_{[0,1]^d} \left| \nabla \psi_{\xi',\eta} \right|^{2+\delta}\,dx \right]^{\frac2{2+\delta}}
&
\leq
\E \left[  \left( \int_{[-1,2]^d} \left| \nabla \psi_{\xi',\eta} \right|^{2}\,dx \right)^{\frac{2+\delta}{2}} \right]^{\frac2{2+\delta}}
\\ &
\leq 
C \left|\eta\right|^2 \E \left[ \X^{\frac{d}{2}(2+\delta)} \right]
\leq
C\left| \eta \right|^2. 
\end{align*}
Combining the previous three displays yields ~\eqref{e.Lbarregwts} and completes the proof. 
\end{proof}

\appendix

\section{Regularity for constant Lagrangians}
\label{s.regconstantL}

In this appendix we recall some classical regularity estimates for solutions of constant-coefficient equations, summarized in the following proposition. Versions of the results stated here can be found in books such as~\cite{Giu}, but for our purposes in this paper we require somewhat sharper, more quantitative statements which we could not find in the literature. For this reason we give complete proofs here. 

\begin{proposition}[{$C^{1,\beta}$ regularity for differences}]
\label{p.C1beta.differences.Lbar}
Fix $\gamma \in (0,1)$, $R\in (0,\infty)$, and $\, \mathsf{K},\mathsf{M} \in (0,\infty)$.  Let $F:\Rd  \to \R$ be a Lagrangian satisfying $\left[ F \right]_{C^{2,\gamma}} \leq \mathsf{K}$ and, for every $p \in \R^d$, 
\begin{equation} \label{e.F.uniconvex}
I_d \leq D^2_p F(p) \leq \Lambda I_d.
\end{equation}
Suppose that $u$ and $v$ are local $F$-minimizers in $B_R$. Then $u,v\in C^{2,\gamma}(B_{R/2})$ and the following statements hold. 
\begin{itemize} 

\item There exist~$\theta(d,\Lambda)\in (0,1)$ and $C(d,\Lambda)<\infty$ such that, for every $x,y \in B_{R/2}$,
\begin{equation}  \label{e.C1alpha.Lbar}
\left| \nabla u(x) - \nabla u(y)  \right|  \leq C \left( \frac{|x-y|}{R}\right)^\theta \frac{1}{R} \inf_{\ell \in \mathcal{P}_1}  \left\|  u -  \ell \right\|_{\underline{L}^2(B_{R})}.
\end{equation}

\item 
There exists a constant $C(\mathsf{K},\gamma,d,\Lambda)<\infty$ and $\eta(d,\Lambda) < \infty$ such that
\begin{equation}  \label{e.ass.Lbar}
\frac1R \inf_{\ell \in \mathcal{P}_1}\left\| u - \ell \right\|_{\underline{L}^2(B_R)}  \leq \mathsf{M}
\end{equation}
implies
\begin{equation}  \label{e.C11.Lbar}
R \left\| \nabla^2 u \right\|_{L^\infty(B_{R/2})}   \leq C \left( \mathsf{M} \vee 1 \right)^{\eta}  \mathsf{M}
\end{equation}
and, for every $x,y \in B_{R/4}$,
\begin{multline}  \label{e.C2alpha.Lbardiff}
 R \left| \nabla^2 u(x) -  \nabla^2 u(y)  \right| 
 \\ \leq 
 C \left( \frac{|x-y|}{R} \right)^{\gamma}  \left( \inf_{q \in \mathcal{P}_2}\left\| \nabla u - \nabla q \right\|_{\underline{L}^2(B_{R/2})} +  \left( \mathsf{M} \vee 1 \right)^{\eta}  \mathsf{M}^{1+\gamma} \right).
\end{multline}

\item Suppose that 
\begin{equation}  \label{e.ass.Lbardiff}
\frac1R \inf_{\ell \in \mathcal{P}_1}\left\| u - \ell \right\|_{\underline{L}^2(B_R)} + \frac1R \inf_{\ell \in \mathcal{P}_1} \left\| v - \ell \right\|_{\underline{L}^2(B_R)} \leq \mathsf{M}. 
\end{equation}
Then there exist constants  $C(\mathsf{K},\gamma,d,\Lambda)<\infty$ and $\eta(d,\Lambda) < \infty$ such that, for any $s\in \left(0,\frac{R}{2}\right]$, we have both
\begin{equation}  \label{e.Lip.Lbardiff}
\left\| \nabla u - \nabla v \right\|_{\underline{L}^\infty(B_{R/2})}  \leq  C \left(  \mathsf{M} \vee 1 \right)^{\eta} \frac 1R \left\| u -v - (u-v)_{B_R}\right\|_{\underline{L}^2(B_R)} 
\end{equation}
and, for $r \in (0,s]$, 
\begin{multline} \label{e.C1gamma.Lbardiff}
\inf_{\ell \in \mathcal{P}_1}\left\| u - v - \ell \right\|_{\underline{L}^2(B_{r})}
\leq C\left(\frac rs \right)^2 \inf_{\ell \in \mathcal{P}_1}\left\| u - v - \ell \right\|_{\underline{L}^2(B_{s})} 
\\ + C \left( \mathsf{M}\vee 1 \right)^{\eta}  \left(\frac{s}{r}\right)^{\frac d2}  \left( \frac{s}{R} \right)^{1+\gamma} \mathsf{M}^\gamma \left\| u-v  - (u-v)_{B_R}\right\|_{\underline{L}^2(B_R)}.
\end{multline}
\end{itemize}
\end{proposition}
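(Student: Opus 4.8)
The plan is to prove Proposition~\ref{p.C1beta.differences.Lbar} by reducing everything to the classical Schauder theory for constant-coefficient uniformly elliptic systems, applied to the linear equation satisfied by a difference of solutions, and then carefully tracking the dependence of the constants on the various $C^{k,\gamma}$ norms that appear. The first step is to record the interior regularity of a single local $F$-minimizer $u$: since $F\in C^{2,\gamma}$ is uniformly convex, the De Giorgi--Nash--Moser and then Schauder bootstrap give $u\in C^{2,\gamma}_{\mathrm{loc}}$, and by a scaling argument in each ball of radius comparable to $R$ one obtains the quantitative estimate~\eqref{e.C1alpha.Lbar} with $\theta(d,\Lambda)$ the De Giorgi exponent for the linearized equation $-\nabla\cdot(D^2_pF(\nabla u)\nabla(\partial_k u))=0$; here one subtracts an affine function and uses the Caccioppoli inequality to pass from $\inf_\ell\|u-\ell\|_{\underline L^2(B_R)}$ on the right side. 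For~\eqref{e.C11.Lbar} and~\eqref{e.C2alpha.Lbardiff}, the point is that under the normalization~\eqref{e.ass.Lbar}, \eqref{e.C1alpha.Lbar} already gives $\|\nabla u\|_{L^\infty(B_{3R/4})}\le C(\mathsf M\vee1)$, so that the coefficient field $x\mapsto D^2_pF(\nabla u(x))$ has a $C^{0,\theta'}$ seminorm controlled by $\mathsf{K}(\mathsf M\vee1)^{\theta'}$; then the interior Schauder estimate for $\partial_k u$, with explicit dependence on the Hölder seminorm of the coefficients (as in~\cite[Theorem 3.13]{HL} combined with scaling, exactly as used around~\eqref{e.whom.C1beta}), yields $\|\nabla^2 u\|_{L^\infty(B_{R/2})}$ bounded as claimed, and differentiating once more and applying Schauder to $\nabla^2 u$ gives~\eqref{e.C2alpha.Lbardiff}.

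The heart of the matter is the last bullet, concerning the difference $w:=u-v$. It satisfies the linear equation $-\nabla\cdot(\a\nabla w)=0$ in $B_R$ with
\begin{equation*}
\a(x):=\int_0^1 D^2_pF\bigl(t\nabla u(x)+(1-t)\nabla v(x)\bigr)\,dt,
\end{equation*}
which is uniformly elliptic by~\eqref{e.F.uniconvex}. Under~\eqref{e.ass.Lbardiff}, the already-established bounds on $\nabla u$ and $\nabla v$ give $\|\a\|_{L^\infty}\le\Lambda$ and, crucially, $[\a]_{C^{0,\gamma\theta}(B_{R/2})}\le C(\mathsf K,\gamma,d,\Lambda)(\mathsf M\vee1)^{\eta}$ for a suitable $\eta(d,\Lambda)$, because $\a$ is a smooth function of $\nabla u,\nabla v$ and those are Hölder with the quantitative seminorm coming from~\eqref{e.C1alpha.Lbar}--\eqref{e.C11.Lbar}. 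Then~\eqref{e.Lip.Lbardiff} is just the interior Lipschitz (indeed $C^{1,\gamma\theta}$) Schauder estimate for $w$, applied after subtracting the mean $(w)_{B_R}$ and with the explicit dependence of the constant on $[\a]_{C^{0,\gamma\theta}}$, which produces the factor $(\mathsf M\vee1)^\eta$. For~\eqref{e.C1gamma.Lbardiff}, the strategy is the standard Campanato/excess-decay comparison: on a ball $B_s$ with $s\le R/2$, freeze the coefficients at a point, say $\a_0:=\a(0)$, let $\bar w$ solve $-\nabla\cdot(\a_0\nabla\bar w)=0$ in $B_s$ with $\bar w=w$ on $\partial B_s$; for the constant-coefficient solution $\bar w$ one has the clean second-order decay $\inf_{\ell\in\mathcal P_1}\|\bar w-\ell\|_{\underline L^2(B_r)}\le C(r/s)^2\inf_{\ell}\|\bar w-\ell\|_{\underline L^2(B_s)}$, while the error $w-\bar w\in H^1_0(B_s)$ is estimated in energy by $\|(\a-\a_0)\nabla w\|_{\underline L^2(B_s)}\le [\a]_{C^{0,\gamma\theta}}s^{\gamma\theta}\|\nabla w\|_{L^\infty(B_s)}$, and then in $\underline L^2$ by Poincaré; feeding in~\eqref{e.Lip.Lbardiff} to bound $\|\nabla w\|_{L^\infty(B_s)}$ by $R^{-1}\|u-v-(u-v)_{B_R}\|_{\underline L^2(B_R)}$ and combining via the triangle inequality gives the stated inequality, with the $(\mathsf M\vee1)^\eta$, the $(s/r)^{d/2}$ loss from passing $\underline L^2$ averages between nested balls, and the $(s/R)^{1+\gamma}\mathsf M^\gamma$ factor — where one writes $\gamma$ in place of $\gamma\theta$ after relabeling, noting $\theta$ depends only on $(d,\Lambda)$, or alternatively replaces $\gamma$ by $\gamma\wedge\theta$ throughout, which is harmless for the applications in the body of the paper.

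The main obstacle, and the reason the authors say these sharper statements are not readily available in the literature, is not any single deep estimate but the bookkeeping: one must run the Schauder machinery with \emph{explicit, polynomial} dependence of all constants on the Hölder seminorms of the coefficients (which here are themselves powers of $\mathsf M$), and one must be careful that the exponent $\eta$ and the Hölder exponent in the conclusions depend only on $(d,\Lambda)$ and $\gamma$ and not on $\mathsf M$ or $\mathsf K$. The cleanest way to get the explicit seminorm dependence is the scaling argument already invoked elsewhere in the paper: state the standard interior Schauder estimate on the unit ball, then dilate to radius $R$ and to the (possibly large) size of the coefficient seminorm, tracking the powers of $R$ and of $[\a]_{C^{0,\gamma\theta}}$ that come out. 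A secondary technical point is that~\eqref{e.C2alpha.Lbardiff} requires differentiating the nonlinear equation twice, which is legitimate since $F\in C^{2,\gamma}$ forces $u\in C^{2,\gamma}$ but no more, so one should phrase the second-order statement as a Schauder estimate for the equation satisfied by $\nabla^2 u$ with $C^{0,\gamma}$ coefficients and a $C^{0,\gamma}$ right-hand side generated by $[D^2_pF]_{C^{0,\gamma}}$ and the already-controlled $\|\nabla^2 u\|_{L^\infty}$ — this is where the extra factor $\mathsf M^{1+\gamma}(\mathsf M\vee1)^\eta$ in~\eqref{e.C2alpha.Lbardiff} originates.
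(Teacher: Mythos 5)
Your overall strategy is sound and close in spirit to the paper's: both arguments linearize the difference $w=u-v$, freeze the coefficients $\a(x)=\int_0^1D^2_pF(t\nabla u+(1-t)\nabla v)\,dt$, and run an excess-decay comparison against the constant-coefficient problem. The structural difference is in the middle bullet. You prove~\eqref{e.C11.Lbar} directly, by applying a quantitative Schauder estimate to the differentiated equation $-\nabla\cdot(D^2_pF(\nabla u)\nabla\partial_ku)=0$, whose coefficients are $C^{0,\gamma\theta}$ by~\eqref{e.C1alpha.Lbar}; the paper instead proves~\eqref{e.Lip.Lbardiff} \emph{first}, by a hand-rolled Campanato iteration with an absorption at a scale $s^*\sim R(\mathsf M\vee1)^{-1/(\theta\gamma)}$, and then deduces~\eqref{e.C11.Lbar} by applying~\eqref{e.Lip.Lbardiff} to $u$ and its translate $u(\cdot+h)$ and sending $h\to0$. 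Both orders work; your black-box use of Schauder with explicit polynomial dependence on the coefficient seminorm is consistent with what the paper itself does elsewhere (cf.~the discussion around~\eqref{e.whom.C1beta}), and the exponent of $\mathsf M$ still depends only on $(d,\Lambda)$ because the seminorm enters as $\mathsf M^\gamma$ raised to a power $\sim d/(2\gamma\theta)$, so the $\gamma$'s cancel and the $\gamma$-dependence is absorbed into $C$.

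There is, however, one genuine gap: your derivation of~\eqref{e.C1gamma.Lbardiff} produces the error term with exponent $(s/R)^{1+\gamma\theta}$ rather than the stated $(s/R)^{1+\gamma}$, because you estimate $|\a(x)-\a(0)|$ using only the $C^{0,\theta}$ modulus of $\nabla u,\nabla v$ from~\eqref{e.C1alpha.Lbar}. The proposed fix of ``relabeling $\gamma\theta$ as $\gamma$'' is not available: $\gamma$ is fixed by the hypothesis $[F]_{C^{2,\gamma}}\leq\mathsf K$ and appears on both sides of the statement, so the relabeled claim is strictly weaker. The correct repair is already in your hands: under~\eqref{e.ass.Lbardiff} both $u$ and $v$ satisfy~\eqref{e.ass.Lbar}, so~\eqref{e.C11.Lbar} gives $\|\nabla^2u\|_{L^\infty(B_{R/2})}+\|\nabla^2v\|_{L^\infty(B_{R/2})}\leq C(\mathsf M\vee1)^\eta\mathsf M/R$, whence
\begin{equation*}
\left|\a(x)-\a(0)\right|\leq \mathsf K\left(|\nabla u(x)-\nabla u(0)|+|\nabla v(x)-\nabla v(0)|\right)^\gamma\leq C(\mathsf M\vee1)^{\gamma\eta}\mathsf M^\gamma\left(\frac{|x|}{R}\right)^\gamma,
\end{equation*}
and feeding this into your freezing argument yields the full exponent $\gamma$. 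This is precisely the paper's Step~4, and it is why the proposition orders the bullets so that the interior $C^{1,1}$ bound is available before the sharp difference estimate. The same upgrade is what legitimizes your claim for~\eqref{e.C2alpha.Lbardiff}: one does not differentiate the equation twice (which $u\in C^{2,\gamma}$ does not permit), but rather applies Schauder to the once-differentiated equation whose coefficients are now genuinely $C^{0,\gamma}$ with seminorm $\sim\mathsf K((\mathsf M\vee1)^\eta\mathsf M/R)^\gamma$ thanks to~\eqref{e.C11.Lbar}; your closing remarks indicate you intend this, so only~\eqref{e.C1gamma.Lbardiff} needs the correction.
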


\begin{remark} \label{r.C1beta.differences.Lbar}
For the estimate~\eqref{e.C1alpha.Lbar} it is enough to assume~\eqref{e.F.uniconvex}. 
In fact, is suffices to assume that $F \in C^{1}$ and, for every $p_1,p_2 \in \R^d$, 
\begin{equation*} 
|p_1-p_2|^2 \leq \left( D_p F(p_1) - D_p F(p_2) \right)  \cdot (p_1-p_2)  \leq \Lambda |p_1-p_2|^2.
\end{equation*}
\end{remark}

\begin{proof}[Proof of Proposition~\ref{p.C1beta.differences.Lbar}]
We divide the proof into five steps. In the first step we will prove~\eqref{e.C1alpha.Lbar}, and in the second both~\eqref{e.ass.Lbardiff}. In Step 3 we will prove~\eqref{e.C11.Lbar} using~\eqref{e.Lip.Lbardiff},  and in Step 4 we will show~\eqref{e.C1gamma.Lbardiff}. Finally, in the last step, we will prove~\eqref{e.C2alpha.Lbardiff} using~\eqref{e.Lip.Lbardiff} and~\eqref{e.C1gamma.Lbardiff}. 

\smallskip

\emph{Step 1}. We first prove~\eqref{e.C1alpha.Lbar}. 
Due to the first variation and smoothness of $F$, $u$ satisfies the equation
\begin{equation*} 
\nabla \cdot \left( D_p^2 F(\nabla u) \nabla^2 u \right) = 0.
\end{equation*}
By~\eqref{e.F.uniconvex}, we may apply the classical De Giorgi-Nash-Moser theory to obtain that $\nabla u \in C^{0,\theta}$ and, in particular, 
\begin{equation*} 
\sup_{x,y\in B_r} \left| \nabla u(x) - \nabla u(y)  \right|  \leq C \left( \frac{r}{R}\right)^\theta \left\| \nabla u - (\nabla u)_{B_{R/2}} \right\|_{\underline{L}^2(B_{R/2})} .
\end{equation*}
Furthermore, for all $\ell \in \mathcal{P}_1$ we have
\begin{equation*} 
\nabla \cdot \left( D_p F(\nabla u) - D_p F(\nabla \ell) \right) = 0,
\end{equation*}
and again~\eqref{e.F.uniconvex} provides us a Caccioppoli inequality
\begin{equation}  \label{e.Cacc2nd.Lbar}
\left\| \nabla u - (\nabla u)_{B_{R/2}} \right\|_{\underline{L}^2(B_{R/2})} 
= \inf_{\ell \in \mathcal{P}_1}  \left\| \nabla u - \nabla \ell \right\|_{\underline{L}^2(B_{R/2})} 
\leq \frac{C}{R} \inf_{\ell \in \mathcal{P}_1}  \left\|  u -  \ell \right\|_{\underline{L}^2(B_{R})}.
\end{equation}
Combining the estimates proves~\eqref{e.C1alpha.Lbar}.

\smallskip

\emph{Step 2}.
We next prove~\eqref{e.Lip.Lbardiff}. The difference $u-v$ satisfies the equation 
\begin{equation*} 
\nabla \cdot \left( \b(x) \nabla (u-v) \right) = 0,
\end{equation*}
where
\begin{equation*} 
\b(x) := \int_{0}^1 D_p^2 F \left(t \nabla u(x) + (1-t) \nabla v(x)  \right) \, dt. 
\end{equation*}
Observe that, by~\eqref{e.F.uniconvex}, $I_d \leq \b(x) \leq \Lambda I_d$.  By freezing the coefficients at the origin we get that 
\begin{equation*} 
\nabla \cdot \left( \b(0) \nabla (u-v) \right)  = \nabla \cdot \left( (\b(0) - \b(x)) \nabla (u-v) \right)  .
\end{equation*}
By the smoothness of $F$ and~\eqref{e.C1alpha.Lbar} (applied for both $u$ and $v$), we get, for all $x \in B_{R/2}$, 
\begin{equation*} 
\left| \b(0) - \b(x)\right| \leq C \mathsf{K} \left( \frac{|x|}{R} \right)^{\theta \gamma} \mathsf{M}^\gamma
\end{equation*}
Fix $s \in \left(0,\frac R2 \right)$ and  let $w_s \in u-v + H_0^1(B_s)$ solve $\nabla \cdot \left( \b(0) \nabla w_s \right) = 0$ in $B_s$. Then
\begin{equation*} 
\left\| \nabla (u-v) - \nabla w_s \right\|_{\underline{L}^2(B_s)} \leq C  \left( \frac{s}{R} \right)^{\theta \gamma} \mathsf{M}^\gamma \left\| \nabla (u-v) \right\|_{\underline{L}^2(B_s)} . 
\end{equation*}
Since $w_s$ satisfies the equation with constant coefficients, we find a constant $C(\mathsf{K},d,\Lambda)$ such that, for any $r \in (0,1)$, 
\begin{equation*} 
\inf_{\ell \in \mathcal{P}_1}\left\| w_s - \ell \right\|_{\underline{L}^2(B_{ r})} \leq C \left(\frac rs \right)^2  \inf_{\ell \in \mathcal{P}_1}\left\| w_s - \ell \right\|_{\underline{L}^2(B_{s})} . 
\end{equation*}
By Poincar\'e's inequality, the Caccioppoli inequality for $u-v$, and the triangle inequality, the above two displays imply 
\begin{multline*} 
\inf_{\ell \in \mathcal{P}_1}\left\| u - v - \ell \right\|_{\underline{L}^2(B_{r})}
\leq C\left(\frac rs \right)^2 \inf_{\ell \in \mathcal{P}_1}\left\| u - v - \ell \right\|_{\underline{L}^2(B_{s})} 
\\ + C \left(\frac{s}{r}\right)^{\frac d2}  \left( \frac{r}{R} \right)^{\theta \gamma} \mathsf{M}^\gamma \left\| u-v  - (u-v)_{B_s}\right\|_{\underline{L}^2(B_s)}.
\end{multline*}
Setting
\begin{equation*} 
E_1(s) := \left( \frac{R}{s} \right)^{1+\theta \gamma} \inf_{\ell \in \mathcal{P}_1}\left\| u - v - \ell \right\|_{\underline{L}^2(B_{s})} , \quad 
E_0(s) := \frac{R}{s} \left\| u-v  - (u-v)_{B_s}\right\|_{\underline{L}^2(B_s)},
\end{equation*}
we find a small constant $\ep(\mathsf{K},d,\Lambda) \in (0,1)$ such that 
\begin{equation*} 
E_1(\ep s) \leq \frac 12 E_1(s) + C  \mathsf{M}^\gamma E_0(s)
\end{equation*}
This implies, after taking supremum and reabsorbing, that 
\begin{equation}  \label{e.Lbar.supE_1}
\sup_{s \in (0,R)} E_1(s) \leq CE_1(R) + C  \mathsf{M}^\gamma  \sup_{s \in (0,R)}  E_0(s) . 
\end{equation}
Notice that the term on the right is finite since both $u$ and $v$ are Lipschitz continuous. 
Letting $\ell_s$ be the affine function realizing the infimum in the definition of $E_1(s)$, we obtain by the triangle inequality that
\begin{multline*} 
| \nabla \ell_{s} - \nabla \ell_{2s}| \leq C \frac 1R \left( \frac sR \right)^{\theta \gamma} \left( E_1(s)  + E_1(2s)\right) 
\\ \leq 
C \frac 1R \left( \frac sR \right)^{\theta \gamma} \left(E_1(R) +  \mathsf{M}^\gamma  \sup_{s \in (0,R)}  E_0(s) \right).
\end{multline*}
After telescoping, we deduce, for $s \in (0,R)$, the estimate
\begin{equation*} 
\sup_{r \in (0,s)} | \nabla \ell_{r}| \leq | \nabla \ell_{s}| + C \frac 1R \left( \frac sR \right)^{\theta \gamma} \left(E_1(R) +  \mathsf{M}^\gamma  \sup_{s \in (0,R)}  E_0(s) \right).
\end{equation*}
Using the trivial estimate $| \nabla \ell_{s}| \leq CR^{-1} E_0(s)$, we also get, for any $s \in (0,R]$, 
\begin{multline*} 
\sup_{r \in (0,s]} E_0(r) \leq R \sup_{r \in (0,s]} | \nabla \ell_{r}| + \left( \frac sR \right)^{\theta \gamma} \sup_{r \in (0,s]} E_1(s)  
\\ \leq C E_0(s) + C \left( \frac sR \right)^{\theta \gamma} \left(E_1(R) + \mathsf{M}^\gamma  \sup_{s \in (0,R]}  E_0(s) \right)
\end{multline*}
Choosing, in particular, $s = s^*$, where
\begin{equation*} 
s^* := R \left[  2C \mathsf{M}^{\gamma} \vee 1 \right]^{-\frac 1{\theta \gamma}},
\end{equation*}
and using $E_1(R)\leq E_0(R)$ and 
\begin{equation*} 
\sup_{r\in (s^*,R]} E_0(r)  \leq \left(\frac{R}{s^*}\right)^{1+ \frac d2} E_0(R) = \left(  2C  \mathsf{M}^\gamma \vee 1 \right)^{\frac{d+2}{2\theta\gamma}} E_0(R), 
\end{equation*}
we can reabsorb $\sup_{r \in (0,s^*)} E_0(r)$ from the right to get
\begin{equation*} 
\sup_{s \in (0,R)} E_0(s) \leq  C\left( \mathsf{M}\vee 1 \right)^{\frac{d+2}{2\theta}} E_0(R).
\end{equation*}
Thus,~\eqref{e.Lip.Lbardiff} follows. 

\smallskip

\emph{Step 3}. We prove~\eqref{e.C11.Lbar}. For fixed $h\in \R^d$ with $|h|\ll R$, we set $v(x) := u(x+h)$, which is still $F$-minimizer in $B_{R-|h|}$. Moreover,~\eqref{e.ass.Lbar} gives, for small enough $|h|$,
\begin{equation*} 
\sup_{y \in B_{R/2-|h|}} \left( \frac2R \inf_{\ell \in \mathcal{P}_1}\left\| u - \ell \right\|_{\underline{L}^2 \left(B_{R/2}(y)\right)} + \frac2R \inf_{\ell \in \mathcal{P}_1} \left\| v - \ell \right\|_{\underline{L}^2\left(B_{R/2}(y)\right)} \right) \leq 2^{\frac d2 +1 }  \mathsf{M}. 
\end{equation*}
We can thus apply~\eqref{e.Lip.Lbardiff} for $u-v$ in $B_{R/2-|h|}(y)$. Dividing the resulting inequality with $|h|$ and recalling that $u \in H_{\textrm{loc}}^2(B_R)$, we may send $|h| \to 0$ and obtain, for $y \in B_{R/2}$,  
\begin{equation*} 
\frac1s \left\| \nabla u - \left( \nabla u \right)_{B_s(y)}  \right\|_{\underline{L}^2(B_s(y))}  \leq C\left( \mathsf{M}\vee 1 \right)^{\frac{d+2}{2\theta}}  \frac1R \left\| \nabla u - \left( \nabla u \right)_{B_{R/4}(y)} \right\|_{\underline{L}^2\left(B_{R/4}(y)\right)} .
\end{equation*}
Now~\eqref{e.C11.Lbar} follows by the previous inequality and an application of the Caccioppoli estimate, for the differentiated equation $\nabla \cdot \left( D_p^2 F(\nabla u) \nabla^2 u \right) = 0$, giving
\begin{align*} 
\left\| \nabla^2 u \right\|_{\underline{L}^2(B_{s/2}(y))} & 
\leq  \frac Cs \left\| \nabla u - \left( \nabla u \right)_{B_s}  \right\|_{\underline{L}^2(B_s(y))} 
\\ &
\leq
C\left( \mathsf{M}\vee 1 \right)^{\frac{d+2}{2\theta}}  \frac1R \left\| \nabla u - \left( \nabla u \right)_{B_{R/4}(y)} \right\|_{\underline{L}^2\left(B_{R/4}(y)\right)}, 
\end{align*}
after letting $s \to 0$ and applying the Caccioppoli estimate~\eqref{e.Cacc2nd.Lbar}.

\smallskip

\emph{Step 4}. We then prove~\eqref{e.C1gamma.Lbardiff}. We return to Step 2, and observe that by using~\eqref{e.C11.Lbar} we actually get an improved estimate
\begin{equation*} 
\left| \b(0) - \b(x)\right| \leq C \left( \mathsf{M}\vee 1 \right)^{\gamma \eta}   \left( \frac{|x|}{R} \right)^{\gamma} \mathsf{M}^\gamma,
\end{equation*}
and using this as in the Step 2, we also get 
\begin{multline*} 
\inf_{\ell \in \mathcal{P}_1}\left\| u - v - \ell \right\|_{\underline{L}^2(B_{r})}
\leq C\left(\frac rs \right)^2 \inf_{\ell \in \mathcal{P}_1}\left\| u - v - \ell \right\|_{\underline{L}^2(B_{s})} 
\\ + C \left( \mathsf{M}\vee 1 \right)^{\gamma \eta}  \left(\frac{s}{r}\right)^{\frac d2}  \left( \frac{r}{R} \right)^{\gamma} \mathsf{M}^\gamma \left\| u-v  - (u-v)_{B_s}\right\|_{\underline{L}^2(B_s)}.
\end{multline*}
Setting this time
\begin{equation*} 
E_1(s) := \left( \frac{R}{s} \right)^{1+ \gamma} \inf_{\ell \in \mathcal{P}_1}\left\| u - v - \ell \right\|_{\underline{L}^2(B_{s})},
\end{equation*}
we find a small constant $\ep(\mathsf{K},d,\Lambda) \in (0,1)$ such that 
\begin{equation*} 
E_1(\ep s) \leq \frac 12 E_1(s) + C \left( \mathsf{M}\vee 1 \right)^{\gamma \eta} \mathsf{M}^\gamma E_0(s)
\end{equation*}
By~\eqref{e.Lip.Lbardiff}, $E_0(s) \leq C \left( \mathsf{M}\vee 1 \right)^{\eta} E_0(R)$, and thus, after reabsorption, we deduce~\eqref{e.C1gamma.Lbardiff}.

\smallskip

\emph{Step 5}. We finally prove~\eqref{e.C2alpha.Lbardiff}.  As in Step 3,~\eqref{e.C1gamma.Lbardiff} and~\eqref{e.Cacc2nd.Lbar} yield that, for $r \leq s \leq \frac{R}{2}$,  
\begin{equation*} 
\inf_{\ell \in \mathcal{P}_1}\left\| \nabla u - \ell \right\|_{\underline{L}^2(B_{r})}
\leq C\left(\frac rs \right)^2 \inf_{\ell \in \mathcal{P}_1}\left\| \nabla u- \ell \right\|_{\underline{L}^2(B_{s})} 
+ C \left( \mathsf{M}\vee 1 \right)^{\gamma \eta}  \left(\frac{s}{r}\right)^{\frac d2}  \left( \frac{s}{R} \right)^{1+\gamma} \mathsf{M}^{1+\gamma} .
\end{equation*}
Setting 
\begin{equation*} 
E_2(s) := \left( \frac{R}{s}\right)^{1+\gamma} \inf_{\ell \in \mathcal{P}_1}\left\| \nabla u - \ell \right\|_{\underline{L}^2(B_{s})}, 
\end{equation*}
we find $\ep(\mathsf{K},\gamma,d,\Lambda) \in (0,1)$ such that 
\begin{equation*} 
E_2(\ep s) \leq \frac12 E_2(s)  +  C \left( \mathsf{M}\vee 1 \right)^{\gamma \eta} \mathsf{M}^{1+\gamma} .
\end{equation*}
Thus, we get after reabsorption that
\begin{equation*} 
\sup_{s \in \left(0 , \frac{R}{2}\right)} E_2(s) \leq C \left( E_2\left(\frac R2 \right) + \left( \mathsf{M}\vee 1 \right)^{\gamma \eta}  \mathsf{M}^{1+\gamma} \right) . 
\end{equation*}
Letting $\ell_s$ be the affine function realizing the infimum in $E_2(s)$, we have analogously to Step 3 that 
\begin{equation*} 
\left| \nabla \ell_{2s} - \nabla \ell_{s} \right| \leq \frac{C}{R} \left( \frac{s}{R}\right)^{\gamma}  \left( E_2\left(\frac R2 \right) + \left( \mathsf{M}\vee 1 \right)^{\gamma \eta}  \mathsf{M}^{1+\gamma} \right) . 
\end{equation*}
Thus $\{\nabla \ell_{s}\}_s$ is a Cauchy sequence and there exists an affine function $\overline{\ell}$ such that 
\begin{equation*} 
\left| \nabla \ell_{s} - \nabla \overline{\ell} \right| \leq  \frac{C}{R} \left( \frac{s}{R}\right)^{\gamma}  \left( E_2\left(\frac R2 \right) + \left( \mathsf{M}\vee 1 \right)^{\gamma \eta}  \mathsf{M}^{1+\gamma} \right) .
\end{equation*}
A similar argument shows also that 
\begin{equation*} 
\left| \ell_{s}(0) - \overline{\ell}(0) \right| \leq  C \left( \frac{s}{R}\right)^{1+\gamma}  \left( E_2\left(\frac R2 \right) + \left( \mathsf{M}\vee 1 \right)^{\gamma \eta}  \mathsf{M}^{1+\gamma} \right) .
\end{equation*}
As a consequence we obtain that that for this $\overline{\ell}$ we have, for $s \in \left(0 , \frac{R}{2}\right)$, that
\begin{equation*} 
\left\| \nabla u - \overline{\ell} \right\|_{\underline{L}^2(B_{s})} \leq C \left( \frac{s}{R} \right)^{1+\gamma} \left( 
\inf_{\ell \in \mathcal{P}_1}\left\| \nabla u - \ell \right\|_{\underline{L}^2(B_{R/2})} +  \left( \mathsf{M}\vee 1 \right)^{\gamma \eta} \mathsf{M}^{1+\gamma}  \right).
\end{equation*}
It is straightforward 
to show that translating this estimate implies that $u \in C^{2,\gamma}$ and that $\overline{\ell}(x) = \nabla q(x)$, where 
\begin{equation*} 
q(x) :=  u(0) + \nabla u(0) \cdot x + \frac 12 x \cdot \nabla^2 u(0) x,
\end{equation*}
and the previous inequality reads as 
\begin{equation}  \label{e.C2alpha.Lbardiffpre}
\left\| \nabla u - \nabla q \right\|_{\underline{L}^2(B_{s})} \leq C \left( \frac{s}{R} \right)^{1+\gamma} \left( 
\inf_{\ell \in \mathcal{P}_1}\left\| \nabla u - \ell \right\|_{\underline{L}^2(B_{R/2})} +  \left( \mathsf{M}\vee 1 \right)^{\gamma \eta} \mathsf{M}^{1+\gamma}  \right).
\end{equation}
Furthermore, we have the equation 
\begin{equation*} 
- \nabla \cdot \left( D_p^2 F(\nabla u)  \nabla \left( \nabla u -  \nabla q \right) \right) = 
- \nabla \cdot \left( \left( D_p^2 F(\nabla u) - D_p^2 F(\nabla u(0)) \right)   \nabla^2 u(0)\right).
\end{equation*}
The term on the right can be controlled as
\begin{equation*} 
\left| \left( D_p^2 F(\nabla u(x)) - D_p^2 F(\nabla u(0)) \right)   \nabla^2 u(0)  \right| \leq C \left| \nabla u(x)  - \nabla u(0) \right|^\gamma \left| \nabla^2 u(0) \right|,
\end{equation*}
and hence, by~\eqref{e.C11.Lbar},
\begin{equation*} 
\left| \left( D_p^2 F(\nabla u(x)) - D_p^2 F(\nabla u(0)) \right)   \nabla^2 u(0)  \right|  \leq \frac{C}{R} \left( \frac{|x|}{R} \right)^\gamma \left( \mathsf{M} \vee 1 \right)^{\eta(1+\gamma)}  \mathsf{M}^{1+\gamma}. 
\end{equation*}
We thus get a Caccioppoli estimate, using also~\eqref{e.C2alpha.Lbardiffpre},  
\begin{align} \notag 
\left\| \nabla^2 u -  \nabla^2 u(0)  \right\|_{\underline{L}^2(B_{s/2})} & \leq \frac{C}{s} \left\| \nabla u -  \nabla q  \right\|_{\underline{L}^2(B_{s})} + \frac{C}{R} \left( \frac{s}{R} \right)^\gamma \left( \mathsf{M} \vee 1 \right)^{\eta(1+\gamma)}  \mathsf{M}^{1+\gamma}
\\ \notag & 
\leq \frac {C}{R} \left( \frac{s}{R} \right)^\gamma  \left( \inf_{\ell \in \mathcal{P}_1}\left\| \nabla u - \ell \right\|_{\underline{L}^2(B_{R/2})} +  \left( \mathsf{M} \vee 1 \right)^{\eta(1+\gamma)}  \mathsf{M}^{1+\gamma} \right).
\end{align}
By translating the previous estimate and applying the triangle inequality yields, for any $x,y \in B_{R/2}$ with $|x-y|\leq \frac R{32}$, 
\begin{align} \notag 
\lefteqn{ \left| \nabla^2 u(x) -  \nabla^2 u(y)  \right|} \quad &
\\ \notag & \leq 2^d \left\| \nabla^2 u - \nabla^2 u(x)  \right\|_{\underline{L}^2(B_{4|x-y|}(x))} + 2^d \left\| \nabla^2 u - \nabla^2 u(y)  \right\|_{\underline{L}^2(B_{4|x-y|}(y))}
\\ \notag & \leq  \frac{C}{R}  \left( \frac{|x-y|}{R} \right)^{\gamma}  \left( \inf_{\ell \in \mathcal{P}_1}\left\| \nabla u - \ell \right\|_{\underline{L}^2(B_{R/2})} +  \left( \mathsf{M} \vee 1 \right)^{\eta(1+\gamma)}  \mathsf{M}^{1+\gamma} \right),
\end{align}
which is~\eqref{e.C2alpha.Lbardiff} in the case $|x-y|\leq \frac R{32}$ after noticing that we trivially have $$\inf_{\ell \in \mathcal{P}_1}\left\| \nabla u - \ell \right\|_{\underline{L}^2(B_{R/2})}  \leq \inf_{q \in \mathcal{P}_2}\left\| \nabla u - \nabla q \right\|_{\underline{L}^2(B_{R/2})}.$$  If, on the other hand, $|x-y|> \frac R{32}$, the estimate follows easily by setting $x_j =  \frac{j}{32} x$  and $y_j =  \frac{j}{32} y$, $j \in \{0,1,\ldots,32\}$, and applying the previous estimate with $x = x_j$ and $y=x_{j+1}$, and similarly for $y_j$ and $y_{j+1}$, and we thus deduce~\eqref{e.C2alpha.Lbardiff} also in this case. The proof is complete.
\end{proof}

\begin{proposition}[{$C^{1,\beta}$ Schauder estimate}]
\label{p.C1beta.coeff}
Fix $\gamma' \in (0,1]$ and $\, \mathsf{K} \in (0,\infty)$. 
Suppose that  $F:\Rd \times B_1  \to \R$ is a Lagrangian satisfying, for every $p \in \R^d$, 
\begin{equation}  \label{e.Freg2}
 \left[ \frac{D_p F(p,\cdot)}{1 + |p|} \right]_{C^{0,\gamma'}(B_1)} 
 \leq \mathsf{K}
\end{equation}
and
\begin{equation} \label{e.F.uniconvex2}
I_d \leq D^2_p F(p,\cdot) \leq \Lambda I_d.
\end{equation}
There exist constants $\beta(\mathsf{K},\gamma',d,\Lambda) \in (0,\gamma']$ and $C(\mathsf{K},\gamma',d,\Lambda)$ such that, for every local $F$-minimizers $u\in H^1(B_1)$, 
\begin{equation} \label{e.C1betacoeff.res1}
\|\nabla u  \|_{L^\infty(B_{1/2})} + \left[ \nabla u  \right]_{C^{0,\beta}(B_{1/2})} \leq C \left( 1 + \|\nabla u \|_{\underline{L}^2(B_1)} \right).
\end{equation}
\end{proposition}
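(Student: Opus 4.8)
\textbf{Proof proposal for Proposition~\ref{p.C1beta.coeff}.} The plan is to reduce the statement to the constant-coefficient theory of Proposition~\ref{p.C1beta.differences.Lbar} via a standard Campanato/excess-decay iteration, where the excess is measured by approximation with affine functions. First I would fix a point $x_0 \in B_{1/2}$ and a radius $r \le 1/4$, and freeze the coefficients: for a local $F$-minimizer $u$, the gradient $\nabla u$ satisfies the equation $\nabla \cdot \left( D_p F(\nabla u(x),x) \right) = 0$, and I compare $u$ on $B_r(x_0)$ with the solution $w$ of the frozen problem
\begin{equation*}
\left\{
\begin{aligned}
& \nabla \cdot \left( D_p \tilde{F}(\nabla w) \right) = 0 & \mbox{in} & \ B_r(x_0), \\
& w = u & \mbox{on} & \ \partial B_r(x_0),
\end{aligned}
\right.
\end{equation*}
where $\tilde{F}(p) := F(p,x_0)$, which is uniformly convex by~\eqref{e.F.uniconvex2}. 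The energy comparison, using~\eqref{e.Freg2} to control $|D_pF(p,x) - D_pF(p,x_0)| \le \mathsf{K}(1+|p|)|x-x_0|^{\gamma'}$, gives a bound of the form $\|\nabla u - \nabla w\|_{\underline{L}^2(B_r(x_0))} \le C r^{\gamma'} (1 + \|\nabla u\|_{\underline{L}^2(B_r(x_0))})$.

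The second step is to invoke~\eqref{e.C1alpha.Lbar} from Proposition~\ref{p.C1beta.differences.Lbar} (applied to $\tilde F$, using Remark~\ref{r.C1beta.differences.Lbar} which only requires uniform convexity), which yields that $w$ has a $C^{1,\theta}$ estimate: for $\rho \le r$,
\begin{equation*}
\inf_{\ell \in \mathcal{P}_1} \left\| \nabla w - \nabla \ell \right\|_{\underline{L}^2(B_\rho(x_0))}
\le C \left( \frac{\rho}{r} \right)^{\theta}
\inf_{\ell \in \mathcal{P}_1} \left\| \nabla w - \nabla \ell \right\|_{\underline{L}^2(B_r(x_0))}.
\end{equation*}
Combining this with the comparison estimate and the triangle inequality, and writing $E(x_0,r) := \inf_{\ell \in \mathcal{P}_1} \|\nabla u - \nabla \ell\|_{\underline{L}^2(B_r(x_0))}$ together with $A(x_0,r) := |(\nabla u)_{B_r(x_0)}|$, one obtains the excess-decay inequality
\begin{equation*}
E(x_0,\rho) \le C\left( \frac{\rho}{r} \right)^{\theta} E(x_0,r) + C \left( \frac{r}{\rho} \right)^{d/2} r^{\gamma'}\bigl( 1 + A(x_0,r) + E(x_0,r) \bigr).
\end{equation*}
Choosing $\rho = \tau r$ with $\tau$ small so that $C\tau^{\theta} \le \frac12$, and setting $\beta := \min\{\theta, \gamma'\}/2$ (or any value making the perturbation term summable), a routine iteration over dyadic scales $r = \tau^k/4$ shows that both $\sup_k A(x_0,\tau^k/4)$ and $\sum_k \tau^{-k\beta} E(x_0,\tau^k/4)$ are controlled by $C(1 + \|\nabla u\|_{\underline{L}^2(B_1)})$; the control of $\|\nabla u\|_{\underline{L}^2(B_1)}$ on an enlarged ball already follows from the Caccioppoli inequality for $u$ (which holds because of~\eqref{e.F.uniconvex2}) and interior energy estimates. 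This gives a uniform pointwise bound on $\nabla u$ at Lebesgue points via $|\nabla u(x_0)| \le \sup_k A(x_0,\tau^k/4) + C\sum_k \tau^{k\theta}\cdots$, hence $\nabla u \in L^\infty(B_{1/2})$, and the summability of the rescaled excesses gives the $C^{0,\beta}$ modulus: for $x_0, y_0 \in B_{1/2}$ with $|x_0 - y_0| = s$ small, one estimates $|\nabla u(x_0) - \nabla u(y_0)|$ by comparing to the minimizing affine functions at scale $\sim s$, picking up a factor $s^{\beta}$.

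The main obstacle I anticipate is purely bookkeeping: making the iteration clean requires that the perturbation term $r^{\gamma'}(1 + A + E)$ not feed back uncontrollably into $A$ and $E$ themselves. The standard remedy is to first establish an a priori $L^\infty$ bound on $\nabla u$ over $B_{3/4}$ (say) — which can be extracted either from a preliminary lower-exponent Hölder estimate via De Giorgi–Nash applied to the equation $\nabla \cdot(D_p^2 F(\nabla u, x)\nabla^2 u) = \cdots$, or more simply by noting that once $A(x_0,r)$ is shown to be bounded uniformly in $r$ along the iteration (which one proves simultaneously with the excess decay, closing the loop by choosing the initial scale small depending on $\|\nabla u\|_{\underline{L}^2(B_1)}$), the nonlinear term $D_pF(\nabla u, x)$ is effectively a bounded, Hölder-in-$x$ perturbation. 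Since~\eqref{e.Freg2} gives linear growth in $p$ with the correct normalization, the constant in the final estimate comes out depending only on $(\mathsf{K}, \gamma', d, \Lambda)$ as claimed, with the additive $1$ absorbing the inhomogeneity coming from $D_pF(0,\cdot)$.
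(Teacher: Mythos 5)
Your proposal is correct and follows essentially the same route as the paper: freeze the coefficients, compare with the minimizer of the frozen functional using \eqref{e.Freg2} to get an error of order $r^{\gamma'}(1+\|\nabla u\|_{\underline{L}^2(B_r)})$, apply \eqref{e.C1alpha.Lbar} via Remark~\ref{r.C1beta.differences.Lbar}, and close a Campanato iteration that controls the averages and the excess simultaneously (the paper likewise first obtains a preliminary exponent and then re-runs the decay inequality to upgrade it). The only differences are cosmetic — you freeze at a general center $x_0$ and track $A(x_0,r)$ explicitly, whereas the paper freezes at the origin, tracks $\|\nabla u\|_{\underline{L}^2(B_r)}$, and translates at the end.
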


\begin{proof}[Proof of Proposition~\ref{p.C1beta.coeff}] 
Let $u_r$ be the minimizer of 
\begin{equation*} 
\min_{v \in u + H_0^1(B_r)} \fint_{B_r} F(\nabla v(x),0) \, dx,
\end{equation*}
so that, by the first variation, 
\begin{equation*} 
 - \nabla \cdot \left( D_pF(\nabla u,0) - D_pF(\nabla u_r,0)  \right) =  \nabla \cdot \left( D_pF(\nabla u,x) - D_pF(\nabla u,0) \right) 
\end{equation*}
in $B_r$. From this, together with~\eqref{e.Freg2},  we obtain 
\begin{equation*} 
\|\nabla u - \nabla u_r\|_{\underline{L}^2(B_r)} \leq C r^{\gamma'} \left(1 + \|\nabla u \|_{\underline{L}^2(B_r)} \right)
\end{equation*}
Then, for $\theta \in (0,1)$, we get by~\eqref{e.C1alpha.Lbar} (recalling Remark~\ref{r.C1beta.differences.Lbar}) and the triangle inequality that
\begin{multline} \label{e.C1betacoeff.res1.temp1}
\left\| \nabla u - \left( \nabla u \right)_{B_{\theta r}} \right\|_{\underline{L}^2(B_{\theta r})} 
\\\leq
C \theta^\beta \left\| \nabla u - \left( \nabla u \right)_{B_{r}} \right\|_{\underline{L}^2(B_{r})}
+ C \theta^{-\frac d2} r^{\gamma'} \left(1 + \|\nabla u \|_{\underline{L}^2(B_r)} \right).
\end{multline}
Similarly to Step 2 of the proof of Proposition~\ref{p.C1beta.differences.Lbar} this yields, for $\beta' := \frac12 \left( \beta \wedge \gamma' \right)$, that
\begin{equation*} 
\sup_{r \in (0,1)} \left(r^{-\beta'}\|\nabla u - (\nabla u)_{B_r} \|_{\underline{L}^2(B_r)} +  \|\nabla u \|_{\underline{L}^2(B_r)}  \right) \leq  C \left(1 + \|\nabla u \|_{\underline{L}^2(B_1)} \right) .
\end{equation*}
By translating this estimate, we conclude by an iteration argument that
\begin{equation*} 
\|\nabla u  \|_{L^\infty(B_{1/2})} + \left[ \nabla u  \right]_{C^{0,\beta'}} \leq C \left(1 + \|\nabla u \|_{\underline{L}^2(B_1)} \right).
\end{equation*}
Going back to~\eqref{e.C1betacoeff.res1.temp1}, we see that 
\begin{multline} \label{e.C1betacoeff.res1.temp2}
\left\| \nabla u - \left( \nabla u \right)_{B_{\theta r}} \right\|_{\underline{L}^2(B_{\theta r})} 
\\\leq
C \theta^\beta \left\| \nabla u - \left( \nabla u \right)_{B_{r}} \right\|_{\underline{L}^2(B_{r})}
+ C \theta^{-\frac d2} r^{\gamma'} \left(1 + \|\nabla u \|_{\underline{L}^2(B_1)} \right),
\end{multline}
and may repeat the argument to obtain H\"older exponent $\beta' \in (0,\beta) \cap [0,\gamma']$. Thus we conclude 
with~\eqref{e.C1betacoeff.res1} after relabelling $\beta'$. 
\end{proof}

\section{Homogenization estimates}
\label{ap.ASestimates}

In this appendix we derive the estimate~\eqref{e.vztildevz} from estimates in~\cite{AS}. In fact,~\eqref{e.vztildevz} is an immediate consequence of the triangle inequality, a basic energy estimate and the following stronger form of~\eqref{e.ellztildevz}: there exist~$\alpha(d,\Lambda)>0$ and $C(\mathsf{K},q,d,\Lambda)<\infty$ and a random variable $\X$ satisfying $\X\leq \O_1(C)$ such that, for every $M,N\in\N$ with $M\leq N$,
\begin{multline}
\label{e.ellztildevz.strong}
\sup_{\xi \in B_{\mathsf{K}3^{Mq}}} 
(1+|\xi|)^{-2} 3^{-d(N-M)} 
\sum_{z \in 3^M\Zd\cap \cu_N}
3^{-2M}
\left\| 
v(\cdot,z+\cu_M,\xi) - \ell_\xi
\right\|_{\underline{L}^2 \left(z+\cu_{M} \right)}^2
\\
\leq C3^{-M\alpha(d-\sigma)} + \X 3^{-\sigma'N}.
\end{multline}
What~\cite[Corollary 3.5]{AS} gives us is the following bound, valid for every $z\in 3^M\Zd$: 
\begin{multline}
\label{e.ellztildevz.strong.2}
\sup_{\xi \in B_{\mathsf{K}3^{Mq}}} 
(1+|\xi|)^{-2} 
3^{-2M}
\left\| 
v(\cdot,z+\cu_M,\xi) - \ell_\xi
\right\|_{\underline{L}^2 \left(z+\cu_{M} \right)}^2
\\
\leq C3^{-M\alpha(d-\sigma)} + \X_z 3^{-\sigma'M},
\end{multline}
where the random variables $\{ \X_z \}_{z\in 3^M\Zd}$ are identically distributed, satisfy the bound $\X_z\leq \O_1(C)$, and each $\X_z$ is measurable with respect to $\F(z+\cu_M)$, that is, it depends on the restriction of the Lagrangian to $z+\cu_M$. In particular, for every $z,z'\in3^M\Zd$, the random variables $\X_z$ and $\X_{z'}$ are independent provided that the cubes $z+\cu_M$ and $z'+\cu_M$ are not adjacent. To obtain~\eqref{e.ellztildevz.strong}, it suffices to prove
\begin{equation*}
3^{-d(N-M)} \sum_{z\in 3^M\Zd\cap \cu_N} \X_z 
\leq 2 \E \left[ \X_0 \right] + \O_1 \left( 3^{-d(N-M)} \right). 
\end{equation*}
This is a very crude large deviation-type estimate that is relatively straightforward to derive. It can be obtained for instance from~\cite[Lemma 2.14]{AS} or its proof.

\subsection*{Acknowledgments}
SA was partially supported by the National Science Foundation through grant DMS-1700329. SF was partially supported by NSF grants DMS-1700329 and DMS-1311833.
T.K.~was supported by the Academy of Finland and the European Research Council (ERC) under the European Union's Horizon 2020 research and innovation programme (grant agreement No 818437).


\small
\bibliographystyle{abbrv}
\bibliography{linearization}

\end{document}